\documentclass{article}
\usepackage{latexsym}
\usepackage{leftidx}
\usepackage{amsmath,amsthm, color}
\usepackage{enumerate}
\usepackage{amssymb}
\usepackage{upgreek}
\usepackage[margin=1.1in,dvips]{geometry}

\def\f12{\frac 1 2}

\def\a{\alpha}
\def\b{\beta}
\def\ga{\gamma}

\def\ep{\epsilon}
\def\la{\lambda}

\def\si{\sigma}
\def\Si{\Sigma}
\def\om{\omega}
\def\Om{\Omega}

\def\ub{\underline{u}}
\def\Lb{\underline{L}}

\def\ab{\underline{\a}}
\def\Hb{\underline{H}}

\def\pa{\partial}
\def\les{\lesssim}

\def\f12{\frac 1 2}
\def\div{\text{div}}

\newcommand{\vol}{\textnormal{vol}}

\newcommand{\D}{\mbox{$D \mkern-13mu /$\,}}
\newcommand{\J}{\mbox{$J \mkern-13mu /$\,}}
\newcommand{\nabb}{\mbox{$\nabla \mkern-13mu /$\,}}
\newcommand{\divs}{\mbox{$\div \mkern-13mu /$\,}}
\newtheorem{thm}{Theorem}

\newtheorem{prop}{Proposition}

\newtheorem{lem}{Lemma}
\newtheorem{cor}{Corollary}
\newtheorem{remark}{Remark}
\theoremstyle{definition}

\begin{document}

\title{Decay of solutions of Maxwell-Klein-Gordon equations with large Maxwell field}
\date{}
\author{Shiwu Yang}
\maketitle
\begin{abstract}
In the author's work \cite{yangILEMKG}, it has been shown that solutions of Maxwell-Klein-Gordon equations in $\mathbb{R}^{3+1}$ possess some form of global strong decay properties with data bounded in some weighted energy space. In this paper, we prove pointwise decay estimates for the solutions for the case when the initial data are merely small on the scalar field but can be arbitrarily large on the Maxwell field. This extends the previous result of Lindblad-Sterbenz \cite{LindbladMKG}, in which smallness was assumed both for the scalar field and the Maxwell field.
\end{abstract}

\section{Introduction}

In this paper, we study the pointwise decay of solutions to the Maxwell-Klein-Gordon equations on $\mathbb{R}^{3+1}$ with large Cauchy data. To define the equations, let $A=A_\mu dx^\mu$ be a $1$-form. The covariant derivative associated to this 1-form is
\begin{equation*}
D_\mu =\pa_\mu+\sqrt{-1}A_\mu,
\end{equation*}
which can be viewed as a $U(1)$ connection on the complex line bundle over $\mathbb{R}^{3+1}$ with the standard flat metric $m_{\mu\nu}$. Then the curvature $2$-form $F$ associated to this connection is given by
\begin{equation*}
F_{\mu\nu}=-\sqrt{-1}[D_{\mu}, D_{\nu}]=\pa_\mu A_\nu-\pa_\nu A_\mu=(dA)_{\mu\nu}.
\end{equation*}
This is a closed $2$-form, that is, $F$ satisfies the Bianchi identity
\begin{equation}
\label{bianchi}
 \pa_\ga F_{\mu\nu}+\pa_\mu F_{\nu\ga}+\pa_\nu F_{\ga\mu}=0.
\end{equation}
The Maxwell-Klein-Gordon equations (MKG) is a system for the connection field $A$ and the complex scalar field $\phi$:
\begin{equation}
 \label{EQMKG}\tag{MKG}
\begin{cases}
\pa^\nu F_{\mu\nu}=\Im(\phi \cdot\overline{D_\mu\phi})=J_\mu;\\
D^\mu D_\mu\phi=\Box_A\phi=0.
\end{cases}
\end{equation}
These are Euler-Lagrange equations of the functional
\[
L[A, \phi]=\iint_{\mathbb{R}^{3+1}}\frac{1}{4}F_{\mu\nu}F^{\mu\nu}+\frac{1}{2}D_{\mu}\phi\overline{D^{\mu}\phi}dxdt.
\]
A basic feature of this system is that it is gauge invariant under the following gauge transformation:
\[
 \phi\mapsto e^{i\chi}\phi; \quad A\mapsto A-d\chi.
\]
More precisely, if $(A, \phi)$ solves \eqref{EQMKG}, then $(A-d\chi, e^{i\chi}\phi)$ is also a solution for any potential function $\chi$. Note that $U(1)$ is abelian. The Maxwell field $F$ is invariant under the above gauge transformation and \eqref{EQMKG} is said to be an \textsl{abelian gauge theory}. For the more general theory when $U(1)$ is replaced by a compact Lie group, the corresponding equations are referred to as \textsl{Yang-Mills-Higgs equations}.

\bigskip

In this paper, we consider the Cauchy problem to \eqref{EQMKG}. The initial data set $(E, H, \phi_0, \phi_1)$ consists of the initial electric field $E$, the magnetic field $H$, together with initial data $(\phi_0, \phi_1)$ for the scalar field. In terms of the solution $(F, \phi)$, on the initial hypersurface, these are:
\begin{equation*}
F_{0i}=E_i,\quad \leftidx{^*}F_{0i}=H_i,\quad \phi(0, x)=\phi_0,\quad D_t\phi(0, x)=\phi_1,
\end{equation*}
where $\leftidx{^*}F$ is the Hodge dual of the 2-form $F$. In local coordinates $(t, x)$,
\[
(H_1, H_2, H_3)=(F_{23}, F_{31}, F_{12}).
\]
 The data set is said to be \textsl{admissible} if it satisfies the compatibility condition
\begin{equation}
\label{eq:comp:cond}
div(E)=\Im(\phi_0\cdot \overline{\phi_1})=\left.J_0\right|_{t=0},\quad div (H)=0,
\end{equation}
where the divergence is taken on the initial hypersurface $\mathbb{R}^3$. For solutions of \eqref{EQMKG}, the energy
\[
 E[F, \phi](t):=\int_{\mathbb{R}^3}|E|^2+|H|^2+|D\phi|^2dx
\]
is conserved. Another important conserved quantity is the total charge
\begin{equation}
\label{defcharge}
q_0=\frac{1}{4\pi}\int_{\mathbb{R}^3}\Im(\phi\cdot \overline{D_t\phi})dx=\frac{1}{4\pi}\int_{\mathbb{R}^3}div (E)dx,
\end{equation}
which can be defined at any fixed time $t$. The existence of nonzero charge plays a crucial role in the asymptotic behavior of solutions of \eqref{EQMKG}. It makes the analysis more complicated and subtle. This is obvious from the above definition as the electric field $E_i=F_{0i}$ has a tail $q_0r^{-3}x_i$ at any fixed time $t$.

\bigskip

The Cauchy problem to \eqref{EQMKG} has been studied extensively. One of the most remarkable results is due to Eardley-Moncrief in \cite{Moncrief1}, \cite{Moncrief2}, in which it was shown that there is always a global solution to the general Yang-Mills-Higgs equations for sufficiently smooth initial data.
 This has later been improved to data merely bounded in the energy space for MKG by Klainerman-Machedon in \cite{MKGkl} and for the non-abelian case of Yang-Mills equations in e.g. \cite{YMkl}, \cite{sungjinYM},
\cite{MKGtesfahun}. Since then there has been an extensive literature on generalizations and extensions of this classical result, aiming at improving the regularity of the initial data in order to construct a global solution, see \cite{MKGtataru}, \cite{MKGtao}, \cite{KriegerMKG4}, \cite{MKGmachedon}, \cite{OhMKG4}, \cite{MKGigor} and references therein. A common feature of all these works is to
 construct a local solution with rough data. Then the global well-posedness follows by establishing a priori bound for some appropriate norms of the solution. For example, a local solution was constructed in \cite{Moncrief1} while in \cite{Moncrief2}, they showed that the $L^\infty$ norm of the solution never blows up even though it may grow in time $t$. As a consequence the solution can be extended to all time; however the decay property of the solution is unknown. In view of this, although the solution of \eqref{EQMKG} exists
globally with rough initial data, very little is known about the decay properties.

\bigskip
Asymptotic behavior and decay estimates are well understood for linear fields (see e.g.\cite{asymLkl}) and nonlinear fields with sufficiently small initial data (see e.g.\cite{fieldschrist}, \cite{Shu}). These mentioned results rely on the conformal symmetry of the system, either by conformally compactifying the Minkowski space or by using the conformal killing vector field $(t^2+r^2)\pa_t+2tr\pa_r$ as multiplier. Nevertheless the use of the conformal symmetry requires strong decay of the initial data and thus in general does not allow the presence of nonzero charge except when the initial data are essentially compactly supported. For the case with nonzero charge, the first related work regarding the asymptotic properties was due to W. Shu in \cite{shu2}. However, that work only considered the case when the solution is trivial outside
a fixed forward light cone. Details for general case were not carried out. A complete proof towards this program was later contributed by Lindblad-Sterbenz in \cite{LindbladMKG}, also see a more recent work \cite{MKGLydia}.

 \bigskip

 The presence of nonzero charge has a long range effect on the asymptotic behavior of the solutions, at least in a neighbourhood of the spatial infinity. This can be seen from the conservation law
of the total charge as the electric field $E$ decays at most $ r^{-2}$ as $r\rightarrow\infty$ at any fixed time. This weak decay rate makes the analysis more complicated even for small initial data. To deal with this difficulty, Lindblad-Sterbenz constructed a global chargeless field and made use of the fractional Morawetz estimates obtained by using the vector fields $u^p\pa_u+v^p\pa_v$ as multipliers. The latter work \cite{MKGLydia} relied on the observation that the angular derivative of the Maxwell field has zero charge. The Maxwell field then can be estimated by using Poincar\'e inequality.

\bigskip

The asymptotic behavior of solutions of MKG with general large data remains unknown until recently in \cite{yangILEMKG} quantitative decay estimates have been obtained for solutions with data bounded in some weighted energy space. Pointwise decay requires the energy estimates for the derivatives of the solution. However, commuting the equations with derivatives generates nonlinear terms. The aim of this paper is to identify a class of large data for MKG equations such that we can derive the pointwise decay of the solutions.

\bigskip

We define some necessary notations in order to state our main result.
We use the standard polar local coordinate system $(t, r,
\om)$ of Minkowski space as well as the null coordinates $u=\frac{t-r}{2}$, $v=\frac{t+r}{2}$. Let $\nabla$ denote the derivative on $\mathbb{R}^3$ and $\Om$ be the set of angular momentum vector fields $\Om_{ij}=x_i\pa_j-x_j\pa_i$.
Without loss of generality we only prove estimates in the future, i.e., $t\geq 0$.
 Next we introduce a null frame $\{L, \Lb, e_1, e_2\}$, where
\[
L=\pa_v=\pa_t+\pa_r,\quad \Lb=\pa_u=\pa_t-\pa_r
\]
and $\{e_1, e_2\}$ is an orthonormal basis of the sphere with
constant radius $r$. We use $\D$ to denote the covariant derivative associated to the connection field $A$ on the sphere with radius $r$. For any 2-form $F$, denote the null decomposition under the above null frame by
\begin{equation}
\label{eq:curNull}
\a_i=F_{Le_i},\quad\underline{\a}_i=F_{\Lb e_i},\quad \rho=\f12 F_{\Lb L}, \quad \si=F_{e_1 e_2},\quad i\in{1, 2}.
\end{equation}
We assume that the initial data set $(E, H, \phi_0, \phi_1)$ is admissible. Let $q_0$ be the charge defined in \eqref{defcharge} which is
uniquely determined by the initial data of the scalar field $(\phi_0, \phi_1)$. We assume that the data for the scalar field is small but the data for the Maxwell field is large. However the data can not be assigned freely. They satisfy the compatibility condition \eqref{eq:comp:cond}. To measure the size of the initial data for the scalar field and the Maxwell field, let $(E^{df}, E^{cf})$ be the Hodge decomposition of the electronic field $E$ with $E^{df}$ the divergence free part and $E^{cf}$ the curl free part. Then the compatibility condition \eqref{eq:comp:cond} on $E$ is equivalent to
\[
\div E^{cf}=\Im(\phi_0\cdot \overline{\phi_1}).
\]
This implies that $E^{cf}$ can be uniquely determined by $(\phi_0, \phi_1)$ (with suitable decay assumption on $E$). Therefore for the initial data set $(E, H, \phi_0, \phi_1)$ for \eqref{EQMKG} we can freely assign $\phi_0$, $\phi_1$ and $E^{df}$, $H$ as long as $\div H=0$, $\div E^{df}=0$. The total charge $q_0$ is a constant determined by $(\phi_0, \phi_1)$.

We now define the norms of the initial data. For some positive constant $0<\ga_0<1$, we define the second order weighted Sobolev norm respectively for the initial data of the Maxwell field $(E, H)$ and the initial data of the scalar field $(\phi_0, \phi)$:
\begin{align*}
\mathcal{M}:&=\sum\limits_{l\leq 2}(1+r)^{1+\ga_0}(|\Om^l E^{df}|^2+|\Om^l H|^2+|\nabla^l E^{df}|^2+|\nabla^l H|^2)dx,\\
\mathcal{E}:&=\sum\limits_{l\leq 2}(1+r)^{1+\ga_0}(|\nabla\Om^l\phi_0|^2+|\Om^l \phi_1|^2+|\nabla^{l+1} \phi_0|^2+|\nabla^{l} \phi_1|^2+|\phi_0|^2)dx.
\end{align*}
We remark here that the definition for $\mathcal{E}$ is not gauge invariant. The gauge invariant norm depends on the connection field $A$ which up to to a gauge transformation can be determined by the initial data of the Maxwell field $(E^{df}, H)$. However in our setting $\mathcal{M}$ can be arbitrarily large while $\mathcal{E}$ is assumed to be small depending on $\mathcal{M}$. It it hence much clear to use a norm for the scalar field so that it does not depend on $\mathcal{M}$. However we will show later (see Lemma \ref{lem:IDbd} in Section \ref{sec:btstp}) that the gauge invariant norm is equivalent to the above Sobolev norm up to a constant depending on $\mathcal{M}$.

We now can state our main theorem:
\begin{thm}
 \label{thm:dMKG:small}
Consider the Cauchy problem to \eqref{EQMKG} with admissible initial data set $(E, H, \phi_0, \phi_1)$. Then there exists a positive constant $\ep_0$, depending on $\mathcal{M}$, $\ga_0$, such that for all $\mathcal{E}<\ep_0$, the solution $(F, \phi)$ of \eqref{EQMKG} satisfies the following decay estimates:
\begin{align*}
|D_{\Lb}(r\phi)|^2(u, v, \om)\leq C\mathcal{E}u_+^{-1-\ga_0},&\quad |r\ab|^2(u, v, \om)\leq C u_+^{-1-\ga_0};\\
r^p(|D_L(r\phi)|^2+|\D(r\phi)|^2)(u, v, \om)\leq C\mathcal{E} u_+^{p-1-\ga_0},&\quad 0\leq p\leq 1+\ga_0;\\
 r^p(|r\a|^2+|r\si|^2)(u, v, \om)\leq C u_+^{p-1-\ga_0},&\quad 0\leq p\leq 1+\ga_0;\\
r^{p+2}|\rho-q_0 r^{-2}\chi_{\{t+R\leq r\}}|^2(u, v, \om)\leq C u_+^{p-1-\ga_0},&\quad 0\leq p< 1,\\
r^p|\phi|^2(u, v,\om)\leq C\mathcal{E} u_+^{p-2-\ga_0},&\quad 1\leq p\leq 2\\
|D\phi|^2(t, x)+|\phi|^2(t, x)\leq C\mathcal{E}(1+t)_+^{-1-\ga_0},&\quad |F|^2(t, x)\leq C(1+t)_+^{-1-\ga_0} ,\quad \forall |x|\leq R
 \end{align*}
for all $(u, v, \om)\in\mathbb{R}^{3+1}\cap\{|x|\geq R\}$ and for some constant $C$ depending on $\mathcal{M}$, $\ga_0$, $p$. Here $q_0$ is the total charge and $\chi_{\{t+2\leq r\}}$ is the characteristic function on the exterior region $\{t+2\leq r\}$ and $u_+=2+|u|$.
\end{thm}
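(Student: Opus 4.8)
The plan is to run a bootstrap argument built on a hierarchy of $r$-weighted energy estimates, following the by-now-standard new-method-of Dafermos--Rodnianski philosophy but adapted to the charged, large-Maxwell-field setting of \cite{yangILEMKG}. First I would set up the global geometry: foliate the future of the initial hypersurface by the hyperboloidal/outgoing null cones $\{u=\text{const}\}$ together with the interior time slices, decompose the Maxwell field into its null components $(\a,\ab,\rho,\si)$ as in \eqref{eq:curNull}, and split off the charge by writing $\rho=\bar\rho + q_0 r^{-2}\chi_{\{t+R\le r\}}$ with a cutoff so that the remainder $\bar\rho$ has zero charge aspect on each sphere. The chargeless remainder field together with $\phi$ will carry finite weighted energy; this is exactly the place where the compatibility condition \eqref{eq:comp:cond} and the Hodge decomposition $E=E^{df}+E^{cf}$ enter, ensuring (via Lemma~\ref{lem:IDbd}) that the gauge-invariant second-order energies are controlled by $\mathcal{M}$ and $\mathcal{E}$ with the correct smallness in $\mathcal{E}$.

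Next I would establish the large-data energy bounds for the Maxwell field alone. Because the scalar current $J_\mu=\Im(\phi\cdot\overline{D_\mu\phi})$ is quadratic in $\phi$ and $\phi$ will be shown to be small, the Maxwell field to leading order solves an almost-linear Maxwell system; its $r$-weighted energy fluxes through the null cones, at zeroth, first, and second order (commuting with $\Om_{ij}$, with $L$, and with scaling), are then bounded by $\mathcal{M}$ plus error terms that are at least cubic and hence absorbable. The key structural input, as in \cite{yangILEMKG} and \cite{MKGLydia}, is that the angular derivatives of $F$ are chargeless, so the Poincar\'e inequality on the spheres converts angular-momentum energy into pointwise control of $r\a$, $r\si$, $r\ab$ and of $r^2(\rho-q_0r^{-2}\chi)$ with the stated $r$- and $u_+$-weights. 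Simultaneously I would close the $r$-weighted energy hierarchy for the covariant scalar field $\Box_A\phi=0$: here the background connection $A$ is large, so one cannot treat this perturbatively in the usual sense, but the $p$-weighted estimates of $\psi=r\phi$ only see $A$ through the curvature $F$ in commutators, and those commutator terms $\sim r^p F\cdot D\phi\cdot D\phi$ are quadratic in the small field $\phi$ and controlled by the already-established large-but-finite Maxwell energy. This yields the weighted flux bounds $\int r^p(|D_L\psi|^2+|\D\psi|^2)\lesssim \mathcal{E}u_+^{p-1-\ga_0}$ for $0\le p\le 1+\ga_0$, and after commuting with one derivative, the analogous second-order bounds.

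From the second-order weighted energy bounds I would extract the pointwise decay by the standard route: a fundamental-theorem-of-calculus argument along the $e_i$ and $L$ directions combined with a Sobolev embedding on the spheres (using the $\Om$-commuted energies) gives $|\psi|^2$ and $|D\psi|^2$ pointwise bounds with the claimed $r^p u_+^{p-1-\ga_0}$ weights; dividing by appropriate powers of $r$ recovers the decay for $\phi$ and $D\phi$ stated in the theorem, and the interior estimates $|x|\le R$ follow from the null-cone bounds by propagating along the incoming cones (or by a local energy decay / Sobolev argument on the bounded region). The charge-renormalized $\rho$ estimate with the borderline $0\le p<1$ comes out the same way from the chargeless part of the Maxwell energy. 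The main obstacle I expect is closing the bootstrap uniformly in the size of $\mathcal{M}$: since $\ep_0$ is allowed to depend on $\mathcal{M}$, the nonlinear error terms coupling the two fields must be shown to carry at least one power of $\mathcal{E}^{1/2}$ after all the large-data Maxwell norms are used up, so the bookkeeping of which terms are "merely large" (Maxwell $\times$ Maxwell, absorbed by $\mathcal{M}$) versus "genuinely small" (anything touching $\phi$, giving the gain in $\mathcal{E}$) has to be done with care — in particular the commutator structure $[D_\mu,D_\nu]=\sqrt{-1}F_{\mu\nu}$ must be exploited so that every extra derivative falling on $\phi$ produces a curvature factor paired with a scalar factor, never an uncontrolled product of large quantities. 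A secondary technical point is the slow $r^{-2}$ decay of the charge tail, which forces the use of the cutoff $\chi_{\{t+R\le r\}}$ and the restriction $p<1$ in the $\rho$ estimate, and requires the fractional Morawetz-type $u^p\pa_u+v^p\pa_v$ multipliers near spatial infinity, exactly as in \cite{LindbladMKG}.
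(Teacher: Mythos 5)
Your overall architecture — bootstrap on the smallness of $\phi$, $r$-weighted energy hierarchy, charge decomposition, then Sobolev/trace embedding for pointwise decay — matches the paper's strategy in broad outline, but there is a genuine gap at the heart of the argument, and a couple of secondary deviations worth flagging.

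The genuine gap concerns how the interaction terms between the (large) Maxwell field and the scalar field are controlled. In the energy identities for $\phi$, terms of the schematic form $X^\nu F_{\nu\gamma}J^\gamma[\phi]\sim F\cdot\phi\cdot D\phi$ appear on the right-hand side (and similarly commutator terms $Z^\nu F_{\mu\nu}D^\mu\phi$ from $[\Box_A,D_Z]$). You say these are ``quadratic in the small field $\phi$ and controlled by the already-established large-but-finite Maxwell energy.'' That is not an argument that closes: these terms carry a coefficient proportional to $F$, which is large, so they cannot be absorbed into the left-hand side the way they are in \cite{LindbladMKG}. Boundedness of the Maxwell energy is not enough; what actually makes the paper work is that the Maxwell energy flux through $\Sigma_\tau$ \emph{decays} like $\tau_+^{-1-\gamma_0}$, so that after Cauchy--Schwarz the dangerous term becomes $\int g(\tau)E[\phi](\Sigma_\tau)\,d\tau$ with $g\in L^1_\tau$, and Gronwall's inequality \emph{along the $\Sigma_\tau$ foliation} (not the $t$-foliation) closes the estimate without any smallness on $F$. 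This is the central mechanism the paper is advertising, and it is missing from your proposal; without it, your argument degenerates into the Lindblad--Sterbenz small-Maxwell-field scheme, which is exactly what the theorem is supposed to avoid.

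Two secondary points where your route diverges from the paper, and where your choices would actually create new problems. First, you propose commuting with $\Om$, with $L$, and with scaling. The paper deliberately commutes \emph{only} with $\partial_t$ and $\Om$ (cf.\ the set $Z$), because commuting with $L$ or $S$ would require more regularity and more decay on the data than $\mathcal{M},\mathcal{E}$ provide; the missing $L$- and $\Lb$-derivatives are instead recovered from the null transport equations of Lemma~\ref{lem:nullMKG} and Lemma~\ref{lem:EQ4sca:null}, and the pointwise decay is obtained via the trace theorem of Lemma~\ref{lem:trace} on null cones, plus elliptic estimates in $\{r\le R\}$ (Propositions~\ref{prop:Est4F:in:R} and~\ref{prop:Est4phi:in:R}) — not via a Klainerman--Sobolev type inequality, which the paper explicitly notes it lacks. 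Second, you invoke the fractional Morawetz multipliers $u^p\partial_u+v^p\partial_v$ near spatial infinity ``exactly as in \cite{LindbladMKG}''; the paper instead uses $X=r^p L$ (the Dafermos--Rodnianski $r$-weighted vector field, Propositions~\ref{prop:pWE:sca} and~\ref{prop:pWE:cur}), and this choice is what makes the hierarchy + pigeon-hole argument, and hence the Gronwall mechanism above, available. If you tried to run your version literally, you would neither have the $\tau$-decaying coefficients needed for Gronwall nor the low-regularity pointwise bounds.

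In short: the skeleton is right, but the proposal misses the one idea that makes the large-Maxwell case tractable — trading absorption for Gronwall along the null foliation using the decay of the Maxwell flux — and it imports commutators and multipliers that the paper purposely avoids for regularity reasons.
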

We make several remarks:
\begin{remark}
The second order derivatives of the initial data is the minimum regularity we need to derive the above pointwise decay of the solution. Similar decay estimates hold for the higher order derivatives of the solution if higher order weighted Sobolev norms of the initial data are known.
\end{remark}

\begin{remark}
The restriction on $\ga_0$, that is $0<\ga_0<1$, is merely for the sake of brevity. If $\ga_0\geq 1$, then the decay property of the solutions propagates in the exterior region ($t+2\leq r$). In other words, we have the same decay estimates as in the theorem for $\tau\leq 0$. However in the interior region where $\tau>0$, the maximal decay rate is $\tau_+^{-2}$ (corresponding to $\ga_0=1$), that is, the decay rate in the interior region for $\ga_0\geq 1$ in general can not be better than that of $\ga_0=1$.
\end{remark}

\begin{remark}
Since we assume the scalar field is small, the charge is also small by definition. Combined with the techniques in \cite{yangILEMKG}, our approach can be adapted to the case with large charge. There are two ways of generalizations. The first one is to relax the assumption on the scalar field as in \cite{yang5} so that the charge can be large. Secondly we can consider the following unphysical equations:
\begin{equation*}
\begin{cases}
\pa^\nu F_{\mu\nu}=\la\Im(\phi \cdot\overline{D_\mu\phi});\\
\Box_A\phi=0
\end{cases}
\end{equation*}
for some constant $\la$.
\end{remark}
Compared to the previous result of Lindblad-Sterbenz \cite{LindbladMKG}, we have made the following improvements: first of all, we obtain pointwise decay estimates for solutions of \eqref{EQMKG} for a class of large initial data. We only require smallness on the scalar field. In particular our initial data for \eqref{EQMKG} can be arbitrarily large. Combining the method in \cite{yang5}, we can even make the data on the scalar field large in the energy space. Secondly we have lower regularity on the initial data. In \cite{LindbladMKG}, it was assumed that the derivative of the initial data decays one order better, that is, $\nabla^{I}(E^{df}, H)$, $D^{I}(D\phi_0, \phi_1)$ belong to the weighted Sobolev space with weights $(1+r)^{1+\ga_0+2|I|}$, while in this paper we only assume that the angular derivatives of the data obey this improved decay (see the definition of $\mathcal{M}$, $\mathcal{E}$). For the other derivatives, the weights is merely $(1+r)^{1+\ga_0}$. This makes the analysis more delicate. Moreover, as the solution decays weaker initially, our decay rate is weaker than that in \cite{LindbladMKG} (only decay rate in $\tau$, the decay in $r$ is the same). However if we assume the same decay of the initial data as in \cite{LindbladMKG}, then we are able to obtain the same decay for the solution.

\bigskip

We use a new approach developed in \cite{yangILEMKG} to study the asymptotic behavior of solutions of \eqref{EQMKG}. This new method was originally introduced by Dafermos-Rodnianski in \cite{newapp} for the study of decay of linear waves on black hole spacetimes. This novel method starts by proving the energy flux decay of the solutions of linear equations through the forward light cone $\Si_{\tau}$ (see definitions in Section \ref{sec:notation}). The pointwise decay then follows by commuting the equation with $\pa_t$ and the angular momentum $\Om$. In the abstract framework set by Dafermos-Rodnianski in \cite{newapp}, the energy flux decay relies on three kinds of basic ingredients and estimates: a uniform energy bound, an integrated local energy decay estimate and a hierarchy of $r$-weighted energy estimates in a neighbourhood of the null infinity, which can be obtained by using the vector fields $\pa_t$, $f(r)\pa_r$, $r^p(\pa_t+\pa_r)$ as multipliers respectively. Combining these three estimates, a pigeon hole argument then leads to the energy flux decay.

\bigskip

As the initial data for the scalar field is small, we can use perturbation method to prove the pointwise decay of the solution. With a suitable bootstrap assumption on the nonlinearity $J=\Im(\phi\cdot \overline{D\phi})$, we first can use the new method to prove energy decay estimates for the Maxwell field up to the second order derivatives. Once we have these decay estimates for the Maxwell field, we then can show the energy decay as well as pointwise decay for the scalar field, which can then be used to improve the bootstrap assumption. The smallness of the scalar field is used here to close the bootstrap assumptions.

\bigskip

The existence of nonzero chargel has a long range effect on the asymptotic behavior of the solution in the exterior region $\{t+R\leq r\}$, which has been discussed in \cite{yangILEMKG} when the charge is large. To deal with this difficulty, we define the chargeless 2-form
\[
\bar F=F-\chi_{\{t+R\leq r\}}q_0 r^{-2}dt\wedge dr.
\]
We first carry out estimates for $\bar F$ on the exterior region $\{t+R\leq r\}$, which in particular controls the energy flux through $\{t+R=r\}$ (the intersection of the interior region and the exterior region). We then can use the new method to obtain estimates for the Maxwell field $F$ in the interior region. The Maxwell equation commutes with the Lie derivatives of $F$ (see Lemma \ref{lem:commutator}). It is not hard to obtain energy decay estimates for the derivatives of the Maxwell field under suitable bootstrap assumptions on the nonlinearity $J$ by using the new approach.

\bigskip

The main difficulty lies in showing the energy decay estimates for the scalar field due to fact that the covariant derivative $D$  is not commutable. The interaction terms of the Maxwell field and the scalar field arise from the commutator. To control those interaction terms, previous results (\cite{LindbladMKG}, \cite{MKGLydia}) rely on the smallness of the Maxwell field and those terms could be absorbed. The key observation that the Maxwell field is allowed to be large in this paper is that the robust new method makes use of the decay in $\tau$ (equivalent to $1+|t-r|$ up to a constant) and those terms could be controlled by using Gronwall's inequality without smallness assumption on the Maxwell field. Traditionally, the Gronwall's inequality is used with respect to the foliation $t=constant$. Therefore strong decay in $t$ is necessary. As the new method foliates the spacetime by using the null hypersurfaces $S_{\tau}$, it enables us to make use of the weaker decay in $\tau$ in order to apply Gronwall's inequality.

\bigskip

The paper is organized as follows: we define additional notations and derive the transport equations for the curvature components in Section \ref{sec:notation}; we then review the energy identities respectively for the scalar field and the Maxwell field in Section \ref{sec:energyID}; in Section \ref{sec:decay:lin}, we use the new method to obtain energy decay estimates first for the Maxwell field and then for the scalar field; finally in the last section, we improved the bootstrap assumption and conclude our main theorem.

\textbf{Acknowledgments}
The author would like to thank Pin Yu for helpful discussions.

\section{Preliminaries and notations}
\label{sec:notation}
We define some additional notations used in the sequel. Recall the null frame $\{L, \Lb, e_1, e_2\}$ defined in the introduction. At any fixed point $(t, x)$, we may choose $e_1$, $e_2$ such that
\begin{equation*}
 [L, e_i]=-\frac{1}{r}e_i,\quad [\Lb, e_i]=\frac{1}{r}e_i,\quad
\left.[e_1, e_2]\right|_{(t, x)}=0,\quad i\in\{1, 2\}.
\end{equation*}
This helps to compute those geometric quantities which are
independent of the choice of the local coordinates. We then can compute the covariant derivatives for the null frame at any fixed point:
\begin{equation}
 \label{eq:nullderiv}
 \begin{split}
&\nabla_L L=0,\quad   \nabla_L \Lb=0, \quad \nabla_L e_i=0,\quad \nabla_{\Lb}\Lb=0, \quad \nabla_{\Lb}e_i=0,\\
&\nabla_{e_i}L=r^{-1}e_i,\quad \nabla_{e_i}\Lb=-r^{-1}e_i, \quad \nabla_{e_1}e_2=\nabla_{e_2}e_1=0,\quad \nabla_{e_i}e_i=-r^{-1}\pa_r.
 \end{split}
\end{equation}
We use $\pa$ to abbreviate $(\pa_t,
\pa_{1}, \pa_{2}, \pa_{3}) =(\pa_t, \nabla)$ and $\nabb$ to denote the covariant derivative on the sphere with radius $r$.

Now we define the foliation of the spacetime. Let $H_u$ be the outgoing null hypersurface $\{t-r=2u\}$ and $\underline{H}_{\underline{u}}$ as the incoming null hypersurface $\{t+r=2 \underline{u}\}$. Let $R>1$ be a fixed constant. We now use this fixed constant $R$
to define the foliation of the future of the initial hypersurface $t=0$. Let $\tau^*=\frac{\tau-R}{2}$. In the exterior region where $t+R\leq r$, we use the foliation
\[
 \Si_{\tau}:=H_{\tau^*}\cap \{t\geq 0\},\quad \tau\leq 0.
\]
In the interior region where $t+R\geq r$, let $\Si_{\tau}$ be the foliation defined as follows:
\begin{align*}
 \Si_{\tau}:=\{t=\tau, \quad |x|\leq R\}\cup (H_{\tau^*}\cap \{|x|\geq R\}).
\end{align*}
In particular, the future spacetime $t\geq 0$ is foliated by $\Si_{\tau}$, $\tau\in \mathbb{R}$ where $\tau\leq 0$ foliates the exterior region and $\tau\geq 0$ gives the foliation in the interior region.

Note that the boundary of the region bounded by $\Si_{\tau_1}$ and $\Si_{\tau_2}$ is part of the future null infinity where the decay behavior of the solution is unknown. To make the energy estimates rigorous, we consider the finite truncated region. For any $v_0\geq \frac{\tau+R}{2}$, denote the truncated $\Si_{\tau}$ as
\[
 \Si_{\tau}^{v_0}=\Si_{\tau}\cap\{v\leq v_0\}.
\]
Unless we specify it, in the following the outgoing null hypersurface $H_u$ stands for $H_{u}\cap \{t\geq 0\}$ in the exterior region and
$H_{u}\cap \{|x|\geq R\}$ in the interior region. On the initial hypersurface $t=0$, we denote the annulus with radii $r_1<r_2$ as
\[
 B_{r_1}^{ r_2}=\{r_1\leq |x|\leq r_2\}, \quad B_{r}=B_{r}^{\infty}.
\]
Next we define the domains.
In the exterior region, for $\tau_2\leq \tau_1\leq 0$, denote $\mathcal{D}_{\tau_1}^{ \tau_2}$ to be the Cauchy development of the annulus $\{R-\tau_1\leq |x|\leq R- \tau_2\}$ or more precisely
\[
 \mathcal{D}_{\tau_1}^{ \tau_2}=\{(t, x)|||x|+\tau_1^*+\tau_2^*|+t\leq \tau_1^*-\tau_2^*\}.
\]
The boundary of this domain consists of the spacelike initial surface $B_{R-\tau_1}^{R-\tau_2}$ and the truncated outgoing and incoming null hypersurfaces which we denote as:
\begin{align*}
 H_{\tau_1^*}^{\tau_2^*}=H_{\tau_1^*}\cap \mathcal{D}_{\tau_1}^{ \tau_2},\quad\underline{H}_{-\tau_2^*}^{\tau_1^*}=\underline{H}_{-\tau_2^*}\cap \mathcal{D}_{\tau_1^*}^{ \tau_2^*}.
\end{align*}
In the interior region, for any $\tau_2\geq \tau_1\geq 0$, we denote $\mathcal{D}_{\tau_1}^{\tau_2}$ to be the region bounded by $\Si_{\tau_1}$ and $\Si_{\tau_2}$:
\begin{align*}
\mathcal{D}_{\tau_1}^{\tau_2}=\{(t, x)|(t, x)\in \Si_\tau, \tau_1\leq \tau\leq \tau_2\}.
\end{align*}
We use
$\bar{\mathcal{D}}_{\tau_1}^{\tau_2}=\mathcal{D}_{\tau_1}^{\tau_2}\cap\{|x|\geq R\}$ to denote the region outside the cylinder.l
The incoming null boundary of this finite region is denoted by $\underline{H}_{v}^{\tau_1, \tau_2}=\underline{H}_v\cap \{\tau_1^*\leq u\leq \tau_2^*\}$.
 Finally, for $\tau\in\mathbb{R}$, let $\mathcal{D}_{\tau}=\mathcal{D}_{\tau}^{+\infty}$ when $\tau\geq 0$ and $\mathcal{D}_{\tau}=\mathcal{D}_{\tau}^{-\infty}$ when $\tau< 0$.

 We use $E[\phi](\Si)$ to denote the energy flux of the complex scalar field $\phi$ and $E[F](\Si)$ the energy flux of the 2-form $F$
 through the hypersurface $\Si$ in Minkowski space. The derivative on the scalar field is with respect to the covariant derivative $D$. For our interested hypersurfaces, we can comlpute
\begin{align*}
 E[\phi](\mathbb{R}^3)&=\int_{\mathbb{R}^3}|D\phi|^2dx,\quad E[F](\mathbb{R}^3)=\int_{\mathbb{R}^3}\rho^2+|\si|^2+\frac{1}{2}(|\a|^2+|\underline{\a}|^2)dx,\\
 E[\phi](H_u)&=\int_{H_u}(|D_L\phi|^2+|\D\phi|^2)r^2dvd\om,\quad E[F](H_u)=\int_{H_u}(\rho^2+\si^2+|\a|^2) r^2dvd\om,\\
 E[\phi](\underline{H}_{\underline{u}})&=\int_{\underline{H}_{\underline{u}}}(|D_{\Lb}\phi|^2+|\D\phi|^2)r^2dvd\om,\quad E[F](\Hb_{\ub})=\int_{S_\tau}(\rho^2+\si^2+|\ab|^2) r^2dvd\om.
\end{align*}
Here $\rho$, $\si$, $\a$, $\ab$ are the null components of the 2-form $F$ defined in line \eqref{eq:curNull}.

Next we define weighted Sobolev norm either on domains or on surfaces. For any function $f$ (scalar or vector valued or tensors) we denote the spacetime integral on $\mathcal{D}$ in Minkowski space
\[
 I^{p}_{q}[f](\mathcal{D}):=\int_{\mathcal{D}}u_+^{q}r_+^{p}|f|^2,\quad r_+=1+r,\quad u_+=1+|u|.
\]
for any reall numbers $p$, $q$.
Here $\mathcal{D}$ can be the domain or hypersurface in the Minkowski space. For example, when $\mathcal{D}$ is $H_{u}$, then
\[
 I^{p}_{q}[f](H_u):=\int_{H_u} r_+^{p}u_+^{q}|f|^2 r^2 dvd\om.
\]
To define the norms of the derivatives of the solution, we need vector fields used as commutators which, in this paper,
are the killing vector field $\pa_t$ together with the angular momentum $\Om$ with components $\Om_{ij}=x_i\pa_j-x_j\pa_i$. We denote the set
\[
Z=\{\pa_t, \Om_{ij}\}.
\]
For the scalar field, it is nature to take the
covariant derivative $D_Z$ associated to the connection $A$ for any vector field $Z$. This covariant derivative has already been defined for the purpose of defining the equations in the beginning of the introduction. For the Maxwell field $F$ which is a 2-form, we define the Lie derivative
\begin{align*}
 (\mathcal{L}_Z F)_{\mu\nu}=Z(F_{\mu\nu})-F(\mathcal{L}_Z \pa_{\mu}, \pa_\nu)-F(\pa_\mu, \mathcal{L}_{Z}\pa_\nu), \quad (\mathcal{L}_Z J)_\mu= Z(J_\mu)-J(\mathcal{L}_Z \pa_\mu)
\end{align*}
respectively for any two form $F$ and any one form $J$. Here $\mathcal{L}_Z X=[Z, X]$.

If the vector field $Z$ is killing, that is, $\pa^\mu Z^\nu+\pa^\nu Z^\mu=0$,
then we can show that
\begin{align*}
 \pa^\mu(\mathcal{L}_Z F)_{\mu\nu}&=Z(\pa^\mu F_{\mu\nu})+\pa^\mu Z^\a\pa_\a F_{\mu\nu}+\pa_\mu Z^\a\pa^\mu F_{\a\nu}+\pa_\nu Z^\a\pa^\mu F_{\mu\a}\\
&=Z(\pa^\mu F_{\mu\nu})+\pa_\nu Z^\a\pa^\mu F_{\mu\a}=(\mathcal{L}_Z \delta F)_\nu.
\end{align*}
Here $\delta F_{\nu}=\pa^\mu F_{\mu\nu}$ is a one form which is the divergence of the $2$-form $F$. We use $\mathcal{L}_Z^k$ or $D^k_Z$ to denote the $k$-th derivatives, that is ,
\[
 \mathcal{L}_Z^k=\mathcal{L}_{Z^{1}}\mathcal{L}_{Z^{2}}\ldots\mathcal{L}_{Z^k}.
\]
Similarly for $D_Z^k$. The vector fields $Z^j$ is $\pa_t$ or the angular momentum $\Om_{ij}$.

Based on these calculations, we have the following commutator lemma:
\begin{lem}
 \label{lem:commutator}
For any killing vector field $Z$, we have
\begin{align*}
[\Box_A, D_Z]\phi&=2i Z^\nu F_{\mu \nu}D^\mu \phi+i \pa^\mu(Z^\nu F_{\mu\nu})\phi,\\
\pa^\mu(\mathcal{L}_Z G)_{\mu\nu}&=(\mathcal{L}_Z \delta G)_\nu
\end{align*}
for any complex scalar field $\phi$ and any two form $G$.
\end{lem}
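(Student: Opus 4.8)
The statement to prove is the commutator Lemma~\ref{lem:commutator}, which has two parts: the covariant wave commutator $[\Box_A, D_Z]\phi = 2iZ^\nu F_{\mu\nu}D^\mu\phi + i\pa^\mu(Z^\nu F_{\mu\nu})\phi$ for any Killing field $Z$, and the Maxwell commutator identity $\pa^\mu(\mathcal{L}_Z G)_{\mu\nu} = (\mathcal{L}_Z\delta G)_\nu$ for any $2$-form $G$.

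Let me sketch the proof approach.

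**The Maxwell part.** This is essentially already done in the excerpt — the computation displayed just before the lemma shows $\pa^\mu(\mathcal{L}_Z F)_{\mu\nu} = (\mathcal{L}_Z\delta F)_\nu$ using the Killing condition $\pa^\mu Z^\nu + \pa^\nu Z^\mu = 0$. The key points are: expand $(\mathcal{L}_Z G)_{\mu\nu} = Z^\alpha\pa_\alpha G_{\mu\nu} + \pa_\mu Z^\alpha G_{\alpha\nu} + \pa_\nu Z^\alpha G_{\mu\alpha}$ (this uses $\mathcal{L}_Z\pa_\mu = [Z,\pa_\mu] = -\pa_\mu Z^\alpha \pa_\alpha$, so $G(\mathcal{L}_Z\pa_\mu,\pa_\nu) = -\pa_\mu Z^\alpha G_{\alpha\nu}$). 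Then apply $\pa^\mu$ and collect terms. The terms involving second derivatives of $Z$ vanish because for Killing fields in Minkowski space, $\pa_\mu\pa_\nu Z^\alpha$ combinations cancel (more precisely, $\pa^\mu\pa_\mu Z^\alpha = 0$ and the mixed second-derivative terms cancel by the Killing identity $\pa_\mu\pa_\nu Z_\alpha = -\pa_\nu\pa_\alpha Z_\mu - \pa_\alpha\pa_\mu Z_\nu$... actually for our $Z\in\{\pa_t,\Omega_{ij}\}$ the coefficients are at most linear, so second derivatives vanish outright, which is the cleanest route). Using $\pa^\mu Z^\alpha = -\pa^\alpha Z^\mu$ (antisymmetry) and the antisymmetry of $G$, two of the first-derivative-of-$Z$ terms combine with the $Z(\pa^\mu G_{\mu\nu})$ term to reconstruct exactly $(\mathcal{L}_Z\delta G)_\nu = Z^\alpha\pa_\alpha(\delta G)_\nu + \pa_\nu Z^\alpha(\delta G)_\alpha$. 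I would just carry out this index chase carefully, noting the sign conventions.

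**The scalar part.** Here I would compute $[\Box_A, D_Z]\phi$ directly. Write $D_Z\phi = Z^\mu D_\mu\phi$ and $\Box_A\phi = D^\mu D_\mu\phi$. Then
\[
\Box_A(D_Z\phi) = D^\mu D_\mu(Z^\nu D_\nu\phi) = Z^\nu D^\mu D_\mu D_\nu\phi + 2\pa^\mu Z^\nu D_\mu D_\nu\phi + (\Box Z^\nu)D_\nu\phi.
\]
For our vector fields $\Box Z^\nu = 0$. Meanwhile $D_Z(\Box_A\phi) = Z^\nu D_\nu(D^\mu D_\mu\phi)$. Subtracting, I need to commute the covariant derivatives: use $[D_\mu,D_\nu]\phi = iF_{\mu\nu}\phi$ (which follows from $F_{\mu\nu} = -i[D_\mu,D_\nu]$, the curvature definition in the introduction). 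So $D^\mu D_\mu D_\nu\phi - D_\nu D^\mu D_\mu\phi = D^\mu([D_\mu,D_\nu]\phi) + [D^\mu,D_\nu]D_\mu\phi = D^\mu(iF_{\mu\nu}\phi) + iF^\mu{}_\nu D_\mu\phi = i(\pa^\mu F_{\mu\nu})\phi + 2iF_\mu{}^\nu... $ — I need to track this, but the outcome should combine with the $2\pa^\mu Z^\nu D_\mu D_\nu\phi$ term. For that term, split $D_\mu D_\nu\phi = \tfrac12(D_\mu D_\nu + D_\nu D_\mu)\phi + \tfrac12 iF_{\mu\nu}\phi$; contracting the symmetric part with $\pa^\mu Z^\nu$ gives zero by the Killing antisymmetry, leaving $\pa^\mu Z^\nu\cdot iF_{\mu\nu}\phi$. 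Assembling everything and using the Killing identity once more to write $\pa^\mu(Z^\nu F_{\mu\nu}) = Z^\nu\pa^\mu F_{\mu\nu} + \pa^\mu Z^\nu F_{\mu\nu}$, the terms reorganize into $2iZ^\nu F_{\mu\nu}D^\mu\phi + i\pa^\mu(Z^\nu F_{\mu\nu})\phi$.

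**Main obstacle.** Neither part is conceptually hard — both are index manipulations — but the bookkeeping of signs and of which terms are symmetric versus antisymmetric is where errors creep in. The trickiest point is making sure the first-derivatives-of-$Z$ terms are handled consistently: one must exploit the Killing antisymmetry $\pa_\mu Z_\nu = -\pa_\nu Z_\mu$ at exactly the right places, and pair it against the antisymmetry of $F$ (or $G$). I would organize the computation so that the "$Z$ acting as a differential operator" part and the "$Z$ acting on the tensor indices" part are kept separate until the last step, which makes the cancellations transparent. I would also remark that although the general Killing-field argument requires knowing $\Box Z = 0$ and the second-derivative Killing identities, for the concrete commutators $Z\in\{\pa_t,\Omega_{ij}\}$ used in this paper one can simply note that the components $Z^\mu$ are affine in $x$, so all their second derivatives vanish and the argument simplifies considerably.
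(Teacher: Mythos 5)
Your proposal is correct. The Maxwell half is exactly the paper's displayed computation immediately preceding the lemma statement (using the cancellation $\pa^\mu Z^\alpha\pa_\alpha F_{\mu\nu}+\pa_\mu Z^\alpha\pa^\mu F_{\alpha\nu}=0$ from the Killing antisymmetry after renaming indices, and the vanishing of second derivatives of $Z$), which you correctly identify. The scalar half is not proven in the paper at all — the lemma is simply asserted after that Maxwell computation — but your argument (expand $\Box_A(Z^\nu D_\nu\phi)$, use $\Box Z^\nu=0$ and $[D_\mu,D_\nu]\phi=iF_{\mu\nu}\phi$, split $D_\mu D_\nu\phi$ into symmetric and antisymmetric parts and contract against the antisymmetric $\pa^\mu Z^\nu$, then recombine via the Leibniz rule $\pa^\mu(Z^\nu F_{\mu\nu})=Z^\nu\pa^\mu F_{\mu\nu}+\pa^\mu Z^\nu F_{\mu\nu}$, which you slightly mislabel as a "Killing identity") is the standard computation and does yield the stated formula.
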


For the energy estimates of the solutions of \eqref{EQMKG}, the initial energies $\mathcal{M}$, $\mathcal{E}$ defined in the introduction can not be used directly as $\mathcal{E}$ is not gauge invariant. Note that the vector fields used as commutators are $Z=\{\pa_t, \Om\}$, for any two form $F$ satisfying the Bianchi identity and any scalar field $\phi$, for the given connection field $A$, we define the following weighted $k$-th order initial energies:
\begin{equation}
\label{eq:def4E0kFphi}
 E^k_0[F]:=\sum\limits_{l\leq k}\int_{\mathbb{R}^3}r_+^{1+\ga_0}|\mathcal{L}_{Z}^l F|^2(0, x) dx,\quad E^k_0[\phi]:=\sum\limits_{l\leq k, j\leq 3}\int_{\mathbb{R}^3}r_+^{1+\ga_0}|D_{Z}^l D_j\phi|^2(0, x) dx.
\end{equation}
Here $D_j$ denotes the spatial covariant derivative and $0<\ga_0<1$ is the constant in the main Theorem. We remark here that $F$ may not be the full Maxwell field of the solution of \eqref{EQMKG}. In application, it can be the chargeless part of the full solution. However, the connection field $A$ is associated to the full Maxwell field. In fact the full Maxwell field does not belong to this weighted Sobolev space due to the existence of nonzero charge.

We end this section by writing the Maxwell equation under the null frame $\{L, \Lb, e_1, e_2\}$. In other words, we derive the transport equations for the curvature components. Let $F_{\mu\nu}$ be the two-form verifying the Bianchi identity. Denote $J=\delta F$, that is, $J_{\mu}=\pa^\nu F_{\mu \nu}$. Then we have
\begin{lem}l
 \label{lem:nullMKG}
 Under the null frame $\{L, \Lb, e_1, e_2\}$, the MKG equations are the following transport equations for the curvature components:
 \begin{align}
\label{eq:eq4rhoCu}
&\Lb(r^2\rho)-\divs(r^2\ab)=r^2 J_{\Lb}, \quad L(r^2\rho)+\divs(r^2\a)=r^2 J_L,\\
\label{eq:eq4ab}
&\nabla_{L}(r{\ab}_i)-r\nabb_{e_i}\rho-r\nabb_{e_j}F_{e_ie_j}=rJ_{e_i},\quad i=1, 2,\\
\label{eq:eq4si}
&\Lb(r^2\si)=r^2 (e_2\ab_1-e_1\ab_2),\quad L(r^2\si)=r^2 (e_2\a_1-e_1\a_2),\\
\label{eq:eq4a}
&\nabla_{\Lb}(r{\a}_i)+r\nabb_{e_i}\rho-r\nabb_{e_j}F_{e_ie_j}=rJ_{e_i},\quad i=1, 2.
\end{align}
Here $\divs$ is the divergence operator on the sphere with radius $r$.
\end{lem}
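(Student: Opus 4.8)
The plan is to derive all four transport equations directly from the two structure equations available for $F$: the Bianchi identity \eqref{bianchi} (equivalently $dF=0$) and the divergence equation $\delta F = J$, i.e. $\pa^\nu F_{\mu\nu}=J_\mu$. The only geometric input needed is the covariant-derivative table \eqref{eq:nullderiv} for the null frame $\{L,\Lb,e_1,e_2\}$, together with the metric pairings $m(L,\Lb)=-2$, $m(e_i,e_j)=\delta_{ij}$, and the commutators $[L,e_i]=-r^{-1}e_i$, $[\Lb,e_i]=r^{-1}e_i$. First I would fix the dual null coframe and record that for a $2$-form $F$ one has $\pa^\nu F_{\mu\nu}= -\tfrac12 L^\nu F_{\mu\Lb;\nu}-\tfrac12\Lb^\nu F_{\mu L;\nu}+\sum_i e_i^\nu F_{\mu e_i;\nu}$ after raising indices with the null-frame inverse metric. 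Here the semicolon denotes the Levi-Civita connection; because $A$ does not appear in $\delta F$ (the scalar field enters only through $J$ on the right), no covariant-$D$ subtleties arise for this lemma.

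Next I would compute $J_L=\pa^\nu F_{L\nu}$ and $J_{\Lb}=\pa^\nu F_{\Lb\nu}$ by contracting $F_{;\nu}$ as above and substituting \eqref{eq:nullderiv}. The key point is that $\nabla_{e_i}L=r^{-1}e_i$ and $\nabla_{e_i}\Lb=-r^{-1}e_i$ produce exactly the $r$-weight factors: $L(\rho)$ combines with a $\tfrac2r\rho$ term from the connection coefficients to give $r^{-2}L(r^2\rho)$, the angular pieces assemble into $\divs(r^2\a)$ after using $\nabla_{e_i}e_i=-r^{-1}\pa_r$, and one obtains \eqref{eq:eq4rhoCu}. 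The $\si$ equations \eqref{eq:eq4si} follow instead from the Bianchi identity $\pa_{[e_1}F_{e_2 L]}=0$ and $\pa_{[e_1}F_{e_2\Lb]}=0$: expanding the antisymmetrization, the terms $e_1(F_{e_2L})$, $e_2(F_{Le_1})$ and $L(F_{e_1e_2})$ produce $L(\si)$ plus a $\tfrac2r\si$ correction (from $[L,e_i]=-r^{-1}e_i$ acting inside $F_{e_1e_2}$), which again packages as $r^{-2}L(r^2\si)=e_2\a_1-e_1\a_2$, and symmetrically with $\Lb$. Finally the $\ab$ and $\a$ equations \eqref{eq:eq4ab}, \eqref{eq:eq4a} come from the Bianchi components $\pa_{[L}F_{\Lb e_i]}=0$ and $\pa_{[\Lb}F_{L e_i]}=0$: the three cyclic terms give $L(\ab_i)$, a piece $\nabb_{e_i}\rho$ from $e_i(F_{\Lb L})=e_i(2\rho)$, and the curl term $\nabb_{e_j}F_{e_ie_j}$, with the $r^{-1}$ commutator coefficients absorbing into the $r$-weight on the left; the right-hand side $rJ_{e_i}$ is generated here because $F_{\Lb e_i}$ does not by itself satisfy a closed Bianchi relation — one must also invoke $\delta F=J$ to replace one of the frame derivatives, which is where the source appears. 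I should be careful that \eqref{eq:eq4ab} and \eqref{eq:eq4a} have the same right-hand side $rJ_{e_i}$ with opposite signs on the $\nabb_{e_i}\rho$ term, which is consistent with adding and subtracting: $\nabla_L(r\ab)+\nabla_{\Lb}(r\a)=2r\nabb_{e_j}F_{e_ie_j}+2rJ_{e_i}$ and $\nabla_L(r\ab)-\nabla_{\Lb}(r\a)=2r\nabb_{e_i}\rho$, and I would use this as a consistency check.

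The main obstacle is purely bookkeeping: correctly accounting for every connection coefficient from \eqref{eq:nullderiv} so that the $r$-weights $r^2\rho$, $r^2\a$, $r\ab_i$, etc., emerge cleanly, and keeping the sign conventions for $\rho=\tfrac12 F_{\Lb L}$, for raising indices in the null frame (the factor $-\tfrac12$ on the $L,\Lb$ contractions), and for the Hodge dual implicit in $\si$ all mutually consistent. A clean way to organize this is to work with the rescaled quantities from the start — introduce $\tilde\rho=r^2\rho$, $\tilde\a=r^2\a$, $\tilde{\ab}=r\ab$ — and verify that under the frame the differential operators $L$, $\Lb$ acting on these rescaled objects are exactly the operators appearing in \eqref{eq:eq4rhoCu}–\eqref{eq:eq4a}, so that no stray lower-order terms survive. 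Once the rho- and sigma-equations are in hand, the $\a$, $\ab$ equations follow from essentially the same computation with the roles of $L$ and $\Lb$ interchanged, so I would only carry out one of each pair in detail and deduce the other by the symmetry $L\leftrightarrow\Lb$, $\a\leftrightarrow\ab$, $r\mapsto r$ (noting $\rho$ is symmetric and $\si$ picks up the expected sign).
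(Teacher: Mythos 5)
Your proposal follows the paper's proof essentially verbatim: the $\rho$-equations \eqref{eq:eq4rhoCu} from the divergence equation $\delta F=J$, the $\si$-equations \eqref{eq:eq4si} from the Bianchi identity alone, and the $\ab$- and $\a$-equations \eqref{eq:eq4ab}, \eqref{eq:eq4a} from combining the $e_i$-component of $\delta F=J$ with the Bianchi identity on $\{L,\Lb,e_i\}$ — and your sum/difference consistency check correctly captures that interplay (the sum is the divergence equation, the difference is pure Bianchi). One small inaccuracy in the narrative: the curl term $\nabb_{e_j}F_{e_ie_j}$ does not arise from the three cyclic terms of $\pa_{[L}F_{\Lb e_i]}=0$ (those cyclic terms involve only $L\ab_i$, $\Lb\a_i$, and $e_i\rho$); it enters from the $(\nabla_{e_j}F)(e_j,e_i)$ contribution in $\delta F(e_i)$, which is exactly the piece the divergence equation supplies. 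Working that through would resolve the slight muddle in your description of where the source $rJ_{e_i}$ and the curl term originate, after which the computation closes as you outline.
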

\begin{proof}
From the Maxwell equation $J_L=(\delta F)(L)$. Use the formula
\[
(\nabla_{X}F)(Y, Z)=XF(Y, Z)-F(\nabla_{X}Y, Z)-F(Y,  \nabla_{X}Z)
\]
for all vector fields $X$, $Y$, $Z$. By using \eqref{eq:nullderiv}, we then can compute
\begin{align*}
-(\delta F)(\Lb)&=-\f12(\nabla_{L}F)(\Lb, \Lb)-\f12 (\nabla_{\Lb}F)(L, \Lb)+(\nabla_{e_i}F)(e_i, \Lb)\\
&=\Lb\rho-e_i\ab_i-F(-2r^{-1}\pa_r, \Lb)-F(e_i, -r^{-1}e_i)\\
&=\Lb\rho-2r^{-1}\rho-\divs(\ab).
\end{align*}
Multiple both sides by $r^2$. We then get the first equation of \eqref{eq:eq4rhoCu}. The second equation follows similarly.

For \eqref{eq:eq4ab} and \eqref{eq:eq4a}, we need to use the Bianchi identity \eqref{bianchi} which is equivalent to
\[
(\nabla_X F)(Y, Z)+(\nabla_Y F)(Z, X)+(\nabla_Z F)(X, Y)=0
\]
for all vector fields $X$, $Y$, $ Z$. Let's only prove \eqref{eq:eq4ab}. We can show that
\begin{align*}
-(\delta F)(e_i)&=-\f12(\nabla_{L}F)(\Lb, e_i)-\f12 (\nabla_{\Lb}F)(L, e_i)+(\nabla_{e_j}F)(e_j, e_i)\\
&=-\f12 L\ab_i+\f12 (\nabla_{L}F)(e_i, \Lb)+\f12(\nabla_{e_i}F)(\Lb, L)+e_j F_{e_je_i}-F(-2r^{-1}\pa_r, e_i)-F(e_i, -r^{-1}\pa_r)\\
&=-L\ab_i+e_i\rho-\f12 F(-r^{-1}e_i, L)-\f12 F(\Lb, r^{-1}e_i)+e_j F_{e_je_i}+r^{-1}F(\pa_r, e_i)\\
&=-L\ab_i+e_i\rho+e_j F_{e_je_i}-r^{-1}\ab_i.
\end{align*}
This leads to \eqref{eq:eq4ab}.

The first transport equation \eqref{eq:eq4si} for $\si$ follows from the Bianchi identity:
\begin{align*}
0&=(\nabla_{\Lb} F)(e_1, e_2)+(\nabla_{e_1}F)(e_2, \Lb)+(\nabla_{e_2}F)(\Lb, e_1)\\
&=\Lb\si-e_1\ab_2-F(e_2, -r^{-1}e_1)+e_2\ab_1-F(-r^{-1}e_2, e_1)\\
&=\Lb\si-e_1\ab_2+e_2\ab_1-2r^{-1}\si.
\end{align*}
The dual one follows if we replace $\Lb$ with $L$.
\end{proof}

\section{Energy method}
\label{sec:energyID}

In this section, we review the energy method for solutions of the covariant linear wave equations and Maxwell equations by using the new method developed in  \cite{yangILEMKG}.
Denote $d\vol$ the
volume form in the Minkowski space. In the local coordinate system $(t, x)$, we have $
d\vol=dxdt
$. Here we have chosen $t$ to be the time orientation.
\subsection{Eergy identity for the scalar field}
For any complex scalar field $\phi$, we define the associated energy momentum tensor
\begin{equation*}
\begin{split}
 T[\phi]_{\mu\nu}=\Re\left(\overline{D_\mu\phi}D_\nu\phi\right)-\f12 m_{\mu\nu}\overline{D^\ga\phi}D_\ga\phi.
\end{split}
\end{equation*}
Here $m_{\mu\nu}$ is the flat metric of Minkowski spacetime and the covariant derivative $D$ is defined with respect to the given connection field $A$. For any vector field $X$, we have the following identity
\[
\pa^\mu(T[\phi]_{\mu\nu}X^\nu) = \Re(\Box_A \phi X^\nu \overline{D_\nu\phi})+X^\nu F_{\nu\ga}J^\ga[\phi]+T[\phi]^{\mu\nu}\pi^X_{\mu\nu},
\]
where $\pi_{\mu\nu}^X=\f12 \mathcal{L}_X m_{\mu\nu}$ is the deformation tensor of the vector field $X$ in Minkowski space, $\Box_A$ is the covariant wave operator associated to the connection $A$, $F=dA$ is the exterior derivative
of the one-form $A$ which gives us a two-form and $J^\ga[\phi]=Im(\phi\cdot \overline{D^\ga \phi})$.
For any function $\chi$, we have
\begin{align*}
 \f12\pa^{\mu}\left(\chi \pa_\mu|\phi|^2-\pa_\mu\chi|\phi|^2\right)= \chi \overline{D_\mu\phi}D^\mu\phi -\f12\Box\chi\cdot|\phi|^2+\chi \Re(\Box_A\phi\cdot \overline\phi).
\end{align*}
We now define the vector field $\tilde{J}^X[\phi]$ with components
\begin{equation}
\label{eq:mcurent}
\tilde{J}^X_\mu[\phi]=T[\phi]_{\mu\nu}X^\nu -
\f12\pa_{\mu}\chi \cdot|\phi|^2 + \f12 \chi\pa_{\mu}|\phi|^2+Y_\mu
\end{equation}
for some vector field $Y$ which may also depend on the complex scalar field $\phi$. We then have the equality
\[
\pa^\mu \tilde{J}^X_\mu[\phi] =\Re(\Box_A \phi(\overline{D_X\phi}+\chi\overline \phi))+div(Y)+X^\nu F_{\nu\a}J^\a[\phi]+T[\phi]^{\mu\nu}\pi^X_{\mu\nu}+
\chi \overline{D_\mu\phi}D^\mu\phi -\f12\Box\chi\cdot|\phi|^2.
\]
Here the operator $\Box$ is the wave operator in Minkowski space. Now for any region $\mathcal{D}$ in $\mathbb{R}^{3+1}$, using Stokes' formula, we derive the following energy identity:
\begin{align}
\notag &\iint_{\mathcal{D}}\Re(\Box_A \phi(\overline{D_X\phi}+\chi\overline \phi))+div(Y)+X^\nu F_{\nu\ga}J^\ga+T[\phi]^{\mu\nu}\pi^X_{\mu\nu}+
\chi \overline{D_\mu\phi}D^\mu\phi -\f12\Box\chi\cdot|\phi|^2d\vol\\
&=\iint_{\mathcal{D}}\pa^\mu \tilde{J}^X_\mu[\phi]d\vol=\int_{\pa \mathcal{D}}i_{\tilde{J}^X[\phi]}d\vol,
\label{energyeq}
\end{align}
where $\pa\mathcal{D}$ denotes the boundary of the domain $\mathcal{D}$ and $i_Z d\vol$ denotes the contraction of the volume form $d\vol$
with the vector field $Z$ which gives the surface measure of the
boundary. For example, for any basis $\{e_1, e_2,
\ldots, e_n\}$, we have
$$i_{e_1}( de_1\wedge de_2\wedge\ldots
de_k)=de_2\wedge de_3\wedge\ldots\wedge de_k.
$$
Throughout this paper, the domain $\mathcal{D}$ will be regular regions bounded by the $t$-constant slices, the outgoing null hypersurfaces $u=constant$, the incoming null
hypersurfaces $v=constant$ or the surface with constant $r$. We now compute $i_{\tilde{J}^{X}[\phi]}d\vol$ respectively on these hypersurfaces.

On
$t=constant$ slice, the surface measure is a function times $dx$. Recall the volume form
\[
d\vol=dxdt=-dtdx.
\]
Here note that $dx$ is a $3$-form. We thus can show that
\begin{equation}
\label{eq:curlessR}
\begin{split}
 i_{\tilde{J}^{X}[\phi]}d\vol&=-(\tilde{J}^{X}[\phi])^0dx=-(\Re(\overline{D^t\phi}
D_X\phi)-\f12 X^0\overline{D^\ga\phi}D_\ga\phi-\f12 \pa^t\chi
|\phi|^2+\f12\chi\pa^t|\phi|^2+Y^0l)dx.
\end{split}
\end{equation}
On the surface with constant $r$, the surface measure is $r^2dtd\om$. Therefore we have
\begin{equation}
\label{eq:currcon}
i_{\tilde{J}^{X}[\phi]}d\vol=(\Re(\overline{D^{r}\phi}
D_X\phi)-\f12 X^{r}\overline{D^\ga\phi}D_\ga\phi-\f12
\pa^{r}\chi|\phi|^2+\f12\chi \pa^{r}|\phi|^2+Y^{r})r^2dt d\om.
\end{equation}
On the outgoing null hypersurface $H_u$, we can write the volume form
\[
d\vol=dxdt=r^2drdt d\om=2r^2dvdud\om=-2dudvd\om.
\]
Here $d\om$ is the standard surface measure on the unite sphere.
Notice that $\Lb=\pa_u$. We can compute
\begin{equation}
\label{eq:curStau}
i_{\tilde{J}^{X}[\phi]}d\vol=-2(\Re(\overline{D^{\Lb}\phi}
D_X\phi)-\f12 X^{\Lb}\overline{D^\ga\phi}D_\ga\phi-\f12
\pa^{\Lb}\chi|\phi|^2+\f12\chi \pa^{\Lb}|\phi|^2+Y^{\Lb}l)r^2dvd\om.
\end{equation}
Similarly, on the $v$-constant incoming null hypersurfaces $\Hb_{\ub}$, we
have
\begin{equation}
\label{eq:curnullinfy}
i_{\tilde{J}^{X}[\phi]}d\vol=2(\Re(\overline{D^{L}\phi}
D_X\phi)-\f12 X^{L}\overline{D^\ga\phi}D_\ga\phi-\f12
\pa^{L}\chi|\phi|^2+\f12\chi \pa^{L}|\phi|^2+Y^{L})r^2dud\om.
\end{equation}
We remark here that the above formulae hold for any vector fields $X$, $Y$ and any function $\chi$.

\subsection{Energy identities for the Maxwell field}
Let $F$ be any 2-form satisfying the Bianchi identity \eqref{bianchi}. The associated the energy momentum tensor is
\[
 T[F]_{\mu\nu}=F_{\mu\ga}F_{\nu}^{\;\ga}-\frac{1}{4}m_{\mu\nu}F_{\ga\b}F^{\ga\b}.
\]
For any vector field $X$, we have the divergence formula
\[
 \pa^\mu{T[F]_{\mu\nu}X^\nu}=\pa^\mu F_{\mu\ga}F_{\nu}^{\;\ga}X^\nu+T[F]^{\mu\nu}\pi^X_{\mu\nu},
\]
where as defined previously, $\pi_{\mu\nu}^X=\f12 \mathcal{L}_X m_{\mu\nu}$ is the deformation tensor of the vector field $X$ in Minkowski space.
Define the vector field $J^X[F]$ as follows:
\[
 J^X[F]_\mu=T[F]_{\mu\nu}X^\nu.
\]
Then for any domain $\mathcal{D}$ in $\mathbb{R}^{3+1}$, we have the following energy identity for the Maxwell field $F$:
\begin{align}
\iint_{\mathcal{D}}\pa^\mu F_{\mu\a}F_{\nu}^{\;\a}X^\nu+T[F]^{\mu\nu}\pi^X_{\mu\nu}d\vol=\iint_{\mathcal{D}}\pa^\mu J^X_\mu[F]d\vol=\int_{\pa \mathcal{D}}i_{J^X[F]}d\vol.
\label{energyeqcur}
\end{align}
For the terms on the boundary, similar to \eqref{eq:curlessR} to \eqref{eq:curnullinfy}, we can compute
\begin{equation}
\label{eq:curF}
\begin{split}
 i_{J^{X}[F]}d\vol&=-(F^{0\mu}F_{\nu\mu}X^\nu-\frac{1}{4}X^0 F_{\mu\nu}F^{\mu\nu})dx;\\
i_{J^{X}[F]}d\vol&=(F^{r\mu}F_{\nu\mu}X^\nu-\frac{1}{4}X^r F_{\mu\nu}F^{\mu\nu})r^2dtd\om;\\
i_{J^{X}[F]}d\vol&=-2(F^{\Lb \mu}F_{\nu\mu}X^\nu-\frac{1}{4}X^{\Lb}F_{\mu\nu}F^{\mu\nu})r^2dvd\om;\\
i_{J^{X}[F]}d\vol&=2( F^{L \mu}F_{\nu\mu}X^\nu-\frac{1}{4}X^{L}F_{\mu\nu}F^{\mu\nu})r^2dud\om
\end{split}
\end{equation}
respectively on the $t=constant$ slice, surface with constant $r$, the outgoing null hypersurface $H_u$ and the incoming null hypersurface $\Hb_{\ub}$.

\subsection{The integrated local energy estimates using the multiplier $f(r)\pa_r$}
For the full solution $(\phi, F)$ of the Maxwell Klein-Gordon equations, including the case with large charge, the integrated local energy estimates together with the $r$-weighted energy estimates
in the next subsection have been studied in the author's work \cite{yangILEMKG}. To obtain estimates for higher order derivatives of the solutions, we need to commute the equations with derivatives and
nonlinear terms hence arise. Furthermore, in our setting, the data for the Maxwell field are large while the data for the complex scalar field are small. We thus need to obtain estimates separately for the
Maxwell field and the scalar field.

We first consider the integrated local energy estimates for the scalar field. In the energy identity \eqref{energyeq} for the scalar field,
we choose the vector fields
$$X=f(r)\pa_r,\quad Y=0$$
for some function $f(r)$. We then can compute
\begin{align*}
 &T[\phi]^{\mu\nu}\pi^X_{\mu\nu}+ \chi \overline{D_\mu\phi}D^\mu\phi-\f12 \Box\chi |\phi|^2\\
=&(r^{-1}f-\chi+\f12 f')|D_t\phi|^2+(\chi+\f12 f'-r^{-1}f)|D_r\phi|^2+(\chi-\f12 f')|\D\phi|^2-\f12 \Box\chi |\phi|^2.
\end{align*}
The idea is to choose the functions $f$, $\chi$ so that the coefficients are positive. Let $\ep$ be a small positive constant, depending only on $\ga_0$. Construct the functions $f$ and $\chi $ as follows:
$$f=2\ep^{-1}-\frac{2\ep^{-1}}{(1+r)^{\ep}},\quad \chi=r^{-1}f.$$
We can compute
\begin{align*}
 \chi-r^{-1}f + \f12 f'=r^{-1}f + \f12 f' - \chi=\frac{1}{(1+r)^{1+\ep}},\quad-\f12 \Box \chi=\frac{1+\ep}{r(1+r)^{2+\ep}}.
\end{align*}
When $r>1$, we have the following improved estimate for
$\chi-\f12 f'$
\begin{equation}\label{improvnabb}\chi- \f12
f'\geq \frac{2\ep^{-1}}{r} - \frac{1+2\ep^{-1}}{r(1+r)^\ep}\geq \frac{1}{r},
\quad r\geq 1.
\end{equation}
This improved estimate will be used to show the improved integrated local
energy estimate for the covariant angular derivative of the
scalar field $\phi$.

From the above calculation, we see that for this particular choice of vector field $X$ and the function $\chi$, the last three terms in the first line of equation \eqref{energyeq} have positive signs. We
treat the first two terms as nonlinear terms. To get an integrated local energy estimate for the scalar field $\phi$, it the suffices to control the boundary terms arising from the Stokes' formula \eqref{energyeq}. This  requires a version of Hardy's inequality. Before stating the lemma, we make a convention
that the notation $A\les_{\Gamma} B$ means that there exists a constant $C$, depending only on the constants $R$, $\ga_0$, $\ep$ and the set $\Gamma$ such that $A\leq CB$. For the particular case when $\Gamma$ is empty, we omit the index $\Gamma$.
\begin{lem}
 \label{lem:Hardyga}
 Assume $0\leq\ga<1$ and the complex scalar field $\phi$ vanishes at null infinity, that is,
\[
 \lim_{v\rightarrow\infty}\phi(v, u, \om)=0
\]
for all $u$, $\om$. Then we have
 \begin{equation}
  \label{eq:Hardyga}
  \int_{H_u}r^{\ga}|\phi|^2dvd\om \les \int_{\om}r^{1+\ga}|\phi|^2(u, v(u), \om)d\om  +\int_{H_u} r^{\ga}|D_L(r\phi)|^2dvd\om
 \end{equation}
 for all $u\in\mathbb{R}$. Here $v(u)=-u$ when $u\leq -\frac{R}{2}$ that is in the exterior region and $v(u)=2R+u$ when $u>-\frac{R}{2}$ that is in the interior region. In particular, we have
 \begin{equation}
 \label{eq:Hardy}
 \int_{H_u}|\phi|^2dvd\om\les E[\phi](H_u),\quad \int_{\Si_{\tau}}|\phi|^2dv'd\om \les E[\phi](\Si_{\tau}).
 \end{equation}
 Here $v'=v$ when $r\geq R$ and $v'=r$ otherwise.
\end{lem}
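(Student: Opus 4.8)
The plan is to prove the weighted Hardy inequality \eqref{eq:Hardyga} on a fixed outgoing null hypersurface $H_u$ by integration by parts in the $v$-variable, and then deduce \eqref{eq:Hardy} as the special case $\ga=0$ after splitting the domain into the exterior and interior pieces. Throughout, recall that on $H_u$ the radial parameter $r$ is an affine function of $v$ (indeed $r=v-u$), so $dv$ and $dr$ are interchangeable up to a constant, and $D_L(r\phi)=L(r\phi)+iA_L(r\phi)$, so $|D_L(r\phi)|=|L(r\phi)+iA_L r\phi|$ has the same modulus structure we need. The key algebraic identity is $L|r\phi|^2 = 2\Re\big(\overline{(r\phi)}\,L(r\phi)\big) = 2\Re\big(\overline{(r\phi)}\,D_L(r\phi)\big)$, since the imaginary connection term $iA_L|r\phi|^2$ contributes nothing to the real part; this is exactly the gauge-covariant replacement that makes the estimate gauge invariant.

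First I would write, for each fixed $(u,\om)$ and using $L=\pa_v$ on $H_u$,
\begin{align*}
\int_{v(u)}^{\infty} r^{\ga-2}\,L\big(|r\phi|^2\big)\,dv
= -\,r^{\ga-2}|r\phi|^2\Big|_{v=v(u)} - \int_{v(u)}^\infty (\ga-2) r^{\ga-3}|r\phi|^2\,\tfrac{dr}{dv}\,dv,
\end{align*}
where the boundary term at $v=\infty$ vanishes by the hypothesis $\lim_{v\to\infty}\phi=0$ together with the (implicitly assumed) decay of the weighted energy that makes $r^\ga|r\phi|^2\to 0$; here $\tfrac{dr}{dv}=1$. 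Since $2-\ga>1>0$, the integral term on the right is $(2-\ga)\int r^{\ga-1}|\phi|^2 dv$, which is a positive multiple of the left-hand side of \eqref{eq:Hardyga}. Substituting $L(|r\phi|^2)=2\Re(\overline{r\phi}\,D_L(r\phi))$ on the left and applying Cauchy–Schwarz, $2|\overline{r\phi}\,D_L(r\phi)| \le \delta r^{\ga-1}|\phi|^2 + \delta^{-1} r^{\ga-1}|D_L(r\phi)|^2 \cdot r^{-2}\cdot r^2$—more carefully, pairing the weight as $r^{\ga-2}\cdot 2|r\phi||D_L(r\phi)| \le \delta r^{\ga-1}|\phi|^2 + \delta^{-1} r^{\ga}|D_L(r\phi)|^2 r^{-2}$; one checks the exponents balance to give exactly $r^\ga|D_L(r\phi)|^2$ after accounting for the $r^2$ in $r^{\ga-2}|r\phi|^2 = r^\ga|\phi|^2$. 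Choosing $\delta$ small relative to $2-\ga$, the $\delta r^{\ga-1}|\phi|^2$ term is absorbed into the left side, and we are left with \eqref{eq:Hardyga}, the boundary term at $v=v(u)$ being precisely $\int_\om r^{1+\ga}|\phi|^2(u,v(u),\om)\,d\om$ after integrating in $\om$.

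For \eqref{eq:Hardy}, apply \eqref{eq:Hardyga} with $\ga=0$. The second term on the right, $\int_{H_u} |D_L(r\phi)|^2\,dv\,d\om$, upon expanding $D_L(r\phi) = r D_L\phi + \phi$ and using $|r D_L\phi + \phi|^2 \lesssim r^2|D_L\phi|^2 + |\phi|^2$, is bounded by $E[\phi](H_u) = \int_{H_u}(|D_L\phi|^2+|\D\phi|^2)r^2\,dv\,d\om$ plus a lower-order $\int |\phi|^2\,dv\,d\om$ term, which on the exterior cone can itself be absorbed (for $u\le -R/2$ the sphere $v=v(u)$ degenerates or the boundary term is controlled by the flux via a further Hardy step), while in the interior region the boundary sphere sits at $r=R$, a fixed radius, where $\int_\om R|\phi|^2\,d\om$ is controlled by the energy on $\Si_\tau$ restricted to $|x|\le R$ through the standard (non-weighted) trace/Hardy inequality on the ball. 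The statement for $\Si_\tau$ then follows by concatenating the estimate on $H_u\cap\{|x|\ge R\}$ with the elementary bound on the flat piece $\{t=\tau,|x|\le R\}$, using $v'=r$ there. The main obstacle I anticipate is handling the boundary term $\int_\om r^{1+\ga}|\phi|^2(u,v(u),\om)\,d\om$ so that it is genuinely controlled by the energy flux and not left as an uncontrolled free term: in the exterior region $v(u)=-u$ corresponds to the tip $r=0$ where $r^{1+\ga}|\phi|^2\to 0$ automatically, so there the boundary term drops; in the interior it is a fixed-radius sphere term that must be traded against $E[\phi](\Si_\tau)$ via a one-dimensional Hardy inequality on the radial segment $[0,R]$, which is where the constant's dependence on $R$ enters the $\lesssim$.
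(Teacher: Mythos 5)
The approach is correct in spirit---integration by parts against $L(|r\phi|^2)$ followed by Cauchy--Schwarz, exactly as in the paper---but there is a genuine off-by-one error in the $r$-weight that breaks the argument. You test against $r^{\ga-2}L(|r\phi|^2)$, so after integration by parts the remaining integral is $(2-\ga)\int r^{\ga-3}|r\phi|^2\,dv = (2-\ga)\int r^{\ga-1}|\phi|^2\,dv$, not the desired $\int r^\ga|\phi|^2\,dv$ on the left of \eqref{eq:Hardyga}; correspondingly the boundary term you produce is $r^\ga|\phi|^2(u,v(u),\om)$, not $r^{1+\ga}|\phi|^2$. The Cauchy--Schwarz step as you wrote it, $r^{\ga-2}\cdot 2|r\phi||D_L(r\phi)|\le \de\, r^{\ga-1}|\phi|^2 + \de^{-1}r^{\ga-2}|D_L(r\phi)|^2$, does not follow from Young's inequality: writing the cross term as $2\bigl(r^{(\ga-1)/2}|\phi|\bigr)\bigl(r^{(\ga-1)/2}|D_L(r\phi)|\bigr)$ forces both exponents on the right to be $r^{\ga-1}$, so you actually get $\int r^{\ga-1}|D_L(r\phi)|^2\,dv$, not $\int r^\ga|D_L(r\phi)|^2\,dv$. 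Put differently, your computation, once the exponents are corrected, proves \eqref{eq:Hardyga} with $\ga$ replaced by $\ga-1$, i.e.\ a different estimate in a different range. The fix is to use the weight $r^{\ga-1}$ against $L(|r\phi|^2)$, equivalently to rewrite $r^{\ga-2}|\psi|^2\,dv=\tfrac{1}{\ga-1}|\psi|^2\,d(r^{\ga-1})$ as the paper does; then every exponent aligns, the hypothesis $\ga<1$ is exactly what makes the boundary term at $v=\infty$ carry a favorable sign, and the balanced inequality $2r^{\ga-1}|\psi||D_L\psi|\le\tfrac12 r^{\ga-2}|\psi|^2+C r^{\ga}|D_L\psi|^2$ closes the estimate.

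Two smaller problems arise in your deduction of \eqref{eq:Hardy}. First, the geometric claim that $v(u)=-u$ in the exterior region corresponds to ``the tip $r=0$'' is false: there $r=v-u=-2u\ge R$, so the boundary sphere lies on the initial hypersurface $\{t=0\}$ at positive radius and the term does not drop for free. Second, reducing $\int_{H_u}|D_L(r\phi)|^2\,dvd\om$ to $E[\phi](H_u)$ via $|D_L(r\phi)|^2\les r^2|D_L\phi|^2+|\phi|^2$ and then ``absorbing'' the $\int|\phi|^2$ term is circular: the constant multiplying the re-introduced $\int|\phi|^2$ exceeds one, so nothing absorbs. The non-circular route expands $|D_L(r\phi)|^2=|\phi|^2+rL(|\phi|^2)+r^2|D_L\phi|^2$ and integrates the cross term by parts, giving the exact identity $\int_{H_u}|D_L(r\phi)|^2\,dv + r|\phi|^2\big|_{v(u)} = \int_{H_u} r^2|D_L\phi|^2\,dv + \lim_{v\to\infty} r|\phi|^2$, in which the $\int|\phi|^2$ contributions cancel; this is the content of the standard Hardy estimate that the paper cites from \cite{yang1} and \cite{dr3} rather than rederiving.
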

\begin{proof}
It suffices to notice that the connection $D$ is compatible with the inner product $<,>$ on the complex plane. Then the proof when $\ga=0$ goes the same as the case when the connection field $A$ is trivial,
see e.g. Lemma 2 of \cite{yang1} or Proposition 11.2 of \cite{dr3}. Another quick way to reduce the proof of the Lemma to the case with trivial connection field $A$ is to choose a particular gauge such that the scalar field
$\phi$ is real. We can do this is due to the fact that all the norms we considered in this paper are gauge invariant.
For general $\ga$, based on the above argument, the proof goes similar to the proof of the standard Hardy's inequality. Let $\psi=r\phi$. Note that $\ga <1$. We can show that
 \begin{align*}
  \int_{v_0}^{\infty}\int_{\om }r^{\ga-2}|\psi|^2dvd\om &=\frac{1}{\ga-1}\int_{v_0}^{\infty}\int_{\om}|\psi|^2d\om d r^{\ga -1}\\
  &=\frac{1}{\ga-1}\left. r^{\ga -1}\int_{\om }|\psi|^2d\om \right|_{v_0}^{\infty}+\frac{2}{1-\ga}\int_{v_0}^{\infty}\int_{\om }r^{\ga -1}D_L\psi \cdot \psi dv d\om \\
  &\leq \frac{1}{1-\ga} \int_{\om }r^{1+\ga}|\phi|^2(u, v_0, \om ) d\om +\frac{1}{2}\int_{v_0}^{\infty}\int_{\om}r^{\ga-2}|\psi|^2dvd\om \\
  &\quad \quad +\frac{8}{(1-\ga)^2}\int_{v_0}^{\infty}\int_{\om}r^{\ga}|D_L\psi|^2dvd\om.
 \end{align*}
Estimate \eqref{eq:Hardyga} then follows by absorbing the second term and taking $v_0=v(u)$.
\end{proof}

We then can derive the following integrated local energy estimate for the scalar field $\phi$.
\begin{prop}
 \label{prop:ILEs}
Assume the complex scalar field $\phi$ vanishes at null infinity and the spatial infinity initially. Then in the interior region $\{r\leq R+t\}$, we have the following energy estimates:
\begin{equation}
 \label{eq:ILE:Sca:in}
\begin{split}
& I^{-1-\ep}_0[\bar D\phi](\mathcal{D}_{\tau_1}^{\tau_2})+E[\phi](\Si_{\tau_2})+E[\phi](\Hb^{\tau_1, \tau_2}_v)+\iint_{\mathcal{D}_{\tau_1}^ {\tau_2}}\frac{|\D\phi|^2}{1+r} dxdt\\
&\les E[\phi](\Si_{\tau_1})+I^{1+\ep}_0[\Box_A\phi](\mathcal D_{\tau_1}^{\tau_2})+
\iint_{\mathcal{D}_{\tau_1}^{\tau_2}}|F_{L\nu}J^\nu[\phi]|+|F_{\Lb\nu}J^\nu[\phi]|dxdt
\end{split}
\end{equation}
for all $0\leq \tau_1<\tau_2$, $v\geq\frac{\tau_2+R}{2}$, where we denote $\bar D\phi=(D\phi, r_+^{-1}\phi)$ and $F=dA$. Similarly, in the exterior region $\{r>t+R\}$, we have
\begin{equation}
 \label{eq:ILE:Sca:ex}
\begin{split}
& I^{-1-\ep}_0[\bar D\phi](\mathcal D_{\tau_1}^{\tau_2})+E[\phi](H_{\tau_1^*}^{-\tau_2^*})+E[\phi](\Hb_{-\tau_2^*}^{\tau_1^*})\\
&\les E[\phi](B_{R-\tau_1})+I^{1+\ep}_0[\Box_A\phi](\mathcal D_{\tau_1}^{\tau_2})+
\iint_{\mathcal{D}_{\tau_1}^{ \tau_2}}|F_{L\nu}J^\nu[\phi]|+|F_{\Lb\nu}J^\nu(\phi)| dxdt
\end{split}
\end{equation}
for all $\tau_2\leq \tau_1\leq 0$. Here the notations have been defined in Section \ref{sec:notation} and $J^\mu[\phi]=\Im(\phi\cdot \overline{D^\mu \phi})$.
\end{prop}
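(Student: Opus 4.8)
The plan is to apply the energy identity \eqref{energyeq} for the scalar field with the multiplier $X=f(r)\pa_r$, $Y=0$, and the function $\chi=r^{-1}f$ constructed just above, and to choose the domain $\mathcal{D}$ to be $\mathcal{D}_{\tau_1}^{\tau_2}$ (interior) or $\mathcal{D}_{\tau_1}^{\tau_2}$ (exterior) of Section~\ref{sec:notation}, then pass to the limit $v_0\to\infty$ using the assumed vanishing of $\phi$ at null infinity. The spacetime bulk term produced by the multiplier is, by the computation preceding \eqref{improvnabb},
\[
\frac{1}{(1+r)^{1+\ep}}\left(|D_t\phi|^2+|D_r\phi|^2\right)+\left(\chi-\tfrac12 f'\right)|\D\phi|^2+\frac{1+\ep}{r(1+r)^{2+\ep}}|\phi|^2,
\]
all terms with a good sign, which reproduces $I^{-1-\ep}_0[\bar D\phi](\mathcal{D})$ together with the weighted angular term $\iint |\D\phi|^2/(1+r)$ once we invoke the improved lower bound \eqref{improvnabb} for $r\geq 1$ and handle the region $r\leq 1$ crudely (there $f\sim r$, $\chi\sim 1$, everything is harmless). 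On the right-hand side the source term $\Re(\Box_A\phi(\overline{D_X\phi}+\chi\bar\phi))$ is controlled by $\f12 I^{-1-\ep}_0[\bar D\phi](\mathcal D)$ absorbed to the left, plus $I^{1+\ep}_0[\Box_A\phi](\mathcal D)$, using $|D_X\phi+\chi\phi|\lesssim |D\phi|+r_+^{-1}|\phi|$ and $|f|+|r\chi|\lesssim r_+$; and the term $X^\nu F_{\nu\ga}J^\ga[\phi]$ is exactly $f(r)(F_{r\nu}J^\nu[\phi])$, which expands in the null frame into a combination of $F_{L\nu}J^\nu[\phi]$ and $F_{\Lb\nu}J^\nu[\phi]$ with bounded coefficients, producing the stated interaction integrals.

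It remains to identify the boundary terms from Stokes' theorem via formulas \eqref{eq:curlessR}, \eqref{eq:currcon}, \eqref{eq:curStau}, \eqref{eq:curnullinfy}. The outgoing piece of $\pa\mathcal{D}$ along $\Si_{\tau_2}$ (resp.\ $H_{\tau_1^*}^{-\tau_2^*}$ in the exterior) carries, after using $X=f\pa_r$ with $0\le f\le 2\ep^{-1}$ and $\chi=r^{-1}f\ge 0$, a density pointwise comparable to $(|D_L\phi|^2+|\D\phi|^2)r^2$ plus a term $\chi\pa^{\Lb}|\phi|^2$ whose bad part $-f r^{-1}\Re(\overline{D_L\phi}\cdot\phi)\,r^2$ and the $\pa^{\Lb}\chi|\phi|^2$ term must be absorbed: this is precisely where Lemma~\ref{lem:Hardyga} enters, giving $\int_{H_u}|\phi|^2\,dvd\om\lesssim E[\phi](H_u)$ so that these lower-order contributions are dominated by the good flux $E[\phi](\Si_{\tau_2})$ (the boundary term on $\{r=R\}$ inside the cylinder and the flat piece $\{t=\tau_2,|x|\le R\}$ are treated the same way, all absorbed into $E[\phi](\Si_{\tau_2})$). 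The incoming null boundary $\Hb_v^{\tau_1,\tau_2}$ likewise contributes $+E[\phi](\Hb_v^{\tau_1,\tau_2})$ up to lower-order terms controlled by Hardy; and the initial boundary $\Si_{\tau_1}$ (resp.\ $B_{R-\tau_1}$) yields $\lesssim E[\phi](\Si_{\tau_1})$, again after a Hardy inequality to handle the $|\phi|^2$ pieces. Collecting signs — good bulk and good outgoing/incoming fluxes on the left, initial energy plus $\Box_A\phi$ source plus $F\cdot J$ interaction on the right — gives \eqref{eq:ILE:Sca:in} and \eqref{eq:ILE:Sca:ex}.

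The main obstacle, and the only genuinely delicate point, is the handling of the zeroth-order terms $-\f12\pa_\mu\chi|\phi|^2+\f12\chi\pa_\mu|\phi|^2$ both in the bulk (the $-\f12\Box\chi|\phi|^2$ term, which luckily is $\ge 0$ for our $\chi$) and on the null boundaries, where they are not manifestly signed; the remedy is the weighted Hardy inequality of Lemma~\ref{lem:Hardyga} with $\ga=0$, which requires exactly the hypothesis that $\phi$ vanish at null infinity, and the care needed is to check that the weights generated match (the boundary term $\chi\pa^{\Lb}|\phi|^2 r^2$ integrates by parts along $H_u$ producing $\int (\pa_{\Lb}(\chi r^2))|\phi|^2$, which since $\chi r^2\sim r$ for large $r$ is borderline and needs the $r^{1+\ga}|\phi|^2$ boundary term on the sphere $v=v(u)$ supplied by \eqref{eq:Hardyga}). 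A secondary technical point is the limiting argument $v_0\to\infty$: one runs the identity on the truncated domains $\Si_\tau^{v_0}$ and uses the vanishing at null infinity together with the already-controlled $r$-weighted flux to kill the boundary contribution at $v=v_0$, exactly as in the truncation setup described in Section~\ref{sec:notation}.
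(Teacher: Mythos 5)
Your proposal correctly identifies the multiplier $X=f(r)\partial_r$, $\chi=r^{-1}f$, the positive bulk terms, the treatment of the $\Box_A\phi$ source by Cauchy--Schwarz, and the role of Hardy's inequality for the zeroth-order boundary pieces. But there is a genuine gap in the sign analysis of the boundary terms, and you are missing the second multiplier that the proof actually needs.

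You assert that the boundary density produced by $X=f\partial_r$ on the outgoing null piece of $\Sigma_{\tau_2}$ is ``pointwise comparable to $(|D_L\phi|^2+|\D\phi|^2)r^2$,'' and consequently that the terminal energy fluxes $E[\phi](\Sigma_{\tau_2})$ and $E[\phi](\Hb^{\tau_1,\tau_2}_v)$ land on the left of the estimate with a favorable sign. This is false. Writing $X=\tfrac12 f(L-\Lb)$ and inserting into \eqref{eq:curStau}, the cross terms in $\Re(\overline{D^{\Lb}\phi}D_X\phi)$ and $-\tfrac12 X^{\Lb}\overline{D^{\gamma}\phi}D_{\gamma}\phi$ cancel and the quadratic part of the density reduces to $\tfrac12 f(|D_L\phi|^2-|\D\phi|^2)r^2$, which is indefinite; the incoming null boundary is similar. (This is exactly the sign structure made explicit in the proof of Proposition~\ref{prop:ILEcur} for the Maxwell field, where the $f\partial_r$ boundary density is $f(|\alpha|^2-\rho^2-\sigma^2)r^2$.) So the $f\partial_r$ identity can only bound its own boundary contributions in absolute value by the energy fluxes, which therefore appear on the \emph{right} of the intermediate inequality, not the left. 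To move $E[\phi](\Sigma_{\tau_2})$ and $E[\phi](\Hb^{\tau_1,\tau_2}_v)$ to the left one must run a second, separate energy identity with $X=\partial_t$, $\chi=0$, $Y=0$ --- the classical energy estimate --- whose boundary terms are positive definite, and then add a suitable multiple of it to the $f\partial_r$ estimate to absorb the fluxes. Your proposal never invokes $\partial_t$, so as written it cannot produce \eqref{eq:ILE:Sca:in} or \eqref{eq:ILE:Sca:ex} with the fluxes on the left.
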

\begin{proof}
For all $v_0\geq \frac{\tau_2+R}{2}$, take the region $\mathcal{D}$ to be $\mathcal{D}_{\tau_1}^{\tau_2}\cap\{v\leq v_0\}$ which is bounded by the surfaces $\Si_{\tau_1}$, $\Si_{\tau_2}$
, $\Hb_{v_0}^{\tau_1, \tau_2}$ and the functions $f$, $\chi$ as above and the vector field $Y=0$ in the energy identity \eqref{energyeq}.
The boundary terms can be controled by the energy flux according to Hardy's inequality of Lemma \ref{lem:Hardyga}. For more details regarding this bound, we refer to e.g. Proposition 1 in \cite{yang1}.
Therefore the above calculations lead to the following integrated local energy estimate:
\begin{align*}
 &\iint_{\mathcal{D}_{\tau_1}^{\tau_2}\cap\{v\leq v_0\}}\frac{|D\phi|^2}{(1+r)^{1+\ep}}+\frac{|\D\phi|^2}{1+r}+\frac{|\phi|^2}{r(1+r)^{2+\ep}}dxdt\\
&\les E[\phi](\Si_{\tau_1}^{ v_0})+E[\phi](\Si_{\tau_2}^{v_0})+E[\phi](\Hb^{\tau_1, \tau_2}_{v_0})
+\iint_{\mathcal{D}_{\tau_1}^{\tau_2}}|\Box_A \phi (\overline{D_X\phi}+\chi\overline{\phi})|+|F_{r\nu}J^\nu[\phi]|dxdt
\end{align*}
Next, we take the vector fields $X=\pa_t$, $Y=0$ and the function $\chi=0$ in the energy identity \eqref{energyeq} for the scalar field. Consider the region
$\mathcal{D}_{\tau_1}^{\tau_2}\cap\{v\leq v_0\}$. We retrieve the classical energy estimate
\begin{equation*}
 E[\phi](\Si_{\tau_2}^{v_0})+E[\phi](\Hb_{v_0}^{\tau_1, \tau_2})=E[\phi](\Si_{\tau_1}^{v_0})-2\iint_{\mathcal{D}_{\tau_1}^{\tau_2}\cap\{v\leq v_0\}}\Re\left(\Box_A\phi \overline{D_t\phi}\right)+F_{0\nu}J^\nu[\phi] dxdt.
\end{equation*}
Combined with the previous integrated local energy estimate and letting $v_0\rightarrow \infty$, we derive that
\begin{align*}
 I^{-1-\ep}_0[\bar D\phi](\mathcal{D}_{\tau_1}^{\tau_2})\les E[\phi](\Si_{\tau_1})+\iint_{\mathcal{D}_{\tau_1}^{\tau_2}}|\Box_A\phi \overline{\bar D\phi}|+|F_{L\nu}J^\nu[\phi]|+|F_{\Lb\nu}J^\nu[\phi]| dxdt.
\end{align*}
We apply Cauchy-Schwarz's inequality to the integral of $\Box_A\phi \overline{\bar D\phi}$:
\[
 2|\Box_A\phi \overline{\bar D\phi}|\leq \ep_1 (1+r)^{-1-\ep}|\bar D\phi|^2+\ep_1^{-1}(1+r)^{1+\ep}|\Box_A\phi|^2,\quad \forall \ep_1>0.
\]
Choose $\ep_1$ to be sufficiently small depending only on $\ep$, $\ga_0$, $R$ so that the integral of the first term can be absorbed. We thus can derive the integrated local
energy estimate for the scalar field. Then in the above classical energy estimate, we can use Cauchy-Schwarz's inequality again to bound $\Re\left(\Box_A\phi \overline{D_t\phi}\right)$ which gives
control of the energy flux $E[\phi](H_{\tau_2^*})$. This energy estimate together with the previous integrated local energy estimate imply the energy estimate \eqref{eq:ILE:Sca:in} of the Proposition in the interior region.
The improved estimate for the angular covariant derivative is due to the improve estimate \eqref{improvnabb}.

The proof for the estimate \eqref{eq:ILE:Sca:ex} in the exterior region is similar. The only point we need to emphasize is that we use the fact that the
$\phi$ goes to zero as $r\rightarrow \infty$ on the initial
hypersurface. We thus can use the Hardy's inequality to control the integral of $\frac{|\phi|^2}{(1+r)^{2}}$. This is also the reason that we have $E[\phi](B_{R-\tau_1})$
instead of $E[\phi](B_{R-\tau_1}^{R-\tau_2})$ on the right hand side of \eqref{eq:ILE:Sca:ex}.
\end{proof}
In our setting, $F$ is the Maxwell field which is no longer small. In particular this means that the integral of $|F_{L\nu}J^\nu[\phi]|$ on the right hand side of \eqref{eq:ILE:Sca:in}, \eqref{eq:ILE:Sca:ex} could not be absorbed. The key to control those terms is to use the $r$-weighted energy estimates in the next section.

Let $F$ be any 2-form satisfying the Bianchi identity \eqref{bianchi}. Denote $J=\delta F$ or $J_{\mu}=\pa^\nu F_{\nu\mu}$ be the divergence of $F$. This notation $J$ can be viewed as inhomogeneous term of the linear Maxwell equation. In \eqref{EQMKG}, this $J$ is identity to $J[\phi]$ which is quadratic in the scalar field $\phi$. Under the null frame $\{L, \Lb, e_1, e_2\}$, ldenote $\J=(J_{e_1}, J_{e_2})$. We derive an analogue of Proposition \ref{prop:ILEs}.
\begin{prop}
 \label{prop:ILEcur}
Then in the interior region $\{r\leq t+R\}$, we have the integrated local energy estimates
\begin{equation}
 \label{eq:ILE:cur:in}
\begin{split}
&I^{-1-\ep}_0[F](\mathcal{D}_{\tau_1}^{\tau_2})+ \int_{\tau_1}^{\tau_2}\int_{\Si_{\tau}}\frac{\rho^2+\si^2}{1+r} dxd\tau+E[F](\Hb_{v_0}^{\tau_1,\tau_2})+E[F](\Si_{\tau_2})\\
\les &  E[F](\Si_{\tau_1})+I^{1+\ep}_0[|J_{L}|+|\J|](\mathcal{D}_{\tau_1}^{\tau_2})+\iint_{\mathcal{D}_{\tau_1}^{\tau_2}}|J_{\Lb}||\rho|dxdt.
\end{split}
\end{equation}
for all $0\leq \tau_1<\tau_2$, $v_0\geq \frac{\tau_2+R}{2}$. In the exterior region $\{r\leq R+t\}$ for all $\tau_2\leq \tau_1\leq 0$, we have
\begin{equation}
 \label{eq:ILE:cur:ex}
\begin{split}
&I^{-1-\ep}_0[F](\mathcal{D}_{\tau_1}^{\tau_2})+E[F](\Hb_{-\tau_2^*}^{\tau_1^*})+E[F](H_{\tau_1^*}^{-\tau_2^*})\\
&\les  E[F](B_{R-\tau_1}^{R-\tau_2})+I^{1+\ep}_0[|J_{L}|+|\J|](\mathcal{D}_{\tau_1}^{\tau_2})+\iint_{\mathcal{D}_{\tau_1}^{\tau_2}}|J_{\Lb}||\rho|dxdt.
\end{split}
\end{equation}
\end{prop}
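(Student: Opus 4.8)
The plan is to run the same multiplier argument that was used for the scalar field in Proposition~\ref{prop:ILEs}, but now with the energy--momentum tensor $T[F]$ of the two-form in place of $T[\phi]$, and to exploit the divergence identity \eqref{energyeqcur}. First I would choose the multiplier $X=f(r)\pa_r$ with exactly the same function $f=2\ep^{-1}-2\ep^{-1}(1+r)^{-\ep}$ as before, and compute the bulk term $T[F]^{\mu\nu}\pi^X_{\mu\nu}$ in the null frame $\{L,\Lb,e_1,e_2\}$. Because $T[F]$ is traceless, the structure is cleaner than in the scalar case: one finds that the angular components $\rho^2+\si^2$ come with a good coefficient comparable to $(1+r)^{-1}$, and $|\a|^2$, $|\ab|^2$ come with coefficients comparable to $(1+r)^{-1-\ep}$ (or one of them may even come with the wrong sign, which is harmless because it is exactly the component that already appears positively on the outgoing/incoming null boundaries — this is the usual ``good sign on the boundary compensates bad sign in the bulk'' phenomenon for the Morawetz vector field applied to Maxwell). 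This produces the weighted spacetime integral $I^{-1-\ep}_0[F](\mathcal D_{\tau_1}^{\tau_2})$ together with the improved $\int (\rho^2+\si^2)/(1+r)$ on the left.

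Next I would handle the boundary terms coming from Stokes' formula \eqref{energyeqcur}, using the explicit expressions \eqref{eq:curF}. On the $t=$const slices / on $\Si_{\tau_1},\Si_{\tau_2}$ the flux of $J^X[F]$ is controlled by $E[F](\Si_{\tau_i})$ up to a constant depending on $\|f\|_{L^\infty}\les_\Ga 1$; unlike the scalar case there is no need for a Hardy inequality here since $T[F]$ involves no undifferentiated field. On the truncated incoming null hypersurface $\Hb_{v_0}^{\tau_1,\tau_2}$ the flux is, up to sign, the nonnegative quantity $E[F](\Hb_{v_0}^{\tau_1,\tau_2})$, so it can be kept on the left-hand side; likewise in the exterior region the outgoing piece on $H_{\tau_1^*}^{-\tau_2^*}$ is nonnegative and is kept on the left. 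To also recover the energy flux $E[F](\Si_{\tau_2})$ in the interior case (resp.\ the initial energy on $B_{R-\tau_1}^{R-\tau_2}$ in the exterior case) with the right sign, I would additionally run the energy identity with the Killing multiplier $X=\pa_t$, for which $\pi^{\pa_t}=0$ and the only bulk term is $\pa^\mu F_{\mu\a}F_\nu{}^\a\pa_t^\nu=J_\a F_0{}^\a$, giving the conservation-law form $E[F](\Si_{\tau_2})+E[F](\Hb_{v_0}^{\tau_1,\tau_2})=E[F](\Si_{\tau_1})-\iint J_\a F_0{}^\a\,d\vol$. Combining the two identities and letting $v_0\to\infty$ gives the left-hand sides of \eqref{eq:ILE:cur:in}, \eqref{eq:ILE:cur:ex}.

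It then remains to bound the inhomogeneous bulk term $\iint_{\mathcal D} \pa^\mu F_{\mu\a}F_\nu{}^\a X^\nu\,d\vol = \iint_{\mathcal D} J_\a F^{r\a} f(r)\,d\vol$ (plus the $\pa_t$ contribution $\iint J_\a F^{0\a}$). Decomposing $J$ and $F$ in the null frame, $F^{r\a}$ pairs $J_L$ with $\ab$, $J_{\Lb}$ with $\a$, and $\J$ with $(\rho,\si)$ (and $\pa_t$ pairs $J_L$ with $\a$, $J_{\Lb}$ with $\ab$, $\J$ with $(\rho,\si)$). For the terms involving $J_L$ and $\J$ I would apply Cauchy--Schwarz in the weighted form $2|J_\bullet||F_\bullet|\les \ep_1 (1+r)^{-1-\ep}|F_\bullet|^2 + \ep_1^{-1}(1+r)^{1+\ep}|J_\bullet|^2$, absorbing the first term into $I^{-1-\ep}_0[F]$ (or, for $\J$, into the stronger $\int(\rho^2+\si^2)/(1+r)$ term after also using the improved interior weight), and leaving $I^{1+\ep}_0[|J_L|+|\J|]$ on the right. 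The one term that cannot be treated this way is $\iint |J_{\Lb}||\rho|$: there is no spacetime weight on $\rho$ strong enough near $r\to\infty$ on the left to absorb it, and indeed in the statement it is simply left as $\iint_{\mathcal D}|J_{\Lb}||\rho|\,dxdt$ on the right-hand side — this is the point where one later, in applications, must pay with a separate (hierarchy / Gronwall) argument. So the main obstacle is not any single estimate but the bookkeeping of signs: verifying that every genuinely ``bad-sign'' bulk coefficient for $X=f\pa_r$ is matched either by a good-sign null-boundary term or by the $\pa_t$ identity, and isolating precisely the $J_{\Lb}\rho$ interaction as the only term that resists absorption. The exterior-region version is identical except that the relevant domain $\mathcal D_{\tau_1}^{\tau_2}$ is the Cauchy development of an annulus, the initial flux is on $B_{R-\tau_1}^{R-\tau_2}$, and the kept null boundary is the outgoing $H_{\tau_1^*}^{-\tau_2^*}$; since $T[F]$ carries no undifferentiated field there is no analogue of the ``$\phi\to0$ at spatial infinity'' subtlety that forced $B_{R-\tau_1}$ rather than $B_{R-\tau_1}^{R-\tau_2}$ in Proposition~\ref{prop:ILEs}.
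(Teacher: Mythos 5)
Your overall strategy matches the paper's: use $X=f(r)\partial_r$ with the same $f$ to get the bulk term $T[F]^{\mu\nu}\pi^X_{\mu\nu}=(r^{-1}f-\tfrac12 f')(\rho^2+\sigma^2)+\tfrac14 f'(|\alpha|^2+|\underline\alpha|^2)$ (both coefficients are in fact nonnegative, so no wrong-sign bulk term arises), combine with the $\partial_t$ identity, control the $f\partial_r$ boundary contributions pointwise by the $\partial_t$ flux, and Cauchy--Schwarz the inhomogeneous terms. This is the right plan.

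However, the step where you decompose $J_\gamma F_\nu^{\;\gamma}X^\nu$ into null components is wrong, and this is precisely the computation the proposition hinges on. For $X\in\mathrm{span}\{L,\underline L\}$, $F_\nu^{\;\gamma}X^\nu$ involves only $F_L^{\;\gamma}$ and $F_{\underline L}^{\;\gamma}$, so after raising the remaining index one gets $J^\gamma F_{L\gamma}=J_L\rho+\J\cdot\alpha$ (up to constants) and $J^\gamma F_{\underline L\gamma}=-J_{\underline L}\rho+\J\cdot\underline\alpha$. The pairings are therefore $J_L\rho$, $J_{\underline L}\rho$, $\J\cdot\alpha$, $\J\cdot\underline\alpha$; the component $\sigma$ never occurs at all (it lives in $F_{e_1e_2}$, which is not touched when $\nu\in\{L,\underline L\}$), and $J_L$ never meets $\underline\alpha$ nor $J_{\underline L}$ meets $\alpha$. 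Your claimed pairings ``$J_L$ with $\underline\alpha$, $J_{\underline L}$ with $\alpha$, $\J$ with $(\rho,\sigma)$'' do not arise, and crucially they do not produce the term $|J_{\underline L}||\rho|$ that you then single out as the one resisting absorption. With the correct decomposition the logic does close: $J_L\rho$, $\J\alpha$, $\J\underline\alpha$ are Cauchy--Schwarz'd and the $F$-parts absorbed into $I^{-1-\epsilon}_0[F]$, which costs $I^{1+\epsilon}_0[|J_L|+|\J|]$ on the right; $J_{\underline L}\rho$ cannot be treated this way because the right-hand side carries no $I^{1+\epsilon}_0[J_{\underline L}]$ (reflecting the null structure of the Maxwell nonlinearity, where $J_{\underline L}$ is the ``bad'' component), so it is left as $\iint|J_{\underline L}||\rho|$. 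As written, your proposal reads off the correct conclusion from the statement but cannot derive it from its own (incorrect) decomposition.

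A smaller inaccuracy: the $f\partial_r$-flux on the incoming null hypersurface is $\tfrac{f}{2}(|\underline\alpha|^2-\rho^2-\sigma^2)$ up to constants, which is not sign-definite and so cannot simply be ``kept on the left''. The paper's mechanism is that $|i_{J^{f\partial_r}[F]}d\mathrm{vol}|\leq f\cdot i_{J^{\partial_t}[F]}d\mathrm{vol}$ on each boundary, so all $f\partial_r$ boundary terms are moved to the right and dominated by the $\partial_t$ energy fluxes, which are then controlled (or kept with good sign on the left) via the $\partial_t$ identity.
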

 \begin{proof}
 The idea to prove this proposition is the same as that of the previous proposition for the scalar field. However, the calculations is different for the Maxwell field $F$.
In the energy identity \eqref{energyeqcur} for the Maxwell field, we take the vector field
\[
 X=f(r)\pa_r=2\ep^{-1}(1-r_+^{-\ep})\pa_r.
\]
Denote $\om_i=r^{-1}x_i$. We then can compute
\begin{align*}
 T[F]^{\mu\nu}\pi_{\mu\nu}^X&=T[F]^{ij}(f'\om_i\om_j+r^{-1}f\delta_{ij}-r^{-1}f\om_i\om_j)\\
&=\frac{1}{4}(2r^{-1}f-f')F_{\mu\nu}F^{\mu\nu}+(f'-r^{-1}f)F_{r\nu}F^{r\nu}-r^{-1}fF_{0\nu}F^{0\nu},
\end{align*}
where the Latin indices $\mu$, $\nu$ run from $0$ to $3$ and the Greek indices $i$, $j$ run from 1 to 3. Using the null decomposition of the 2-form under the null frame
$\{L, \Lb, e_1, e_2\}$ defined in line \eqref{eq:curNull}, we can show that
\begin{align*}
 F_{\mu\nu}F^{\mu\nu}&=-2 \rho^2-2\a\cdot \underline{\a}+2\si^2,\\
F_{0\nu}F^{0\nu}&=-\frac{1}{4}(4\rho^2+2\a\cdot \underline{\a}+|\a|^2+|\underline\a|^2),\\
F_{r\nu}F^{r\nu}&=-\frac{1}{4}(4\rho^2+2\a\cdot\underline{\a}-|\a|^2-|\underline{\a}|^2).
\end{align*}
Therefore we have
\begin{equation}
 \label{TFXr}
T[F]^{\mu\nu}\pi_{\mu\nu}^X=(r^{-1}f-\f12 f')(\rho^2+\si^2)+\frac{1}{4}f'(|\a|^2+|\underline\a|^2).
\end{equation}
The calculations before line \eqref{improvnabb} imply that the coefficients $r^{-1}f-\f12 f'$, $f'$ have positive signs.
To obtain the similar integrated local energy estimates for the Maxwell field $F$, we need to control the boundary terms arising from the Stokes' formula \eqref{energyeqcur}.
Using the formulae \eqref{eq:curF}, we can compute that
\begin{align*}
 2| i_{J^{X}[F]}d\vol|&= f\left||\a|^2-|\underline{\a}|^2\right|dx\leq |F|^2 dx=2f i_{J^{\pa_t}[F]}d\vol,\\
2|i_{J^{X}[\phi]}d\vol|&=f \left|-\rho^2+|\a|^2-\si^2\right|r^2dvd\om\leq f(\rho^2+|\a|^2+\si^2)r^2dvd\om=2fi_{J^{\pa_t}[\phi]}d\vol,\\
2|i_{J^{X}[\phi]}d\vol|&=f\left|-\rho^2+|\underline{\a}|^2-\si^2\right|r^2dud\om\leq f(\rho^2+|\underline{\a}|^2+\si^2)r^2dud\om=2fi_{J^{\pa_t}[\phi]}d\vol
\end{align*}
respectively on the $t=constant$ slice, the outgoing null hypersurface and the incoming null hypersurface for all positive function $f$. This in particular implies that
the boundary terms corresponding to the vector field $f\pa_r$ can be bounded by the energy flux for all positive bounded function $f$. Therefore for the particular choice of vector field $X$,
the energy identity \eqref{energyeqcur} on the domain $\mathcal{D}_{\tau_1}^{\tau_2}\cap\{v\leq v_0\}$ for all $0\leq \tau_1<\tau_2$, $v_0\geq \frac{\tau_2+R}{2}$ leads to
\begin{align*}
 \int_{\tau_1}^{\tau_2}\int_{\Si_{\tau}^{v_0}}\frac{|F|^2}{(1+r)^{1+\ep}}+\frac{\rho^2+\si^2}{1+r} dxd\tau\les & E[F](\Si_{\tau_1}^{v_0})+E[F](\Si_{\tau_2}^{v_0})+E[F](\Hb_{v_0}^{\tau_1, \tau_2})\\
&+\int_{\tau_1}^{\tau_2}\int_{\Si_{\tau}}|J^{\ga}||F_{L\ga}-F_{\Lb \ga})|dxd\tau.
\end{align*}
Here notice that we have the improved estimate \eqref{improvnabb} for the coefficient of $\rho^2+\si^2$. If we take the vector field $X=\pa_t$ on the same domain, we then can derive the classical energy identity
\begin{align*}
\int_{\tau_1}^{\tau_2}\int_{\Si_{\tau}^{v_0}}J^{\ga}(F_{L \ga}+F_{\Lb\ga})dxd\tau=E[F](\Si_{\tau_1}^{v_0})-E[F](\Hb_{v_0}^{\tau_1,\tau_2})-E[F](\Si_{\tau_2}^{v_0}).
\end{align*}
Let $v_0\rightarrow \infty$ and apply Cauchy-Schwarz to the inhomogeneous term $J^{\mu} (|F_{L\mu}|+|F_{\Lb \mu}|)$ for $\mu=\Lb$, $e_1$, $e_2$:
 \[
 |J^{\Lb}| |F_{L\Lb}|+|J^{e_i}| (|F_{L e_i}|+|F_{\Lb e_i}|)\les \ep_1^{-1}(|J_{L}|+|\J|)r_+^{1+\ep}+\ep_1 |F|^2r_+^{-1-\ep},\quad \ep_1>0.
 \]
 The integral of second term could be absorbed for sufficiently small $\ep_1$. For the component when $\mu=L$, we estimate:
 \[
 |J^{L}| |F_{L\Lb}|\les |J_{\Lb}||\rho|.
 \]
 Then the above energy identity together with the integrated local energy estimates imply the integrated local energy estimate \eqref{eq:ILE:cur:in} in the interior region. The energy estimate
\eqref{eq:ILE:cur:ex} in the exterior region follows in a same way.
 \end{proof}

\subsection{The $r$-weighted energy estimates using the multiplier $r^pL$}
In this section, we establish the robust $r$-weighted energy estimates both for the scalar field and the Maxwell field. This estimate for solutions of linear wave equation in Minkowski space
was first introduced by Dafermos-Rodnianski in \cite{newapp}. We study the $r$-weighted energy estimate either in the exterior region $\{r\geq R+t\}$ for the domain $\mathcal{D}_{\tau_1}^{\tau_2}$ for $\tau_2\leq \tau_1\leq 0$ or in the interior region for
domain $\bar{\mathcal{D}}_{\tau_1}^{\tau_2}$ for $0\leq \tau_1<\tau_2$ which is bounded by the outgoing null hypersurfaces $H_{\tau_1^*}$, $H_{\tau_2^*}$ and the cylinder $\{r=R\}$.

We have the following $r$-weighted energy estimates for the complex scalar field.
\begin{prop}
 \label{prop:pWE:sca}
 Assume that the complex scalar field $\phi$ vanishes at null infinity. Then in the interior region, for all $0\leq \tau_1\leq \tau_2$, $v_0\geq \frac{\tau_2+R}{2}$, we have the $r$-weighted energy estimate:
 \begin{equation}
  \label{eq:pWE:sca:in}
  \begin{split}
 &\int_{H_{\tau_2^*}}r^p|D_L\psi|^2dvd\om+\int_{\tau_1}^{\tau_2}\int_{H_{\tau^*}}r^{p-1}(p|D_L\psi|^2+(2-p)|\D\psi|^2)dvd\om d\tau+\int_{\Hb_{v_0}^{\tau_1, \tau_2}}r^p|\D\psi|^2dud\om\\
\les &\int_{H_{\tau_1^*}}r^p|D_L\psi|^2dvd\om+I^{\max\{1+\ep, p\}}_{\min\{1+\ep, p\}}[\Box_A\phi](\mathcal{D}_{\tau_1}^{\tau_2})+E[\phi](\Si_{\tau_1})+I^{1+\ep}_0[\Box_A\phi](\mathcal{D}_{\tau_1}^{\tau_2})\\
&\qquad+\iint_{\mathcal{D}_{\tau_1}^{\tau_2}}|F_{\Lb\mu}J^\mu[\phi]|+|F_{L\mu}J^\mu[\phi]|dxdt+
\iint_{\bar{\mathcal{D}}_{\tau_1}^{\tau_2}}r^{p}|F_{L\mu}J^\mu[\phi]|dxdt.
\end{split}
 \end{equation}
for all $0\leq p\leq 2$. Similarly in the exterior region, for all $\tau_2\leq \tau_1\leq 0$, we have
\begin{equation}
 \label{eq:pWE:sca:ex}
 \begin{split}
  &\int_{H_{\tau_1^*}^{-\tau_2^*}}r^p|D_L\psi|^2dvd\om+\iint_{\mathcal{D}_{\tau_1}^{\tau_2}}r^{p-1}(p|D_L\psi|^2+(2-p)|\D\psi|^2)dvd\om du+\int_{\Hb_{-\tau_2^*}^{\tau_1^*}}r^p|\D\psi|^2dud\om\\
\les & \int_{B_{R-\tau_1}^{R-\tau_2}}r^p(|D_L\psi|^2+|\D\psi|^2)drd\om+I^{\max\{p, 1+\ep\}}_{\min\{1+\ep, p\}}[\Box_A \phi](\mathcal{D}_{\tau_1}^{\tau_2})+\iint_{\mathcal{D}_{\tau_1}^{\tau_2}}r^{p}|F_{L\mu}J^\mu[\phi]| dxdt.
\end{split}
\end{equation}
\end{prop}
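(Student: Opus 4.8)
The plan is to carry out a standard $r^p$-weighted energy estimate in the spirit of Dafermos-Rodnianski, applied to the covariant wave equation $\Box_A\phi = G$ where $G$ serves as inhomogeneous term. The natural multiplier is $X = r^{p-1}L$ (or equivalently one works with $\psi = r\phi$ and multiplies the equation for $\psi$ by $r^p L\psi$), but since $D$ is not the flat derivative one must be careful to track the connection field. First I would record the equation satisfied by $\psi = r\phi$: from the null form of the covariant wave operator, $\Box_A\phi = -\frac{1}{r}D_L D_{\Lb}\psi - \frac{1}{r}D_{\Lb}D_L\psi + \frac{1}{r}\lap\!\!\!/\,\psi + \ldots$ (here $D$-commutators produce curvature terms $F$, but on the sphere they drop out in the key identity), so that $2D_L D_{\Lb}\psi = r\Box_A\phi \cdot(-1) + \D^2\psi + (\text{lower order})$. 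Then I would apply $D_L(r\psi)$ — more precisely multiply the transport-type identity by $r^{p-1}\overline{D_L\psi}$ and take real parts, so that the main term becomes $\Re(D_{\Lb}D_L\psi \cdot \overline{D_L\psi}) = \frac12 D_{\Lb}|D_L\psi|^2$ (compatibility of $D$ with the Hermitian inner product, exactly as invoked in Lemma~\ref{lem:Hardyga}), and similarly the angular term yields a favorable bulk contribution after integration by parts on the sphere.

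The second step is to integrate the resulting pointwise divergence identity over the domain — $\bar{\mathcal{D}}_{\tau_1}^{\tau_2}$ in the interior case (bounded by $H_{\tau_1^*}$, $H_{\tau_2^*}$, the truncating $\Hb_{v_0}$, and the cylinder $\{r=R\}$) and $\mathcal{D}_{\tau_1}^{\tau_2}$ in the exterior case — and apply Stokes' theorem. The divergence structure $\partial^\mu(r^p(\ldots)L_\mu)$ produces: on $H_{\tau^*}$ the flux $\int r^p|D_L\psi|^2$; a spacetime bulk term $\int r^{p-1}(p|D_L\psi|^2 + (2-p)|\D\psi|^2)$, which is the gain in the estimate and is positive precisely for $0\le p\le 2$; on $\Hb$ the term $\int r^p|\D\psi|^2$; and the inhomogeneous contribution $\int r^p \Re(\Box_A\phi\cdot\overline{D_L\psi})$. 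The last term I would split via Cauchy-Schwarz: for the region $r\le R$ (inside the cylinder, only relevant in the interior case) and for the transition one absorbs against the $\tau$-integrated local energy and uses Proposition~\ref{prop:ILEs} to bring in $E[\phi](\Si_{\tau_1})$ and $I^{1+\ep}_0[\Box_A\phi]$; for $r\ge R$ one writes $r^p|\Box_A\phi\,\overline{D_L\psi}| \le \ep_1 r^{p-1}|D_L\psi|^2 + \ep_1^{-1}r^{p+1}|\Box_A\phi|^2$, absorbing the first into the bulk and leaving $I^{\max\{p,1+\ep\}}_{\min\{1+\ep,p\}}[\Box_A\phi]$ as the genuinely weighted source (the min/max in the exponents is just bookkeeping to cover $p<1+\ep$ versus $p\ge 1+\ep$). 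The nonlinear interaction term $X^\nu F_{\nu\ga}J^\ga[\phi]$ coming from the non-trivial connection contributes the $r^p|F_{L\mu}J^\mu[\phi]|$ integrals on the right; these are simply carried along, not estimated here.

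The third step is to handle the boundary term on the cylinder $\{r=R\}$ (interior case) — since $r$ is bounded there, $r^p$-weights are harmless and this flux is controlled by $E[\phi](\Si_\tau)$, which in turn is absorbed into the right-hand side via $E[\phi](\Si_{\tau_1})$ and Proposition~\ref{prop:ILEs}; this is the mechanism that introduces the two ``non-$p$-weighted'' error terms $E[\phi](\Si_{\tau_1}) + I^{1+\ep}_0[\Box_A\phi](\mathcal{D}_{\tau_1}^{\tau_2}) + \iint(|F_{\Lb\mu}J^\mu| + |F_{L\mu}J^\mu|)$ in \eqref{eq:pWE:sca:in} but not in the exterior estimate \eqref{eq:pWE:sca:ex} (in the exterior region the analogous boundary is the initial annulus $B_{R-\tau_1}^{R-\tau_2}$, which is already a data term). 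Finally I would let the truncation parameter $v_0 \to \infty$; here the hypothesis that $\phi$ vanishes at null infinity is used to discard the limiting flux through the far end, exactly as in Proposition~\ref{prop:ILEs}.

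The main obstacle is not the $r^p$-multiplier bookkeeping, which is by now routine, but rather correctly propagating the covariant structure: one must verify that when $D_L D_{\Lb}$ and the angular Laplacian $\lap\!\!\!/\,$ are expressed via $D$ rather than $\partial$, the extra curvature terms $F_{Le_i}$, $F_{\Lb e_i}$, $F_{L\Lb}$ that arise from commutators either cancel in the key divergence identity or can be grouped into the displayed $F_{L\mu}J^\mu[\phi]$ and $F_{\Lb\mu}J^\mu[\phi]$ terms — this is the content of the identity $\partial^\mu \tilde J^X_\mu[\phi] = \Re(\Box_A\phi(\ldots)) + X^\nu F_{\nu\ga}J^\ga[\phi] + T[\phi]^{\mu\nu}\pi^X_{\mu\nu} + \ldots$ recorded in Section~\ref{sec:energyID}, now specialized to $X = r^{p-1}L$ and a suitable choice of the auxiliary function $\chi$ and vector field $Y$. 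A secondary subtlety, absent in the flat-space Dafermos-Rodnianski argument, is that the positivity of the bulk coefficient $(2-p)$ for the angular term $|\D\psi|^2$ already degenerates at $p=2$; for the present paper we only need $p\le 1+\ga_0 < 2$ so this endpoint is not an issue, but the proof should state the estimate for all $0\le p\le 2$ as written.
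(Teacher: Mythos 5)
Your high-level strategy — the energy identity \eqref{energyeq} specialized to a multiplier in the $r^p L$ family, the resulting bulk gain $r^{p-1}(p|D_L\psi|^2+(2-p)|\D\psi|^2)$, the control of the cylinder boundary $\{r=R\}$ through the $p=0$ estimate and Proposition \ref{prop:ILEs}, and carrying along the $F_{L\mu}J^\mu[\phi]$ terms — is the same route the paper takes (modulo your writing $X=r^{p-1}L$ rather than the paper's $X=r^p L$, $Y=\frac p2 r^{p-2}|\phi|^2 L$, $\chi=r^{p-1}$; these are compatible conventions and your remark that a ``suitable choice of $\chi$ and $Y$'' must be made is where the paper's explicit cancellation of the $L(r^{p+1}|\phi|^2)$, $\Lb(r^{p+1}|\phi|^2)$, $\pa_t(r^{p+1}|\phi|^2)$ boundary contributions lives).

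There is, however, a genuine gap in your treatment of the inhomogeneous term, precisely in the step you dismiss as ``just bookkeeping.'' You write $r^p|\Box_A\phi\,\overline{D_L\psi}|\le\ep_1 r^{p-1}|D_L\psi|^2+\ep_1^{-1}r^{p+1}|\Box_A\phi|^2$ and propose to ``absorb the first into the bulk.'' But the bulk coefficient multiplying $r^{p-1}|D_L\psi|^2$ is exactly $p$, which degenerates as $p\to 0$; for $p$ below 1 you would have to take $\ep_1<p$, and even allowing a $p$-dependent constant, your Cauchy--Schwarz introduces no $u_+$-weight, so the source you produce is $I^{p+1}_0[\Box_A\phi]$ rather than the stated $I^{\max\{p,1+\ep\}}_{\min\{1+\ep,p\}}[\Box_A\phi]$. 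That exponent on $u_+$ is not cosmetic: the min/max appears precisely so that downstream decay estimates (Propositions \ref{prop:Endecay:cur}, \ref{prop:Enerdecay:sca:in} and their scalar analogues) can be derived from the assumed finiteness of $M_k$ and $\mathcal{E}_k[\phi]$, which are stated in terms of $I^{1+\ga_0}_{1+\ep}$ and $I^{1+\ep}_{1+\ga_0}$. The paper avoids absorption entirely for $D_L\psi$: it splits $2r^{p+1}|\Box_A\phi\cdot\overline{D_L\psi}|\le r^p u_+^{-1-\ep}|D_L\psi|^2+r^{p+2}u_+^{1+\ep}|\Box_A\phi|^2$ and controls the first term by Gronwall's inequality in $\tau$ against the flux $\int_{H_{\tau^*}}r^p|D_L\psi|^2$ (the $u_+^{-1-\ep}$ is what makes the Gronwall exponent integrable). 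For $p<1+\ep$ this simple split no longer gives the right $u_+$ exponent on the source, so the paper further observes that $2p-1-\ep<p\cdot\frac{p}{1+\ep}+(p-1)(1-\frac{p}{1+\ep})$ and interpolates $r^{2p-1-\ep}u_+^{-p}\le (r^p u_+^{-1-\ep})^{p/(1+\ep)}(r^{p-1})^{1-p/(1+\ep)}\le r^p u_+^{-1-\ep}+r^{p-1}$, splitting into a Gronwall-able piece and a genuinely absorbable piece while still producing $r^{3+\ep}u_+^{p}|\Box_A\phi|^2$, i.e., $I^{1+\ep}_p$. Without this two-tier argument your proof does not yield the stated estimate for $p<1+\ep$.

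Two smaller remarks. First, in the interior case the boundary term on $\{r=R\}$ is not simply ``controlled by $E[\phi](\Si_\tau)$'' — the paper bounds it via the $p=0$ instance of the weighted identity, feeding in the Hardy inequality $\int_{H_\tau}|D_L\psi|^2\,d\om\,dv\les E[\phi](\Si_\tau)$ from Lemma \ref{lem:Hardyga}, and then Proposition \ref{prop:ILEs}; your sketch is consistent with this but skips the Hardy step which is what makes $\int r^p|D_L\psi|^2-|\D\psi|^2\,d\om\,dt$ on $r=R$ estimable. Second, the hypothesis that $\phi$ vanishes at null infinity is not used to ``discard a limiting flux''; the statement is for fixed $v_0$, and the hypothesis enters via Lemma \ref{lem:Hardyga} to control the boundary data terms.
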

\begin{proof}
Apply the energy identity \eqref{energyeq} to the region $\bar{\mathcal{D}}_{\tau_1}^{\tau_2}\cap\{v\leq v_0\}$ which
bounded by $H_{\tau_1^*}$, $H_{\tau_2^*}$, $\{r=R\}$ and $\Hb_{v_0}^{\tau_1, \tau_2}$ with the vector fields $X$, $Y$ and the function $\chi$ as follows:
\[
 X=r^{p}L, \quad Y=\frac{p}{2}r^{p-2}|\phi|^2L,\quad  \chi=r^{p-1}.
\]
Denote $\psi=r\phi$ as the weighted scalar field. Note that we have the equality
\[
 r^2|D_L\phi|^2=|D_L\psi|^2-L(r|\phi|^2),\quad r^2|\D\phi|^2=|\D\psi|^2,\quad r^2|D_{\Lb}\phi|^2=|D_{\Lb}\psi|^2+\Lb(r|\phi|^2).
\]
We then can compute
\begin{align*}
 & div(Y)+T[\phi]^{\mu\nu}\pi_{\mu\nu}^X+\chi \overline{D^{\mu}\phi}D_\mu\phi-\f12\Box\chi |\phi|^2\\
&=\frac{p}{2}r^{-2}L(r^{p}|\phi|^2)+\f12r^{p-1}\left(p|D_L\phi|^2+(2-p)|\D\phi|^2\right)-\f12 p(p-1)r^{p-3}|\phi|^2\\
&=\f12 r^{p-3}\left(p|D_L\psi|^2+(2-p)|\D\psi|^2\right).
\end{align*}
We next compute the boundary terms according to the formula \eqref{eq:curF}. We have
\begin{align*}
 \int_{H_{\tau^*}}i_{\tilde{J}^X[\phi]}d\vol&=\int_{H_{\tau^*}}r^{p}|D_L\psi|^2-\f12 L(r^{p+1}\phi) \quad dvd\om,\\
\int_{\Hb_{v_0}^{\tau_1, \tau_2}}i_{\tilde{J}^X[\phi]}d\vol&=-\int_{\Hb_{v_0}^{\tau_1, \tau_2}}r^{p}|\D\psi|^2+\f12\Lb(r^{p+1}|\phi|^2)\quad dud\om,\\
\int_{\{r=R\}\cap\{ \tau_1\leq t\leq \tau_2\} }i_{\tilde{J}^X[\phi]}d\vol&=\int_{\tau_1}^{\tau_2}\int_{\om}\f12 r^p(|D_L\psi|^2-|\D\psi|^2)-\f12\pa_t(r^{p+1}|\phi|^2) \quad d\om dt.
\end{align*}
Now notice that there is a cancellation for the boundary terms:
\begin{align*}
 &-\int_{H_{\tau_1^*}}L(r^{p+1}|\phi|^2)dvd\om-\int_{\Hb_{v_0}^{\tau_1, \tau_2}}\Lb(r^{p+1}|\phi|^2)dud\om\\
&+\int_{H_{\tau_2^*}}L(r^{p+1}|\phi|^2)dvd\om+\int_{\tau_1}^{\tau_2}\int_{\om}\pa_t(r^{p+1}|\phi|^2)d\om dt=0.
\end{align*}
Therefore in the interior region for the domain $\bar{\mathcal{D}}_{\tau_1}^{\tau_2}\cap\{v\leq v_0\}$, the above calculations lead to the following $r$-weighted energy identity:
\begin{equation}
\label{eq:pWE:sca:in:id}
\begin{split}
 &\int_{H_{\tau_2^*}^{v_0}}r^p|D_L\psi|^2dvd\om+\int_{\tau_1}^{\tau_2}\int_{H_{\tau^*}^{v_0}}r^{p-1}(p|D_L\psi|^2+(2-p)|\D\psi|^2)dvd\om d\tau+\int_{\Hb_{v_0}^{\tau_1, \tau_2}}r^p|\D\psi|^2dud\om\\
=&\int_{H_{\tau_1^*}^{v_0}}r^p|D_L\psi|^2dvd\om-\f12\int_{\tau_1}^{\tau_2}\int_{\om}r^p(|D_L\psi|^2-|\D\psi|^2) d\om dt\\
&-\int_{\tau_1}^{\tau_2}\int_{H_{\tau^*}^{v_0}}r^{p-1}\Re\left(\Box_A\phi \overline{D_L\psi}\right)+r^{p}F_{L\mu}J^\mu[\phi] dxdt.
\end{split}
\end{equation}
Similarly, in the exterior region $\{r\leq R+t\}$ for the domain $\mathcal{D}_{\tau_1}^{\tau_2}$ for all $\tau_2\leq \tau_1\leq 0$, we have
\begin{equation}
\label{eq:pWE:sca:ex:id}
\begin{split}
  &\int_{H_{\tau_1^*}^{-\tau_2^*}}r^p|D_L\psi|^2dvd\om+\iint_{\mathcal{D}_{\tau_1}^{\tau_2}}r^{p-1}(p|D_L\psi|^2+(2-p)|\D\psi|^2)dvd\om du+\int_{\Hb_{-\tau_2^*}^{\tau_1^*}}r^p|\D\psi|^2dud\om\\
=&\f12\int_{B_{R-\tau_1}^{R-\tau_2}}r^p(|D_L\psi|^2+|\D\psi|^2)drd\om-\iint_{\mathcal{D}_{\tau_1}^{\tau_2}}r^{p-1}\Re\left(\Box_A\phi \overline{D_L\psi}\right)+r^{p}F_{L\mu}J^\mu[\phi] dxdt.
\end{split}
\end{equation}
For the inhomogeneous term, when $p\geq 1+\ep$, we apply Cauchy-Schwarz inequality directly
\[
 2r^{p+1}|\Box_A\phi \cdot \overline{D_L\psi}|\les r^{p} u_+^{-1-\ep}|D_L\psi|^2+r^{p+2}u_+^{1+\ep}|\Box_A\phi|^2.
\]
The integral of the first term in the above inequality can be controlled by using Gronwall's inequality both in \eqref{eq:pWE:sca:in:id} and \eqref{eq:pWE:sca:ex:id}. In particular this shows that estimate \eqref{eq:pWE:sca:ex}
follows from \eqref{eq:pWE:sca:ex:id}.

When $p<1+\ep$, we note that
\[
 2p-1-\ep<p\cdot \frac{p}{1+\ep}+(p-1)(1-\frac{p}{1+\ep}).
\]
Then we can estimate the inhomogeneous term as follows:
\begin{align*}
2r^{p+1}|\Box_A\phi \cdot \overline{D_L\psi}|&\leq \ep_1 r^{2p- 1-\ep} u_+^{-p}|D_L\psi|^2+\ep_1^{-1}r^{1+\ep+2}u_+^{ p}|\Box_A\phi|^2\\
&\leq \ep_1 (r^p u_+^{-1-\ep})^{\frac{p}{1+\ep}}(r^{p-1})^{ 1-\frac{p}{1+\ep}}|D_L\psi|^2+\ep_1^{-1}r^{1+\ep+2}u_+^{ p}|\Box_A\phi|^2\\
&\leq \ep_1 r^p u_+^{-1-\ep}|D_L\psi|^2+\ep_1 r^{p-1}|D_L\psi|^2+\ep_1^{-1}r^{1+\ep+2}u_+^{ p}|\Box_A\phi|^2
\end{align*}
for all $\ep_1>0$. The integral of the first term lcan be controlled by using Gronwall's inequality. The integral of
the second term can be absorbed for sufficiently small $\ep_1$. Then estimate \eqref{eq:pWE:sca:ex} follows.

For the $r$-weighted energy estimate \eqref{eq:pWE:sca:in} in the interior region, we need to control the boundary term on $\{r=R\}$. It suffices to estimate it for $p=0$ in \eqref{eq:pWE:sca:in:id} by making use of the energy estimates \eqref{eq:ILE:Sca:in}. From Hardy's inequality in Lemma \ref{lem:Hardyga},l we note that
\[
 \int_{H_{\tau}}|D_L\psi|^2d\om dv \les E[\phi](\Si_\tau).
\]
By using the itegrated local energy estimate \eqref{eq:ILE:Sca:in}, we therefore can show that
\begin{align*}
 &\left|\int_{\tau_1}^{\tau_2}\int_{\om}r^p(|D_L\psi|^2-|\D\psi|^2) d\om dt\right|\\
 \les & R^p \int_{H_{\tau_2^*}^{v_0}}|D_L\psi|^2dvd\om+\int_{\tau_1}^{\tau_2}\int_{H_{\tau^*}^{v_0}}r^{-1}|\D\psi|^2 dvd\om d\tau+\int_{\Hb_{v_0}^{\tau_1, \tau_2}}|\D\psi|^2dud\om\\
&\qquad +\int_{H_{\tau_1^*}^{v_0}}|D_L\psi|^2dvd\om+\int_{\tau_1}^{\tau_2}\int_{H_{\tau^*}^{v_0}}r^{-1}|\Re\left(\Box_A\phi \overline{D_L\psi}\right)|+|F_{L\mu}J^\mu[\phi]| dxdt\\
\les & E[\phi](\Si_{\tau_1})+I^{1+\ep}_0[\Box_A\phi](\mathcal{D}_{\tau_1}^{\tau_2})+\iint_{\mathcal{D}_{\tau_1}^{\tau_2}}|F_{\Lb\mu}J^\mu[\phi]|+|F_{L\mu}J^\mu[\phi]| dxdt.
\end{align*}
The inhomogeneous term can be bounded by using Cauchy-Schwarz inequality and the integrated local energy estimates. Once we have bound for the boundary terms on $\{r=R\}$, the $r$-weighted energy estimate \eqref{eq:pWE:sca:in} follows from
the identity \eqref{eq:pWE:sca:in:id} and Gronwall's inequality.
\end{proof}
Next we establish the $r$-weighted energy estimate for the Maxwell field.
\begin{prop}
 \label{prop:pWE:cur}
 Let $F$ be any 2-form satisfying the Bianchi identity \eqref{bianchi}. Then in the interior region, for all $0\leq \tau_1\leq \tau_2$, $v_0\geq \frac{\tau_2+R}{2}$, we have the $r$-weighted energy estimate:
 \begin{align}
\notag
 &\int_{H_{\tau_2^*}}r^{p+2}|\a|^2dvd\om+\int_{\tau_1}^{\tau_2}\int_{H_{\tau^*}}r^{p+1}(p|\a|^2+(2-p)(\rho^2+\si^2))dvd\om d\tau+\int_{\Hb_{v_0}^{\tau_1, \tau_2}}r^{p+2}(\rho^2+\si^2)dud\om\\
   \label{eq:pWE:cur:in}
\les &\int_{H_{\tau_1^*}^{v_0}}r^{p+2}|\a|^2dvd\om+I^{\max\{p, 1+\ep\}}_{\min\{1+\ep, p\}}[\J](\mathcal{D}_{\tau_1}^{\tau_2})+(2-p)^{-1}I^{p+1}_0[J_{L}](\mathcal{D}_{\tau_1}^{\tau_2})\\
\notag
&+E[F](\Si_{\tau_1})+I^{1+\ep}_0[|J_{L}|+|\J|](\mathcal{D}_{\tau_1}^{\tau_2})+\iint_{\mathcal{D}_{\tau_1}^{\tau_2}}|J_{\Lb}||\rho|dxdt
 \end{align}
for all $0\leq p\leq 2$. Similarly in the exterior region, for all $\tau_2\leq \tau_1\leq 0$, $0\leq p\leq 2$, we have
\begin{equation}
 \label{eq:pWE:cur:ex}
 \begin{split}
  &\int_{H_{\tau_1^*}^{-\tau_2^*}}r^p|\a|^2r^2dvd\om+\iint_{\mathcal{D}_{\tau_1}^{\tau_2}}r^{p+1}(p|\a|^2+(2-p)(\rho^2+\si^2))dvd\om du+\int_{\Hb_{-\tau_2^*}^{\tau_1^*}}r^p(\rho^2+\si^2)r^2dud\om\\
\les &\int_{B_{R-\tau_1}^{R-\tau_2}}r^p |F|^2 dx+I^{\max\{p, 1+\ep\}}_{\min\{p,1+\ep\}}[\J ](\mathcal{D}_{\tau_1}^{\tau_2})+(2-p)^{-1}I^{p+1}_0[J_{L}](\mathcal{D}_{\tau_1}^{\tau_2}).
 \end{split}
\end{equation}
\end{prop}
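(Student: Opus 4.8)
The plan is to mimic the proof of Proposition \ref{prop:pWE:sca}, but now applied to the Maxwell field using the energy identity \eqref{energyeqcur} with the multiplier $X=r^p L$. The key algebraic input is the null decomposition of $T[F]^{\mu\nu}\pi^X_{\mu\nu}$ for this vector field. A direct computation (using \eqref{eq:nullderiv} for the covariant derivatives of the null frame, and the identities for $F_{\mu\nu}F^{\mu\nu}$, $F_{0\nu}F^{0\nu}$, $F_{r\nu}F^{r\nu}$ already recorded in the proof of Proposition \ref{prop:ILEcur}) should give, after multiplying through by $r^2$ and writing everything in terms of the rescaled quantities $r\a$, $r\rho$, $r\si$, $r\ab$,
\[
r^2\,T[F]^{\mu\nu}\pi^X_{\mu\nu}=\tfrac12 r^{p-1}\bigl(p\,|r\a|^2+(2-p)(|r\rho|^2+|r\si|^2)\bigr)+\text{(lower order)}.
\]
This is the Maxwell-field analogue of the identity computed for $\psi=r\phi$ in Proposition \ref{prop:pWE:sca}; it is the quantity that produces the positive bulk term $r^{p+1}(p|\a|^2+(2-p)(\rho^2+\si^2))$ after reinstating the $r^2$ from the measure. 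First I would carry out this computation carefully, since getting the coefficients of $|\a|^2$ and $\rho^2+\si^2$ right is what the estimate hinges on; the factor $(2-p)$ in front of $\rho^2+\si^2$ (and correspondingly the $(2-p)^{-1}$ weight in front of $I^{p+1}_0[J_L]$ on the right) must come out of this.

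Next I would compute the boundary terms using \eqref{eq:curF} with $X=r^pL$. On $H_{\tau^*}$ the flux is $\int r^{p+2}|\a|^2\,dvd\om$; on $\Hb_v$ it is $\int r^{p+2}(\rho^2+\si^2)\,dud\om$; and, in the interior region, on the timelike cylinder $\{r=R\}$ there is a boundary term of the form $\tfrac12\int R^p(|r\a|^2-|r\rho|^2-|r\si|^2)\,d\om dt$, i.e. $\les R^{p+2}\int(|\a|^2+\rho^2+\si^2)\,d\om dt$. Unlike the scalar case there is no $|\phi|^2$ correction and hence no cancellation to arrange — the $Y$ and $\chi$ terms of Proposition \ref{prop:pWE:sca} are simply absent since $J^X[F]_\mu=T[F]_{\mu\nu}X^\nu$. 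Assembling \eqref{energyeqcur} on $\bar{\mathcal D}_{\tau_1}^{\tau_2}\cap\{v\le v_0\}$ (interior) or $\mathcal D_{\tau_1}^{\tau_2}$ (exterior) then yields an $r$-weighted energy identity completely parallel to \eqref{eq:pWE:sca:in:id}, \eqref{eq:pWE:sca:ex:id}, with inhomogeneous spacetime term $\iint \pa^\mu F_{\mu\a}F_L^{\ \a}X^\nu=\iint r^p J^\a F_{L\a}$, which splits as $r^p(J_{\Lb}\rho+J_{e_i}\a_i)$ up to constants (note $F_{LL}=0$, $F_{L\Lb}=2\rho$, $F_{Le_i}=\a_i$).

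Then I would estimate this inhomogeneous term exactly as in Proposition \ref{prop:pWE:sca}. The component $r^{p+2}J_{e_i}\a_i$ is handled by Cauchy–Schwarz: for $p\ge 1+\ep$ split as $r^{p}u_+^{-1-\ep}|\a|^2 r^2 + r^{p+2}u_+^{1+\ep}|\J|^2$, absorbing the first into the $p$-weighted bulk term via Gronwall in $\tau$; for $p<1+\ep$ use the same interpolation trick $2p-1-\ep<p\cdot\frac{p}{1+\ep}+(p-1)(1-\frac{p}{1+\ep})$ to split into $r^pu_+^{-1-\ep}|\a|^2r^2$ (Gronwall), $r^{p-1}|r\a|^2$ with a small constant (absorbed into the $p|\a|^2$ bulk term), and $r^{1+\ep+2}u_+^p|\J|^2$. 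The component $r^{p+2}J_{\Lb}\rho$ is more delicate because $\rho$ enters the bulk with coefficient $(2-p)$, which degenerates as $p\to 2$; this forces the separate treatment, putting $\iint r^{p+1}(2-p)\rho^2$ worth of it into the bulk at the cost of the $(2-p)^{-1}I^{p+1}_0[J_L]$ term, and pushing the remaining low-$r$ part of $|J_{\Lb}||\rho|$ into the $\iint|J_{\Lb}||\rho|$ term on the right via the integrated local energy estimate \eqref{eq:ILE:cur:in}. For the interior estimate \eqref{eq:pWE:cur:in} I also need to control the $\{r=R\}$ boundary term: as in Proposition \ref{prop:pWE:sca} it suffices to bound the $p=0$ case using \eqref{eq:ILE:cur:in} together with $\int_{H_\tau}|\a|^2 r^2\,d\om dv\les E[F](\Si_\tau)$, after which Gronwall's inequality closes the estimate. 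The main obstacle is the bookkeeping around the degenerate $(2-p)$ weight on $\rho^2+\si^2$ and the correct handling of the $J_{\Lb}\rho$ interaction: one must peel off precisely the right multiple of the $(2-p)\rho^2$ bulk term so that what remains is controllable by the lower-order integrated local energy quantity, and verify this works uniformly for all $0\le p\le 2$ — exactly the point where the factor $(2-p)^{-1}$ appears.
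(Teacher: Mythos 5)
Your overall strategy --- multiplier $X=r^pL$ in \eqref{energyeqcur}, computing $T[F]^{\mu\nu}\pi^X_{\mu\nu}$ by splitting into the $f\partial_r$ and $f\partial_t$ parts, reading off the boundary fluxes from \eqref{eq:curF}, and closing via Cauchy--Schwarz and Gronwall as in Proposition \ref{prop:pWE:sca} --- is precisely what the paper does. Your observation that the $Y$- and $\chi$-corrections of the scalar case are absent, your handling of the $(2-p)$ degeneracy by peeling off part of the $\rho^2$ bulk at the cost of $(2-p)^{-1}I^{p+1}_0[J_L]$, and your use of the $p=0$ identity together with \eqref{eq:ILE:cur:in} to control the $\{r=R\}$ boundary term are all in the right direction.

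There is, however, a genuine computational error in your decomposition of the bulk source. You write $J_\alpha F_L^{\ \alpha}=J_{\Lb}\rho+J_{e_i}\alpha_i$, citing $F_{L\Lb}=2\rho$, $F_{LL}=0$. But raising the index $\alpha$ on $F_{L\alpha}$ swaps $L$ and $\Lb$: in the null frame $m^{L\Lb}=-\tfrac12$ while $m^{LL}=m^{\Lb\Lb}=0$, so $F_L^{\;L}=-\tfrac12 F_{L\Lb}=\rho$ and $F_L^{\;\Lb}=-\tfrac12 F_{LL}=0$. Hence $J_\alpha F_L^{\ \alpha}=J_L\rho+J_{e_i}\alpha_i$, and $J_{\Lb}$ does not appear in the inhomogeneous term at all. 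Your own paragraph is internally inconsistent on this point: you attribute the $(2-p)^{-1}I^{p+1}_0[J_L]$ term to a component you call $J_{\Lb}\rho$, which only makes sense if that component is in fact $J_L\rho$. This matters for the accounting downstream. You propose that ``the remaining low-$r$ part of $|J_{\Lb}||\rho|$'' is what produces the $\iint|J_{\Lb}||\rho|$ term on the right of \eqref{eq:pWE:cur:in}; it is not. That term enters solely because bounding the $\{r=R\}$ boundary flux (and the $E[F](\Si_{\tau_1})$ contribution) invokes \eqref{eq:ILE:cur:in}, whose right-hand side already carries $\iint|J_{\Lb}||\rho|$. The two sources of $J\cdot\rho$ couplings are disjoint: $J_L\rho$ comes from the $r^pL$ multiplier and is estimated by Cauchy--Schwarz against the $(2-p)$-weighted bulk, while $J_{\Lb}\rho$ is imported wholesale from the integrated local energy estimate and only appears in the interior case --- consistent with the fact that \eqref{eq:pWE:cur:ex} carries no $J_{\Lb}$ term, precisely because there is no $\{r=R\}$ boundary in the exterior region.
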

\begin{proof}
Take the vector field
\[
 X=r^p L=f\pa_t+f\pa_r
\]
in the energy identity \eqref{energyeqcur} for the Maxwell field. Using the computations before line \eqref{TFXr}, we have
\begin{align*}
 T[F]^{\mu\nu}\pi^X_{\mu\nu}&=T[F]^{\mu\nu}\pi^{f\pa_r}_{\mu\nu}+T[F]^{\mu\nu}\pi^{f\pa_t}\\
&=(r^{-1}f-\f12 f')(\rho^2+\si^2)+\frac{1}{4}f'(|\a|^2+|\underline\a|^2)+\frac{f'}{4}(|\a|^2-|\underline{\a}|^2)\\
&=\f12 r^{p-1}\left((2-p)(\rho^2+\si^2)+ p|\a|^2\right).
\end{align*}
For the boundary terms corresponding to the vector field $X=r^p L$, we have
\begin{align*}
i_{J^{X}[F]}d\vol=\f12 r^p (|\a|^2+\rho^2+\si^2)dx,\quad & i_{J^{X}[F]}d\vol=\f12 r^p (|\a|^2-\rho^2-\si^2)r^2dtd\om,\\
i_{J^{X}[\phi]}d\vol=r^p|\a|^2 r^2dvd\om,\quad & i_{J^{X}[\phi]}d\vol=-r^p (\rho^2+\si^2)r^2dud\om
\end{align*}
respectively on the $t=constant$ slice, $r=constant$ surface, the outgoing null hypersurface and the incoming null hypersurface. Therefore for all $0\leq \tau_1\leq \tau_2$, $v_0\geq \frac{\tau_2+R}{2}$ if we take the region $\mathcal{D}$
 bounded by $H_{\tau_1^*}$, $H_{\tau_2^*}$, $\{r=R\}$, $\Hb_{v_0}^{\tau_1, \tau_2}$, we get the $r$-weighted energy identity in the interior region $\{r\leq R+t\}$:
\begin{align}
\notag
 &\int_{H_{\tau_2^*}^{v_0}}r^p|\a|^2r^2dvd\om+\int_{\tau_1}^{\tau_2}\int_{H_{\tau^*}^{v_0}}r^{p-1}(p|\a|^2+(2-p)(\rho^2+\si^2))r^2dvd\om d\tau+\int_{\Hb_{v_0}^{\tau_1, \tau_2}}r^p(\rho^2+\si^2)r^2dud\om\\
 \label{eq:pWE:cur:in:id}
=&\int_{H_{\tau_1^*}^{v_0}}r^p|\a|^2r^2dvd\om-\f12\int_{\tau_1}^{\tau_2}\int_{\om}r^p(|\a|^2-\rho^2-\si^2) r^2d\om dt-\int_{\tau_1}^{\tau_2}\int_{H_{\tau^*}^{v_0}}r^p \pa^\mu F_{\mu\nu}F_{L}^{\;\nu} dxdt.
\end{align}
Similarly, in the exterior region $\{r\geq R+t\}$, consider the region $\mathcal{D}_{\tau_1}^{\tau_2}$ for $\tau_2\leq \tau_1\leq 0$. We have the following identity:
\begin{align}
\notag
  &\int_{H_{\tau_1^*}^{-\tau_2^*}}r^p|\a|^2r^2dvd\om+\iint_{\mathcal{D}_{\tau_1}^{\tau_2}}r^{p-1}(p|\a|^2+(2-p)(\rho^2+\si^2))dvd\om du+\int_{\Hb_{-\tau_2^*}^{\tau_1^*}}r^p(\rho^2+\si^2)r^2dud\om\\
\label{eq:pWE:cur:ex:id}
=&\f12\int_{B_{R-\tau_1}^{R-\tau_2}}r^p(|\a|^2+\rho^2+\si^2)dx-\iint_{\mathcal{D}_{\tau_1}^{\tau_2}}r^{p} \pa^\mu F_{\mu\nu}F_{L}^{\;\nu}  dxdt.
\end{align}
To obtain \eqref{eq:pWE:cur:ex}, we note that under the null frame $\{L, \Lb, e_1, e_2\}$
\[
 F_{L}^{\;L}=-\f12 F_{L\Lb}=\rho,\quad F_{L}^{\;\Lb}=0, \quad F_{L}^{\;e_j}=\a_j,\quad j=1, 2.
\]
We can use the same method to treat the term $r^p|\pa^\mu F_{\mu e_j}F_{L}^{\;e_j}|$ as that for $\Box_A\phi \cdot \overline{D_L\psi}$ in the previous Proposition \ref{prop:pWE:sca} (simply replace $\Box_A\phi$ with $\pa^\mu F_{\mu e_j}$ and
$\overline{D_L}\psi$ with $r \a_j$).
For the term involving $\rho$, we estimate
\[
r^{p+2} |J_{L}\cdot \rho|\leq \f12 (2-p)r^{p+1}|\rho|^2+\frac{2}{2-p}r^{p+3}|J_{L}|^2.
\]
The integral of the first term could be absorbed. Then the $r$-weighted energy estimate \eqref{eq:pWE:cur:ex} follows from the above $r$-weighted energy identity \eqref{eq:pWE:cur:ex:id}.

We can treat the inhomogeneous term the same way for the $r$-weighted energy estimate in the interior region from the $r$-weighted energy identity \eqref{eq:pWE:cur:in:id}. Like the case for the scalar field, the boundary term on $\{r=R\}$ can be bounded by taking
$p=0$ in \eqref{eq:pWE:cur:in:id} and then by making use of the integrated local energy estimate \eqref{eq:ILE:cur:in}:
\begin{align*}
|\int_{\tau_1}^{\tau_2}\int_{\om}r^p(|\a|^2-\rho^2-\si^2) r^2d\om dt|&\les  \int_{\tau_1}^{\tau_2}\int_{H_{\tau^*}^{v_0}}(\rho^2+\si^2)rdvd\om d\tau+\int_{\Hb_{v_0}^{\tau_1, \tau_2}}(\rho^2+\si^2)r^2dud\om\\
 &+R^p\int_{H_{\tau_2^*}^{v_0}}|\a|^2r^2dvd\om+ \int_{H_{\tau_1^*}^{v_0}}|\a|^2r^2dvd\om+\int_{\tau_1}^{\tau_2}\int_{H_{\tau^*}^{v_0}}|\pa^\mu F_{\mu\nu}F_{L}^{\;\nu} |dxdt\\
& \les  E[F](\Si_{\tau_1})+I^{1+\ep}_0[|J_{L}|+|\J|](\mathcal{D}_{\tau_1}^{\tau_2})+\iint_{\mathcal{D}_{\tau_1}^{\tau_2}}|J_{\Lb}||\rho|dxdt.
\end{align*}
This together with the above argument by using Gronwall's inequality imply the $r$-weighted energy estimate for the Maxwell field in the interior region.
\end{proof}

\section{Decay estimates for the linear solutions}
\label{sec:decay:lin}

In this section we derive energy flux decay both for the linear Maxwell field and linear complex scalar field under appropriate assumptions.
We use bootstrap argument to construct global solutions of the nonlinear \eqref{EQMKG}. The first step is to study the decay properties of the linear solutions. Recall that $F=dA$ with $A$ the connection used to define the covariant derivative
$D$. Our strategy is that we make assumptions on $J_{\mu}=\pa^\nu F_{\mu\nu}$ to obtain estimates for the linear solution $F$. We then use these estimates to derive estimates for the solutions of the linear
covariant wave equation $\Box_A\phi=0$. As in the equation \eqref{EQMKG} the nonlinearity $J$ is quadratic in $\phi$, by making use of the smallness of the scalar field we then can close our bootstrap assumption on $J$. The difficulty is that
the Maxwell field $F$ is no longer small and the existence of nonzero charge.

 Assume that $F=dA$ has charge $q_0$ and splits into the charge part and chargeless part
  \[
   F=\chi_{\{r>t+R\}}q_0 r^{-2}dt\wedge dr+\bar F.
  \]
Let $\J=(J_{e_1}, J_{e_2})$ be the angular component of $J$. Denote
\begin{equation}
\label{eq:def4NkF}
\begin{split}
 m_k&=\sum\limits_{l\leq k}I^{1+\ga_0}_{1+\ep}[\mathcal{L}_Z^l \J](\{r\geq R\})+I^{2+\ga_0}_0[\mathcal{L}_Z^l J_L](\{ r\geq R\})+I^{1+\ep}_{1+\ga_0}[|\mathcal{L}_Z^l \J|+|\mathcal{L}_Z^l J_L|](\{t\geq 0\})\\
 & +I^{1-\ep}_{1+\ga_0+2\ep}[\mathcal{L}_Z^l J_{\Lb}](\{t\geq 0\})+I_{1+\ga_0}^0[\nabla \mathcal{L}_Z^{l-1}J](\{r\leq 2R\})+|q_0|\sup\limits_{\tau\leq 0}\tau_+^{1+\ga_0}\iint_{\mathcal{D}_{\tau}^{-\infty}}|J_{\Lb}|r^{-2}dxdt,\\
 M_k &=m_k+ \sum\limits_{l\leq k}E^k_0[\bar F]+1+|q_0|,
\end{split}
 \end{equation}
 where we recall from line \eqref{eq:def4E0kFphi} in Section \ref{sec:notation} that $E_0^k[F]$ denotes the weighted Sobolev norm of the Maxwell field with weights $r_+^{1+\ga_0}$ on the initial hypersurface $t=0$. The integral of $|J_{\Lb}|r^{-2}$ is used to control the interaction of the nonzero charge with the nonlinearity $J$ in the exterior region.

To derive the energy decay for the Maxwell field, we assume that $M_k$ is finite. Note that $m_k$ can be viewed as the assumption on the inhomogeneous term $J$ which depends on the scalar field $\phi$ in the equation \eqref{EQMKG}. According
to our assumption, it is small. $E^k_0[\bar F]$ denotes the size of the initial data for the chargeless part of the Maxwell field. Hence it is large in our setting. The charge $q_0$ is a constant depending on the initial data of the
scalar field.

\subsection{Energy decay for the Maxwell field}
We derive energy flux decay for the Maxwell field $F$ under the assumption that $M_k$ is finite.
\begin{prop}
 \label{prop:Endecay:cur}
 In the interior region for all $0\leq \tau_1\leq \tau_2$, $v_0\geq\frac{\tau_2+R}{2}$, we have the following energy flux decay for the Maxwell field:
  \begin{equation}
  \label{eq:Endecay:cur:in}
I^{-1-\ep}_0[F](\mathcal{D}_{\tau_1}^{\tau_2})+ \int_{\tau_1}^{\tau_2}\int_{\Si_{\tau}}\frac{\rho^2+\si^2}{1+r} dxd\tau+E[F](\Hb_{v_0}^{\tau_1,\tau_2})+E[F](\Si_{\tau_1})\les (\tau_1)_+^{-1-\ga_0}M_0.
\end{equation}
In the exterior region $\{r\leq R+t\}$ for all $\tau_2\leq \tau_1\leq 0$, $0\leq p\leq 1+\ga_0$, we have
\begin{align}
 \label{eq:Endecay:cur:ex}
&I^{-1-\ep}_0[\bar F](\mathcal{D}_{\tau_1}^{\tau_2})+E[\bar F](\Hb_{-\tau_2^*}^{\tau_1^*})+E[\bar F](H_{\tau_1^*})+(\tau_1)_+^{-p}\int_{H_{\tau_1^*}}r^{p+2}|\a|^2dvd\om
\les  (\tau_1)_+^{-1-\ga_0}M_0.
\end{align}
\end{prop}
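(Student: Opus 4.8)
The plan is to prove the exterior estimate \eqref{eq:Endecay:cur:ex} first, since the interior bound \eqref{eq:Endecay:cur:in} will use the flux through $H_{\tau_1^*}$ that the exterior analysis produces (in particular on the interface $\{t+R=r\}$). In the exterior region I work with the chargeless field $\bar F$, which satisfies the Bianchi identity and has $\delta\bar F = J$ (the charge part $q_0 r^{-2}dt\wedge dr$ is closed and co-closed away from the origin, so subtracting it does not change the source). First I would establish a hierarchy of $r$-weighted energy estimates by applying Proposition \ref{prop:pWE:cur} (estimate \eqref{eq:pWE:cur:ex}) to $\bar F$ over $\mathcal{D}_{\tau_1}^{\tau_2}$, for $p$ ranging over $0\le p\le 1+\ga_0$. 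The right-hand side of \eqref{eq:pWE:cur:ex} is controlled by the initial weighted norm $\int_{B_{R-\tau_1}^{R-\tau_2}}r^p|\bar F|^2\,dx$, which is bounded by $E_0^0[\bar F]\lesssim M_0$ since $p\le 1+\ga_0$, plus the weighted spacetime norms of $J$, which are exactly the quantities packaged into $m_0$ (the terms $I^{1+\ga_0}_{1+\ep}[\J]$, $I^{2+\ga_0}_0[J_L]$, $I^{1+\ep}_{1+\ga_0}[|\J|+|J_L|]$ over $\{r\ge R\}$, $\{t\ge0\}$). The extra interaction term $|q_0|\iint |J_{\Lb}|r^{-2}$ arises precisely because $F=\bar F+q_0r^{-2}dt\wedge dr$, so the identity \eqref{eq:ILE:cur:ex}/\eqref{eq:pWE:cur:ex:id} applied to $F$ generates a cross term $q_0 r^{-2}\cdot(\text{source})$, and this is what the last term in $m_0$ absorbs.

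Next I would run the Dafermos–Rodnianski pigeonhole argument. Starting from the $p=1+\ga_0$ estimate, the spacetime integral $\iint r^{\ga_0}(\cdots)$ being finite forces a dyadic sequence of cones $H_{\tau_n^*}$ on which $\int_{H_{\tau_n^*}}r^{1+\ga_0}|\bar F|^2 r^2\,dvd\om$ decays; combining with the $p=\ga_0$ (or lower) estimate and the monotonicity of the plain energy flux $E[\bar F](H_{\tau^*})$ (which follows from \eqref{eq:pWE:cur:ex} at $p=0$, interpreted as an energy identity up to controlled sources), one upgrades this to decay along the full continuum: $E[\bar F](H_{\tau_1^*})\lesssim \tau_{1,+}^{-1-\ga_0}M_0$. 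The same chain gives the decay of $(\tau_1)_+^{-p}\int_{H_{\tau_1^*}}r^{p+2}|\a|^2$ for $0\le p\le 1+\ga_0$, by interpolating the hierarchy. The integrated local energy term $I^{-1-\ep}_0[\bar F](\mathcal{D}_{\tau_1}^{\tau_2})$ and the incoming flux $E[\bar F](\Hb_{-\tau_2^*}^{\tau_1^*})$ are then read off from \eqref{eq:ILE:cur:ex}, whose right-hand side is now controlled by the decaying initial flux plus the small $m_0$-terms.

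For the interior estimate \eqref{eq:Endecay:cur:in} I would repeat this scheme using Proposition \ref{prop:ILEcur} (estimate \eqref{eq:ILE:cur:in}) and Proposition \ref{prop:pWE:cur} (estimate \eqref{eq:pWE:cur:in}) over $\bar{\mathcal{D}}_{\tau_1}^{\tau_2}$, now with the crucial observation that in the interior region $\{r\le t+R\}$ the full field $F$ equals $\bar F$ (the charge cutoff $\chi_{\{r>t+R\}}$ vanishes), so no charge-interaction terms appear and one may work directly with $F$. The boundary contribution on the cylinder $\{r=R\}$ is handled exactly as in the proof of Proposition \ref{prop:pWE:cur} — bound it at $p=0$ via \eqref{eq:ILE:cur:in} — and the boundary on $H_{\tau_1^*}\cap\{r\ge R\}$ is supplied by the exterior estimate \eqref{eq:Endecay:cur:ex} already proven, with its $\tau_{1,+}^{-1-\ga_0}$ decay. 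A pigeonhole in $\tau$ across the hierarchy $0\le p\le 1+\ga_0$, together with Gronwall in $\tau$ to dispose of the borderline inhomogeneous term $r^p u_+^{-1-\ep}|\a|^2$, then yields $E[F](\Si_{\tau_1})\lesssim \tau_{1,+}^{-1-\ga_0}M_0$ and the companion integrated-decay and flux terms. The main obstacle I anticipate is the careful bookkeeping of the charge-interaction term in the exterior region: one must check that the weighted energy identity for $F$ really does produce only the single cross term $q_0 r^{-2}J_{\Lb}\rho$-type contribution (all other cross terms vanishing by the null structure $F_L{}^{\Lb}=0$ and the fact that the charge 2-form has only a $\rho$-component), and that the weight in $m_0$'s last term, $\tau_+^{1+\ga_0}\iint_{\mathcal{D}_\tau^{-\infty}}|J_{\Lb}|r^{-2}$, is exactly strong enough to close the pigeonhole at rate $1+\ga_0$ without eating into the smallness budget.
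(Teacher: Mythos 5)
Your outline of the interior argument is broadly right (pigeonhole in $\tau$ driven by the hierarchy $0\le p\le 1+\ga_0$, seeded by the flux bound through $H_{0^*}$ inherited from the exterior, plus the observation that $F=\bar F$ when $r\le t+R$), and your remarks on the charge cross term and the null structure of the charge $2$-form in the exterior are also on target. The gap is in the exterior region, and it is a real one.

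You bound the initial contribution by $\int_{B_{R-\tau_1}^{R-\tau_2}}r^p|\bar F|^2\,dx\les E_0^0[\bar F]\les M_0$ and then try to manufacture the decay factor $(\tau_1)_+^{-1-\ga_0}$ via a Dafermos--Rodnianski pigeonhole along a dyadic sequence of outgoing cones. That route does not produce the stated rate. First, a pigeonhole on a \emph{bounded} (not decaying) spacetime integral $\iint r^{\ga_0}(\cdots)$ can only give $u_+^{-1}$ decay along a dyadic subsequence, not $u_+^{-1-\ga_0}$. Second, upgrading that subsequence bound to the full continuum requires a monotonicity statement for $E[\bar F](H_{\tau^*})$ as $\tau\to-\infty$; in the exterior region such monotonicity is not available, since decreasing $\tau_1$ enlarges the annulus of initial data feeding into $H_{\tau_1^*}$, and \eqref{eq:pWE:cur:ex} at $p=0$ only gives boundedness of the flux, not monotone decay. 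The mechanism actually used in the paper is both simpler and necessary: in the exterior region $r\ge\tfrac12 u_+$, so on the annulus $B_{R-\tau_1}^{R-\tau_2}$ one has $r^p=r^{1+\ga_0}\cdot r^{p-1-\ga_0}\le r^{1+\ga_0}(\tau_1)_+^{p-1-\ga_0}$ for $p\le 1+\ga_0$, and likewise the $J$-terms entering \eqref{eq:pWE:cur:ex} and \eqref{eq:ILE:cur:ex} pick up the same $(\tau_1)_+^{p-1-\ga_0}$ factor from the weights built into $m_0$. The decay of \eqref{eq:Endecay:cur:ex} is thus read directly off the $r$-weighted estimate and the integrated local energy estimate, with no pigeonhole at all; the pigeonhole is reserved for the interior, where the domain is genuinely foliated in $\tau$ and the energy flux is almost-monotone. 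You should also note, for the $|J_{\Lb}||\rho|$ term in the interior, the specific interpolation $r_+^{\ep-1}\tau_+^{-2\ep}\le r_+^{-1-\ep}+\tau_+^{-1-\ga_0}r_+^{\ga_0}$ the paper uses to split it between the integrated local energy and the $r$-weighted estimate with the correct $\tau$-decay; your Gronwall remark does not quite capture this.
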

\begin{proof}
Let's first consider the estimates in the exterior region. By the definition of $M_0$, we derive that
\begin{align*}
  \int_{B_{R-\tau_1}^{R-\tau_2}}r^p |\bar F|^2 dx+&I^{p}_{1+\ep}[\J](\mathcal{D}_{\tau_1}^{\tau_2})+I^{p+1}_0[J_{L}](\mathcal{D}_{\tau_1}^{\tau_2})\les (\tau_1)_+^{p-1-\ga_0}M_0,\quad 0\leq p\leq 1+\ga_0.
\end{align*}
Here note that in the exterior region $r\geq \frac{1}{2}u_+$. Then the $r$-weighted energy estimate \eqref{eq:pWE:cur:ex} implies that
\begin{equation*}
\begin{split}
 \int_{H_{\tau_1^*}^{-\tau_2^*}}r^{p+2}|\a|^2dvd\om+\iint_{\mathcal{D}_{\tau_1}^{\tau_2}}r^{p+1}(|\a|^2+\bar\rho^2+\si^2)dvd\om du
 \les (\tau_1)_+^{p-1-\ga_0}M_0.
 \end{split}
\end{equation*}
 This estimate can be used to bound the integral of $|J_{\Lb}||\rho|$ on the right hand side of \eqref{eq:ILE:cur:ex}. Recall that $\rho=q_0 r^{-2}+\bar \rho$ when $r\geq R+t$. We then can show thatl
\begin{align*}
\iint_{\mathcal{D}_{\tau_1}^{\tau_2}}|J_{\Lb}||\rho|dxdt\les \iint_{\mathcal{D}_{\tau_1}^{\tau_2}}|q_0||J_{\Lb}|r^{-2}+|\bar\rho|^2 r^{\ep-1}u_+^{-\ep}+
|J_{\Lb}|^2r^{1-\ep}u_+^{\ep}dxdt\les M_0(\tau_1)_+^{-1-\ga_0}.
\end{align*}
The decay estimate \eqref{eq:Endecay:cur:ex} then follows from the energy estimate \eqref{eq:ILE:cur:ex} as
 \[
E[\bar F](B_{R-\tau_1}^{R-\tau_2})+I^{1+\ep}_0[|\J|+|J_{L}|](\mathcal{D}_{\tau_1}^{\tau_2})\les (\tau_1)_+^{-1-\ga_0}M_0.
 \]
For the decay estimates in the interior region, we use the pigeon hole argument in \cite{newapp}. First by interpolation, we derive from the definition of $M_0$ that
\begin{align*}
I^{\max\{p, 1+\ep\}}_{\min\{p, 1+\ep\}}[\J](\mathcal{D}_{\tau_1}^{\tau_2})+I^{p+1}_0[J_{L}](\mathcal{D}_{\tau_1}^{\tau_2})\les (\tau_1)_+^{p-1-\ga_0}M_0
\end{align*}
for all $\ep\leq p\leq 1+\ga_0$. To bound $|J_{\Lb}||\rho|$, we use Cauchy-Schwaz:
\[
\iint_{\mathcal{D}_{\tau_1}^{\tau_2}}|J_{\Lb}||\rho|dxdt\les \iint_{\mathcal{D}_{\tau_1}^{\tau_2}}\ep_1|\rho|^2 r_+^{\ep-1}+\ep_1^{-1}r_+^{1-\ep}|J_{\Lb}|^2 dxdt,\quad \forall \ep_1>0.
\]
Here note that in the interior region $\rho=\bar \rho$. For $\ep\leq p\leq 1+\ga_0$ and sufficiently small $\ep_1$ the first term could be absorbed from the $r$-weighted energy estimates \eqref{eq:pWE:cur:in} and the second term is bounded above by $M_0(\tau_1)_+^{-1-\ga_0}$ by the definition of $M_2$.

To apply the pigeon hole argument, we need to control the weighted energy flux through the initial hypersurface $\Si_0$ of the interior region. Note that $H_{-\frac{R}{2}}=H_{0^*}$. The weighted energy flux bound through $H_{0^*}$ follows from the decay estimate \eqref{eq:Endecay:cur:ex} in the exterior region:
\begin{align*}
E[F](H_{0^*})+\int_{H_{0^*}}r^{3+\ga_0}|\a|^2dvd\om\les M_0.
\end{align*}
Here we note that on the boundary $H_{0^*}$ the charge part has bounded energy. Hence
take $p=1+\ga_0$, $\tau_1=0$ in the $r$-weighted energy estimate \eqref{eq:pWE:cur:in}. We derive that
\begin{equation*}
 \int_{H_{\tau_2^*}}r^{3+\ga_0}|\a|^2dvd\om+\int_{0}^{\tau_2}\int_{H_{\tau^*}}r^{\ga_0+2}(|\a|^2+\si^2+\rho^2)dvd\om\les M_0,\quad \forall \tau_2\geq 0.
\end{equation*}
We conclude that there exists a dyadic sequence $\{\tau_{n}\}$, $n\geq 3$ such that
\begin{equation*}
 \int_{H_{\tau_n^*}}r^{\ga_0+2}|\a|^2dvd\om \les (\tau_n)_+^{-1}M_0,\quad \la^{-1}\tau_n\leq \tau_{n+1}\leq \la \tau_n.
\end{equation*}
for some constant $\la$ depending only on $\ga_0$, $\ep$, $R$. Interpolation implies that
\[
 \int_{H_{\tau_n^*}}r^{1+2}|\a|^2dvd\om \les (\tau_n)_+^{-\ga_0}M_0.
\]
To bound $|J_{\Lb}||\rho|$ on the right hand side of the energy estimate \eqref{eq:ILE:cur:in}, we interpolate $|\rho|$ between the integrated local energy estimate and the above $r$-weighted energy estimate:
\begin{align*}
\iint_{\mathcal{D}_{\tau_1}^{\tau_2}}|J_{\Lb}||\rho|dxdt &\les \iint_{\mathcal{D}_{\tau_1}^{\tau_2}}\ep_1|\rho|^2 (r_+^{-\ep-1}+r_+^{\ga_0}\tau_+^{-1-\ga_0})+\ep_1^{-1}\tau_+^{2\ep}r_+^{1-\ep}|J_{\Lb}|^2 dxdt\\
&\les \ep_1 I^{-1-\ep}_0[F](\mathcal{D}_{\tau_1}^{\tau_2})+\ep_1^{-1}M_0(\tau_1)_+^{-1-\ga_0},\quad \forall 1>\ep_1>0.
\end{align*}
Here we have used the bound
\[
r_+^{\ep-1}\tau_+^{-2\ep}\leq r_+^{-1-\ep}+\tau_+^{-1-\ga_0}r_+^{\ga_0}.
\]
Take $\ep_1$ to be sufficiently small. From the energy estimate \eqref{eq:ILE:cur:in}, we then obtain
\begin{align*}
 I^{-1-\ep}_0[F](\mathcal{D}_{\tau_1}^{\tau_2})+E[F](\Si_{\tau_2})\les E[F](\Si_{\tau_1})+(\tau_1)_+^{-1-\ga_0}M_0
\end{align*}
for all $0\leq \tau_1\leq \tau_2$, $0<\ep_1<1$. In particular, we have
\[
 \int_{\tau_n}^{\tau_2}\int_{\{r\leq R\}\cap \{t=\tau\}}|F|^2 dx d\tau\les E[F](\Si_{\tau_1})+(\tau_1)_+^{-1-\ga_0}M_0.
\]
Then combine this integrated local energy estimate with the $r$-weighted energy estimate \eqref{eq:pWE:cur:in} with $p=1$. For all $\tau_n\leq \tau_2$, we derive that
\begin{align*}
 \int_{\tau_n}^{\tau_2}E[F](\Si_{\tau})d\tau&\les \int_{\tau_n}^{\tau_2}\int_{\{r\leq R\}\cap \{t=\tau\}}|F|^2 dx d\tau+\int_{\tau_n}^{\tau_2}\int_{H_{\tau^*}}
 (|\a|^2+\si^2+\rho^2)r^2dvd\om\\
 &\les \int_{H_{\tau_n^*}}r^{1+2}|\a|^2dvd\om+E[F](\Si_{\tau_n})+(\tau_n)_+^{-\ga_0}M_0\\
 &\les E[F](\Si_{\tau_n})+(\tau_n)_+^{-\ga_0}M_0.
\end{align*}
On the other hand, for all $\tau\leq \tau_2$, we have
\[
E[F](\Si_{\tau_2})\leq E[F](\tau)+(\tau)_+^{-1-\ga_0}M_0\les E[F](\Si_0)+M_0\les M_0.
\]
Then from the previous estimate, we can show that
\begin{align*}
(\tau_2-\tau_n)E[F](\Si_{\tau_2})\les E[F](\Si_{\tau_n})+(\tau_n)_+^{-\ga_0}M_0\les M_0.
\end{align*}
The above estimate holds for all $\tau_2\geq \tau_n$. In particular, we obtain the coarse bound
\[
E[F](\Si_{\tau})\les \tau_+^{-1}M_0,\quad \forall \tau\geq 0.
\]
Based on this coarse bound, we can take $\tau_2=\tau_{n+1}$ in the previous estimate. We then can show that
\[
(\tau_{n+1}-\tau_n)E[F](\Si_{\tau_{n+1}})\les (\tau_n)^{-\ga_0}M_0.
\]
As $\{\tau_n\}$ is dyadic, we conclude that
\[
E[F](\Si_{\tau_n})\les (\tau_n)^{-1-\ga_0}M_0,\quad \forall n\geq 3.
\]
Then using the energy estimate, we can show that for $\tau\in[\tau_n, \tau_{n+1}]$ we have
\[
E[F](\Si_{\tau})\les E[F](\tau_{n})+(\tau_n)_+^{-1-\ga_0}M_0\les (\tau_n)_+^{-1-\ga_0}M_0\les \tau_+^{-1-\ga_0}M_0.
\]
Having this energy flux decay, the integrated local energy decay \eqref{eq:Endecay:cur:in} follows from the integrated local energy estimate \eqref{eq:ILE:cur:in}.
\end{proof}
Since the Lie derivative $\mathcal{L}_{Z}$ commutes with the linear Maxwell equation from the commutator Lemma \ref{lem:commutator}, as a corollary of the above energy decay proposition, we also have the
energy decay estimates for the higher order derivatives of the Maxwell field.
\begin{cor}
\label{cor:Endecay:cur:hide}
 We have the following energy flux decay for the $k$-th derivative of the Maxwell field:
  \begin{equation}
  \label{eq:Endecay:cur:hide}
E[\mathcal{L}_Z^k \bar F](\Si_{\tau})\les (\tau)_+^{-1-\ga_0}M_k,\quad \forall \tau\in\mathbb{R}.
\end{equation}
\end{cor}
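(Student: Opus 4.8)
The plan is to bootstrap on the order $k$ of differentiation, using Proposition \ref{prop:Endecay:cur} as the base case $k=0$ and the commutator Lemma \ref{lem:commutator} to pass from order $k-1$ to order $k$. The key structural fact is that $\mathcal{L}_Z$ commutes with the divergence operator $\delta$ on $2$-forms for killing $Z$: by Lemma \ref{lem:commutator}, $\pa^\mu(\mathcal{L}_Z G)_{\mu\nu}=(\mathcal{L}_Z\delta G)_\nu$. Hence if $F=dA$ has $\delta F=J$, then $\mathcal{L}_Z^k\bar F$ is again a $2$-form satisfying the Bianchi identity (Lie derivatives of closed forms are closed) whose divergence is $\mathcal{L}_Z^k(\delta\bar F)$. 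One must first observe that $\delta\bar F = \delta F - q_0\,\delta(\chi_{\{r>t+R\}}r^{-2}dt\wedge dr)$, and that the charge $2$-form $q_0 r^{-2}dt\wedge dr$ is divergence-free away from $\{r=t+R\}$, so up to the (harmless, lower-order) contribution supported near the boundary cone, $\delta\bar F$ coincides with $J$ in the interior and with $\bar J:=J - q_0\,(\cdots)$ in the exterior. Thus $\mathcal{L}_Z^k\bar F$ plays the role of ``$F$'' in Proposition \ref{prop:Endecay:cur} with inhomogeneous term $\mathcal{L}_Z^k J$ (modulo charge terms), and the norm $M_k$ defined in \eqref{eq:def4NkF} is precisely the one that controls this inhomogeneous term together with the initial data $E_0^k[\bar F]$.

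First I would fix $k$ and assume inductively that \eqref{eq:Endecay:cur:hide} holds with $k$ replaced by each $j<k$ (with $M_j\le M_k$). Applying Proposition \ref{prop:Endecay:cur} to the $2$-form $G:=\mathcal{L}_Z^k\bar F$, I need to check three inputs: (i) the weighted initial energy $E_0^k[\bar F]$ on $\Si_0$, which is part of $M_k$ by definition \eqref{eq:def4E0kFphi}, \eqref{eq:def4NkF}; (ii) the $J$-type spacetime norms appearing on the right of \eqref{eq:Endecay:cur:in}, \eqref{eq:Endecay:cur:ex}, namely $I^{1+\ep}_{1+\ga_0}$ and $I^{\max\{p,1+\ep\}}_{\min\{p,1+\ep\}}$ of $\mathcal{L}_Z^k\J$ and $I^{p+1}_0$ of $\mathcal{L}_Z^k J_L$, plus the weighted $I^{1-\ep}_{1+\ga_0+2\ep}[\mathcal{L}_Z^k J_{\Lb}]$ bound and the charge–nonlinearity interaction $|q_0|\sup_\tau \tau_+^{1+\ga_0}\iint|\mathcal{L}_Z^k J_{\Lb}|r^{-2}$ — all of which are collected into $m_k\le M_k$ in \eqref{eq:def4NkF}; and (iii) in the interior, the boundary energy flux of $\mathcal{L}_Z^k\bar F$ through $H_{0^*}$, which as in the proof of Proposition \ref{prop:Endecay:cur} is recovered from the exterior estimate. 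Since all these quantities are finite and bounded by $M_k$, the proof of Proposition \ref{prop:Endecay:cur} applies verbatim to $G$, yielding $E[\mathcal{L}_Z^k\bar F](\Si_\tau)\les \tau_+^{-1-\ga_0}M_k$ in the interior and the analogous exterior bound. For $\tau\le 0$ one runs only the exterior argument, which does not use the pigeonhole step and is even simpler.

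The main obstacle — and the only genuinely delicate point — is bookkeeping the charge and the boundary cone $\{r=t+R\}$ under commutation. The splitting $F=\chi_{\{r>t+R\}}q_0 r^{-2}dt\wedge dr+\bar F$ is not smooth across the cone, so $\mathcal{L}_Z^k$ of the charge piece produces distributional terms on $\{r=t+R\}$; one must verify that the genuine chargeless object is $\mathcal{L}_Z^k\bar F$ and that commuting does not generate new, uncontrolled charge. The clean way to see this is that $\pa_t$ and $\Om_{ij}$ are tangent to the cone $\{r=t+R\}$ (indeed $\pa_t$ shifts it and $\Om$ preserves each sphere), so $\mathcal{L}_Z$ maps the ansatz $\chi_{\{r>t+R\}}q_0 r^{-2}dt\wedge dr$ to something of the same structural type plus lower-order terms that land in $\mathcal{D}_\tau$ with the decay already budgeted in the last term of $m_k$ (the $|q_0|$ integral). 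A second, minor point is that one should record at each stage the pointwise and energy bounds on $\mathcal{L}_Z^{\le k-1}J$ that enter the definition of $m_k$; since $J=\Im(\phi\cdot\overline{D\phi})$ in the application these are ultimately controlled by the scalar-field estimates of later sections, but at the level of this Corollary they are simply hypotheses encoded in the finiteness of $M_k$. Once these two points are addressed, the result is a direct consequence of Lemma \ref{lem:commutator} and Proposition \ref{prop:Endecay:cur}.
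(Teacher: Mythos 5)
Your proposal is correct and takes the same route as the paper: note that $\mathcal{L}_Z$ preserves closedness and, by Lemma~\ref{lem:commutator}, sends $\delta\bar F$ to $\mathcal{L}_Z^k(\delta\bar F)=\mathcal{L}_Z^k J$, so $\mathcal{L}_Z^k\bar F$ satisfies the hypotheses of Proposition~\ref{prop:Endecay:cur} with source controlled by $m_k$ and initial data by $E_0^k[\bar F]$, both subsumed in $M_k$.

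One remark on the part you flag as the delicate point. Your claim that $\pa_t$ is tangent to the cone $\{r=t+R\}$ is not correct (the cone is $\{u=-R/2\}$, and $\pa_t(t-r)=1\neq 0$), but this does not actually matter. The energy argument in Proposition~\ref{prop:Endecay:cur} is run separately in the exterior region $\{r\geq t+R\}$ (for $\bar F$) and the interior region $\{r\leq t+R\}$ (where $F=\bar F$), with the cone $H_{0^*}$ serving only as the boundary across which the flux is passed; one never differentiates $\chi_{\{r>t+R\}}$ transversally. Even more directly, the Coulomb form $q_0 r^{-2}\,dt\wedge dr$ is $t$-independent and rotationally invariant, hence $\mathcal{L}_Z\bigl(q_0 r^{-2}\,dt\wedge dr\bigr)=0$ for every $Z\in\{\pa_t,\Om_{ij}\}$; so in each region $\mathcal{L}_Z^k\bar F=\mathcal{L}_Z^k F$ for $k\geq 1$, and no new charge-type terms are created by commutation. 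The induction on $k$ you set up is harmless but unnecessary: one can apply Proposition~\ref{prop:Endecay:cur} once, directly to $G=\mathcal{L}_Z^k\bar F$.
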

This decay estimate then leads to the integrated local energy and $r$-weighted energy estimates for the Maxwell field.
\begin{remark}
By using the finite speed of propagation, the estimates in the above proposition and corollary in the exterior region depend only on the data and $J$
in the exterior region $\{t+R\leq r\}$ instead of the whole spacetime. Therefore the quantity $M_k$ can be replaced by the corresponding one defined in the exterior region. However, the estimates in the interior region rely on the data in the whole space.
\end{remark}
\subsection{Pointwise bound for the Maxwell field}
\label{sec:pbdMaxwell}
The energy decay estimates derived in the previous section are sufficient to obtain pointwise bound for the Maxwell field $F$ after commuting the equation with $Z=\{\pa_t, \Om\}$ for sufficiently many times, e.g., in \cite{yang2}, four derivatives were used to show the pointwise bound for the solution. The aim of this section is to derive the pointwise bound for the Maxwell field $F$ merely assuming $M_2$ is finite, that is, we commute the equation with $Z$ for only twice. The difficulty is that we are lack of Klainerman-Sobolev embedding which leads to the decay of the solution directly, see e.g. \cite{LindbladMKG}. Our idea is that in the inner region $r\leq R$ we rely on elliptic estimates. In the outer region $r\geq R$, write the solutions as functions of $(u, v, \om)$. The angular momentum $\Om$ can be viewed as derivative on $\om$. The pointwise bound then follows by using a trace theorem on the null hypersurfaces and a Sobolev embedding on the sphere. Since we do not commute the equation with $L$ nor $\Lb$, those necessary energy estimates heavily rely on the null equations given in Lemma \ref{lem:nullMKG}.

Let's first consider the pointwise bound for the Maxwell field in the inner region $\{r\leq R\}$. To derive the pointwise bound, we use the vector fields $\pa_t$ and the angular momentum $\Om$ as commutators. Note that the angular momentum vanishes at $r=0$. In particular we are not able to get the robust estimates for the solution in the bounded region
$r\leq R$ merely from the angular momentum. We thus rely on the killing vector field $\pa_t$ andl elliptic estimates. The following proposition gives the estimates for the Maxwell field $F$ on the bounded region $\{r\leq R\}$.
\begin{prop}
 \label{prop:Est4F:in:R}
  For all $0\leq \tau$, $0\leq \tau_1<\tau_2$, we have
 \begin{align}
  \label{eq:Est4F:in:R}
  \int_{\tau_1}^{\tau_2}\sup_{|x|\leq R}|F|^2(\tau, x)d\tau&\les  \int_{\tau_1}^{\tau_2}\int_{r\leq R}|\pa^2 F|^2 dxdt \les M_2(\tau_1)_+^{-1-\ga_0},\\
 \label{eq:Est4F:in:R:p}
  |F|^2(\tau, x)&\les M_2 \tau_+^{-1-\ga_0},\quad \forall |x|\leq R.
  \end{align}
\end{prop}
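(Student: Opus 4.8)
The plan is to prove the two estimates for the Maxwell field in the inner region $\{r\leq R\}$ by a combination of elliptic estimates on spatial slices, a trace/Sobolev embedding in the $t$-direction, and the energy decay already established in Proposition \ref{prop:Endecay:cur} and Corollary \ref{cor:Endecay:cur:hide}. The first reduction, namely
\[
\int_{\tau_1}^{\tau_2}\sup_{|x|\leq R}|F|^2(\tau,x)\,d\tau\les\int_{\tau_1}^{\tau_2}\int_{r\leq 2R}|\pa^2 F|^2+|\pa F|^2+|F|^2\,dxdt,
\]
is a standard Sobolev embedding $H^2(B_{2R})\hookrightarrow L^\infty(B_R)$ applied on each slice $t=\tau$ and then integrated in $\tau$; the enlarged ball $B_{2R}$ costs nothing since we have control on the whole interior region. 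So the first half of \eqref{eq:Est4F:in:R} follows immediately, and it remains to bound the right-hand side spacetime integral of $|\pa^{\leq 2}F|^2$ over $\{r\leq 2R\}\cap\{\tau_1\leq t\leq\tau_2\}$ by $M_2(\tau_1)_+^{-1-\ga_0}$.

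To get that bound I would first control the $Z$-derivatives. The commutator Lemma \ref{lem:commutator} shows $\mathcal{L}_Z$ commutes with $\delta$, so $\mathcal{L}_Z^{\leq 2}F$ satisfies the Bianchi identity with source $\mathcal{L}_Z^{\leq 2}J$; hence Corollary \ref{cor:Endecay:cur:hide} gives $E[\mathcal{L}_Z^k\bar F](\Si_\tau)\les\tau_+^{-1-\ga_0}M_k$ for $k\leq 2$, and since in the bounded region $r\leq R$ the charge part $\chi_{\{r>t+R\}}q_0r^{-2}dt\wedge dr$ vanishes (for $\tau\ge0$), this is energy decay for $\mathcal{L}_Z^{\leq2}F$ itself. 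In particular the integrated local energy estimate \eqref{eq:ILE:cur:in} applied to $\mathcal{L}_Z^{\leq 2}F$ controls $\int_{\tau_1}^{\tau_2}\int_{r\leq R}|\mathcal{L}_Z^{\leq2}F|^2\,dxdt$ by $M_2(\tau_1)_+^{-1-\ga_0}$. The remaining task is to trade $Z$-derivatives and $\mathcal{L}_Z^{\leq2}F$ for the full spatial derivatives $\pa^{\leq 2}F$. Here is where elliptic estimates enter: on each slice $\{t=\tau\}$, the two-form $F$ (restricted to $\mathbb{R}^3$, i.e. the pair $(E,H)$) satisfies a div-curl system — $\mathrm{div}(E)=J_0$, $\mathrm{curl}(E)=-\pa_t H$, $\mathrm{div}(H)=0$, $\mathrm{curl}(H)=J+\pa_t E$ — coming from the Maxwell equations and the Bianchi identity. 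Elliptic (div-curl) estimates on the ball $B_{2R}$ then bound $\|\pa F\|_{L^2(B_R)}$ by $\|\pa_t F\|_{L^2(B_{2R})}+\|J\|_{L^2(B_{2R})}+\|F\|_{L^2(B_{2R})}$, and iterating once more bounds $\|\pa^2 F\|_{L^2(B_R)}$ by $\|\pa_t^{\leq2}F\|_{L^2(B_{2R})}+\|\pa_t^{\leq1}J\|_{L^2(B_{2R})}+\|\nabla J\|_{L^2(B_{2R})}+\|F\|_{L^2(B_{2R})}$. Since $\pa_t\in Z$, the $\pa_t$-terms are already controlled by the previous paragraph, and the $J$-terms are controlled after integration in $\tau$ by the quantity $I_{1+\ga_0}^0[\nabla\mathcal{L}_Z^{\leq1}J](\{r\leq2R\})$ together with the other $J$-norms built into $m_2\le M_2$, which contribute at most $M_2(\tau_1)_+^{-1-\ga_0}$. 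Assembling these gives $\int_{\tau_1}^{\tau_2}\int_{r\leq2R}|\pa^{\leq2}F|^2\,dxdt\les M_2(\tau_1)_+^{-1-\ga_0}$, completing \eqref{eq:Est4F:in:R}.

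For the genuinely pointwise bound \eqref{eq:Est4F:in:R:p} the idea is to upgrade the $t$-integrated sup-bound to a pointwise-in-$t$ bound using a one-dimensional trace inequality in the $\tau$ variable: for a function $g(\tau)=\sup_{|x|\le R}|F|^2(\tau,x)$ one has, roughly, $g(\tau_0)\les\int_{\tau_0}^{2\tau_0}g\,d\tau+\int_{\tau_0}^{2\tau_0}|\pa_\tau g|\,d\tau$, and $\pa_\tau g$ is controlled after Cauchy–Schwarz by $\sup|F|\cdot\sup|\pa_t F|$, to which the same $H^2\hookrightarrow L^\infty$ plus div-curl machinery applies (now with one more $\pa_t$, i.e. needing $M_2$ and the $\nabla\mathcal{L}_Z^{\le1}J$ term). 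Choosing the dyadic interval $[\tau_0,2\tau_0]\ni\tau$ and using that the right side of \eqref{eq:Est4F:in:R} over that interval is $\les M_2\tau_0^{-1-\ga_0}\sim M_2\tau_+^{-1-\ga_0}$ yields the claim; the small-$\tau$ (i.e. $\tau$ near $0$) case is handled directly by the initial energy bound $E_0^2[\bar F]\le M_2$ together with local energy estimates, which is why the statement carries $\tau_+=1+|\tau|$ rather than $\tau$.

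The main obstacle I anticipate is the careful bookkeeping of the div-curl elliptic estimates near $r=0$, where the angular vector fields $\Om$ degenerate: one cannot recover all of $\pa^2F$ from $\pa_t$-derivatives and $\Om$-derivatives alone, so the div-curl structure of the Maxwell/Bianchi system is essential, and one must verify that each spatial derivative of $F$ that is \emph{not} of the form $\pa_t^{\le2}F$ or $\mathcal{L}_\Om^{\le2}F$ is genuinely recovered by elliptic regularity with source terms that are exactly the $J$-norms appearing in the definition of $m_2$ (in particular the term $I_{1+\ga_0}^0[\nabla\mathcal{L}_Z^{l-1}J](\{r\le2R\})$, which is tailored for precisely this purpose). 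A secondary technical point is ensuring that all the constants in the Sobolev and trace inequalities depend only on $R$ (hence are absorbed in $\les$) and that the charge cutoff plays no role in $\{r\le R\}$ for $\tau\ge0$, both of which are routine.
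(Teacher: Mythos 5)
Your argument for the integrated estimate \eqref{eq:Est4F:in:R} is essentially the paper's: slice-wise elliptic (div-curl) estimates for $(E,H)$ convert $\pa_t$- and $J$-control into full $\pa^{\leq 2}F$-control, Sobolev embedding $H^2_x(B_{2R})\hookrightarrow L^\infty(B_R)$ gives the first inequality, and the integrated local energy estimate \eqref{eq:ILE:cur:in} for $\mathcal{L}_Z^{\leq 2}F$ together with the $J$-norms built into $m_2$ (in particular $I_{1+\ga_0}^0[\nabla\mathcal{L}_Z^{l-1}J](\{r\leq 2R\})$) supply the decaying right-hand side.

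Your route to the pointwise bound \eqref{eq:Est4F:in:R:p} has a genuine gap: a derivative loss. You propose a trace-in-$t$ inequality on $g(\tau)=\sup_{|x|\leq R}|F|^2$ and control $\pa_\tau g$ via $\sup|F|\cdot\sup|\pa_t F|$. To bound $\sup|\pa_t F|$ by the same $H^2\hookrightarrow L^\infty$ plus div-curl machinery you would need $\|\pa_t F\|_{H^2_x(B_{2R})}$, and the slice elliptic estimate for $\pa_t F$ inevitably requires $\pa_t^3 F\in L^2_x$ on the enlarged ball --- one more derivative than $\mathcal{L}_Z^{\leq 2}F$ and hence $M_2$ controls. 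Your parenthetical ``needing $M_2$ and the $\nabla\mathcal{L}_Z^{\leq 1}J$ term'' underestimates the cost; this step actually needs $M_3$, while the proposition is stated under $M_2<\infty$ only.

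The paper sidesteps this by placing the trace-in-$t$ inequality at the level of the \emph{source} $J$, not the solution. At fixed time $\tau$, the slice elliptic estimate bounds $\|F\|_{H^2_x(B_R)}^2$ by $\|\pa_t^{\leq 2}F\|_{L^2_x(B_{2R})}^2+\|\nabla J\|_{L^2_x(B_{2R})}^2+\|\pa_t^{\leq 1}J\|_{L^2_x(B_{2R})}^2$. The fixed-time bound on $\|\mathcal{L}_Z^{\leq 2}F\|_{L^2_x(B_{2R})}$ (which dominates $\|\pa_t^{\leq 2}F\|_{L^2_x}$) comes \emph{not} from a trace but from the $\pa_t$-energy identity on the region $\mathcal{D}_1$ between $\Si_{\tau^+}$ ($\tau^+=\max\{\tau-R,0\}$) and the terminal slab $\{t=\tau,\ |x|\leq 2R\}$, using $E[\mathcal{L}_Z^2 F](\Si_{\tau^+})\les M_2\tau_+^{-1-\ga_0}$ from Corollary \ref{cor:Endecay:cur:hide}. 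The fixed-time bound on $\|\nabla J\|_{L^2_x(B_{2R})}$ comes from the one-dimensional trace inequality
\[
\|\nabla J(\tau)\|_{L^2_x(B_{2R})}^2\les\sum_{k\leq 1}\int_{\tau}^{\tau+1}\|\nabla\mathcal{L}_Z^k J\|_{L^2_x(B_{2R})}^2\,dt\les M_2\tau_+^{-1-\ga_0},
\]
which only costs one $\pa_t$ and is exactly covered by $m_2$. Sobolev embedding then yields \eqref{eq:Est4F:in:R:p}. The moral: take the trace in $t$ on $J$, and use the energy flux through the terminal slab for $F$ itself, rather than tracing $\sup|F|^2$ in $\tau$.
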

\begin{remark}
Estimate \eqref{eq:Est4F:in:R:p} gives the pointwise bound for $F$ in the inner region $\{r\leq R\}$ but it is weaker than the integral version \eqref{eq:Est4F:in:R} in the sense of decay rate. It is this integrated decay estimate that allows us to control the nonlinearities in the inner region. In other words, it is not necessary to show the improved decay of the solution in the inner region by using our approach, see e.g. \cite{improvLuk}. However this does not mean that our method is not able to obtain the improved decay in the inner region. The improved decay can be derived by commuting the equation with the vector field $L$. For details about this, we refer to \cite{Volker:LSchw}.
\end{remark}
\begin{proof}
 We use elliptic estimates to prove this proposition. At fixed time $t$, let $E$, $H$ be the electric and magnetic part of the Maxwell field $F$. Let $B_r$ be the ball with radius $r$, that is, $B_r=\{t, |x|\leq r\}$. The Maxwell equation can be written as follows:
 \begin{align*}
  div(E)=J_0,\quad \pa_t H +cur(E)=0,\\
  div(H)=0,\quad \pa_t E-cur(H)=\bar J,
 \end{align*}
where $\bar J=(J_1, J_2, J_3)$ is the spatial part of $J$. Therefore by using elliptic theory we derive that
\begin{align*}
 \sum\limits_{k\leq 1}\|\pa_t^k F\|_{H^1_x(B_{ \frac{3R}{2}})}^2\leq \sum\limits_{k\leq 1}\|\pa_t^k H\|_{H^1_x(B_{ \frac{3R}{2}})}^2+\|\pa_t^k E\|_{H^1_x(B_{ \frac{3R}{2}})}^2\les \sum\limits_{k\leq 1}
 \|\pa_t^k J\|_{L^2_x(B_{ 2R})}^2+\|\pa_t^{k+1} F\|_{L^2_x(B_{ 2R})}^2.
\end{align*}
Make use of the above estimates with $k=1$. Differentiate the linear Maxwell equation with the spatial derivative $\nabla$. Using elliptic estimates again, we then obtain
\begin{align*}
 \|\pa F\|_{H^1_x(B_{ R})}^2\les \|\pa J\|_{L^2_x(B_{ 2R})}^2+\|\pa^2_t F\|_{L^2_x(B_{ 2R})}^2.
\end{align*}
Here we omitted the lower order terms. Integrate the above inequality from time $\tau_1$ to $\tau_2$. We derive
\begin{align*}
 \int_{\tau_1}^{\tau_2}\int_{r\leq R}|\pa^2 F|^2 dxdt &\les \int_{\tau_1}^{\tau_2}\int_{r\leq 2R}|\pa^2_t F|^2+|\pa J|^2 dxdt\\
 &\les I_0^{-1-\ep}[\pa_t^2 F](\mathcal{D}_{\tau_1^+}^{\tau_2})+I_0^{-1-\ep}[\pa_t J](\mathcal{D}_{\tau_1^+}^{\tau_2})+I_0^0[\nabla J](\mathcal{D}_{\tau_1}^{\tau_2}\cap\{r\leq 2R\})\\
 &\les M_2 (\tau_1)_+^{-1-\ga_0}.
\end{align*}
Here $\tau_1^+=\max\{\tau_1-R, 0\}$. The estimate \eqref{eq:Est4F:in:R} then follows by using Sobolev embedding.

For the pointwise bound \eqref{eq:Est4F:in:R:p}, first we note that
\begin{align*}
\int_{r\leq 2R}|\nabla J|^2 dx\les \sum\limits_{k\leq 1}\int_{\tau}^{\tau+1}|\nabla \mathcal{L}_Z^kJ|^2dxdt\les M_2\tau_+^{-1-\ga_0}.
\end{align*}
Consider energy estimate on the region $\mathcal{D}_1$ bounded by $\Si_{\tau^+}$, $\tau^+=\max\{\tau-R, 0\}$ and $t=\tau$, $\tau\geq 0$. From the energy estimate \eqref{eq:ILE:cur:in}, we conclude that
\[
\int_{r\leq 2R}|\mathcal{L}_Z^2 F|^2 dx=E[\mathcal{L}_Z^2 F](r\leq 2 R)\les E[\mathcal{L}_Z^2 F](\Si_{\tau^+})+I^{1+\ep}_0[\mathcal{L}_Z^2J](\mathcal{D}_1)\les M_2\tau_+^{-1-\ga_0}.
\]
Thus the pointwise bound \eqref{eq:Est4F:in:R:p} holds.
\end{proof}
To show the decay of the solution via the energy flux through the null hypersurface, we rely on the following trace theorem.
\begin{lem}[Trace theorem]
\label{lem:trace}
Let $f(r, \om)$ be a smooth function defined on $[a, b]\times \mathbb{S}^2$. Then
\begin{equation}
\label{eq:trace}
\left(\int_{\om}|f|^4(r_0, \om)d\om\right)^{\f12}\leq C\int_{a}^{b}\int_{\om}|f|^2+|\pa_rf|^2+|\pa_{\om}f|^2d\om dr,\quad \forall r_0\in[a, b]
\end{equation}
for some constant $C$ independent of $r_0$.
\end{lem}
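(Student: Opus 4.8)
The plan is to reduce the $L^4$ estimate on a sphere of fixed radius to an $L^2$-type estimate over the slab $[a,b]\times\mathbb S^2$ by combining a one-dimensional fundamental-theorem-of-calculus argument in the radial variable with the two-dimensional Sobolev embedding $H^1(\mathbb S^2)\hookrightarrow L^4(\mathbb S^2)$. Fix $r_0\in[a,b]$. First I would apply the Sobolev embedding on the unit sphere: there is a universal constant $C$ with
\begin{equation}
\label{eq:trace:sob}
\left(\int_\om |g|^4(\om)\,d\om\right)^{\f12}\leq C\int_\om |g|^2+|\pa_\om g|^2\,d\om
\end{equation}
for any smooth $g$ on $\mathbb S^2$. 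Applying this with $g(\om)=f(r_0,\om)$ reduces the claim to controlling $\int_\om |f|^2(r_0,\om)+|\pa_\om f|^2(r_0,\om)\,d\om$ by the right-hand side of \eqref{eq:trace}, i.e. it suffices to bound the $H^1(\mathbb S^2)$ norm at the fixed radius $r_0$ by the full slab integral of $|f|^2+|\pa_r f|^2+|\pa_\om f|^2$ (applied both to $f$ and to $\pa_\om f$).

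The second step is the radial trace estimate. For any smooth function $h$ on $[a,b]$ and any $r_0,r\in[a,b]$ one has $h(r_0)^2 = h(r)^2 + \int_r^{r_0} 2 h(s) h'(s)\,ds$; averaging over $r\in[a,b]$ and using Cauchy--Schwarz gives
\begin{equation*}
h(r_0)^2 \leq \frac{1}{b-a}\int_a^b h(s)^2\,ds + \int_a^b h(s)^2 + h'(s)^2\,ds \les \int_a^b h(s)^2+h'(s)^2\,ds,
\end{equation*}
where the implicit constant depends only on $b-a$ (and is in fact uniform if $b-a\geq 1$, say). I would apply this pointwise in $\om$ to $h(r)=f(r,\om)$ and then integrate the resulting inequality over $\om\in\mathbb S^2$, obtaining
\begin{equation*}
\int_\om |f|^2(r_0,\om)\,d\om \les \int_a^b\int_\om |f|^2+|\pa_r f|^2\,d\om\,dr.
\end{equation*}
Applying the same estimate to $f$ replaced by $\pa_\om f$ (legitimate since $f$ is smooth, and $\pa_r$ and $\pa_\om$ commute up to the smooth geometry of the sphere) yields the companion bound for $\int_\om |\pa_\om f|^2(r_0,\om)\,d\om$.

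Finally I would chain the two steps: insert the two radial trace bounds into \eqref{eq:trace:sob}, which produces exactly the right-hand side of \eqref{eq:trace}, with a constant independent of $r_0$ since the constants in both the Sobolev embedding and the radial estimate depend only on the fixed geometry and on $b-a$. The only mild subtlety — and the one point deserving care rather than difficulty — is the interaction between the $\pa_\om$ derivative and the radial integration: one must check that differentiating $f$ tangentially and then running the one-dimensional argument in $r$ is harmless, which is immediate because on the product $[a,b]\times\mathbb S^2$ the radial and spherical directions are orthogonal and the measure factors as $dr\,d\om$. There is no serious obstacle here; this is a standard trace/Sobolev interpolation packaged for later use on the null cones $H_u$.
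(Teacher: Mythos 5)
Your proposal has a genuine gap in the second step, precisely at the point you flagged as a ``mild subtlety''. Applying the radial fundamental-theorem-of-calculus estimate to $g=\pa_\om f$ gives
\[
\int_\om|\pa_\om f|^2(r_0,\om)\,d\om \les \int_a^b\int_\om |\pa_\om f|^2 + |\pa_r \pa_\om f|^2\,d\om\,dr,
\]
and the mixed second derivative $\pa_r\pa_\om f$ is not present in the right-hand side of \eqref{eq:trace}. The commutativity of $\pa_r$ and $\pa_\om$ and the product structure of the measure, which you invoke, are not the issue; the issue is a loss of one order of differentiability. Your two-step chaining, namely the full Sobolev embedding $H^1(\mathbb S^2)\hookrightarrow L^4(\mathbb S^2)$ followed by the radial trace, requires controlling the $H^1(\mathbb S^2)$ norm of the restriction $f(r_0,\cdot)$, which costs one angular derivative \emph{plus} one radial derivative on $\pa_\om f$, i.e.\ data of $H^{1,1}$ type rather than $H^1$ of the slab. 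To close with only the claimed $H^1$ norm one must use half a derivative less on the cross-section: this is exactly what the paper does, invoking the sharp trace theorem $H^1([a,b]\times\mathbb S^2)\to H^{1/2}(\{r_0\}\times\mathbb S^2)$ and then the \emph{critical} Sobolev embedding $H^{1/2}(\mathbb S^2)\hookrightarrow L^4(\mathbb S^2)$ on the two-sphere. The interpolation (one can see it via spherical harmonics: the frequency weight $1+l$ is distributed unevenly in the Cauchy--Schwarz step, all onto the low-derivative factor) is what the $H^{1/2}$ route buys and what your symmetric use of the radial estimate on $f$ and $\pa_\om f$ separately forfeits. You would need either to keep $\int\!\!\int|\pa_r\pa_\om f|^2$ in the right-hand side (a genuinely weaker lemma) or to rework the radial step as a fractional trace.
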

\begin{proof}
The condition implies that $f\in H^1_{r, \om}$.
By using trace theorem,
\[
\|f(r_0, \cdot)\|_{H^{\f12}_{\om}}\leq C\|f\|_{H^1_{r, \om}},\quad \forall r_0\in[a, b].
\]
The lemma then follows by using Sobolev embedding on the sphere.
\end{proof}
Using this lemma, we are now able to show the pointwise bound for the Maxwell field when l$\{r\geq R\}$.
\begin{prop}
\label{prop:supF}
Let $\bar{\mathcal{D}}_{\tau_1}=\mathcal{D}_{\tau_1}\cap\{r\geq R\}$. Then we have
\begin{align}
\label{eq:supab:I}
\|r \mathcal{L}_{Z}^k\ab\|_{L_u^2L_v^\infty L_{\om}^2(\bar{\mathcal{D}}_{\tau_1})}^2&\les M_2(\tau_1)_+^{-1-\ga_0+2\ep},\quad k=0, 1,\\
\label{eq:supab:p}
|r\ab|^2(\tau, v, \om)&\les
 M_2\tau_+^{-1-\ga_0},\\
\label{eq:suparhosi:p}
 r^p(|r\a|^2+|r\si|^2)(\tau, v, \om)&\les
 M_2\tau_+^{p-1-\ga_0},\quad 0\leq p\leq 1+\ga_0,\\
 \label{eq:supbarrho:p}
 r^{p}|r\bar\rho|^2(\tau, v, \om)&\les M_2\tau_+^{p-1-\ga_0},\quad 0\leq p\leq 1-\ep,\\
\label{eq:suprhosi:I}
\|r\mathcal{L}_Z^k\si\|_{L_v^2L_u^\infty L_\om^2(\bar{\mathcal{D}}_{\tau})}^2 &\les M_2\tau_+^{-1-\ga_0+\ep},\quad k\leq 1.
\end{align}
\end{prop}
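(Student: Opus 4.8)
The plan is to prove all five bounds on the outgoing null hypersurfaces $H_u$. A point $(\tau,v,\om)$ with $r\ge R$ lies on $\Si_\tau\cap\{r\ge R\}=H_{\tau^*}\cap\{r\ge R\}$, so it suffices to estimate the null components of $F$ and of $\mathcal{L}_Z^kF$ ($k\le2$) on a fixed $H_u$, either pointwise in $(v,\om)$ (for \eqref{eq:supab:p}, \eqref{eq:suparhosi:p}, \eqref{eq:supbarrho:p}) or in the norms $L_u^2L_v^\infty L_\om^2$, $L_v^2L_u^\infty L_\om^2$ (for \eqref{eq:supab:I}, \eqref{eq:suprhosi:I}). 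Two inputs are used throughout. First, by Lemma \ref{lem:commutator} each $\mathcal{L}_Z^kF$ again solves a Maxwell system with source $\mathcal{L}_Z^kJ$, so the energy flux decay of Proposition \ref{prop:Endecay:cur}, Corollary \ref{cor:Endecay:cur:hide} and the $r$-weighted estimates of Proposition \ref{prop:pWE:cur} supply, with the weights appearing there, $L^2$-control on $H_u$ of the null components of $\mathcal{L}_Z^{\le2}F$; the quantities on the right are all $\les M_2\tau_+^{-1-\ga_0}$ up to the relevant power of $r$. Second, the trace inequality of Lemma \ref{lem:trace}, together with $L^4(\mathbb{S}^2)\hookrightarrow L^2(\mathbb{S}^2)$ and $W^{1,4}(\mathbb{S}^2)\hookrightarrow L^\infty(\mathbb{S}^2)$, gives for any quantity $g$ on $H_u$ (viewed as a function of $(v,\om)$, $r=v-u$)
\[
\sup_\om|g|^2(v_0,\om)\les\sum_{j\le 2}\int_{H_u}\bigl(|\Om^jg|^2+|\pa_v\Om^jg|^2\bigr)\,dv\,d\om,
\]
with a constant independent of $v_0$. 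So for each component I need $L^2$-flux control on $H_u$ of the component and up to two angular derivatives, and of one $\pa_v$-derivative.

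The subtlety is that we never commute with $L$ or $\Lb$, so the $\pa_v=L$-derivatives in the display must come from the transport equations of Lemma \ref{lem:nullMKG}. For $g=r^2\rho$ and $g=r^2\si$, equations \eqref{eq:eq4rhoCu} and \eqref{eq:eq4si} express $L(g)$ through angular derivatives of $\a$ — hence through the $\a$-component of $\mathcal{L}_\Om F$ — and through $r^2J_L$; for the $r$-weighted versions one moreover trades the missing $r$-weighted flux of $\rho,\si$ on $H_u$ for that of $\mathcal{L}_\Om F$ and $\mathcal{L}_{\pa_t}F$ by integrating these same transport equations inward from null infinity. Applying the display to $\rho$, $\si$ and their $\Om$-derivatives and inserting the flux decay yields \eqref{eq:suparhosi:p} for $\si$ and $\rho$, and then \eqref{eq:supbarrho:p} after writing $\rho=\bar\rho$ in the interior region and $\rho=\bar\rho+q_0r^{-2}$ in the exterior region, the explicit term $q_0r^{-2}$ being harmless for $r\ge R$ (the range $p<1-\ep$ reflecting that the available $r$-weight for $\bar\rho$ is weaker than for $\a$). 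For the remaining null component $\a$ there is no transport equation for $L(r\a)$; here I would use $L=2\pa_t-\Lb$, so that $L(r\a_i)=2\pa_t(r\a_i)-\nabla_\Lb(r\a_i)+\text{(lower order)}$, control $\pa_t(r\a_i)$ by the flux of $\mathcal{L}_{\pa_t}F$ (which we do commute with), and control $\nabla_\Lb(r\a_i)$ by \eqref{eq:eq4a}; with the $r^{p+2}$-weighted flux of $\a$ from Proposition \ref{prop:pWE:cur}, Proposition \ref{prop:Endecay:cur} this closes \eqref{eq:suparhosi:p} for $\a$.

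The main obstacle is $\ab$, i.e.\ \eqref{eq:supab:I}--\eqref{eq:supab:p}, because the outgoing flux $E[F](H_u)$ does not see $\ab$ at all. For this I would integrate the transport equation \eqref{eq:eq4ab} for $r\ab_i$ along $H_u$ in $v$, starting at $v=v_0$ on $\{r=R\}$ (interior part), where the starting value is bounded by the inner-region estimate Proposition \ref{prop:Est4F:in:R}, or on $\{t=0\}$ (exterior part), where it is bounded by the weighted initial data appearing in $M_2$ via a one-dimensional trace bound. Taking $L_\om^2$ norms and Cauchy--Schwarz in $v$ against the weight $r^{-1-\ep}$, the remaining factor $\int_{H_u}r^{1+\ep}\bigl(|r\nabb\rho|^2+|r\nabb F_{e_ie_j}|^2+|rJ_{e_i}|^2\bigr)\,dv\,d\om$ is controlled by $r$-weighted fluxes of $\mathcal{L}_\Om F$ (exactly the quantities in Proposition \ref{prop:pWE:cur}, Corollary \ref{cor:Endecay:cur:hide}) and by the $J$-norms in $m_1$; propagating the $u_+$-weights and then integrating in $u$ gives the stated rate, the $2\ep$ loss being the price of this weight splitting. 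The pointwise bound \eqref{eq:supab:p} then follows from the same trace-and-Sobolev scheme applied to $r\ab$, with $\pa_v(r\ab)$ read off from \eqref{eq:eq4ab} and the $L^2$-in-$v$ control just obtained. Finally \eqref{eq:suprhosi:I} is the analogue of \eqref{eq:supab:I} for $\si$ carried out on the incoming hypersurfaces $\Hb_{\ub}$, where $\Lb(r^2\si)$ is given by \eqref{eq:eq4si} and $E[F](\Hb_{\ub})$ controls $\si$; the argument is identical and loses only one power of $\ep$.
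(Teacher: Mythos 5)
Your treatment of $\a$ coincides with the paper's (outgoing $r$-weighted flux of $\a$, with $L(r\a)=2\pa_t(r\a)-\nabla_{\Lb}(r\a)$ supplied by \eqref{eq:eq4a} and the commuted flux of $\mathcal{L}_{\pa_t}F$), and your derivations of the $L^2$-averaged bounds \eqref{eq:supab:I}, \eqref{eq:suprhosi:I} by integrating the transport equations and applying Cauchy--Schwarz are sound in outline; starting the $\ab$-integration at $r=R$ rather than at $\Hb_{v_0}$ is a harmless variant of the paper's argument, provided you still interpolate the $\iint r^{1+\ep}(|\mathcal{L}_Z^{k+1}\rho|^2+|\mathcal{L}_Z^{k+1}\si|^2)$ bulk between the integrated local energy and the $r$-weighted estimate to get the stated $2\ep$ loss.

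The gap is in the pointwise bounds \eqref{eq:supab:p}, \eqref{eq:suparhosi:p} (for $\si$) and \eqref{eq:supbarrho:p}, which you propose to run on $H_u$. But $E[F](H_u)$ does not see $\ab$ at all, so the zeroth-order input $\int_{H_{\tau^*}}|r\ab|^2\,dv\,d\om$ required by Lemma \ref{lem:trace} is unavailable for a fixed $\tau^*$; the $L_u^2L_v^\infty L_\om^2$ bound from \eqref{eq:supab:I} gives nothing at a single $u$. Similarly, the $r$-weighted flux of $\rho,\si$ produced by Proposition \ref{prop:pWE:cur} lives on the \emph{incoming} hypersurface (only $\a$ has $r$-weighted flux on $H_u$), and manufacturing $\int_{H_{\tau^*}}r^{p+2}(\rho^2+\si^2)\,dv\,d\om$ by "integrating the transport equations inward from null infinity" presupposes a decay estimate for $r^2\bar\rho$, $r^2\si$ as $v\to\infty$ that you never establish. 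The paper avoids both problems by running the trace scheme for $\ab,\bar\rho,\si$ on the incoming hypersurface $\Hb_\tau$ through the given point: $E[\mathcal{L}_Z^kF](\Hb_\tau)$ directly controls $\int_{\Hb_\tau}|r\mathcal{L}_Z^k\ab|^2\,du\,d\om$, Proposition \ref{prop:pWE:cur} supplies $\int_{\Hb_\tau}r^{p+2}(\rho^2+\si^2)\,du\,d\om$, and the transverse $\Lb$-derivatives are read off from \eqref{eq:eq4ab}, \eqref{eq:eq4rhoCu}, \eqref{eq:eq4si} together with $\Lb=2\pa_t-L$. Finally, the range $0\le p\le 1-\ep$ in \eqref{eq:supbarrho:p} is not because $\bar\rho$ has a weaker available $r$-weight than $\a$; it comes from $\Lb(r^2\rho)=\divs(r^2\ab)+r^2J_{\Lb}$ and the limited weight $r^{1-\ep}$ assumed on $J_{\Lb}$ in the definition of $m_k$, which is why $\si$ (whose transport equation has no $J_{\Lb}$) retains the full range $p\le 1+\ga_0$.
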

\begin{remark}
In terms of decay rate, the integral version \eqref{eq:supab:I} is stronger than the pointwise bound \eqref{eq:supab:p}. We are not able to improve the $u$ decay of the Maxwell field is due to the weak decay rate of the initial data as initially the pointwise bound \eqref{eq:supab:p} is the best we can get. However the integral version improves one order of decay in $u$. This is the key point that we are able to construct the global solution with the weak decay rate of the initial data.
\end{remark}
\begin{proof}
For the integral estimate \eqref{eq:supab:I}, we rely on the transport equation \eqref{eq:eq4ab} for $\ab$. For the case in the exterior region, one can choose the
initial hypersurface $\{t=0\}$. In the interior region, for all $0\leq \tau_1\leq \tau_2$, we can choose the incoming null hypersurface $\Hb_{\frac{\tau_2+R}{2}}^{\tau_1, \tau_2}$. Let's only consider the case in the interior region. From the equation \eqref{eq:eq4ab} for $\ab$ under the null frame, for $k=0$ or $1$, we can show that
\begin{align*}
\|r \mathcal{L}_{Z}^k\ab\|_{L_u^2L_v^\infty L_{\om}^2(\bar{\mathcal{D}}_{\tau_1}^{\tau_2})}^2&\les E[\mathcal{L}_Z^k\ab](\Hb_{\frac{\tau_2+R}{2}}^{\tau_1, \tau_2})+I^{-1-\ep}_0[\mathcal{L}_Z^k\ab](\bar{\mathcal{D}}_{\tau_1}^{\tau_2})
+\|r^{\frac{1+\ep}{2}}L\mathcal{L}_Z^k(r\ab)\|_{L_u^2L_v^2L_{\om}^2(\bar{\mathcal{D}}_{\tau_1}^{\tau_2})}^2\\
& \les M_2(\tau_1)_+^{-1-\ga_0}+\|r^{\frac{1+\ep}{2}}(|\mathcal{L}_Z^{k+1}\rho|+|\mathcal{L}_Z^{k+1}\si|)\|_{L_u^2L_v^2L_{\om}^2(\bar{\mathcal{D}}_{\tau_1}^{\tau_2})}^2
+I^{1+\ep}_0[\mathcal{L}_{Z}^k \J](\bar{\mathcal{D}}_{\tau_1}^{\tau_2})\\
&\les M_2(\tau_1)_+^{-1-\ga_0+2\ep}.
\end{align*}
Here we used interpolation to bound $\rho$ and $\si$. Indeed, the integrated local energy estimate implies that
\[
\iint r^{-\ep+1}(|\mathcal{L}_Z^{k+1}\rho|^2+|\mathcal{L}_Z^{k+1}\si|^2)dudvd\om\les M_2 (\tau_1)_+^{-1-\ga_0}.
\]
On the other hand the $r$-weighted energy estimate shows that
\[
\iint r^{2+\ga_0}(|\mathcal{L}_Z^{k+1}\rho|^2+|\mathcal{L}_Z^{k+1}\si|^2)dudvd\om\les M_2.
\]
Interpolation then implies the estimate for $\rho$ and $\si$. Thus estimate \eqref{eq:supab:I} holds.

For the pointwise bound \eqref{eq:supab:p} for $\ab$, we rely on the energy flux on the incoming null hypersurface together with Lemma \ref{lem:trace}. Consider the point $(\tau, v, \om)$. In the exterior region when $\tau<0$, let $\Hb_{\tau}$ be the incoming null hypersurface $\Hb_{v}^{\tau^*, -v}$, which is the incoming null hypersurface extending to the initial hypersurface $t=0$. In the interior region when $\tau\geq 0$, let $\Hb_{\tau}$ be $\Hb_{v}^{\tau, 2v-R}$ which is the incoming null hypersurface truncated by $r=R$. From the energy estimate \eqref{eq:ILE:cur:ex}, \eqref{eq:ILE:cur:in}, we conclude that
\[
\int_{\Hb_{\tau}}|r\mathcal{L}_Z^k\ab|^2dud\om\les E[\mathcal{L}_Z^k F](\Hb_{\tau})\les M_2\tau_+^{-1-\ga_0},\quad \forall k\leq 2.
\]
As $Z$ consists of $\pa_t$ and the angular momentum $\Om$, to apply Lemma \ref{lem:trace}, we need the energy flux of the tangential derivative $\Lb(\ab)$. We make use of the structure equation \eqref{eq:eq4ab} which implies that
\begin{align*}
\int_{\Hb_{\tau}}|\Lb\mathcal{L}_Z^k(r\ab)|^2dud\om&\les \int_{\Hb_{\tau}}|L\mathcal{L}_Z^k(r\ab)|^2+|\mathcal{L}_{\pa_t}\mathcal{L}_Z^k(r\ab)|^2dud\om\\
&\les \int_{\Hb_{\tau}}|\mathcal{L}_Z^{k+1}\rho|^2+|\mathcal{L}_Z^{k+1}\si|^2+|\mathcal{L}_Z^k(r\J)|^2+|\mathcal{L}_Z^{k+1}(r\ab)|^2dud\om\\
&\les E[\mathcal{L}_Z^{k+1}F](\Hb_{\tau})+I^{0}_{0}[\mathcal{L}_Z^{k+1}\J](\mathcal{D}_{\tau})\\
&\les M_2 \tau_+^{-1-\ga_0},\quad k\leq 1.
\end{align*}
Here note that $\Om=(r e_1, re_2)$. Then by Lemma \ref{lem:trace}, for all $v$ and fixed $\tau$,
\[
\left(\int_{\om}|r\mathcal{L}_Z^k\ab|^4(\tau, v, \om)d\om\right)^{\f12}\les M_2\tau_+^{-1-\ga_0},\quad k\leq 1.
\]
Estimate \eqref{eq:supab:p} then follows by using Sobolev embedding on the sphere.

\bigskip

For the pointwise bound \eqref{eq:suparhosi:p}, \eqref{eq:supbarrho:p} for $\a$, $\si$, $\bar\rho$, the proof for $\a$ is slightly different to that of $\si$ and $\bar\rho$. However the idea is the same. Let's consider $\a$ first. Consider $H_{\tau^*}$, $\tau\in \mathbb{R}$. The $r$-weighted energy estimates \eqref{eq:pWE:cur:ex}, \eqref{eq:pWE:cur:in} imply that
\[
\int_{H_{\tau^*}}r^{p}|r\mathcal{L}_Z^k\a|^2dvd\om\les M_2\tau_+^{p-1-\ga_0},\quad \forall 0\leq p\leq 1+\ga_0,\quad k\leq 2.
\]
To apply Lemma \ref{lem:trace}, we need the energy flux of the tangential derivative $L(r\a)$. Similar to the case of $\a$, we make use of the equation \eqref{eq:eq4a} and $\pa_t$ derivative:
\begin{align*}
\int_{H_{\tau^*}}r^{p}|L(r\mathcal{L}_Z^k\a)|^2dvd\om &\les \int_{H_{\tau^*}}r^p(|\Lb(r\mathcal{L}_Z^k\a)|^2+|r\pa_t\mathcal{L}_Z^k\a|^2)dvd\om\\
&\les  \int_{H_{\tau^*}}r^p(|\mathcal{L}_Z^{k+1}\rho|^2+|\mathcal{L}_Z^{k+1}\si|^2+|\mathcal{L}_Z^k(r\J)|^2+|\mathcal{L}_Z^{k+1}(r\a)|^2)dvd\om\\
&\les  M_2\tau_+^{p-1-\ga_0}+\int_{H_{\tau^*}}r^2(|\mathcal{L}_Z^{k+1}\rho|^2+|\mathcal{L}_Z^{k+1}\si|^2)+r^p|\mathcal{L}_Z^k(r\J)|^2dvd\om\\
&\les M_2\tau_+^{p-1-\ga_0}+E[\mathcal{L}_Z^k F](H_{\tau^*})+I^{p}_0[\mathcal{L}_Z^{k+1}\J](\mathcal{D}_{\tau})\\
&\les M_2\tau_+^{p-1-\ga_0}
\end{align*}
for $k\leq 1$.  Estimate for $\a$ then follows by Lemma \ref{lem:trace} together with Sobolev embedding on the sphere.

For $\bar \rho$, $\si$, we make use of the $r$-weighted energy estimates \eqref{eq:pWE:cur:ex}, \eqref{eq:pWE:cur:in} through the incoming null hypersurface $\Hb_{\tau}$ defined as above. First we have
\[
\int_{\Hb_{\tau}}r^{p-2}(|\mathcal{L}_Z^k(r^2\bar\rho)|^2+|\mathcal{L}_Z^k(r^2\si)|^2)dud\om\les M_2\tau_+^{p-1-\ga_0},\quad k\leq 2.
\]
To derive the tangential derivative $\Lb(r^2\bar\rho)$, $\Lb(r^2\si)$, we use the equations \eqref{eq:eq4rhoCu}, \eqref{eq:eq4si}. We can show that
\begin{align*}
\int_{\Hb_{\tau}}r^{p-2}(|\Lb(r^2\mathcal{L}_Z^k\bar\rho)|^2dud\om &\les \int_{\Hb_{\tau}}r^{p-2}(|r\mathcal{L}_Z^{k+1}\ab|^2+|r^2\mathcal{L}_Z^k J_{\Lb}|^2dud\om\\
&\les E[\mathcal{L}_Z^{k+1}F](\Hb_{\tau})+I^{p}_{0}[\mathcal{L}_Z^{k+1} J_{\Lb}](\mathcal{D}_{\tau})\\
&\les M_2\tau_+^{p-1-\ga_0},\quad k=0, \quad 1
\end{align*}
for all $0\leq p\leq 1-\ep$. We are not able to extend $p$ to the full range of $[0, 1+\ga_0]$ is due to the assumption on $J_{\Lb}$. The equation \eqref{eq:eq4si} for $\si$ does not involve $J_{\Lb}$. We hence have the full range $0\leq p\leq 1+\ga_0$ for $\si$.
Lemma \ref{lem:trace} and Sobolev embedding on the sphere then lead to the pointwise bound for $\bar\rho$ and $\si$. We thus have shown \eqref{eq:supbarrho:p}, \eqref{eq:suparhosi:p}.

Finally for the integrated decay estimates \eqref{eq:suprhosi:I}, we show it by integrating along the incoming null hypersurface. In the interior region case we integrate from from $r=R$ while in the exterior region we integrate from the initial hypersurface $t=0$. Let's only prove \eqref{eq:suprhosi:I} for the interior region case. In particular take $\bar{\mathcal{D}}_{\tau}$ to be $\bar{\mathcal{D}}_{\tau_1}^{\tau_2}$ for $0\leq \tau_1< \tau_2$. First by using the decay estimate \eqref{eq:Est4F:in:R} for $F$ when $r\leq R$, we can show that on the boundary $r=R$
\begin{align*}
\int_{\tau_1}^{\tau_2}|\mathcal{L}_Z^k F|^2(\tau, R,\om)d\om d\tau\les \int_{\tau_1}^{\tau_2}\int_{r\leq R}|\pa \mathcal{L}_Z^k F|^2dxd\tau\les M_2( \tau_1)_+^{-1-\ga_0}.
\end{align*}
Then from the transport equation \eqref{eq:eq4rhoCu}, \eqref{eq:eq4si}, we can show that
\begin{align*}
\|r\mathcal{L}_Z^k\si\|_{L_v^2L_u^\infty L_\om^2(\bar{\mathcal{D}}_{\tau_1}^{\tau_2})}^2 
&\les \int_{\tau_1}^{\tau_2}|\mathcal{L}_Z^k F|^2(\tau, R, \om)d\om d\tau+\iint_{\bar{\mathcal{D}}_{\tau_1}^{\tau_2}}r|\mathcal{L}_Z^k\si|^2 +|(\mathcal{L}_Z^k\si)\cdot \Lb(r^2\mathcal{L}_Z^k\si)|dudvd\om\\
&\les M_2(\tau_1)_+^{-1-\ga_0}++\iint_{\bar{\mathcal{D}}_{\tau_1}^{\tau_2}}r^{1+\ep}|\mathcal{L}_Z^k\si|^2+r^{1-\ep}|\mathcal{L}_Z^{k+1}\ab|^2dudvd\om\\
&\les M_2(\tau_1)_+^{-1-\ga_0}+M_2(\tau_1)_+^{-1-\ga_0+\ep}\les M_2(\tau_1)_+^{-1-\ga_0+\ep}.
\end{align*}
Here we have used the $r$-weighted energy estimates for $\si$ with $p=\ep$ and the integrated local energy estimates to bound $\ab$. This proves \eqref{eq:suprhosi:I}.
\end{proof}

\subsection{Energy decay for the scalar field}
\label{sec:EnEst4sca}
In this section, we study the energy decay for the complex scalar field $\phi$ satisfying the linear covariant wave equation $\Box_A \phi=0$. When the connection field $A$ is trivial, the energy decay has
been well studied by using the new approach, see e.g. \cite{yang1}. For general connection field $A$, presumably not small, new difficulty arises as there are interaction terms between the curvature $dA$ and the
scalar field. In the previous subsection, we derived the energy flux decay for the Maxwell field $F=dA$ with appropriate bound on $J$. The purpose of this section is to derive energy flux decay for the complex scalar field.

In addition to the assumption that $M_k$ is finite, for the complex scalar field, we assume the inhomogeneous term $\Box_A\phi$ and the initial data are bounded in the following norm:
\begin{equation}
\label{eq:def4Nkphi}
\begin{split}
 \mathcal{E}_k[\phi]=\sum\limits_{l\leq k}E^k_0[\phi]&+I^{1+\ga_0}_{1+\ep}[D_Z^l\Box_A\phi](\{t\geq 0\})+I^{1+\ep}_{1+\ga_0}[D_Z^l\Box_A \phi](\{t\geq 0\}).
\end{split}
\end{equation}
For solutions of \eqref{EQMKG}, $\mathcal{E}_{k}[\phi]$ denotes the weighted Sobolev norm of the initial data for the complex scalar field.
As the estimates in the interior region requires the information on the boundary $\Si_{0}$ of which $H_{0^*}$ is the boundary of the exterior region. Thus we need to obtain the energy decay estimates,
at least the boundedness of the energy flux in the exterior region. The main difficulty in the presence of nontrivial connection field is to control the interaction term $(dA)_{\cdot \nu}J^\nu[\phi]$ under very
weak estimates on the curvature $dA$. In the integrated local energy estimate \eqref{eq:ILE:Sca:ex} for the scalar field, it is not possible to control or absorb those terms as there is no smallness assumption on $dA$.
The idea is to make use of the null structure of $J^\nu[\phi]$ together with the $r$-weighted energy estimate \eqref{eq:pWE:sca:ex}. More precisely, we first control those terms in the $r$-weighted energy estimate through Gronwall's
inequality. Then we estimate those terms in the integrated local energy estimates. Once we have control on those interaction terms, the decay of the energy flux follows from the standard argument of the new approach, similar to that
of the energy decay for the Maxwell field in the previous section.

\bigskip

We need a lemma to control the scalar field $\phi$ by using the $r$-weighted energy.
\begin{lem}
 \label{lem:Est4phipWE}
 Assume $\phi$ vanishes at null infinity. In the exterior region on $H_{u}$, we have
 \begin{equation}
  \label{eq:Est4phipWE:ex}
  \int_{\om}|r\phi|^2(u, v, \om)d\om\les \int_{\om}|r\phi|^2(u, -u,\om)d\om+\b^{-1}u_+^{-\b}\int_{-u}^{v}\int_{\om}r^{1+\b}|D_L(r\phi)|^2 dvd\om,\quad \forall \b>0.
 \end{equation}
In the interior region on $\Si_{\tau}$, for $1\leq p\leq 2$, we have
\begin{equation}
\label{eq:Est4phipWE:in:p}
 \int_{\om}r^p|\phi|^2d\om\les \left(E[\phi](\Si_{\tau})\right)^{\delta_p}\left(I^{1+\ga_0}_0[r^{-1}D_L\psi](H_{\tau^*})\right)^{1-\delta_p},\quad \delta_p=\frac{2+\ga_0-p}{1+\ga_0}.
 \end{equation}
 Moreover on $\Si_{\tau}$, $\tau\in\mathbb{R}$, we have
 \begin{equation}
 \label{eq:Est4phiE:small}
 r\int_{\om}|\phi|^2d\om\les \ep_1^{-1}\int_{\Si}|\phi|^2d\tilde{v}d\om+\ep_1 E[\phi](\Si_{\tau})
 \end{equation}
 for all $1\geq \ep_1> 0$. Here $(\tilde{v}, \om)=(v, \om)$ when $r\geq R$ and $(r,\om)$ when $r\leq R$.
\end{lem}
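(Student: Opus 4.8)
All three bounds will come from integrating along the outgoing null direction $L=\pa_v$, using the weighted scalar field $\psi=r\phi$. The two facts I will use repeatedly are: $\pa_v|\psi|^2=2\Re(\overline{\psi}\,D_L\psi)$ together with the diamagnetic bound $|\Re(\overline{\psi}\,D_L\psi)|\le|\psi|\,|D_L\psi|$ (the connection term $iA_L|\psi|^2$ being purely imaginary); and the identity $D_L\psi=\phi+rD_L\phi$, which gives $|D_L\psi|^2\les r^2|D_L\phi|^2+|\phi|^2$, so that $\int_{H_u}|D_L\psi|^2dvd\om\les E[\phi](H_u)$ after using the $\ga=0$ Hardy inequality \eqref{eq:Hardy}. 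The hypothesis that $\phi$ vanishes at null infinity (and finiteness of the right-hand sides) is exactly what lets me discard the boundary term at $v=\infty$ when integrating outward; the general case follows by density.

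\textbf{Proof of \eqref{eq:Est4phipWE:ex}.} On $H_u$ in the exterior region the slice meets $\{t=0\}$ at $v=-u$, where $r=-2u$. Integrating $\pa_v|\psi|^2=2\Re(\overline\psi D_L\psi)$ in $v$ from $-u$ to $v$ and using the diamagnetic inequality gives $|\psi|(u,v,\om)\le|\psi|(u,-u,\om)+\int_{-u}^{v}|D_L\psi|\,dv'$. Squaring, and applying Cauchy--Schwarz with the pair of weights $r^{-1-\b}$, $r^{1+\b}$, the one-dimensional integral computes as $\int_{-u}^{v}r^{-1-\b}dv'=\int_{-2u}^{v-u}\varrho^{-1-\b}d\varrho\le\b^{-1}(-2u)^{-\b}\les\b^{-1}u_+^{-\b}$. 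Integrating the result over $\om$ (the one-dimensional weight being independent of $\om$) and recalling $|D_L\psi|=|D_L(r\phi)|$ yields \eqref{eq:Est4phipWE:ex}.

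\textbf{Proof of \eqref{eq:Est4phiE:small}, and the endpoint $p=1$ of \eqref{eq:Est4phipWE:in:p}.} I treat the null portion $\{r\ge R\}$ of $\Si_\tau$ and the bounded portion $\{r\le R\}$ (present only in the interior region) separately. On $\{r\ge R\}$, integrate $\pa_v(r^{-1}|\psi|^2)=r^{-1}\pa_v|\psi|^2-r^{-2}|\psi|^2$ in $v$ out to null infinity, where $r^{-1}|\psi|^2=r|\phi|^2\to0$; using the diamagnetic bound, $r|\phi|^2(v_0)\le\int_{v_0}^{\infty}\big(r^{-2}|\psi|^2+2r^{-1}|\psi|\,|D_L\psi|\big)dv'$, and estimating the cross term by $2r^{-1}|\psi|\,|D_L\psi|\le\ep_1^{-1}r^{-2}|\psi|^2+\ep_1|D_L\psi|^2$ together with $r^{-2}|\psi|^2=|\phi|^2$ gives, after integrating over $\om$, the bound $r\int_\om|\phi|^2d\om\les\ep_1^{-1}\int_{H_{\tau^*}}|\phi|^2dvd\om+\ep_1\int_{H_{\tau^*}}|D_L\psi|^2dvd\om\les\ep_1^{-1}\int_{\Si}|\phi|^2d\tilde v d\om+\ep_1 E[\phi](\Si_\tau)$, using \eqref{eq:Hardy} and the first paragraph. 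On $\{r\le R\}$ one argues directly with $g(r)=r\int_\om|\phi|^2d\om$: $g(0)=0$ and $g'(r)=\int_\om|\phi|^2d\om+2r\int_\om\Re(\overline\phi\,D_r\phi)d\om\le(1+\ep_1^{-1})\int_\om|\phi|^2d\om+\ep_1 r^2\int_\om|D_r\phi|^2d\om$, and integrating in $r\in[0,R]$ bounds $g$ by $\ep_1^{-1}\int_{\Si}|\phi|^2d\tilde v d\om+\ep_1 E[\phi](\Si_\tau)$. This proves \eqref{eq:Est4phiE:small}; taking $\ep_1=1$ and invoking \eqref{eq:Hardy} gives the endpoint $\int_\om r|\phi|^2d\om\les E[\phi](\Si_\tau)$.

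\textbf{The endpoint $p=2+\ga_0$, and interpolation for \eqref{eq:Est4phipWE:in:p}.} On $H_{\tau^*}\cap\{r\ge R\}$ integrate $\pa_v(r^{\ga_0}|\psi|^2)=\ga_0 r^{\ga_0-1}|\psi|^2+2r^{\ga_0}\Re(\overline\psi D_L\psi)$ in $v$ out to null infinity; since the first term on the right is nonnegative, moving it to the left and using the diamagnetic bound and Cauchy--Schwarz gives
\[
r^{\ga_0}|\psi|^2(v_0)+\ga_0\int_{v_0}^{\infty}r^{\ga_0-1}|\psi|^2dv'\le 2\Big(\int_{v_0}^{\infty}r^{\ga_0-1}|\psi|^2dv'\Big)^{\f12}\Big(\int_{v_0}^{\infty}r^{\ga_0+1}|D_L\psi|^2dv'\Big)^{\f12};
\]
an AM--GM step (this is where $\ga_0>0$ is genuinely used) absorbs the first factor on the right into the left side, leaving $r^{\ga_0}|\psi|^2(v_0)\le\ga_0^{-1}\int_{v_0}^{\infty}r^{\ga_0+1}|D_L\psi|^2dv'$. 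Integrating over $\om$ and recalling $r^{\ga_0}|\psi|^2=r^{2+\ga_0}|\phi|^2$ and $I^{1+\ga_0}_0[r^{-1}D_L\psi](H_{\tau^*})=\int_{H_{\tau^*}}r_+^{1+\ga_0}|D_L\psi|^2dvd\om$ gives $\int_\om r^{2+\ga_0}|\phi|^2d\om\les I^{1+\ga_0}_0[r^{-1}D_L\psi](H_{\tau^*})$. Finally, since $1\le p\le 2\le 2+\ga_0$, writing $r^p=(r^1)^{\de_p}(r^{2+\ga_0})^{1-\de_p}$ with $\de_p=\frac{2+\ga_0-p}{1+\ga_0}\in[0,1]$ and applying H\"older on $\mathbb S^2$ with exponents $1/\de_p$, $1/(1-\de_p)$ yields $\int_\om r^p|\phi|^2d\om\le\big(\int_\om r|\phi|^2d\om\big)^{\de_p}\big(\int_\om r^{2+\ga_0}|\phi|^2d\om\big)^{1-\de_p}$, and the two endpoint bounds conclude \eqref{eq:Est4phipWE:in:p} on the null portion $r\ge R$; for $r\le R$ a cruder bound via the same one-dimensional trace argument suffices and is subsumed. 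The only real obstacle is making the $p=2+\ga_0$ endpoint close against exactly $I^{1+\ga_0}_0[r^{-1}D_L\psi](H_{\tau^*})$ with no residual term, which is precisely what the integration by parts producing the extra good term $\ga_0\int r^{\ga_0-1}|\psi|^2$ accomplishes; the remaining points (vanishing of boundary terms at null infinity, the region $r\le R$) are routine.
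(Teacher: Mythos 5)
Your proofs of \eqref{eq:Est4phipWE:ex} and \eqref{eq:Est4phiE:small} (and hence the $p=1$ endpoint of \eqref{eq:Est4phipWE:in:p}) are correct and follow the same route as the paper: the one--dimensional fundamental theorem of calculus along $L$, the diamagnetic bound, and Cauchy--Schwarz, with the $p=1$ case relying only on the boundary term $r|\phi|^2\to 0$, which \emph{is} implied by $\phi\to 0$.

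However, your proof of \eqref{eq:Est4phipWE:in:p} for $1<p\le 2$ has a genuine gap at the $p=2+\ga_0$ endpoint. You integrate $\pa_v(r^{\ga_0}|\psi|^2)$ outward and discard the boundary term at null infinity, calling this step ``routine.'' It is not: the hypothesis of the lemma is only that $\phi$ vanishes at null infinity, which says $r^{-2}|\psi|^2\to 0$; you need $r^{\ga_0}|\psi|^2=r^{2+\ga_0}|\phi|^2\to 0$, i.e.\ $\psi=r\phi\to 0$ with rate. Generically $\psi$ has a nonzero limit (the radiation field) at null infinity, so $r^{\ga_0}|\psi|^2\to\infty$, and moreover $\ga_0\int r^{\ga_0-1}|\psi|^2\,dv$ also diverges, so the whole identity cannot be rearranged as you propose. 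In fact the endpoint estimate you assert,
\[
\int_\om r^{2+\ga_0}|\phi|^2\,d\om\les I^{1+\ga_0}_0[r^{-1}D_L\psi](H_{\tau^*}),
\]
is simply false: take the trivial connection and $\phi=1/r$; then $\phi\to 0$, $D_L(r\phi)=0$, yet the left side equals $4\pi r^{\ga_0}$. This is precisely why the paper explicitly warns that one ``can not integrate from \dots\ null infinity as the behaviour of $r\phi$ at null infinity is unknown (generically not zero),'' and why its proof interpolates differently: it never proves any endpoint above $p=1$. Instead it chooses a data-adapted radius $r_1$ with $r_1^{1+\ga_0}=E[\phi](\Si_\tau)^{-1}I^{1+\ga_0}_0[r^{-1}D_L\psi](H_{\tau^*})$, bounds $\int_\om|r\phi|^2(v_1)$ by $r_1 E[\phi](\Si_\tau)$ via the $p=1$ case, controls the tail $\int_{v_1}^{v}|D_L\psi|$ by $r_1^{-\ga_0/2}\big(I^{1+\ga_0}_0[\cdot]\big)^{1/2}$ via Cauchy--Schwarz, and optimizing over $r_1$ produces the geometric mean directly. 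Your Hölder-in-$\om$ interpolation step is harmless (it is actually an identity since $r$ is constant on each sphere), but it is powerless without a valid second endpoint; you should replace the $p=2+\ga_0$ argument by the radius-splitting argument.
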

\begin{proof}
 Estimate \eqref{eq:Est4phipWE:ex} follows from the inequality
 \[
  |r\phi|(u, v,\om)\leq |r\phi|(u, -u, \om)+\int_{-u}^{v}|D_L(r\phi)|dv
 \]
followed by the Cauchy-Schwarz's inequality.

In the interior region, for $\psi=r\phi$, the problem is that we can not integrate from the initial hypersurface or the boundary $H_{0^*}$ or the null infinity as the behaviour of $r\phi$ at null infinity is unknown (generically not zero).
However the scalar field $\phi$ vanishes at null infinity. We thus can bound $r|\phi|^2$ by the energy flux through $\Si$ with $\Si=\Si_{\tau}$ or $H_u$. More
precisely, on $\Si$ we can show that
\begin{align*}
r\int_{\om}|\phi|^2d\om&\les \int_{\Si}|\phi|^2d\tilde{v}d\om +\int_{\Si}r|D_{\tilde{v}}\phi||\phi|d\tilde{v}d\om\\
&\les \ep_1\int_{\Si}|D_{\tilde{v}}\phi|^2 r^2 d\tilde{v}d\om +(\ep_1^{-1}+1)\int_{\Si}|\phi|^2d\tilde{v}d\om\\
&\les \ep_1 E[\phi](\Si)+\ep_1^{-1}\int_{\Si}|\phi|^2d\tilde{v}d\om.
\end{align*}
This gives estimate \eqref{eq:Est4phiE:small}. In particular for $\ep_1=1$, from Hardy's inequality \eqref{eq:Hardy}, we conclude that estimate \eqref{eq:Est4phipWE:in:p} holds for $p=1$. To prove it for all $1\leq p\leq 2$, it suffices to show the estimate with $p=2$.
Consider the sphere with radius $r=\frac{\tau^*+v}{2}$ on
$H_{\tau^*}\subset \Si_{\tau}$. Choose the sphere with radius $r_1=\frac{\tau^*+v_1}{2}$ such that
\[
 r_1^{1+\ga_0}=E[\phi](\Si_{\tau})^{-1}\int_{H_{\tau^*}}r^{1+\ga_0}|D_L(r\phi)|^2dvd\om.l
\]
If $r\leq r_1$, then \eqref{eq:Est4phipWE:in:p} with $p=2$ follows from \eqref{eq:Est4phipWE:in:p} with $p=1$. Otherwise we have $r_1< r$.
Then
\begin{align*}
 \int_{\om}|r\phi|^2(\tau^*, v,\om)d\om&\les \int_{\om}|r\phi|^2(\tau^*, v_1, \om )+r_1^{-\ga_0}\int_{H_{\tau^*}}r^{1+\ga_0}|D_L(r\phi)|^2dvd\om\\
 &\les r_1 E[\phi](\Si_{\tau})+r_1^{-\ga_0}I^{1+\ga_0}_0[r^{-1}D_L\psi](H_{\tau^*})\\
 &\les \left(E[\phi](\Si_{\tau})\right)^{\frac{\ga_0}{1+\ga_0}}\left(I^{1+\ga_0}_0[r^{-1}D_L\psi](H_{\tau^*})\right)^{\frac{1}{1+\ga_0}}.
\end{align*}
Here we recall the notation $I$ defined in Section \ref{sec:notation}.
\end{proof}
The following lemma is quite simple but it turns out to be very useful.
\begin{lem}
 \label{lem:simplint}
Suppose $f(\tau)$ is smooth. Then for any $\b\neq 0$, we have the identity
\begin{equation*}
\int_{\tau_1}^{\tau_2}s^\b
f(s)ds=\b\int_{\tau_1}^{\tau_2}\tau^{\b-1}\int_{\tau}^{\tau_2}f(s)ds
d\tau+\tau_1^{\b}\int_{\tau_1}^{\tau_2}f(s)ds.
\end{equation*}
\end{lem}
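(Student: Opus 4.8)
The final statement is Lemma~\ref{lem:simplint}, an elementary integration-by-parts identity. The plan is to prove it directly by Fubini (switching the order of integration in a double integral) together with the fundamental theorem of calculus; no hard analysis is involved, and the only thing to be careful about is bookkeeping of the limits of integration.

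\textbf{Approach.} Set $g(\tau)=\int_{\tau}^{\tau_2}f(s)\,ds$, so that $g'(\tau)=-f(\tau)$ and $g(\tau_2)=0$. First I would integrate by parts in the left-hand side, writing $s^\b f(s)=-s^\b g'(s)$ and integrating:
\[
\int_{\tau_1}^{\tau_2}s^\b f(s)\,ds
=-\,\bigl[s^\b g(s)\bigr]_{\tau_1}^{\tau_2}
+\b\int_{\tau_1}^{\tau_2}s^{\b-1}g(s)\,ds .
\]
Since $g(\tau_2)=0$, the boundary term at $s=\tau_2$ vanishes and the boundary term at $s=\tau_1$ contributes $\tau_1^{\b}g(\tau_1)=\tau_1^{\b}\int_{\tau_1}^{\tau_2}f(s)\,ds$. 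Substituting back the definition of $g$ in the remaining integral gives exactly the claimed identity. Alternatively, one can avoid naming $g$: start from the right-hand side, write $\tau^{\b-1}=\frac{1}{\b}\frac{d}{d\tau}\tau^{\b}$ (valid since $\b\neq0$), integrate by parts in the $\tau$ integral of the double integral $\b\int_{\tau_1}^{\tau_2}\tau^{\b-1}\int_{\tau}^{\tau_2}f(s)\,ds\,d\tau$, and use $\frac{d}{d\tau}\int_{\tau}^{\tau_2}f(s)\,ds=-f(\tau)$; the boundary terms then combine with $\tau_1^\b\int_{\tau_1}^{\tau_2}f(s)\,ds$ to leave $\int_{\tau_1}^{\tau_2}s^\b f(s)\,ds$.

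A second, perhaps even cleaner route is Fubini: expand $\b\int_{\tau_1}^{\tau_2}\tau^{\b-1}\int_{\tau}^{\tau_2}f(s)\,ds\,d\tau$ as a double integral over the triangle $\{\tau_1\le\tau\le s\le\tau_2\}$, switch the order of integration to get $\b\int_{\tau_1}^{\tau_2}f(s)\bigl(\int_{\tau_1}^{s}\tau^{\b-1}\,d\tau\bigr)ds=\int_{\tau_1}^{\tau_2}f(s)(s^\b-\tau_1^\b)\,ds$, and then split off the $-\tau_1^\b\int_{\tau_1}^{\tau_2}f(s)\,ds$ term, which cancels against the explicit additive term on the right-hand side. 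This makes the role of the hypothesis $\b\neq0$ transparent (it is exactly what is needed for $\int_{\tau_1}^{s}\tau^{\b-1}\,d\tau=\frac{1}{\b}(s^\b-\tau_1^\b)$ rather than a logarithm) and requires only that $f$ be, say, continuous, so that all integrals and the interchange are justified.

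\textbf{Main obstacle.} There is essentially no obstacle here: the statement is a one-line manipulation, and the only mild care needed is to make sure the boundary contributions are tracked correctly and that $\b\neq0$ is invoked where the antiderivative of $\tau^{\b-1}$ is taken. I would present the Fubini argument as the proof, since it is the shortest and most self-contained.
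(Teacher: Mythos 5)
Your proposal is correct. The paper states Lemma~\ref{lem:simplint} without any proof (it is prefaced only by the remark that it is ``quite simple''), so there is no proof in the paper to compare against; you have simply supplied the elementary argument the author left to the reader. Both of the routes you sketch are valid: the Fubini route gives
\[
\b\int_{\tau_1}^{\tau_2}\tau^{\b-1}\!\int_{\tau}^{\tau_2}f(s)\,ds\,d\tau
=\b\int_{\tau_1}^{\tau_2}f(s)\Bigl(\int_{\tau_1}^{s}\tau^{\b-1}\,d\tau\Bigr)ds
=\int_{\tau_1}^{\tau_2}f(s)\bigl(s^\b-\tau_1^\b\bigr)\,ds,
\]
and adding $\tau_1^\b\int_{\tau_1}^{\tau_2}f\,ds$ recovers the left-hand side; the integration-by-parts route with $g(\tau)=\int_{\tau}^{\tau_2}f$ and $g(\tau_2)=0$ is an equivalent bookkeeping of the same computation. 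Your remark that $\b\neq0$ enters precisely through $\int_{\tau_1}^{s}\tau^{\b-1}d\tau=\tfrac{1}{\b}(s^\b-\tau_1^\b)$ is the right observation, and smoothness of $f$ is more than enough to justify the interchange of integrals. No gaps.
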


\subsubsection{Energy decay in the exterior region}
In the exterior region, as $r\geq \frac{1}{3}u_+$, it suffices to consider the $r$-weighted energy estimate for the largest $p=1+\ga_0$. First we can show that
\begin{prop}
 \label{prop:Est4FJ:ex}
 In the exterior region for all $\tau_2\leq \tau_1\leq 0$, we have
 \begin{equation}
  \label{eq:Est4FJ:ex}
  \begin{split}
  \iint_{\mathcal{D}_{\tau_1}^{\tau_2}}r^{1+\ga_0}| F_{L\mu}J^{\mu}[\phi] |dxdt\les &  M_2 E_0^0[\phi]+M_2 \int_{u}u_+^{-1-\ep}\int_{v}r^{1+\ga_0}|D_L\psi|^2dvd\om du\\
  &+ |q_0|\iint_{\mathcal{D}_{\tau_1}^{\tau_2}}r^{\ga_0}(|D_L(r\phi)|^2 +|\D(r\phi)|^2)dvdud\om.
 \end{split}
 \end{equation}
\end{prop}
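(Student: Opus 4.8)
The plan is to decompose $F_{L\mu}J^\mu[\phi]$ in the null frame, to peel off the long‑range part coming from the charge, and then to control the genuinely large Maxwell contributions by a weighted Cauchy--Schwarz that plays the $\tau$‑decay of the chargeless field against the $r$‑weighted control of $\phi$ provided by Lemma~\ref{lem:Est4phipWE}. Since $F_{LL}=0$, $F_{L\Lb}=-2\rho$, $F_{Le_i}=\a_i$ and $J^{\Lb}[\phi]=-\f12 J_L[\phi]$, $J^{e_i}[\phi]=J_{e_i}[\phi]$, one reads off $F_{L\mu}J^\mu[\phi]=\rho\,J_L[\phi]+\a\cdot\J[\phi]$. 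Writing $\psi=r\phi$ and using $L(r)=1$, $e_i(r)=0$ one has $|J_L[\phi]|\le r^{-2}|\psi|\,|D_L\psi|$, $|\J[\phi]|\le r^{-2}|\psi|\,|\D\psi|$, so that, since $d\vol=2r^2\,dudvd\om$ in the exterior, the left‑hand side of the Proposition is
\[
 \les \iint_{\mathcal D_{\tau_1}^{\tau_2}}r^{1+\ga_0}\bigl(|\rho|\,|\psi|\,|D_L\psi|+|\a|\,|\psi|\,|\D\psi|\bigr)\,dudvd\om .
\]
In the exterior $\rho=q_0r^{-2}+\bar\rho$. For the $q_0$‑piece, $|q_0|r^{\ga_0-1}|\psi|\,|D_L\psi|\le\f12|q_0|\bigl(r^{\ga_0}|D_L\psi|^2+r^{\ga_0}|\phi|^2\bigr)$; the first term is already one of the three terms on the right, and for the second, Hardy's inequality \eqref{eq:Hardyga} (applicable since $\ga_0<1$, the boundary sphere $v=-u$ lying on $\{t=0\}$, where $r\simeq u_+$), integrated in $u$, bounds $\iint r^{\ga_0}|\phi|^2\,dudvd\om$ by $\les\int_{\mathbb R^3}r^{\ga_0-1}|\phi_0|^2\,dx+\iint r^{\ga_0}|D_L\psi|^2\,dudvd\om\les M_2E_0^0[\phi]+\iint r^{\ga_0}|D_L\psi|^2\,dudvd\om$ (a weighted Hardy inequality on $\mathbb R^3$ together with the norm equivalence of Lemma~\ref{lem:IDbd}). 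Thus the charge contributes only to the first and third terms on the right.

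The heart of the matter is the remaining terms $|\bar\rho|\,|\psi|\,|D_L\psi|$ and $|\a|\,|\psi|\,|\D\psi|$, where $\bar\rho,\a$ are large but $\tau$‑decaying. On each $H_u$ one Cauchy--Schwarzes in $(v,\om)$, pairing $r^{(1+\ga_0)/2}|D_L\psi|$ (respectively $r^{(1+\ga_0)/2}|\D\psi|$) against the $\textrm{(Maxwell)}\times\psi$ factor; the residual integral $\int_{H_u}r^{1+\ga_0}|\bar\rho|^2|\psi|^2\,dvd\om$ is then controlled using: the pointwise Maxwell decay $|\bar\rho|^2\les M_2 r^{-2-p}u_+^{p-1-\ga_0}$ and $|\a|^2\les M_2 r^{-2-p}u_+^{p-1-\ga_0}$ from Proposition~\ref{prop:supF}, on the allowed ranges of $p$; Lemma~\ref{lem:Est4phipWE} in the form $\int_\om|\psi|^2(u,v,\om)\,d\om\les\int_\om|\psi|^2(u,-u,\om)\,d\om+\b^{-1}u_+^{-\b}\int_{-u}^{v}\!\int_\om r^{1+\b}|D_L\psi|^2\,dvd\om$ for an auxiliary exponent $\b$, where keeping the truncated $v$‑integral and applying Fubini against the residual weight $r^{\ga_0-1-p}$ turns the second term into a genuine $r$‑weighted flux $\int_{H_u}r^{1+\b+\ga_0-p}|D_L\psi|^2\,dvd\om$ with a favourable $u_+$‑prefactor; Hardy's inequality for the $|\psi|^2$ that survives; and the exterior relation $r\ge\f12u_+$, which allows spare $r$‑powers to be traded for $u_+$‑powers. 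An AM--GM in $u$ then leaves (i) a term $M_2\int_u u_+^{-1-\ep}\int_{H_u}r^{1+\ga_0}|D_L\psi|^2\,dvd\om\,du$, (ii) an initial‑data term $\les\int_u u_+^{\ep}\int_\om|\psi_0|^2(u,-u,\om)\,d\om\,du\les M_2E_0^0[\phi]$, and — for $|\a|\,|\psi|\,|\D\psi|$, where the angular derivative is instead paired against the bulk weight $r^{\ga_0}$ of \eqref{eq:pWE:sca:ex} — (iii) a term $\les\ep_1\iint r^{\ga_0}|\D\psi|^2$ with $\ep_1$ arbitrarily small; this last, together with the charge contribution, is absorbed once the final term of the Proposition is moved into the left side of \eqref{eq:pWE:sca:ex}.

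The main obstacle is the weight bookkeeping in the last step. The chargeless Maxwell field decays only like $\tau_+^{-1-\ga_0}$ at the level of the energy flux, so a naive Cauchy--Schwarz leaves a coefficient $u_+^{-(1+\ga_0)/2}$ in front of the scalar $r$‑weighted flux, whose exponent exceeds $-1$ when $\ga_0<1$ and which is therefore useless for the Gronwall argument that is meant to close \eqref{eq:pWE:sca:ex} over the semi‑infinite range $u\in(-\infty,0]$. The extra decay needed to reach the integrable weight $u_+^{-1-\ep}$ has to be squeezed out of the additional power of $r$ available in Lemma~\ref{lem:Est4phipWE} (together with $r\ge\f12u_+$); this forces one to take $\b$ as large as the $r$‑weighted hierarchy permits, $1+\b\le2$, and to run the estimate not for $p=1+\ga_0$ in isolation but simultaneously for the whole family of weights $p\in[1+\ep,2]$, closing a coupled Gronwall inequality via \eqref{eq:pWE:sca:ex}. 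Verifying that after every Cauchy--Schwarz and every trade of a spare $r$‑power for a $u_+$‑power the residual $|D_L\psi|^2$‑ and $|\D\psi|^2$‑integrals do collapse onto the three terms on the right — this is exactly where $0<\ga_0<1$ and the smallness of $\ep$ relative to $\ga_0$ are used — is the computational core of the argument; the analogous estimate in the interior (Proposition to come) replaces this global‑in‑$u$ Gronwall by the pigeonhole mechanism of the new method.
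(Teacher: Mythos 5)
Your decomposition $F_{L\mu}J^\mu[\phi]=\rho\,J_L[\phi]+\a\cdot\J[\phi]$, your handling of the charge piece via Hardy's inequality \eqref{eq:Hardyga}, and your use of the $q_0$ smallness to produce the third term of the Proposition, all match the paper's argument. The genuine gap is in the way you estimate the residual $\int_{H_u}r^{1+\ga_0}|\bar\rho|^2|\psi|^2\,dvd\om$ (and similarly with $\a$): you invoke the pointwise Maxwell decay of Proposition~\ref{prop:supF}, $|\bar\rho|^2\les M_2\,r^{-2-p}u_+^{p-1-\ga_0}$, and then pay for the $v$-integral with the spare $r$-power via Fubini. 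This is strictly weaker than what is needed. Carrying out your own computation: taking $\b=p$ (the only choice making the flux exponent come out to $1+\ga_0$; choosing $\b<p$ and then trading $r\gtrsim u_+$ gives the same thing), one finds
\[
\int_{H_u}r^{1+\ga_0}|\bar\rho|^2|\psi|^2\,dvd\om\les M_2\,u_+^{-1}\int_\om|\psi_0|^2\,d\om+M_2\,u_+^{-1-\ga_0}\int_{H_u}r^{1+\ga_0}|D_L\psi|^2\,dvd\om.
\]
After the AM--GM in $u$ (with weight $u_+^{1+\ep}$ on this factor, $u_+^{-1-\ep}$ on the other) the flux term picks up the coefficient $u_+^{1+\ep-1-\ga_0}=u_+^{\ep-\ga_0}$, which, since $\ga_0<1$, is strictly worse than the target $u_+^{-1-\ep}$ for every admissible $\ep>0$. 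Indeed, a weighted AM--GM with arbitrary weight $f(u)$ would need simultaneously $f\gtrsim u_+^{1+\ep}$ and $f\,u_+^{-1-\ga_0}\les u_+^{-1-\ep}$, i.e.\ $\ga_0\geq 1+2\ep$, which is impossible. You correctly notice a related obstacle (``a naive Cauchy--Schwarz leaves $u_+^{-(1+\ga_0)/2}$''), but the remedy you propose --- pushing $\b$ up to $1$ and running a coupled Gronwall across a family of $r$-weights --- does not repair it: with $q=1+\b+\ga_0-p$ the closing condition becomes $\b-p\geq 1+2\ep-\ga_0$ while the constraint $q\leq 2$ forces $\b-p\leq 1-\ga_0$, again a contradiction. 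The $\a$-term fares no better: for $p<1+\ga_0$ the inner $\int_v r^{\ga_0-p}\,dv$ diverges, and at the endpoint $p=1+\ga_0$ all $u_+$-decay is gone.

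What is missing is that the Maxwell field must be controlled in an $L^2_v$ sense on $H_u$, not in $L^\infty$. The paper does not use Proposition~\ref{prop:supF} here; it applies Sobolev embedding on the sphere, $r^2|\bar\rho|^2\les\sum_{j\leq 2}\int_\om r^2|\mathcal{L}_\Om^j\bar\rho|^2\,d\om$, holds the $\psi$-factor in $L^\infty_v$ via Lemma~\ref{lem:Est4phipWE}, and then integrates the Maxwell factor in $v$ to produce the flux $E^2[\bar F](H_u)\les M_2u_+^{-1-\ga_0}$ (and, for the $\a$-term, the $r$-weighted flux from \eqref{eq:Endecay:cur:ex}). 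This $L^\infty_v\times L^1_v$ pairing recovers the crucial extra $u_+^{-1}$: the residual becomes $M_2u_+^{-2}\int_\om|\psi_0|^2+M_2u_+^{-2-\ga_0}\int_{H_u}r^{1+\ga_0}|D_L\psi|^2$, and now the same AM--GM gives the coefficient $u_+^{\ep-1-\ga_0}\les u_+^{-1-\ep}$, which closes. Passing from the $L^2_v$ flux to the $L^\infty_v$ bound (which is exactly what the trace Lemma~\ref{lem:trace} in the proof of Proposition~\ref{prop:supF} does) is a lossy step, and that loss cannot be undone downstream, which is why your route breaks; you should go directly to the flux. (A minor point: your use of Lemma~\ref{lem:IDbd} to bound $\int r^{\ga_0-1}|\phi_0|^2\,dx$ is out of place --- that is a purely covariant Hardy inequality on the initial slice bounded by $E_0^0[\phi]$; Lemma~\ref{lem:IDbd} is only needed later, in the bootstrap, to relate $E_0^2$ to $\mathcal{E}$.)
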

\begin{proof}
As $F=dA$ has different decay properties for different components, we estimate the integral according to the index $\mu$. Note that $ r^2 J[\phi]=J[r\phi]$.
For $\mu=\Lb$, we have
\begin{equation}
\label{eq:FLLbJ}
 |F_{L\Lb}J^{\Lb}[\phi]|\les r^{-2}|q_0| |D_L\psi||\psi|+|\bar \rho||D_L\psi||\psi|,\quad \psi=r\phi.
\end{equation}
The first term on the right hand side will be absorbed with the smallness assumption on the charge $q_0$ (as the data for the scalar field is small). In deed, by using Lemma \ref{lem:Hardyga}, we can show that
\begin{align*}
 2\iint r^{\ga_0-1}|D_L\psi||\psi|dudvd\om & \leq \iint r^{\ga_0} |D_L\psi|^2dvdud\om +\iint r^{\ga_0}|\phi|^2dvdud\om \\
 &\les \iint r^{\ga_0} |D_L\psi|^2dvdud\om+\int_{u}\int_{\om }(r^{1+\ga_0}|\phi|^2)(u, -u, \om )d\om du\\
 &\les \iint r^{\ga_0} |D_L\psi|^2dvdud\om+E_0^0[\phi].
 \end{align*}
For the second term on the right hand side of \eqref{eq:FLLbJ}, the idea is that we use Cauchy-Schwarz inequality and make use of the $r$-weighted energy estimate. First we can estimate that
\begin{align*}
2r^{1+\ga_0}|\bar \rho||D_L\psi||\psi| \leq r^{1+\ga_0}|D_L\psi|^2 u_+^{-1-\ep}+u_+^{1+\ep}r^2|\bar\rho|^2r^{1+\ga_0}|\phi|^2.
\end{align*}
The first term will be controlled through Gromwall's inequality. For the second term, we can first use Sobolev embedding on the unit sphere to bound $\bar \rho$ and then apply Lemma \ref{lem:Est4phipWE}:
\begin{align*}
 &\iint u_+^{1+\ep}r^2|\bar\rho|^2r^{1+\ga_0}|\phi|^2 dudvd\om \\
 &\les \int_{u}u_+^{1+\ep}\int_{v} \sum\limits_{j\leq 2}r^2\int_{\om}|\mathcal{L}_{\Om}^j\bar \rho|^2 d\om \cdot \int_{\om}r^{1+\ga_0}|\phi|^2d\om dvd u\\
 &\les \int_{u} u_+^{1+\ep -1} E^2[\bar F](H_{u}) (u_+^{\ga_0}\int_{\om}|r\phi|^2(u, -u, \om)d\om +\int_{v}\int_{\om}r^{1+\ga_0}|D_L\psi|^2 dvd\om) du\\
 &\les M_2\int_{|x|\geq R} r_+^{1+\ga_0-\ep-2}|\phi|^2(0, x)dx+M_2\int_{u}u_+^{-1-\ep}\int_{v}r^{1+\ga_0}|D_L\psi|^2dvd\om du.
\end{align*}
The first term is bounded by the weighted Sobolev norm of the initial data. The second term can be controlled by using Gronwall's inequality. Thus estimate \eqref{eq:Est4FJ:ex} holds for the case of $\mu=L$.

For $\mu=e_1$ or $e_2$, first we can bound
\begin{align*}
 r^{1+\ga_0}|F_{L e_j}||J^{e_j}[\psi]|\leq \ep_1 r^{\ga_0}|\nabb \psi|^2+\ep_1^{-1} r^{3+\ga_0}|\a|^2 r|\phi|^2, \quad \forall \ep_1>0.
\end{align*}
We choose sufficiently small $\ep_1$ so that the integral of the first term can be absorbed. For the second term, we first use Sobolev embedding on the unit sphere to bound $\a$ and then
Lemma \ref{lem:Est4phipWE} to control $\phi$:
\begin{align*}
 &\iint r^{3+\ep}|\a|^2 r^{1+\ga_0-\ep}|\phi|^2 dudvd\om\\
 & \les \int_{u} u_+^{\ga_0-\ep-1}\int_{v} r^{3+\ep}\sum\limits_{j\leq 2}\int_{\om }|\mathcal{L}_{\Om}^j \a|^2d\om \cdot \left(\int_{\om}|r\phi|^2(u, -u, \om )d\om +u_+^{-\ga_0}
  \int_{v}\int_{\om}r^{1+\ga_0}|D_L\psi|^2dvd\om \right)du\\
  &\les M_2\int_{|x|\geq R}r_+^{1+\ga_0-\ep -2}|\phi|^2(0, x)dx+M_2\int_{u}u_+^{-1-\ep}\int_{v}r^{1+\ga_0}|D_L\psi|^2dvd\om du\\
  &\les M_2E_0^0[\phi]+M_2\int_{u}u_+^{-1-\ep}\int_{v}r^{1+\ga_0}|D_L\psi|^2dvd\om du.
\end{align*}
 As the data for the scalar field is small, the charge is also small. In particular we can choose $\ep_1=|q_0|$ (if $q_0=0$, let $\ep_1$ small depending only on $\ep$, $\ga_0$ land $R$).
Therefore estimate \eqref{eq:Est4FJ:ex} holds for the case when $\mu=e_1$ or $e_2$. We thus finished the proof for the Proposition.
\end{proof}
As a corollary, we show the $r$-weighted energy flux decay of the scalar field in the exterior region.
\begin{cor}
 \label{cor:pWEdecay:sca:ex}
 Assume that the charge $q_0$ is sufficiently small, depending only on $\ep$, $R$, $\ga_0$. Then in the exterior region, we have the energy flux decay
 \begin{equation}
  \label{eq:pWEdecay:sca:ex}
  \begin{split}
   &\int_{H_{\tau_1^*}}r^p|D_L\psi|^2dvd\om+\iint_{\mathcal{D}_{\tau_1}}r^{p-1}(p|D_L\psi|^2+|\D\psi|^2)dvd\om du+\int_{\Hb_{-\tau_2^*}^{\tau_1^*}}r^p|\D\psi|^2dud\om \\
   &\les_{M_2} \mathcal{E}_0[\phi] (\tau_1)_+^{p-1-\ga_0},\quad \forall 0\leq p\leq 1+\ga_0,\quad \forall \tau_2\leq \tau_1\leq 0.
  \end{split}
 \end{equation}
\end{cor}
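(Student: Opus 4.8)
The plan is to combine the $r$-weighted energy estimate \eqref{eq:pWE:sca:ex} of Proposition \ref{prop:pWE:sca} with the bound on the interaction term from Proposition \ref{prop:Est4FJ:ex}, and then run the standard pigeonhole/dyadic argument of the new method as in the proof of Proposition \ref{prop:Endecay:cur}. The first step is to insert the estimate \eqref{eq:Est4FJ:ex} into the right hand side of \eqref{eq:pWE:sca:ex} with $p=1+\ga_0$. Since in the exterior region $r\gtrsim u_+$, the data term $\int_{B_{R-\tau_1}^{R-\tau_2}}r^{1+\ga_0}(|D_L\psi|^2+|\D\psi|^2)$ and the $I^{\max\{p,1+\ep\}}_{\min\{p,1+\ep\}}[\Box_A\phi]$ term are both bounded by $\mathcal{E}_0[\phi](\tau_1)_+^{-1}$ from the definition \eqref{eq:def4Nkphi} (using $r\gtrsim u_+$ to trade powers of $r$ for powers of $u_+$). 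The term $M_2 E_0^0[\phi]\les M_2\mathcal{E}_0[\phi]$ and the term $|q_0|\iint r^{\ga_0}(|D_L(r\phi)|^2+|\D(r\phi)|^2)$ is absorbed by the positive bulk terms on the left of \eqref{eq:pWE:sca:ex} provided $|q_0|$ is small enough depending on $\ep,\ga_0,R$ — this is exactly where the smallness hypothesis on the charge enters.

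The remaining term from \eqref{eq:Est4FJ:ex} is $M_2\int_u u_+^{-1-\ep}\int_v r^{1+\ga_0}|D_L\psi|^2\,dvd\om\,du$. This is handled by Gronwall's inequality with respect to the $u$-foliation: writing the $r$-weighted energy identity on $\mathcal{D}_{\tau_1}^{\tau_2}$ and noting that $\int_{H_u}r^{1+\ga_0}|D_L\psi|^2\,dvd\om$ appears both as the quantity being estimated and, integrated against the bounded-in-$L^1$ weight $u_+^{-1-\ep}$, on the right hand side, Gronwall closes and produces the uniform bound $\int_{H_{\tau_1^*}}r^{1+\ga_0}|D_L\psi|^2\,dvd\om\les_{M_2}\mathcal{E}_0[\phi]$ together with the spacetime bulk bound on $\iint r^{\ga_0}(p|D_L\psi|^2+|\D\psi|^2)\,dvd\om\,du$. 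This gives \eqref{eq:pWEdecay:sca:ex} at $p=1+\ga_0$ but only with decay rate $(\tau_1)_+^0$, i.e. boundedness; to get the stated $(\tau_1)_+^{p-1-\ga_0}$ decay I would then run the pigeonhole argument: from the bulk bound on $\iint r^{\ga_0}|D_L\psi|^2$ over $\bigcup_{\tau\ge 0}\mathcal{D}_\tau$ one extracts a dyadic sequence $\{\tau_n\}$ along which $\int_{H_{\tau_n^*}}r^{\ga_0}|D_L\psi|^2\,dvd\om\les (\tau_n)_+^{-1}\mathcal{E}_0[\phi]$; interpolating with the uniform $r^{1+\ga_0}$ bound gives the intermediate $r^{p}$-weighted fluxes with the decay $(\tau_n)_+^{p-1-\ga_0}$, and then feeding these back into \eqref{eq:pWE:sca:ex} on dyadic blocks $[\tau_n,\tau_{n+1}]$ upgrades the decay to hold for all $\tau_1\le 0$. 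For the interior-type foliation the ordering $\tau_2\le\tau_1\le 0$ is reversed, so one actually iterates towards $\tau\to-\infty$; but the mechanism is identical.

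The main obstacle is the Gronwall step and the treatment of the quadratic interaction term $F_{L\mu}J^\mu[\phi]$ without smallness of $F$: one must be careful that after Cauchy-Schwarz the ``bad'' piece is precisely of the form (bounded $L^1_u$ weight)$\times$(the energy flux being estimated), so that Gronwall — rather than absorption — closes the estimate. This is the conceptual heart of the paper, as emphasized in the introduction: the weak $u_+$-decay (not $t$-decay) of the Maxwell field components, supplied by Propositions \ref{prop:Endecay:cur} and \ref{prop:supF} and encoded in the factor $u_+^{-1-\ep}$, is exactly what makes the weight $u_+^{-1-\ep}$ integrable in $u$. A secondary technical point is bookkeeping the powers of $r$ versus $u_+$ in the exterior region when reducing the various $I$-norms in $\mathcal{E}_0[\phi]$ and $M_2$ to the single power $(\tau_1)_+^{p-1-\ga_0}$; this uses only $r\gtrsim u_+$ together with the interpolation $I^{\max\{p,1+\ep\}}_{\min\{p,1+\ep\}}\le (I^{1+\ga_0}_{1+\ep})^{\theta}(I^{1+\ep}_{1+\ga_0})^{1-\theta}$ for suitable $\theta$, and is routine. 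Finally, the estimates for $\si$, $\bar\rho$, $\a$ appearing through $M_2$ in Proposition \ref{prop:Est4FJ:ex} are already in place from Proposition \ref{prop:supF}, so no further curvature analysis is needed here.
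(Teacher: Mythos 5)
Your treatment of the case $p=1+\ga_0$ is essentially the paper's proof: insert \eqref{eq:Est4FJ:ex} into \eqref{eq:pWE:sca:ex}, absorb the $|q_0|$-term using the smallness of the charge, and close the $M_2\int_u u_+^{-1-\ep}\int_v r^{1+\ga_0}|D_L\psi|^2$ term by Gronwall in $u$. That part is correct.

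The gap is in your handling of $p<1+\ga_0$. The pigeonhole/dyadic argument you describe is neither needed nor the right mechanism in the exterior region, and your attempt to run it towards $\tau\to-\infty$ misidentifies the source of the decay. In the exterior region, every point of $\mathcal{D}_{\tau_1}$ satisfies $r\geq R-\tau_1\gtrsim(\tau_1)_+$. Since $p-1-\ga_0\leq 0$, one has pointwise $r^{p-1-\ga_0}\les(\tau_1)_+^{p-1-\ga_0}$ on $H_{\tau_1^*}$, on $\Hb_{-\tau_2^*}^{\tau_1^*}$ and throughout $\mathcal{D}_{\tau_1}$, so
\[
\int_{H_{\tau_1^*}}r^p|D_L\psi|^2dvd\om\les(\tau_1)_+^{p-1-\ga_0}\int_{H_{\tau_1^*}}r^{1+\ga_0}|D_L\psi|^2dvd\om,
\]
and likewise for the spacetime bulk and the incoming-flux terms. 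The decay in $(\tau_1)_+$ for $p<1+\ga_0$ is therefore an immediate consequence of the uniform $p=1+\ga_0$ bound together with $r\gtrsim(\tau_1)_+$; no iteration and no extraction of a dyadic sequence is involved. This is exactly why the paper opens the proof with "it suffices to prove the corollary for $p=1+\ga_0$." The pigeonhole argument is reserved for the interior region (as in Proposition \ref{prop:Endecay:cur}), where $r$ can be arbitrarily small compared with $\tau_+$ and the trivial pointwise trade of $r$-weight for $\tau_+$-weight is unavailable. You in fact state the key relation $r\gtrsim u_+$ in your proposal, but then fail to use it to conclude, which makes the remainder of your argument superfluous and conceptually misleading.
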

\begin{proof}
 It suffices to prove the corollary for $p=1+\ga_0$. For sufficiently small $q_0$ depending only on $\ep$, $\ga_0$ and $R$, from the $r$-weighted energy estimate \eqref{eq:pWE:sca:ex} and the estimate \eqref{eq:Est4FJ:ex} for the error term,
 the integral of $r^{\ga_0}|D_L(r\phi)|^2$ can be absorbed. Then estimate \eqref{eq:pWEdecay:sca:ex} follows from Gronwall's inequality.
\end{proof}
Next we make use of the $r$-weighted energy decay to show the energy flux decay and the integrated energy decay for the scalar field in the exterior region. In the sequel we assume the charge $q_0$ is sufficiently small so that estimate \eqref{eq:pWEdecay:sca:ex} of Corollary \ref{cor:pWEdecay:sca:ex} holds in the exterior region $\tau_2\leq \tau_1\leq 0$. From the integrated energy estimate \eqref{eq:ILE:Sca:ex}, it suffices to bound
the interaction term of the gauge field and the scalar field.

\begin{prop}
 \label{prop:Est4bFJ:ex}
  In the exterior region for all $\tau_2\leq \tau_1\leq 0$, we have
 \begin{equation}
  \label{eq:Est4bFJ:ex}
  \begin{split}
   &\iint_{\mathcal{D}_{\tau_1}^{ \tau_2}}|F_{L\nu}J^\nu[\phi]|+|F_{\Lb\nu}J^\nu(\phi)| dxdt\\
   &\les \ep_1 I^{-1-\ep}_0[D\phi](\mathcal{D}_{\tau_1}^{\tau_2})+C_{M_2, \ep_1}\left(\mathcal{E}_0[\phi](\tau_1)_+^{-1-\ga_0}+(\tau_1)_+^{\ep}\int_{-\tau_1^*}^{-\tau_2^*}v^{-1-\ep}E[\phi](\Hb_{v}^{-\tau_1^*})dv\right)
  \end{split}
 \end{equation}
 for all $\ep_1>0$ and some constant $C_{M_2, \ep}$ depending on $M_2$ and $\ep_1$.
 \end{prop}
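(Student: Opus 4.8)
The plan is to decompose the interaction term $\iint_{\mathcal D_{\tau_1}^{\tau_2}}|F_{L\nu}J^\nu[\phi]|+|F_{\Lb\nu}J^\nu[\phi]|\,dxdt$ according to the null index $\nu$, exploiting the null structure of $J^\nu[\phi]=\Im(\phi\cdot\overline{D^\nu\phi})$ together with the weak pointwise bounds for $\bar F$ established in Proposition \ref{prop:supF} (and the charge contribution $q_0 r^{-2}$ handled separately as in Proposition \ref{prop:Est4FJ:ex}). Since $r^2 J[\phi]=J[r\phi]$ with $\psi=r\phi$, each summand will be written schematically as a weight times a curvature component times $|D_L\psi|$ or $|\nabb\psi|$ times $|\phi|$, and the strategy in each case is: apply Cauchy--Schwarz to split off a factor that is absorbed into $\ep_1 I^{-1-\ep}_0[D\phi](\mathcal D_{\tau_1}^{\tau_2})$, then bound the remaining factor by using a Sobolev embedding on the sphere to pass from the pointwise curvature bound to the $\mathcal L_\Om^j$-energy (for $j\le 2$, which is where $M_2$ enters), and finally use Lemma \ref{lem:Est4phipWE} — specifically \eqref{eq:Est4phipWE:ex} — to convert $r|\phi|^2$ on the sphere into a boundary term $\int_\om|r\phi|^2(u,-u,\om)d\om$ plus a $v$-weighted integral of $|D_L\psi|^2$ along $H_u$.

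Concretely, I would treat the index $\nu=\Lb$ first, where $|F_{\Lb e_i}|=|\ab|$ and $|F_{\Lb L}|=2|\rho|$; in the exterior region $r\gtrsim u_+$, so the favorable decay in $u_+$ of $r\ab$ (estimate \eqref{eq:supab:p}) and of $r\bar\rho$ together with the small-charge piece give terms controlled by $\mathcal E_0[\phi](\tau_1)_+^{-1-\ga_0}$ after invoking the $r$-weighted energy decay \eqref{eq:pWEdecay:sca:ex} of Corollary \ref{cor:pWEdecay:sca:ex} for $\phi$, which we are entitled to assume here since $q_0$ is taken small. The index $\nu=L$, with $|F_{Le_i}|=|\a|$ and $|F_{L\Lb}|=2|\rho|$, is essentially the content of Proposition \ref{prop:Est4FJ:ex}: the bound there, combined once more with \eqref{eq:pWEdecay:sca:ex}, shows the $L$-contribution is $\lesssim M_2\mathcal E_0[\phi](\tau_1)_+^{-1-\ga_0}$ after the Gronwall-type term is absorbed into the already-established exterior $r$-weighted decay. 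The angular components $\nu=e_i$ are handled symmetrically. The term that produces the genuinely new right-hand side expression $(\tau_1)_+^{\ep}\int_{-\tau_1^*}^{-\tau_2^*}v^{-1-\ep}E[\phi](\Hb_v^{-\tau_1^*})\,dv$ comes from the places where, instead of integrating $|D_L\psi|^2$ along an outgoing cone $H_u$ (which reproduces the $r$-weighted energy we have already decayed), we are forced by the structure of $J^\nu$ to integrate against the incoming cones $\Hb_v$; there the natural quantity is the energy flux $E[\phi](\Hb_v^{-\tau_1^*})$, and the power $v^{-1-\ep}$ arises from the Cauchy--Schwarz split (borrowing $u_+^{-1-\ep}$ from the curvature decay and converting via $r\sim v$ in the exterior region), while the overall $(\tau_1)_+^\ep$ is the loss incurred by the interpolation used to pass between the integrated-local-energy bound and the $r$-weighted bound for the curvature components $\rho,\si$ (the $2\ep$-loss already visible in \eqref{eq:supab:I} and \eqref{eq:suprhosi:I}).

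The main obstacle, as flagged in the introduction, is that $\bar F$ is not small, so none of these interaction terms can be absorbed by smallness; the resolution is that every curvature factor carries a genuine power of $u_+^{-1}$ (from Proposition \ref{prop:Est4FJ:ex}, Corollary \ref{cor:Endecay:cur:hide}, and Proposition \ref{prop:supF}), and this decay in $u$ — not in $t$ — is exactly what lets the remaining $|D_L\psi|^2$ (resp.\ $E[\phi](\Hb_v)$) integral be closed either by Gronwall along the $H_u$ foliation or by the clean $v^{-1-\ep}$-weighted flux integral that appears on the right of \eqref{eq:Est4bFJ:ex}. The only bookkeeping subtlety is matching weights so that the factor split off for absorption is precisely $r_+^{-1-\ep}|D\phi|^2$ (with $r_+\sim u_+$ in the exterior region this is the $I^{-1-\ep}_0[D\phi]$ appearing on the right), which forces the complementary weight on the curvature$\times\phi$ factor to be $r^{1+\ep}$ times the pointwise-bounded quantity — hence the need for the $p=1+\ga_0$ (or $p=1$) range in \eqref{eq:pWEdecay:sca:ex} and the range $0\le p\le 1-\ep$ for $\bar\rho$ in Proposition \ref{prop:supF}. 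Once these are lined up, the estimate follows by collecting the pieces and choosing $\ep_1$ after the curvature-energy constants (all $\lesssim M_2$) are fixed.
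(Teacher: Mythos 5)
Your high-level architecture is sound and matches the paper: decompose by the null index $\nu$, invoke Proposition~\ref{prop:Est4FJ:ex} together with Corollary~\ref{cor:pWEdecay:sca:ex} to dispose of the $F_{L\nu}J^\nu$ piece, and for $F_{\Lb\nu}J^\nu$ split into the angular contribution ($\ab$, bounded by the flux through $\Hb_v$), the $\bar\rho$ contribution (absorbed by Cauchy--Schwarz into $\ep_1 I^{-1-\ep}_0[D\phi]$), with the leftover going through Lemma~\ref{lem:Est4phipWE}.

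However, you misidentify the source of the anomalous right-hand-side term $(\tau_1)_+^{\ep}\int_{-\tau_1^*}^{-\tau_2^*}v^{-1-\ep}E[\phi](\Hb_v^{-\tau_1^*})\,dv$, and this is not a cosmetic slip: it is the whole point of the proposition. You attribute it to generic occasions where one integrates against incoming cones and to ``the loss incurred by the interpolation used to pass between the integrated-local-energy bound and the $r$-weighted bound for $\rho,\si$''; that interpolation loss (the $\ep$ in \eqref{eq:supab:I}, \eqref{eq:suprhosi:I}) plays no role here. In the paper, this term comes exclusively from the \emph{charge part} $|q_0|\,r^{-2}\,|D_{\Lb}\phi|\,|\phi|$ of $|F_{\Lb L}||J^L[\phi]|$. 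The essential difficulty --- spelled out in the paper --- is that even though $q_0$ is small, this term cannot be absorbed into $\ep_1 I^{-1-\ep}_0[\bar D\phi](\mathcal{D}_{\tau_1}^{\tau_2})$ because that integrated local energy carries an extra $r^{-\ep}$ weight relative to $r^{-2}$, and there is no matching factor to donate; smallness of $q_0$ alone does not save you (unlike in Proposition~\ref{prop:Est4FJ:ex}). The resolution is to run Cauchy--Schwarz on the $(u,\om)$ integral (not along outgoing cones), producing the flux $E[\phi](\Hb_v^{-\tau_1^*})$, then use \eqref{eq:SPHDecay:sca:ex} (i.e.\ $\int_{\om}|r\phi|^2\,d\om\lesssim\mathcal{E}_0[\phi]\,u_+^{-\ga_0}$) and $r\gtrsim v$ to obtain the weight $v^{-\frac{3+\ga_0-\ep}{2}}(\tau_1)_+^{-\ep/2}$, and finally Young's inequality, which yields exactly $\mathcal{E}_0[\phi](\tau_1)_+^{-1-\ga_0}$ plus $(\tau_1)_+^{\ep}\int v^{-1-\ep}E[\phi](\Hb_v)\,dv$. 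Your reading, where the $v^{-1-\ep}$ is ``borrowed from the curvature decay $u_+^{-1-\ep}$,'' does not apply: the charge contribution carries no $u$-decay whatsoever. Without identifying this exact term as the culprit (and without observing that it is treated by Gronwall in $\tau$ later, in Corollary~\ref{cor:ILEdecay:sca:ex}), one cannot reconstruct why the right-hand side of \eqref{eq:Est4bFJ:ex} has its specific form, nor why the range $p\le 1-\ep$ for $\bar\rho$ in Proposition~\ref{prop:supF} is irrelevant here.
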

\begin{proof}
The integral of $(dA)_{L\nu}J^\nu[\phi]$ has been controlled in the previous Proposition \ref{prop:Est4FJ:ex} as Corollary \ref{cor:pWEdecay:sca:ex} implies that the right hand
side of \eqref{eq:Est4FJ:ex} can be bounded by a constant depending on $M_2$, $\ep$, $\ga_0$ and $R$. Since in the exterior region $r\geq \frac{1}{3} u_+$, we easily obtain the desired bound:
\begin{equation*}
\iint_{\mathcal{D}_{\tau_1}^{ \tau_2}}|F_{L\nu}J^\nu[\phi]|dxdt\les_{M_2}(\tau_1)_+^{-1-\ga_0}\mathcal{E}_0[\phi].
\end{equation*}
It remains to estimate the integral of $F_{\Lb \nu}J^{\nu}[\phi]$. The $r$-weighted energy decay gives control for the "good" derivative of the scalar field. The problem is that we do not have any control for the "bad" derivative $D_{\Lb}\phi$. What worse is the existence of nonzero charge. Although the
charge can be small, we are not able to absort the charge part $q_0 r^{-2}J_{\Lb}[\phi]$ in the integrated local energy estimate \eqref{eq:ILE:Sca:ex} as there is a small $\ep$ loss of decay in $I^{-1-\ep}_0[\bar D\phi]$ on the
left hand side. The idea to treat this term is to make use of the energy flux on the incoming null hypersurface $\Hb_{-u_2}^{u_1}$ and then apply Gronwall's inequality. Let's first consider the easier terms in the integral of
$F_{\Lb \nu}J^{\nu}[\phi]$. For $\nu=e_1$ or $e_2$, we have
\[
 |F_{\Lb \nu}J^{\nu}[\phi]|\les |\ab||\D\phi||\phi|.
\]
Note that from estimate \eqref{eq:Est4phipWE:ex} of Lemma \ref{lem:Est4phipWE} and Corollary \ref{cor:pWEdecay:sca:ex}, we obtain
\[
 \int_{\om}|r\phi|^2(u, v, \om)d\om \les_{M_2}u_+\int_{\om}(-2u)|\phi(0, -2u, \om)|^2 d\om +\mathcal{E}_0[\phi] u_+^{-\ga_0}.
\]
Here we parametrize $\phi$ in $(t, r, \om )$ coordinates. We then use Sobolev embedding on the initial hypersurface $t=0$ to derive the decay of $\phi$:
\begin{equation}
 \label{eq:SPHDecay:sca:ex}
 \int_{\om}|r\phi|^2(u, v, \om)d\om \les_{M_2}\mathcal{E}_0[\phi] u_+^{-\ga_0}.
\end{equation}
From the $r$-weighted energy estimate \eqref{eq:pWEdecay:sca:ex}, we have the weighted angular derivative of the scalar field on the incoming null hypersurface
\[
 \int_{\Hb_{-\tau_2^*}^{\tau_1^*}}r^{1+\ga_0}|\D(r\phi)|^2dud\om\les_{M_2}\mathcal{E}_0[\phi].
\]
In the exterior region, note that $r\geq\frac{v}{2}$.
Therefore we can show that
\begin{align*}
 &\iint_{\mathcal{D}_{\tau_1}^{ \tau_2}}|F_{\Lb e_j}||J^{e_j}[\phi]|dxdt \\
 &\les \int_{-\tau_1^*}^{-\tau_2^*}\int_{-v}^{\tau_1^*}\int_{\om}r^2|\ab||\D\phi||\phi|d\om dudv\\
 &\les \int_{-\tau_1^*}^{-\tau_2^*}\int_{-v}^{\tau_1^*}r^{-\frac{3+\ga_0}{2}}\left(r^2\sum\limits_{j\leq 2}\int_{\om}|\mathcal{L}_{\Om}^j \ab|^2d\om\right)^{\f12}\left(r^{3+\ga_0}\int_{\om}|\D\phi|^2d\om\cdot \int_{\om }|r\phi|^2d\om\right)^{\f12} dudv\\
&\les_{M_2}\mathcal{E}_0[\phi]^{\f12}(\tau_1)_+^{-\frac{\ga_0}{2}}\int_{-\tau_1^*}^{-\tau_2^*}v^{-\frac{3+\ga_0}{2}}(E^2[dA](\Hb_{v}^{-\tau_1^*}))^{\f12}\mathcal{E}_0[\phi]^{\f12}dv\\
&\les_{M_2}\mathcal{E}_0[\phi](\tau_1)_+^{-\frac{\ga_0}{2}-\frac{1+\ga_0}{2}-\frac{1}{2}}\les_{M_2}\mathcal{E}_0[\phi] (\tau_1)_+^{-1-\ga_0}.
 \end{align*}
When $\nu=L$, first we have
\[
 |F_{\Lb L}||J^{L}[\phi]|\les |q_0|r^{-2}|D_{\Lb }\phi||\phi|+|\bar \rho||D_{\Lb}\phi||\phi|.
\]
The second term is easy to bound. We may use Cauchy-Schwarz to absorb $D_{\Lb}\phi$. In deed,
\[
 2|\bar \rho||D_{\Lb}\phi||\phi|\leq \ep_1 |D_{\Lb}\phi|^2 r^{-1-\ep}+\ep_1^{-1}|\bar \rho|^2|\phi|^2 r^{1+\ep},\quad \forall \ep_1>0.
\]
For sufficiently small $\ep_1$, the integral of the first term on the right hand side can be absorbed from the integrated energy estimate \eqref{eq:ILE:Sca:ex}. For the second term, we make use of estimate \eqref{eq:SPHDecay:sca:ex} to show
that
\begin{align*}
 \iint_{\mathcal{D}_{\tau_1}^{\tau_2}}|\bar \rho|^2r^{3+\ep}|\phi|^2 dvdud\om &\les \int_{\tau_1^*}^{\tau_2^*}\int_{-u}^{-\tau_2^*}\sum\limits_{j\leq 2}\int_{\om}r^2|\mathcal{L}_{\Om}^j \bar\rho|^2d\om\cdot r^{1+\ep}\int_{\om}|\phi|^2d\om dvdu\\
&\les_{M_2} \int_{\tau_1^*}^{\tau_2^*}E^2[\bar F](H_u^{-\tau_2^*})u_+^{-1+\ep-\ga_0}\mathcal{E}_0[\phi] du\\
& \les_{M_2} \mathcal{E}_0[\phi]\int_{u_1^*}^{u_2^*}u_+^{-2-\ga_0}du\les_{M_2}\mathcal{E}_0[\phi] (\tau_1)_+^{-1-\ga_0}.
\end{align*}
Finally we need to bound the charge part, namely the integral of $|q_0|r^{-2}|D_{\Lb }\phi||\phi|$. As we have explained previously, this term can not be absorbed even though the charge $q_0$ is small due to the loss of decay
in the integrated local energy $I^{-1-\ep}_0[\bar D\phi](\mathcal{D}_{\tau_1}^{\tau_2})$ in \eqref{eq:ILE:Sca:ex}. The idea is to make use of the energy flux in the incoming null hypersurface $\Hb_{-\tau_2^*}^{\tau_1^*}$ and then apply Gronwall's inequality.
From estimate \eqref{eq:SPHDecay:sca:ex} and noting that $r\geq \frac{1}{2}v$ in the exterior region, we can show that
\begin{align*}
\iint_{\mathcal{D}_{\tau_1}^{ \tau_2}}r^{-2}|D_{\Lb}\phi||\phi|dxdt &\les \int_{-\tau_1^*}^{-\tau_2^*}\int_{-v}^{\tau_1}\int_{\om}|D_{\Lb}\phi||\phi|d\om dudv\\
 &\les \int_{-\tau_1^*}^{-\tau_2^*}\int_{-v}^{\tau_1}r^{-2}\left(r^{2}\int_{\om}|D_{\Lb}\phi|^2d\om\cdot \int_{\om }|r\phi|^2d\om\right)^{\f12} dudv\\
 &\les_{M_2} \mathcal{E}_0[\phi]^{\f12} \int_{-\tau_1^*}^{-\tau_2^*}v^{-\frac{3+\ga_0-\ep}{2}}\int_{-v}^{\tau_1^*}r^{-\frac{1-\ga_0+\ep}{2}}\left(r^{2}\int_{\om}|D_{\Lb}\phi|^2d\om\right)^{\f12} u_+^{-\frac{\ga_0}{2}} dudv\\
&\les_{M_2} \mathcal{E}_0[\phi]^{\f12}\int_{-\tau_1^*}^{-\tau_2^*}v^{-\frac{3+\ga_0-\ep}{2}}(E[\phi](\Hb_{v}^{-\tau_1^*}))^{\f12}(\tau_1)_+^{-\frac{\ep}{2}}dv\\
&\les_{M_2}\mathcal{E}_0[\phi](\tau_1)_+^{-1-\ga_0}+(\tau_1)_+^{\ep}\int_{-\tau_1^*}^{-\tau_2^*}v^{-1-\ep}E[\phi](\Hb_{v}^{-\tau_1^*})dv.
\end{align*}
Combining all the previous estimates, we then have shown \eqref{eq:Est4bFJ:ex}.
\end{proof}
As a corollary we then can show the energy flux decay as well as the integrated local energy decay of the scalar field in the exterior region.
\begin{cor}
 \label{cor:ILEdecay:sca:ex}
For all $\tau_2<\tau_1\leq 0$, we have:
 \begin{equation}
  \label{eq:ILEdecay:sca:ex}
  \begin{split}
   & I^{-1-\ep}_0[\bar D\phi](\mathcal D_{\tau_1}^{\tau_2})+E[\phi](H_{\tau_1^*}^{-\tau_2^*})+E[\phi](\Hb_{-\tau_2^*}^{\tau_1^*})\les_{M_2} (\tau_1)_+^{-1-\ga_0}\mathcal{E}_0[\phi].
  \end{split}
 \end{equation}
\end{cor}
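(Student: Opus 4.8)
The plan is to derive Corollary \ref{cor:ILEdecay:sca:ex} by combining the integrated local energy estimate \eqref{eq:ILE:Sca:ex} for the scalar field in the exterior region with the bound \eqref{eq:Est4bFJ:ex} on the interaction term $\iint|F_{L\nu}J^\nu[\phi]|+|F_{\Lb\nu}J^\nu[\phi]|$ just established in Proposition \ref{prop:Est4bFJ:ex}. First I would feed \eqref{eq:Est4bFJ:ex} into the right hand side of \eqref{eq:ILE:Sca:ex}: choosing $\ep_1$ sufficiently small (depending only on $M_2$, $\ep$, $\ga_0$, $R$), the term $\ep_1 I^{-1-\ep}_0[\bar D\phi](\mathcal{D}_{\tau_1}^{\tau_2})$ is absorbed into the identical term on the left of \eqref{eq:ILE:Sca:ex}. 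The remaining contributions are $E[\phi](B_{R-\tau_1})$, $I^{1+\ep}_0[\Box_A\phi](\mathcal{D}_{\tau_1}^{\tau_2})$ — both of which are $\les_{M_2}\mathcal{E}_0[\phi](\tau_1)_+^{-1-\ga_0}$ by definition of $\mathcal{E}_0[\phi]$ in \eqref{eq:def4Nkphi} and the fact that $r\geq\frac13 u_+$ in the exterior region — together with the outstanding flux term $(\tau_1)_+^{\ep}\int_{-\tau_1^*}^{-\tau_2^*}v^{-1-\ep}E[\phi](\Hb_v^{-\tau_1^*})dv$.

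Thus the inequality becomes a self-improving estimate for the incoming-null energy flux, and the key step is to close it via Gronwall's inequality along the incoming null foliation. Abbreviate $G(v)=E[\phi](\Hb_v^{-\tau_1^*})$ (the flux through the truncated incoming hypersurface with $u$ ranging over $[\tau_1^*, -v]$ or equivalently up to the slice $\Si_{\tau_1}$). Since $E[\phi](\Hb_{-\tau_2^*}^{\tau_1^*})$ appears on the left of \eqref{eq:ILE:Sca:ex} and $v^{-1-\ep}$ is integrable, the estimate reads, schematically, $G(-\tau_2^*)\les_{M_2}\mathcal{E}_0[\phi](\tau_1)_+^{-1-\ga_0}+(\tau_1)_+^{\ep}\int_{-\tau_1^*}^{-\tau_2^*}v^{-1-\ep}G(v)dv$ for all $\tau_2\leq\tau_1$; noting that $G$ is monotone in its argument (larger incoming region) and running the argument over the family of sub-regions, Gronwall's inequality applied to the kernel $(\tau_1)_+^\ep v^{-1-\ep}$ yields $G(v)\les_{M_2}\mathcal{E}_0[\phi](\tau_1)_+^{-1-\ga_0}$ uniformly, where the factor $(\tau_1)_+^\ep$ times $\int_{-\tau_1^*}^\infty v^{-1-\ep}dv\les (\tau_1)_+^\ep\cdot(\tau_1)_+^{-\ep}$ stays bounded. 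Plugging this back into \eqref{eq:ILE:Sca:ex} gives the full bound on $I^{-1-\ep}_0[\bar D\phi](\mathcal{D}_{\tau_1}^{\tau_2})$, $E[\phi](H_{\tau_1^*}^{-\tau_2^*})$ and $E[\phi](\Hb_{-\tau_2^*}^{\tau_1^*})$, which is exactly \eqref{eq:ILEdecay:sca:ex}.

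The main obstacle is the careful bookkeeping of the Gronwall argument with the $v$-weight: one must verify that the incoming-null energy flux $G(v)$ is genuinely monotone and appears with the right limits of integration so that the standard Gronwall lemma (or Lemma \ref{lem:simplint} with an appropriate choice of exponent) applies, and that after solving the inequality the borderline weight $(\tau_1)_+^{\ep}\cdot v^{-1-\ep}$ does not destroy the decay rate $(\tau_1)_+^{-1-\ga_0}$ — this works precisely because the $\ep$-loss in $u$ is compensated by integrating $v^{-1-\ep}$ from $-\tau_1^*\sim(\tau_1)_+$ to infinity. A secondary point to check is that all the hypotheses of Proposition \ref{prop:Est4bFJ:ex}, Corollary \ref{cor:pWEdecay:sca:ex} and Lemma \ref{lem:Est4phipWE} are in force, in particular the smallness of the charge $q_0$ (depending only on $\ep$, $\ga_0$, $R$), which guarantees that the $r$-weighted decay \eqref{eq:pWEdecay:sca:ex} of the scalar field already holds and hence that the constants hidden in $\les_{M_2}$ are legitimate.
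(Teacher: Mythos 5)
Your proposal is correct and follows essentially the same route as the paper: absorb $\ep_1 I^{-1-\ep}_0[D\phi]$ from \eqref{eq:Est4bFJ:ex} into the left side of \eqref{eq:ILE:Sca:ex} by taking $\ep_1$ small, bound the data terms $E[\phi](B_{R-\tau_1})$ and $I^{1+\ep}_0[\Box_A\phi](\mathcal D_{\tau_1}^{\tau_2})$ by $(\tau_1)_+^{-1-\ga_0}\mathcal{E}_0[\phi]$ via the weights in $\mathcal{E}_0[\phi]$, and close the remaining incoming-flux term by Gronwall's inequality in $v$ (equivalently in $\tau_2$) using the uniform bound $(\tau_1)_+^{\ep}\int_{-\tau_1^*}^{\infty}v^{-1-\ep}dv\les 1$. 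The remark about monotonicity of $G$ is not needed for Gronwall's lemma, but this is a harmless side comment and does not affect the argument.
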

\begin{proof}
First choose $\ep_1$ in the estimate \eqref{eq:Est4bFJ:ex} to be sufficiently small, depending only on $\ep$, $\ga_0$ and $R$, so that after combing
 estimate \eqref{eq:Est4bFJ:ex} and the integrated energy estimate \eqref{eq:ILE:Sca:ex}, the term $\ep_1 I^{-1-\ep}_0[D\phi](\mathcal{D}_{\tau_1}^{\tau_2})$ on the right hand side of \eqref{eq:Est4bFJ:ex} can be absorbed by
$I^{-1-\ep}_0[\bar D\phi](\mathcal{D}_{\tau_1}^{\tau_2})$ on the left hand side of \eqref{eq:ILE:Sca:ex}. Then notice that we have the uniform bound:
 \[
  (\tau_1)_+^{\ep}\int_{-\tau_1^*}^{-\tau_2^*}v^{-1-\ep}dv\les 1,\quad \forall \tau_2<\tau_1\leq 0.
 \]
By using Gronwall's inequality (fix $\tau_1\leq 0$ and take $\tau_2\leq \tau_1$ as variable), we then obtain \eqref{eq:ILEdecay:sca:ex}.
\end{proof}

\subsubsection{Energy decay in the interior region}

Once we have the energy flux and the $r$-weighted energy decay estimates for the scalar field in the exterior region, we in particular have the energy flux bound for the scalar field on the boundary $H_{-\frac{R}{2}}$. This is necessary to
consider the energy flux decay in the interior region. Compared to the case in the exterior region, the charge is not a problem as the charge only effects the decay property of the Maxwell field in the exterior region. However, new difficulties
 arise in the interior region. First of all there is no lower bound for $\frac{r}{\tau_+}$. That means we may need estimates for general $p$ for the $r$-weighted energy estimate instead of simply the largest $p$. Secondly as we
 have explained before that we are not able to absorb the interaction term between the gauge field $A$ and the scalar field due to the fact that $dA$ is no longer small in our setting. Thus we need to rely on the $r$-weighted
 energy estimates and make use of the null structure of $J[\phi]$. In the exterior region, the idea is first to derive the
 $r$-weighted energy decay and then to obtain the integrated local energy and energy flux decay. In the interior region, we see from the $r$-weighted
 energy estimates \eqref{eq:pWE:sca:in} that the term $|F_{\Lb \mu}J^{\mu}[\phi]|$ also appears on the right hand side. This suggests that we have to consider the $r$-weighted energy estimate and the integrated local energy estimates
 simultaneously.

As the boundedness of the energy flux on the boundary $H_{-\frac{R}{2}}=H_{0^*}$ requires the smallness of the assumption that the charge $|q_0|$ is small. In the sequel when we
consider estimates in the interior, we always assume this without repeating it. Let's first estimate the interaction terms of $dA$ and the scalar field in the $r$-weighted energy estimate \eqref{eq:pWE:sca:in}.
 \begin{prop}
  \label{prop:Est4FJ:sca:in}
  In the interior region, for all $0\leq \tau_1\leq \tau_2$ and $1\leq p\leq 1+\ga_0$, we have
  \begin{align}
  \notag
   \iint_{\bar{\mathcal{D}}_{\tau_1}^{\tau_2}}&r^p|F_{L\mu}J^{\mu}[\phi]|^2 dxdt\les \ep_1\iint_{\bar{\mathcal{D}}_{\tau_1}^{\tau_2}}r^{p-1}|\D(r\phi)|^2dvd\om d\tau+
   I^p_{-1-\ep}[r^{-1}D_L(r\phi)](\bar{\mathcal{D}}_{\tau_1}^{\tau_2})\\
    \label{eq:Est4FJ:sca:in}
   &+M_2\ep_1^{-1}\left(\delta_p\int_{\tau_1}^{\tau_2}E[\phi](\Si_{\tau})\tau_+^{\delta_p^{-1}-1-\ep}d\tau+(1-\delta_p)I^{1+\ga_0}_{-1-\ep}[r^{-1}D_L\psi](\bar {\mathcal{D}}_{\tau_1}^{\tau_2})\right)
  \end{align}
  for all $\ep_1>0$. Here $\delta_p=\frac{2+\ga_0-p}{1+\ga_0}$ is given in Lemma \ref{lem:Est4phipWE} in line \eqref{eq:Est4phipWE:in:p}.
  \end{prop}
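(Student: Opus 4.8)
The plan is to exploit the null structure of $J[\phi]$, pass to the rescaled field $\psi=r\phi$, use Cauchy--Schwarz to peel off the two terms that occur on the right-hand side either unweighted ($I^p_{-1-\ep}[r^{-1}D_L\psi]$) or with the factor $\ep_1$ ($\ep_1\iint r^{p-1}|\D(r\phi)|^2$), and reduce everything to spacetime integrals of $r^{\gamma}(\rho^2+|\a|^2)|\psi|^2$, which are then controlled by combining the pointwise Maxwell decay of Proposition \ref{prop:supF} with the weighted trace bound for $\phi$ of Lemma \ref{lem:Est4phipWE} and Young's inequality.

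First I would observe that $\bar{\mathcal D}_{\tau_1}^{\tau_2}$ lies in the interior region and in $\{r\geq R\}$, so there $F=\bar F$ and in particular $\rho=\bar\rho$; the charge plays no role. Decomposing the contracted index in the null frame and using $F_{LL}=0$, $F_L{}^L=\rho$, $F_L{}^{e_j}=\a_j$ exactly as in the proof of Proposition \ref{prop:pWE:cur}, one gets
\[
F_{L\mu}J^{\mu}[\phi]=\rho\,J_L[\phi]+\a\cdot\J[\phi],\qquad J_L[\phi]=\Im(\phi\,\overline{D_L\phi}),\quad J_{e_j}[\phi]=\Im(\phi\,\overline{D_{e_j}\phi}),
\]
so that only the \emph{good} derivatives $D_L\phi$ and $\D\phi$ enter (the $\a$-term even only through $\D\phi$). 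Passing to $\psi=r\phi$ via $r^2J_L[\phi]=\Im(\psi\,\overline{D_L\psi})$, $r^2J_{e_j}[\phi]=\Im(\psi\,\overline{D_{e_j}\psi})$ and $|rD_L\phi|\le|D_L\psi|+r^{-1}|\psi|$ gives the pointwise bound
\[
r^p|F_{L\mu}J^{\mu}[\phi]|\les r^{p-2}|\rho|\,|\psi|\,(|D_L\psi|+r^{-1}|\psi|)+r^{p-2}|\a|\,|\psi|\,|\D\psi|.
\]
For the $\a$-term, Cauchy--Schwarz gives $r^{p-2}|\a||\psi||\D\psi|\le\ep_1 r^{p-1}|\D\psi|^2+\ep_1^{-1}r^{p-3}|\a|^2|\psi|^2$, whose first term integrates (using $d\vol=r^2\,dv\,d\om\,d\tau$ on the foliation by $H_{\tau^*}$) to $\ep_1\iint_{\bar{\mathcal D}_{\tau_1}^{\tau_2}}r^{p-1}|\D(r\phi)|^2\,dv\,d\om\,d\tau$. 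For the $\rho$-term, the split $r^{p-2}|\rho||\psi||D_L\psi|\le\tfrac12 u_+^{-1-\ep}r^{p-2}|D_L\psi|^2+\tfrac12 u_+^{1+\ep}r^{p-2}|\rho|^2|\psi|^2$ isolates $\tfrac12 I^p_{-1-\ep}[r^{-1}D_L\psi](\bar{\mathcal D}_{\tau_1}^{\tau_2})$, while $r^{p-3}|\rho||\psi|^2$ is lower order. It then remains to bound spacetime integrals $\iint u_+^{b}r^{\gamma}(\rho^2+|\a|^2)|\psi|^2\,dxdt$.

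Each such integral I would estimate cone by cone. On a fixed $H_{\tau^*}$ one pulls the sphere-supremum of the Maxwell factor out of $\int_\om d\om$ and keeps $\int_\om|\psi|^2\,d\om$; the Sobolev embedding $H^2(\mathbb S^2)\hookrightarrow L^\infty(\mathbb S^2)$ costs only two angular momenta $\Om$ — which is exactly why Proposition \ref{prop:supF} was proved with $\mathcal L_\Om^j$, $j\le2$, hence stays inside the $M_2$-controlled norms — giving $\sup_\om|\a|^2\les M_2\tau_+^{q-1-\ga_0}r^{-q-2}$ for $0\le q\le1+\ga_0$, and the same for $\bar\rho$ but only for $0\le q\le1-\ep$. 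For the scalar factor, $\int_\om|\psi|^2\,d\om=r^{2-p}\int_\om r^p|\phi|^2\,d\om$ and estimate \eqref{eq:Est4phipWE:in:p} of Lemma \ref{lem:Est4phipWE} yield $\int_\om|\psi|^2\,d\om\les r^{2-p}\big(E[\phi](\Si_\tau)\big)^{\delta_p}\big(I^{1+\ga_0}_0[r^{-1}D_L\psi](H_{\tau^*})\big)^{1-\delta_p}$, which depends on $\phi$ alone. For the main ($\a$) contribution one takes $q=1+\ga_0-\ep$; then the $v$-integral $\int_R^\infty r^{-q-1}\,dr$ converges, and Young's inequality applied to the product $E^{\delta_p}(I^{1+\ga_0}_0)^{1-\delta_p}$ with the power of $\tau_+$ chosen so that the $I^{1+\ga_0}_0$-factor picks up $\tau_+^{-1-\ep}$ produces precisely $M_2\ep_1^{-1}\delta_p\int_{\tau_1}^{\tau_2}E[\phi](\Si_\tau)\tau_+^{\delta_p^{-1}-1-\ep}\,d\tau$ together with $M_2\ep_1^{-1}(1-\delta_p)\int_{\tau_1}^{\tau_2}\tau_+^{-1-\ep}I^{1+\ga_0}_0[r^{-1}D_L\psi](H_{\tau^*})\,d\tau=M_2\ep_1^{-1}(1-\delta_p)I^{1+\ga_0}_{-1-\ep}[r^{-1}D_L\psi](\bar{\mathcal D}_{\tau_1}^{\tau_2})$; the choice $q=1+\ga_0-\ep$ is forced exactly so that the residual weight on $E[\phi]$ is $\tau_+^{\delta_p^{-1}-1-\ep}$.

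The main obstacle is the $\rho$-contribution $\iint u_+^{1+\ep}r^{p-2}|\rho|^2|\psi|^2\,dxdt$: the pointwise decay of $\bar\rho$ is available only with the $r$-weight $r^{-q-2}$ for $q\le1-\ep$, and the growing factor $u_+^{1+\ep}$ — an unavoidable byproduct of matching the $u_+^{-1-\ep}$-weighted term $I^p_{-1-\ep}[r^{-1}D_L\psi]$ — makes the naive power-counting fail uniformly as $p\to1+\ga_0$. To close it I would not use \eqref{eq:supbarrho:p} alone but exploit that the $\rho$-term is genuinely one power of $r$ better than the $\a$-term and trade a little $r$-decay for $\tau$-decay: either interpolate $\bar\rho$ between the integrated local energy decay $\int_{\tau_1}^{\tau_2}\int_{\Si_\tau}\tfrac{\rho^2}{1+r}\,dxd\tau\les M_0\tau_+^{-1-\ga_0}$ of Proposition \ref{prop:Endecay:cur} and the incoming-cone $r$-weighted flux of $\bar F$ from Proposition \ref{prop:pWE:cur} and Corollary \ref{cor:Endecay:cur:hide}, or carry an $\ep_1$ through the Cauchy--Schwarz so that the $|D_L\psi|^2$-piece lands on the $M_2\ep_1^{-1}(1-\delta_p)I^{1+\ga_0}_{-1-\ep}$-term with the correct coefficient and the residue is absorbed into the $E[\phi]$-term (legitimate since $\delta_p^{-1}-1-\ep\ge-\ep$ and $\tau_+\ge1$). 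This is also where the standing hypothesis $0<\ga_0<1$ is used quantitatively: one takes $\ep=\ep(\ga_0)$ small enough that $\ga_0<1-2\ep$, which is precisely the condition making the relevant $v$-integrals converge and the $p$-ranges of Proposition \ref{prop:supF} suffice. Throughout, one works on $\bar{\mathcal D}_{\tau_1}^{\tau_2}\cap\{v\le v_0\}$ and lets $v_0\to\infty$, so these $r$-integrals genuinely have to converge, and the smallness of the charge is not needed here since the charge does not enter the interior region.
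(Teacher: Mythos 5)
Your skeleton matches the paper — split $F_{L\mu}J^\mu=\rho\,J_L+\a\cdot\J$, pass to $\psi=r\phi$, Cauchy--Schwarz with $\tau_+^{\pm(1+\ep)}$ for the $\rho$-piece and $\ep_1$ for the $\a$-piece, then Lemma~\ref{lem:Est4phipWE} and Young's inequality. But the execution has a genuine gap that also makes the last paragraph of your proposal address the wrong term.

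The Cauchy--Schwarz split for the $\a$-term is off by a factor $r^2$. You write
$r^{p-2}|\a||\psi||\D\psi|\le\ep_1 r^{p-1}|\D\psi|^2+\ep_1^{-1}r^{p-3}|\a|^2|\psi|^2$
and then claim the first term integrates against $d\vol=r^2\,dv\,d\om\,d\tau$ to $\ep_1\iint r^{p-1}|\D\psi|^2\,dv\,d\om\,d\tau$. It does not: with the $r^2$ Jacobian it integrates to $\ep_1\iint r^{p+1}|\D\psi|^2\,dv\,d\om\,d\tau$, which is two powers of $r$ too strong and cannot be absorbed into the $r$-weighted energy identity with weight $r^{p-1}$. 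To produce the stated error term one must split as
$r^{p-2}|\a||\psi||\D\psi|\le\ep_1 r^{p-3}|\D\psi|^2+\ep_1^{-1}r^{p-1}|\a|^2|\psi|^2$
(i.e.\ $\ep_1\iint r^{p-1}|\D\psi|^2\,dv\,d\om\,d\tau+\ep_1^{-1}\iint r^{p+1}|\a|^2|\psi|^2\,dv\,d\om\,d\tau$). Once this is corrected, your method of pulling $\sup_\om|\a|^2\les M_2\tau_+^{q-1-\ga_0}r^{-q-2}$ from Proposition~\ref{prop:supF} and $\int_\om|\psi|^2d\om\les r^{2-p}(\ldots)$ fails: the $v$-integrand becomes $r^{p+1}\cdot r^{-q-2}\cdot r^{2-p}=r^{1-q}$, which is not integrable on $H_{\tau^*}$ since $q\le1+\ga_0<2$. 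Your proposal only ``converges'' because the $r^2$ error in the split produces a radial integrand $r^{-q-1}$ instead of the true $r^{1-q}$; the two mistakes cancel for the $\a$-error but leave an unabsorbable $\D\psi$-error.

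The correct ingredient, and what the paper actually uses, is not the pointwise decay of $\a$ (which is only $r^{-3/2}$ and hence $L^\infty_v$-insufficient), but the $L^2_{v,\om}$ flux decay on the outgoing cone: after H\"older on the sphere one keeps $\sum_{j\le 2}\int_\om r^3|\mathcal L_\Om^j\a|^2d\om$ as a function of $v$ and integrates it in $v$ to the $r$-weighted flux $\int_{H_{\tau^*}}r^3|\mathcal L_\Om^j\a|^2\,dv\,d\om\les M_2\tau_+^{-\ga_0}$, while the $\phi$-factor $\int_\om r^p|\phi|^2\,d\om\les(E[\phi])^{\delta_p}(I^{1+\ga_0}_0)^{1-\delta_p}$ is bounded uniformly in $v$ (H\"older $L^1_v\times L^\infty_v$, with the Maxwell field in $L^1_v$). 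The $\rho$-term, which you single out as the main obstacle, is actually the easier one with the paper's method: the energy flux $\int_{H_{\tau^*}}r^2|\mathcal L_\Om^j\rho|^2\,dv\,d\om\les E[\mathcal L_\Om^j\bar F](\Si_\tau)\les M_2\tau_+^{-1-\ga_0}$ beats the $\tau_+^{1+\ep}$ weight outright (since $\ga_0>2\ep$), with no interpolation or extra Cauchy--Schwarz needed. The obstacle you perceive is an artifact of insisting on pointwise decay for the Maxwell factor rather than its cone fluxes.
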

\begin{proof}
Denote $\psi=r\phi$ and $F=dA$. First we have
  \begin{align*}
 2r^p|F_{L\mu}J^{\mu}[\phi]|r^2\leq r^p|D_L\psi|^2\tau_+^{-1-\ep}+r^p |\rho|^2 |\psi|^2 \tau_+^{1+\ep}+\ep_1 r^{p-1}|\D\psi|^2+\ep_1^{-1}r^{p+3}|\a|^2|\phi|^2
 \end{align*}
 for all $\ep_1>0$. The first term can be absorbed by using Gronwall's inequality. The third term will be absorbed for sufficiently small $\ep_1$ depending only on $\ep$, $\ga_0$ and $R$. For the second term,
 we use the energy flux of $\rho$ on $H_{\tau^*}$ to bound $\rho$ and estimate \eqref{eq:Est4phipWE:in:p} of Lemma \ref{lem:Est4phipWE} to bound $\phi$. For the last term, we use the $r$-weighted energy estimate to bound $\a$.
Then similar to the proof of Proposition \ref{prop:Est4FJ:ex} we can show that
\begin{align*}
 &\int_{\tau_1}^{\tau_2}\int_{H_{\tau^*}}\tau_+^{1+\ep}r^p |\rho|^2|\psi|^2+r^{p+3}|\a|^2|\phi|^2dvd\om d\tau\\
 &\les \int_{\tau_1}^{\tau_2}\int_{2R+\tau^*}^{\infty}\sum\limits_{j\leq 2}\int_{\om}\tau_+^{1+\ep} r^2|\mathcal{L}_{\Om}^j \rho|^2+r^3|\mathcal{L}_{\Om}^j\a|^2d\om\cdot
\int_{\om} r^{p}|\phi|^2d\om dvdu\\
&\les M_2\int_{\tau_1}^{\tau_2}\tau_+^{-\ep} \left(E[\phi](\Si_{\tau})\right)^{\delta}\left(I^{1+\ga_0}_0[r^{-1}D_L\psi](H_{\tau^*})\right)^{1-\delta} d\tau\\
&\les M_2\left(\delta\int_{\tau_1}^{\tau_2}E[\phi](\Si_{\tau})\tau_+^{\delta^{-1}-1-\ep}d\tau+(1-\delta)I^{1+\ga_0}_{-1-\ep}[r^{-1}D_L\psi](\bar {\mathcal{D}}_{\tau_1}^{\tau_2})\right).
\end{align*}
The proposition then follows.
\end{proof}
Next we estimate the interaction terms in the energy estimate \eqref{eq:ILE:Sca:in}. We show the following:
\begin{prop}
 \label{prop:Est4FJ:sca:in:0}
 We have
 \begin{equation}
  \label{eq:Est4FJ:sca:in:0}
  \begin{split}
&\iint_{\mathcal{D}_{\tau_1}^{\tau_2}}|F_{L\nu}J^{\nu}[\phi]|+|F_{\Lb\nu}J^{\nu}[\phi]|dxdt\\
&\les \ep_1 I^{-1-\ep}_0[D\phi](\mathcal{D}_{\tau_1}^{\tau_2}) +\ep_1^{-1} \int_{\tau_1}^{\tau_2} g(\tau)E[\phi](\Si_\tau)d\tau+I_{-2-\ga_0}^{1+\ga_0}[r^{-1}D_L(r\phi)]
(\bar{\mathcal{D}}_{\tau_1}^{\tau_2})
  \end{split}
 \end{equation}
 for all $0<\ep_1<1$. Here
 \[
  g(\tau):=\sum\limits_{j\leq 2}I^{-1-\ep}_{1+2\ep}[\mathcal{L}_{\Om}^j F](\Si_{\tau})+\int_{H_{\tau^*}}r^{2+\ep}(|\mathcal{L}_{\Om}^j\a|^2+|\mathcal{L}_{\Om}^j\rho|^2)dvd\om+\sup\limits_{|x|\leq R}|F|^2(\tau, x).
 \]
\end{prop}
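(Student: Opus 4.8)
The plan is to expand the two contractions in the null frame, split the slab $\mathcal{D}_{\tau_1}^{\tau_2}$ into the solid cylinder $\{r\le R\}$ and its complement $\bar{\mathcal{D}}_{\tau_1}^{\tau_2}=\mathcal{D}_{\tau_1}^{\tau_2}\cap\{r\ge R\}$, and on each piece peel a single covariant derivative of $\phi$ off by Cauchy--Schwarz, so that what remains is a purely curvature factor that can be estimated pointwise on the spheres of symmetry. From the definitions $J^\mu[\phi]=\Im(\phi\cdot\overline{D^\mu\phi})$ one gets, up to harmless numerical factors,
\[
F_{L\nu}J^\nu[\phi]=\rho\,J_L[\phi]+\a_i J_{e_i}[\phi],\qquad F_{\Lb\nu}J^\nu[\phi]=\rho\,J_\Lb[\phi]+\ab_i J_{e_i}[\phi],
\]
where $J_X[\phi]=\Im(\phi\cdot\overline{D_X\phi})$. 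In the interior region the charge tail $\chi_{\{r>t+R\}}q_0r^{-2}dt\wedge dr$ is supported outside $\mathcal{D}_{\tau_1}^{\tau_2}$, so $\rho=\bar\rho$ there; and every one of these terms except $\rho\,J_\Lb[\phi]$ involves only the ``good'' derivatives $D_L\phi$ or $\D\phi$.

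On the cylinder $\{r\le R\}$ all $r$-weights are comparable to $1$, so it suffices to bound $\iint_{r\le R}|F|\,|\phi|\,|D\phi|\,dxdt$. Writing $|F|\,|\phi|\,|D\phi|\le\ep_1(1+r)^{-1-\ep}|D\phi|^2+\ep_1^{-1}(1+r)^{1+\ep}|F|^2|\phi|^2$, bounding $|F|\le\sup_{|x|\le R}|F|$ and using Hardy's inequality \eqref{eq:Hardy} in the form $\int_{r\le R}|\phi|^2dx\les E[\phi](\Si_\tau)$ gives a contribution $\les\ep_1 I^{-1-\ep}_0[D\phi](\mathcal{D}_{\tau_1}^{\tau_2})+\ep_1^{-1}\int_{\tau_1}^{\tau_2}\big(\sup_{|x|\le R}|F|^2\big)E[\phi](\Si_\tau)\,d\tau$, which is accounted for by the $\sup_{|x|\le R}|F|^2$ summand of $g$.

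On $\bar{\mathcal{D}}_{\tau_1}^{\tau_2}$ I treat each term $(\text{curvature})\cdot|\phi|\cdot|D_X\phi|$ by Cauchy--Schwarz, placing on the derivative factor whatever weight sends it to an admissible place: the $D_\Lb\phi$ piece (from $\rho\,J_\Lb[\phi]$), carrying the weight $(1+r)^{-1-\ep}$, and the $\D\phi$ pieces, carrying $(1+r)^{-1}$, are absorbed by the coercive terms $I^{-1-\ep}_0[\bar D\phi]$ and $\iint\tfrac{|\D\phi|^2}{1+r}$ on the left of \eqref{eq:ILE:Sca:in}; the $D_L\phi$ piece (from $\rho\,J_L[\phi]$), rewritten via $|rD_L\phi|\le|D_L\psi|+|\phi|$ with $\psi=r\phi$ and split with a $\tau_+$-graded weight, produces the explicit term $I^{1+\ga_0}_{-2-\ga_0}[r^{-1}D_L(r\phi)](\bar{\mathcal{D}}_{\tau_1}^{\tau_2})$, the remaining $|\phi|^2$ part being handled as below. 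What is left are bulk integrals $\iint_{\bar{\mathcal{D}}}(\text{weight})\,|F_{\mathrm{null}}|^2|\phi|^2\,dxdt$ with $F_{\mathrm{null}}\in\{\rho,\a,\ab\}$; here one uses Sobolev embedding on $\mathbb{S}^2$, $\sup_\om|F_{\mathrm{null}}|^2\les\sum_{j\le2}\int_\om|\mathcal{L}_\Om^j F|^2d\om$ (valid because $\Om$ is tangent to the spheres and commutes with $L$ and $\Lb$), together with the bounds of Lemma \ref{lem:Est4phipWE} for $\int_\om|\phi|^2d\om$ in terms of $E[\phi](\Si_\tau)$ and $I^{1+\ga_0}_0[r^{-1}D_L(r\phi)](H_{\tau^*})$. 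After the interior change of variables $dxdt=r^2\,dv\,d\om\,d\tau$ on $\{r\ge R\}$, the $\rho$- and $\a$-pieces reduce to the $\sum_{j\le2}\int_{H_{\tau^*}}r^{2+\ep}(|\mathcal{L}_\Om^j\a|^2+|\mathcal{L}_\Om^j\rho|^2)\,dv\,d\om$ summand of $g$, while the $\ab$-piece, which is not visible to the outgoing null flux of $F$, together with the transition zone $r\sim R$, goes into the $\tau_+$-weighted bulk norm $I^{-1-\ep}_{1+2\ep}[\mathcal{L}_\Om^j F](\Si_\tau)$; summing, these contribute $\les\ep_1^{-1}\int_{\tau_1}^{\tau_2}g(\tau)E[\phi](\Si_\tau)\,d\tau$, which closes the estimate.

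The main obstacle is the term $\rho\,J_\Lb[\phi]=\bar\rho\,\Im(\phi\cdot\overline{D_\Lb\phi})$. The curvature component $\rho$ is genuinely large, and the integrated local energy estimate \eqref{eq:ILE:Sca:in} controls the bad derivative $D_\Lb\phi$ only with the $\tau$-independent, $r^{-1-\ep}$-degenerate weight on its left-hand side, so there is no room to absorb a factor $|\bar\rho|^2$. The point exploited here --- exactly as in the exterior analogue Proposition \ref{prop:Est4bFJ:ex} --- is that $J_\Lb[\phi]$ is a genuine product $\phi\cdot\overline{D_\Lb\phi}$ and never $|D_\Lb\phi|^2$, so after Cauchy--Schwarz the \emph{whole} curvature weight can be moved onto $|\phi|$; combining the pointwise-on-spheres control of $\bar\rho$ by the $\mathcal{L}_\Om$-commuted norms with the scalar decay of Lemma \ref{lem:Est4phipWE} converts this term into the $g(\tau)E[\phi](\Si_\tau)$ form, which the next step closes by Gronwall's inequality along the $\Si_\tau$-foliation, using only decay in $\tau$ and no smallness of $M_2$.
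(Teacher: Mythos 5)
Your overall strategy matches the paper: split $\mathcal{D}_{\tau_1}^{\tau_2}$ into $\{r\le R\}$ (treated with $\sup_{|x|\le R}|F|^2$, Hardy, and absorption into $I^{-1-\ep}_0[\bar D\phi]$) and $\{r\ge R\}$ (treated with the null decomposition, Cauchy--Schwarz, Sobolev embedding on the spheres, and Lemma~\ref{lem:Est4phipWE}). The $\rho$- and $\a$-terms really do go cleanly into the $\int_{H_{\tau^*}}r^{2+\ep}(|\mathcal{L}_\Om^j\a|^2+|\mathcal{L}_\Om^j\rho|^2)\,dvd\om$ summand of $g$ multiplied by $E[\phi](\Si_\tau)$, and your recognition that $J_\Lb[\phi]$ is a genuine product so the entire curvature weight can be pushed onto $\phi$ is exactly the right observation for those terms.

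The gap is in the $\ab$-term and in the origin of $I^{1+\ga_0}_{-2-\ga_0}[r^{-1}D_L(r\phi)](\bar{\mathcal{D}}_{\tau_1}^{\tau_2})$. You attribute the latter to $\rho\,J_L[\phi]$ (via $|rD_L\phi|\le|D_L\psi|+|\phi|$ and a $\tau_+$-graded split), and you claim the $\ab$-term goes purely into $g(\tau)E[\phi](\Si_\tau)$. Neither works. For $\rho\,J_L[\phi]$, if you force $\tau_+^{-2-\ga_0}r^{1+\ga_0}|D_L\psi|^2$ onto the derivative factor, the conjugate piece becomes $\tau_+^{2+\ga_0}r^{1-\ga_0}|\rho|^2|\phi|^2$, whose growing $\tau_+^{2+\ga_0}$ weight cannot be absorbed by anything at your disposal. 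In fact $\rho\,J_L[\phi]$ needs no such term: the simple split $\ep_1 r_+^{-1-\ep}|D_L\phi|^2+\ep_1^{-1}r_+^{1+\ep}|\rho|^2|\phi|^2$, with $\int_\om r|\phi|^2\,d\om\les E[\phi](\Si_\tau)$ ($p=1$ in Lemma~\ref{lem:Est4phipWE}), already puts it in the $g(\tau)E[\phi]$ form. It is the $\ab\cdot\D\phi\cdot\phi$ piece that is responsible for the $I^{1+\ga_0}_{-2-\ga_0}$ term, for a structural reason you cannot circumvent: the only bulk control on $\ab$ in the interior is the integrated local energy with weight $r^{1-\ep}$ on $\ab^2$ (the outgoing flux does not see $\ab$, and $E[F](\Hb_v^{\tau_1,\tau_2})$ decays in $\tau_1$, not $v$). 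This $r^{\ep}$ loss forces the remaining $|\phi|$-factor to carry the weight $r^{1+\ep}$, i.e.\ Lemma~\ref{lem:Est4phipWE} with $p=1+\ep>1$, which interpolates between $E[\phi](\Si_\tau)$ and $I^{1+\ga_0}_0[r^{-1}D_L\psi](H_{\tau^*})$. The paper then uses Young's/Jensen's inequality with carefully chosen $\tau_+$-weights (satisfying $\f12(1+2\ep)-\f12\delta(1+\ep)-\f12(1-\delta)(2+\ga_0)>0$ with $\delta=\frac{1+\ga_0-\ep}{1+\ga_0}$) to split this into $\sum_j I^{-1-\ep}_{1+2\ep}[\mathcal{L}_\Om^j\ab](\Si_\tau)E[\phi](\Si_\tau)$, a $\tau_+^{-1-\ep}E[\phi](\Si_\tau)$ piece, and the standalone $I^{1+\ga_0}_{-2-\ga_0}[r^{-1}D_L(r\phi)]$ term. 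Without this interpolation-plus-Jensen step you either out-run the admissible range $p\le 2$ of Lemma~\ref{lem:Est4phipWE} or demand a stronger $\ab$-norm than exists, so your claimed bound of the $\ab$-piece by $g(\tau)E[\phi]$ alone does not close. Since this is precisely the point where the largeness of the Maxwell field bites, the omission is substantive rather than cosmetic.
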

\begin{proof}
 For the integral on $\{r\geq R\}$, we use Sobolev embedding on the unit sphere to bound the curvature and the proof is quite similar to that of
 the previous proposition. On the finite region $r\leq R$, we make use of the $L^2_t L_x^\infty$ norm of the curvature given in Proposition \ref{prop:Est4F:in:R}. For the case when $r\geq R$, first we have
 \begin{align*}
  |F_{L\nu}J^{\nu}[\phi]|+|F_{\Lb\nu}J^{\nu}[\phi]|&\les (|\rho|+|\a|)|D\phi||\phi|+|\ab||\D\phi||\phi|\\
  &\les \ep_1 r_+^{-1-\ep}|D\phi|^2+\ep_1^{-1}(|\rho|^2+|\a|^2)r_+^{1+\ep}|\phi|^2+|\ab||\D\phi||\phi|.
 \end{align*}
The first term can be absorbed in the energy estimate \eqref{eq:ILE:Sca:in} for sufficiently small $\ep_1$.
For the second term, we can use estimate \eqref{eq:Est4phipWE:in:p} to bound $\phi$ by the energy flux through $H_{\tau^*}$ and the $r$-weighted energy to control the curvature terms. The last term is the most difficult term to control. The reason is that we do not have powerful estimates for $\ab$. The only estimates we have are the integrated local energy estimate and the energy flux through the incoming null hypersurface. Unlike the case in the
exterior region where we can make use of the energy flux through the incoming null hypersurface for $\ab$, this method fails in the interior region. The main reason is that the energy flux $E[F](\Hb_{v}^{\tau_1, \tau_2})$ decays in $\tau_1$ instead of $v$. A possible way to solve this issue is to assume pointwise bound for $\ab$. However the problem is that the pointwise decay for $\ab$ is too weak (due to the assumption on the initial data. We have explained this in the introduction) to be useful. We thus can only rely on the integrated local energy estimate for $\ab$. As there is a $r^{\ep}$ ldecay loss in the integrated local energy estimate for $\ab$, we are not able to bound $\phi$ simply by using the energy flux through $H_{\Si_{\tau^*}}$ but instead we need to make use of the $r$-weighted energy estimate. This means that we can not obtain uniform energy bound from the energy estimate \eqref{eq:ILE:Sca:in}. We need to combine it with the $r$-weighted energy estimate.

For the integral of $|\ab||\D\phi||\phi|$, from estimate \eqref{eq:Est4phipWE:in:p} with $p=1+\ep$, we can show that
\begin{align*}
&\int_{\tau_1}^{\tau_2}\int_{H_{\tau^*}}|\ab||\D\phi||\phi|r^2d\om dv d\tau\\
&\les \int_{\tau_1}^{\tau_2}\int_{2R+\tau^*}^\infty \left(\sum\limits_{j\leq 2}\int_{\om}r^{1-\ep}|\mathcal{L}_{\Om}^j\ab|^2d\om\right)^\f12\left(\int_{\om}r^2|\D\phi|^2d\om\cdot \int_{\om}r^{1+\ep}|\phi|^2d\om \right)^\f12 dv d\tau\\
&\les \sum\limits_{j\leq 2}\int_{\tau_1}^{\tau_2}\left(I^{-1-\ep}_0[\mathcal{L}_{\Om}^j \ab](\Si_{\tau})E[\phi](\Si_\tau)\right)^\f12 \left(E[\phi](\Si_\tau)\right)^{\f12\delta}\left(I_0^{1+\ga_0}[r^{-1}D_L(r\phi)](H_{\tau^*})\right)^{\f12-\f12\delta}d\tau\\
&\les \sum\limits_{j\leq 2}\int_{\tau_1}^{\tau_2} I^{-1-\ep}_{1+2\ep}[\mathcal{L}_{\Om}^j \ab](\Si_{\tau})E[\phi](\Si_\tau)d\tau+\int_{\tau_1}^{\tau_2}\tau_+^{-1-\ep}E[\phi](\Si_\tau)d\tau+I_{-2-\ga_0}^{1+\ga_0}[r^{-1}D_L(r\phi)]
(\bar{\mathcal{D}}_{\tau_1}^{\tau_2})
\end{align*}
Here $\delta=\frac{1+\ga_0-\ep}{1+\ga_0}$ and in the last step we have used Jensen's inequality as well as the relation
\[
 \f12+\ep-\f12 \delta(1+\ep)-(2+\ga_0)(\f12-\f12\delta)=\f12\frac{\ep}{1+\ga_0}>0.
\]
In the above estimate the first two terms will be estimated by using Gronwall's inequality. We keep the last term which involves the $r$-weighted energy estimates.
For the integral of $(|\rho|^2+|\a|^2)r_+^{1+\ep}|\phi|^2$, we use estimate \eqref{eq:Est4phipWE:in:p} to bound $\phi$. We have
\begin{align*}
 &\int_{\tau_1}^{\tau_2}\int_{H_{\tau^*}}(|\rho|^2+|\a|^2)r_+^{1+\ep}|\phi|^2r^2d\om dv d\tau\\
&\les \int_{\tau_1}^{\tau_2}\int_{2R+\tau^*}^\infty \sum\limits_{j\leq 2}\int_{\om}r^{2+\ep}(|\mathcal{L}_{\Om}^j\a|^2+|\mathcal{L}_{\Om}^j\rho|^2)d\om\cdot
\int_{\om}r|\phi|^2d\om  dv d\tau\\
&\les \sum\limits_{j\leq 2}\int_{\tau_1}^{\tau_2}\int_{H_{\tau^*}}r^{2+\ep}(|\mathcal{L}_{\Om}^j\a|^2+|\mathcal{L}_{\Om}^j\rho|^2)dvd\om \cdot E[\phi](\Si_{\tau})d\tau.
\end{align*}
This term will be controlled in the energy estimate \eqref{eq:ILE:Sca:in} by using Gronwall's inequality.

For the integral on the region $r\leq R$, we can show that
\begin{align*}
 \int_{\tau_1}^{\tau_2}\int_{r\leq R}|F_{L\nu}J^{\nu}[\phi]|+|F_{\Lb\nu}J^{\nu}[\phi]|dx d\tau &\les \ep_1\int_{\tau_1}^{\tau_2}\int_{r\leq R}|D\phi|^2 dxd \tau+\ep_1^{-1}\int_{\tau_1}^{\tau_2}\int_{r\leq R}|F|^2|\phi|^2dxd\tau\\
 &\les  \ep_1\int_{\tau_1}^{\tau_2}\int_{r\leq R}\frac{|D\phi|^2}{r_+^{1+\ep}} dxd \tau+\ep_1^{-1}\int_{\tau_1}^{\tau_2}\sup\limits_{|x|\leq R}|F|^2 \cdot E[\phi](\Si_{\tau}) d\tau
\end{align*}
for all $\ep_1>0$. The first term will be absorbed for small $\ep_1$. The second term
can be controlled by using Gronwall's inequality. Combining all these estimates above, we thus have shown estimate \eqref{eq:Est4FJ:sca:in:0} of the Proposition.l
\end{proof}
As a corollary the energy estimate \eqref{eq:ILE:Sca:in} lcan be reduced to the following:
\begin{cor}
 \label{cor:ILE:sca:in:sim0}
In the interior region, we have the following integrated local energy estimate
\begin{equation}
 \label{eq:ILE:sca:in:sim0}
\begin{split}
&I^{-1-\ep}_0[\bar D\phi](\mathcal{D}_{\tau_1}^{\tau_2})+E[\phi](\Si_{\tau_2})+\int_{\tau_1}^{\tau_2}\tau_+^{-1-\ep}E[\phi](\Si_{\tau})d\tau+
\iint_{\mathcal{D}_{\tau_1}^{\tau_2}}|F_{L\nu}J^{\nu}[\phi]|+|F_{\Lb\nu}J^{\nu}[\phi]|dxdt\\
&\les_{M_2} E[\phi](\Si_{\tau_1})+(\tau_1)_+^{-1-\ga_0}\mathcal{E}_0[\phi]+I_{-2-\ga_0}^{1+\ga_0}[r^{-1}D_L(r\phi)]
(\bar{\mathcal{D}}_{\tau_1}^{\tau_2}).
\end{split}
\end{equation}
\end{cor}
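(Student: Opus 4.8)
The plan is to combine the integrated local energy estimate \eqref{eq:ILE:Sca:in} from Proposition \ref{prop:ILEs} with the estimates for the interaction terms obtained in Proposition \ref{prop:Est4FJ:sca:in:0}. More precisely, I would start from \eqref{eq:ILE:Sca:in}, which controls the quantities on the left hand side of \eqref{eq:ILE:sca:in:sim0} (except the weighted energy-flux integral $\int_{\tau_1}^{\tau_2}\tau_+^{-1-\ep}E[\phi](\Si_\tau)d\tau$ and the interaction-term integral, which I will carry along) by $E[\phi](\Si_{\tau_1})$, the inhomogeneous term $I^{1+\ep}_0[\Box_A\phi](\mathcal{D}_{\tau_1}^{\tau_2})$, and the interaction integral $\iint |F_{L\nu}J^\nu[\phi]|+|F_{\Lb\nu}J^\nu[\phi]|dxdt$. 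Since $\Box_A\phi$ is assumed bounded in the norm $\mathcal{E}_0[\phi]$ (see \eqref{eq:def4Nkphi}), the term $I^{1+\ep}_0[\Box_A\phi](\mathcal{D}_{\tau_1}^{\tau_2})$ is harmless: it is $\les (\tau_1)_+^{-1-\ga_0}\mathcal{E}_0[\phi]$.

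The substantive step is to insert estimate \eqref{eq:Est4FJ:sca:in:0} for the interaction integral. That estimate bounds $\iint |F_{L\nu}J^\nu[\phi]|+|F_{\Lb\nu}J^\nu[\phi]|dxdt$ by $\ep_1 I^{-1-\ep}_0[D\phi](\mathcal{D}_{\tau_1}^{\tau_2})$, plus $\ep_1^{-1}\int_{\tau_1}^{\tau_2}g(\tau)E[\phi](\Si_\tau)d\tau$, plus the $r$-weighted term $I_{-2-\ga_0}^{1+\ga_0}[r^{-1}D_L(r\phi)](\bar{\mathcal{D}}_{\tau_1}^{\tau_2})$. First I would choose $\ep_1$ small, depending only on $\ep$, $\ga_0$, $R$ (and the implicit $M_2$ constant from Proposition \ref{prop:ILEs}), so that the term $\ep_1 I^{-1-\ep}_0[D\phi](\mathcal{D}_{\tau_1}^{\tau_2})$ is absorbed into $I^{-1-\ep}_0[\bar D\phi](\mathcal{D}_{\tau_1}^{\tau_2})$ on the left. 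The $r$-weighted term $I_{-2-\ga_0}^{1+\ga_0}[r^{-1}D_L(r\phi)](\bar{\mathcal{D}}_{\tau_1}^{\tau_2})$ is exactly the extra term appearing on the right of \eqref{eq:ILE:sca:in:sim0}, so it is simply kept.

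The remaining point is the term $\ep_1^{-1}\int_{\tau_1}^{\tau_2}g(\tau)E[\phi](\Si_\tau)d\tau$. To produce the clean form \eqref{eq:ILE:sca:in:sim0} I would split $g(\tau)$ according to its definition: the piece $\sup_{|x|\leq R}|F|^2(\tau,x)$ is controlled by $M_2\tau_+^{-1-\ga_0}\les M_2\tau_+^{-1-\ep}$ via the pointwise bound \eqref{eq:Est4F:in:R:p}; the spacetime-integral pieces $\sum_{j\leq 2}I^{-1-\ep}_{1+2\ep}[\mathcal{L}_\Om^j F](\Si_\tau)$ and $\int_{H_{\tau^*}}r^{2+\ep}(|\mathcal{L}_\Om^j\a|^2+|\mathcal{L}_\Om^j\rho|^2)dvd\om$ are integrable-in-$\tau$ quantities whose time integrals are controlled (up to $M_2$) by the integrated local energy decay \eqref{eq:Endecay:cur:in} and the $r$-weighted energy estimates for the Maxwell field from Corollary \ref{cor:Endecay:cur:hide}, so they contribute a term of the schematic form $\int_{\tau_1}^{\tau_2}h(\tau)E[\phi](\Si_\tau)d\tau$ with $h$ integrable, which I would bound crudely by $\sup_{\tau_1\le\tau\le\tau_2}E[\phi](\Si_\tau)$ times a constant — or, more carefully, leave it to be handled by Gronwall's inequality in the subsequent energy-decay proof. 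The appearance of the explicit $\int_{\tau_1}^{\tau_2}\tau_+^{-1-\ep}E[\phi](\Si_\tau)d\tau$ on the left is precisely the way one records this Gronwall-type term so that it can be reabsorbed later. I expect the main (minor) obstacle to be bookkeeping: verifying that every piece of $g(\tau)$ has an integrable-in-$\tau$ profile with the right powers, so that these contributions can indeed be grouped into the $\tau_+^{-1-\ep}E[\phi](\Si_\tau)$ term on the left without losing the $(\tau_1)_+^{-1-\ga_0}\mathcal{E}_0[\phi]$ decay on the right; this uses the Maxwell-field decay of Section \ref{sec:pbdMaxwell} in an essential way and is where the largeness of the Maxwell field is tolerated only because of the extra $\tau_+^{-1-\ep}$ weight.
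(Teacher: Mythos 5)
Your overall strategy is the same as the paper's: plug \eqref{eq:Est4FJ:sca:in:0} into \eqref{eq:ILE:Sca:in}, absorb $\ep_1 I^{-1-\ep}_0[D\phi]$ by choosing $\ep_1$ small, note $I^{1+\ep}_0[\Box_A\phi](\mathcal{D}_{\tau_1}^{\tau_2})\les (\tau_1)_+^{-1-\ga_0}\mathcal{E}_0[\phi]$, and then deal with $\ep_1^{-1}\int_{\tau_1}^{\tau_2}g(\tau)E[\phi](\Si_\tau)d\tau$ by verifying integrability of $g$. That much is right and matches the paper, including your identification of the inputs for $\int g$: \eqref{eq:Est4F:in:R:p} for the $\sup_{|x|\leq R}|F|^2$ piece and the Maxwell decay estimates for the rest (the paper does this with \eqref{eq:Endecay:cur:in}, \eqref{eq:pWE:cur:in} and Lemma \ref{lem:simplint}, yielding $\int_{\tau_1}^{\tau_2}g(\tau)d\tau\les M_2(\tau_1)_+^{-\ga_0+2\ep}$).

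The gap is in how you close off $\int g(\tau)E[\phi](\Si_\tau)d\tau$. Neither of your two fallbacks works. Bounding by $\sup_{\tau_1\leq\tau\leq\tau_2}E[\phi](\Si_\tau)$ times a constant is circular — that sup is exactly what you are trying to bound, and $\int g\les M_2$ is \emph{large}, not small, so you cannot absorb. And you cannot ``group these contributions into the $\tau_+^{-1-\ep}E[\phi](\Si_\tau)$ term on the left'': $g(\tau)$ is only integrable in $\tau$, not pointwise bounded by $\tau_+^{-1-\ep}$ — the piece $I^{-1-\ep}_{1+2\ep}[\mathcal{L}_\Om^j F](\Si_\tau)$ is controlled only through its $\tau$-integral via the ILE, not slice by slice. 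Nor can you defer: the corollary statement has no residual $\int g E$ term on the right, so Gronwall must be applied \emph{here}. The correct mechanism: from $E[\phi](\Si_{\tau_2})\leq A+C\int_{\tau_1}^{\tau_2}g(\tau)E[\phi](\Si_\tau)d\tau$ (where $A$ collects the right-hand side of the corollary), Gronwall gives $E[\phi](\Si_{\tau_2})\leq A\exp\bigl(C\int_{\tau_1}^{\tau_2}g\,d\tau\bigr)\leq A\,e^{C'M_2}$. The exponential is acceptable precisely because the conclusion uses $\les_{M_2}$ — this is where the ``large Maxwell field, integrability of $g$, Gronwall adapted to the $\Si_\tau$ foliation'' mechanism lives, and it is the heart of the corollary. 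Once the uniform energy bound holds for all $\tau\in[\tau_1,\tau_2]$, the extra left-hand-side terms $\int\tau_+^{-1-\ep}E[\phi]d\tau$ and $\iint|F_{L\nu}J^\nu[\phi]|+|F_{\Lb\nu}J^\nu[\phi]|$ follow immediately from it.
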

\begin{proof}
 First choose $\ep_1$ sufficiently small in the estimate \eqref{eq:Est4FJ:sca:in:0} so that combining the energy estimate \eqref{eq:ILE:Sca:in} with \eqref{eq:Est4FJ:sca:in:0} the integrated local energy
 $I^{-1-\ep}_0[D\phi](\mathcal{D}_{\tau_1}^{\tau_2})$ could be absorbed. By our notation, the smallness of $\ep_1$ depends only on $\ep$, $\ga_0$ and $R$. Then for the second term on the right hand side of \eqref{eq:Est4FJ:sca:in:0}, to
 apply Gronwall's inequality, we show that $g(\tau)$ (defined after line \eqref{eq:Est4FJ:sca:in:0}) is integrable. From the integrated local energy estimates \eqref{eq:Endecay:cur:in} and the $r$-weighted energy estimates \eqref{eq:pWE:cur:in} for the Maxwell field, we conclude from the previous section that
 \begin{align*}
 I^{-1-\ep}_0[\mathcal{L}_{Z}^k F](\mathcal{D}_{\tau_1}^{\tau_2})&\les M_k (\tau_1)_+^{-1-\ga_0},\quad \int_{\tau_1}^{\tau_2}\int_{H_{\tau^*}}r^{2+\ep}(|\mathcal{L}_{Z}^k \a|^2+|\mathcal{L}_Z^k \rho|^2)dvd\om d\tau\les M_k(\tau_1)_+^{-\ga_0+\ep}.
 \end{align*}
 Therefore by using Lemma \ref{lem:simplint} and Proposition \ref{prop:Est4F:in:R}, we can show that
 \begin{align*}
  \int_{\tau_1}^{\tau_2}g(\tau)d\tau&\les M_2(\tau_1)_+^{-\ga_0+\ep}+\sum\limits_{j\leq 2}I^{-1-\ep}_{1+2\ep}[\mathcal{L}_{\Om}^j F](\mathcal{D}_{\tau_1}^{\tau_2})\\
  &\les M_2(\tau_1)_+^{-\ga_0+\ep}+\sum\limits_{j\leq 2}\int_{\tau_1}^{\tau_2}\tau_+^{2\ep}I^{-1-\ep}_{0}[\mathcal{L}_{\Om}^j F](\mathcal{D}_{\tau}^{\tau_2})d\tau+(\tau_1)_+^{1+2\ep}I^{-1-\ep}_{0}[\mathcal{L}_{\Om}^j F](\mathcal{D}_{\tau_1}^{\tau_2})\\
  &\les M_2(\tau_1)_+^{-\ga_0+\ep}+M_2\int_{\tau_1}^{\tau_2}\tau_+^{-1-\ga_0+2\ep}d\tau+M_2(\tau_1)_+^{-\ga_0+2\ep}\les M_2(\tau_1)_+^{-\ga_0+2\ep}.
 \end{align*}
By using this uniform bound, the second term on the right hand side of \eqref{eq:Est4FJ:sca:in:0} can be absorbed by using Gronwall's inequality. The corollary then follows.
\end{proof}
We now can use Proposition \ref{prop:Est4FJ:sca:in} and the above corollary to obtain the necessary $r$-weighted energy estimates. To derive energy decay estimates, we at least need the $r$-weighted energy estimates with $p=1$ and $p=1+\ga_0$
(some $p$ bigger than one. However, the decay rate depends on this largest $p$). In any case, we first choose $\ep_1$ in estimate \eqref{eq:Est4FJ:sca:in} sufficiently small so that combining it with the $r$-weighted energy estimate
\eqref{eq:pWE:sca:in}, the first term on the right hand side of \eqref{eq:Est4FJ:sca:in} can be absorbed (note that $\ga_0<1$). The second term on the right hand side of \eqref{eq:Est4FJ:sca:in} can be controlled by using
Gronwall's inequality. Let's first combine the $r$-weighted energy estimate \eqref{eq:pWE:sca:in} for $p=1$ with the integrated local energy estimate \eqref{eq:ILE:sca:in:sim0} to derive the bound for the integral of
the energy flux.
\begin{prop}
 \label{prop:pWE:sca:in:1}
In the interior region for all $0\leq \tau_1<\tau_2$ we have
 \begin{equation}
  \label{eq:pWE:sca:in:1}
  \begin{split}
 \int_{\tau_1}^{\tau_2}E[\phi](\Si_\tau)d\tau\les_{M_2} &\int_{H_{\tau_1^*}}r|D_L\psi|^2dvd\om+E[\phi](\Si_{\tau_1})+(\tau_1)_+^{-\ga_0}\mathcal{E}_0[\phi]
 +I_{-2-\ga_0}^{1+\ga_0}[r^{-1}D_L(r\phi)](\bar{\mathcal{D}}_{\tau_1}^{\tau_2}).
\end{split}
 \end{equation}
\end{prop}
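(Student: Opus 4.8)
The plan is to combine the $r$-weighted energy estimate of Proposition \ref{prop:pWE:sca} at $p=1$ with the integrated local energy estimate of Corollary \ref{cor:ILE:sca:in:sim0}, using that at $p=1$ the coefficient $p|D_L\psi|^2+(2-p)|\D\psi|^2 = |D_L\psi|^2+|\D\psi|^2$ in the bulk term of \eqref{eq:pWE:sca:in} is, together with the improved angular term $\frac{|\D\phi|^2}{1+r}$ and the zeroth-order Hardy control from Lemma \ref{lem:Hardyga}, comparable to the energy density of $E[\phi](\Si_\tau)$ on each slice $H_{\tau^*}\subset\Si_\tau$. More precisely, I would first note that
\[
\int_{\tau_1}^{\tau_2}\int_{H_{\tau^*}}r^{0}\left(|D_L\psi|^2+|\D\psi|^2\right)dvd\om\, d\tau
\]
(the $p=1$ bulk term of \eqref{eq:pWE:sca:in}, written with $r^{p-1}=r^0$) controls $\int_{\tau_1}^{\tau_2}\int_{H_{\tau^*}}(|D_L\phi|^2+|\D\phi|^2)r^2\,dvd\om\,d\tau$ up to the lower order term $\int r^{-1}|\phi|^2$, while on the inner cylinder $\{r\le R\}$ the energy density is already controlled by $I^{-1-\ep}_0[\bar D\phi](\mathcal{D}_{\tau_1}^{\tau_2})$, which appears on the left of \eqref{eq:ILE:sca:in:sim0}. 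Adding the two estimates therefore produces $\int_{\tau_1}^{\tau_2}E[\phi](\Si_\tau)\,d\tau$ on the left.

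Next I would bound the right-hand sides. The $p=1$ case of \eqref{eq:pWE:sca:in} contributes $\int_{H_{\tau_1^*}}r|D_L\psi|^2dvd\om$, the initial energy flux $E[\phi](\Si_{\tau_1})$, the data term $I^{1+\ep}_0[\Box_A\phi](\mathcal D_{\tau_1}^{\tau_2})$ and $I^{\max\{1+\ep,1\}}_{\min\{1+\ep,1\}}[\Box_A\phi]$ — both absorbed into $\mathcal E_0[\phi]$ by the definition \eqref{eq:def4Nkphi} — plus the curvature–scalar interaction integrals $\iint|F_{\Lb\mu}J^\mu[\phi]|+|F_{L\mu}J^\mu[\phi]|$ over $\mathcal{D}_{\tau_1}^{\tau_2}$ and $\iint_{\bar{\mathcal D}_{\tau_1}^{\tau_2}}r^p|F_{L\mu}J^\mu[\phi]|$. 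The latter, at $p=1$, is controlled by Proposition \ref{prop:Est4FJ:sca:in}: choosing $\ep_1$ small enough that the $\ep_1\iint r^{p-1}|\D(r\phi)|^2$ term is absorbed by the bulk term of \eqref{eq:pWE:sca:in}, this leaves $I^1_{-1-\ep}[r^{-1}D_L(r\phi)]$ (a term of the shape $\int \tau_+^{-1-\ep} E[\phi](\Si_\tau)\,d\tau$ after Hardy, to be dealt with by Gronwall), plus $M_2\ep_1^{-1}(\delta_1\int E[\phi](\Si_\tau)\tau_+^{\delta_1^{-1}-1-\ep}d\tau + (1-\delta_1)I^{1+\ga_0}_{-1-\ep}[r^{-1}D_L\psi])$; with $p=1$ one has $\delta_1 = \frac{1+\ga_0}{1+\ga_0}=1$, so the second piece drops out and the first is again $\int E[\phi](\Si_\tau)\tau_+^{-\ep}d\tau$-type — but this is not yet of Gronwall form because $\tau_+^{-\ep}$ is not integrable. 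The way out, exactly as in Corollary \ref{cor:ILE:sca:in:sim0}, is that this contribution is paired against $\int_{\tau_1}^{\tau_2}E[\phi](\Si_\tau)d\tau$ on the left and one instead invokes the already-established coarse decay of the Maxwell quantities (via $g(\tau)$) so that the genuinely dangerous coefficient is replaced by the integrable $g(\tau)$; the residual $\tau_+^{-\ep}$-weighted term is handled by first proving a coarse bound $E[\phi](\Si_\tau)\lesssim \tau_+^{-\delta}$ for some small $\delta>0$ and then bootstrapping, or simply kept on the right since the statement allows an $I^{1+\ga_0}_{-2-\ga_0}[r^{-1}D_L(r\phi)]$ term. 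For the interaction integral over the full $\mathcal{D}_{\tau_1}^{\tau_2}$, I use Corollary \ref{cor:ILE:sca:in:sim0} directly, which already bounds it by $E[\phi](\Si_{\tau_1})+(\tau_1)_+^{-1-\ga_0}\mathcal{E}_0[\phi]+I^{1+\ga_0}_{-2-\ga_0}[r^{-1}D_L(r\phi)]$.

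Finally, I would apply Gronwall's inequality in $\tau_2$ (for fixed $\tau_1$): the left side contains $\int_{\tau_1}^{\tau_2}E[\phi](\Si_\tau)d\tau$ and the only remaining occurrences of $E[\phi](\Si_\tau)$ on the right are multiplied by the integrable weight $g(\tau)$ (integrable by the computation in the proof of Corollary \ref{cor:ILE:sca:in:sim0}, $\int_{\tau_1}^{\tau_2}g(\tau)d\tau\lesssim M_2(\tau_1)_+^{-\ga_0+2\ep}$) or by $\tau_+^{-1-\ep}$; Gronwall then removes them, leaving precisely the right-hand side of \eqref{eq:pWE:sca:in:1}, namely $\int_{H_{\tau_1^*}}r|D_L\psi|^2dvd\om + E[\phi](\Si_{\tau_1}) + (\tau_1)_+^{-\ga_0}\mathcal{E}_0[\phi] + I^{1+\ga_0}_{-2-\ga_0}[r^{-1}D_L(r\phi)](\bar{\mathcal{D}}_{\tau_1}^{\tau_2})$. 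The main obstacle is the careful accounting at $p=1$: ensuring that the interaction terms split cleanly into (i) pieces absorbable by the left-hand bulk, (ii) pieces of integrable-coefficient Gronwall type, and (iii) the surviving $I^{1+\ga_0}_{-2-\ga_0}[r^{-1}D_L(r\phi)]$ term — and in particular handling the borderline $\tau_+^{-\ep}$ weight coming from $\delta_1=1$ in Proposition \ref{prop:Est4FJ:sca:in}, which forces one to either extract a preliminary coarse decay rate or to keep the $r$-weighted $D_L\psi$ term on the right exactly as the statement does.
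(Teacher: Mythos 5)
Your overall strategy matches the paper's: take $p=1$ in the $r$-weighted energy estimate \eqref{eq:pWE:sca:in}, relate the resulting bulk term $\int_{\tau_1}^{\tau_2}\int_{H_{\tau^*}}(|D_L\psi|^2+|\D\psi|^2)\,dvd\om\,d\tau$ together with the integrated local energy (plus a Sobolev trace estimate $\int_\om|\phi|^2(\tau,R,\om)d\om\les\int_{r\le R}|\phi|^2+|D\phi|^2\,dx$ on $\{r=R\}$) to $\int_{\tau_1}^{\tau_2}E[\phi](\Si_\tau)d\tau$, and control the interaction terms via Proposition \ref{prop:Est4FJ:sca:in} and Corollary \ref{cor:ILE:sca:in:sim0}. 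You also correctly compute that at $p=1$ one has $\delta_1=\tfrac{2+\ga_0-1}{1+\ga_0}=1$, so the $I^{1+\ga_0}_{-1-\ep}[r^{-1}D_L\psi]$ contribution from Proposition \ref{prop:Est4FJ:sca:in} vanishes and one is left with $M_2\int_{\tau_1}^{\tau_2}\tau_+^{-\ep}E[\phi](\Si_\tau)\,d\tau$, whose coefficient $\tau_+^{-\ep}$ is indeed not integrable.

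However, your proposed resolutions of this borderline term do not work, and this is the essential step of the proof. A preliminary coarse bound $E[\phi](\Si_\tau)\les\tau_+^{-\delta}$ only gives $\int\tau_+^{-\ep-\delta}d\tau$, which is still divergent unless $\delta>1-\ep$ — and establishing that much decay is precisely what one is trying to do, so the bootstrap has no base case. The alternative of ``keeping the $r$-weighted $D_L\psi$ term on the right'' is vacuous at $p=1$, since its coefficient $1-\delta_1=0$; the $I^{1+\ga_0}_{-2-\ga_0}$ term in the statement comes from Corollary \ref{cor:ILE:sca:in:sim0}, not from Proposition \ref{prop:Est4FJ:sca:in}. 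Likewise, invoking the integrability of $g(\tau)$ does not apply here: $g(\tau)$ enters Corollary \ref{cor:ILE:sca:in:sim0} through a different mechanism and is not what multiplies $E[\phi](\Si_\tau)$ in the $\tau_+^{-\ep}$-weighted term. The paper's actual resolution is a Young-type split
\[
\tau_+^{-\ep}\;\leq\;\tfrac{\ep}{1+\ep}\,\ep_1^{-1/\ep}\,\tau_+^{-1-\ep}\;+\;\tfrac{\ep_1}{1+\ep},\qquad\forall\,\ep_1>0,
\]
which decomposes $M_2\int\tau_+^{-\ep}E[\phi](\Si_\tau)\,d\tau$ into a small constant multiple of $\int_{\tau_1}^{\tau_2}E[\phi](\Si_\tau)\,d\tau$ (absorbed directly into the left-hand side by taking $\ep_1$ small depending on $M_2$) plus $C_{\ep_1}\int\tau_+^{-1-\ep}E[\phi](\Si_\tau)\,d\tau$, which is \emph{already} controlled as a term on the left of \eqref{eq:ILE:sca:in:sim0}. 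Without this step your estimate does not close: the absorption is into the \emph{left-hand side}, not a Gronwall argument in $\tau_2$.
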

\begin{proof}
In $(t, r, \om )$ coordinate, using Sobolev embedding, we have
\[
 \int_{\om}|\phi|^2(\tau, R, \om)d\om\les \int_{r\leq R}|\phi|^2+|D\phi|^2dx.
\]
Then we can show that
\begin{align*}
 \int_{\tau_1}^{\tau_2}E[\phi](\Si_\tau)d\tau&\les \int_{\tau_1}^{\tau_2}\int_{r\leq R}|D\phi|^2dx d\tau +\int_{\tau_1}^{\tau_2}\int_{H_{\tau^*}}|D_L(r\phi)|^2+|\D(r\phi)|^2dvd\om d\tau\\
 &\qquad+\int_{\tau_1}^{\tau_2}\int_{\om}|\phi|^2(\tau, R, \om )d\om\\
 &\les I^{-1-\ep}_0[\bar D\phi](\mathcal{D}_{\tau_1}^{\tau_2})+\int_{\tau_1}^{\tau_2}\int_{H_{\tau^*}}|D_L(r\phi)|^2+|\D(r\phi)|^2dvd\om d\tau.
\end{align*}
Therefore take $p=1$ in the $r$-weighted energy esimate \eqref{eq:pWE:sca:in}. From the above argument, we obtain the following bound for the integral of the enrgy flux:
\begin{equation*}
  \begin{split}
 &\int_{\tau_1}^{\tau_2}E[\phi](\Si_\tau)d\tau\les \int_{H_{\tau_1^*}}r|D_L\psi|^2dvd\om+M_2\int_{\tau_1}^{\tau_2}E[\phi](\Si_{\tau})\tau_+^{-\ep}d\tau\\
&\quad+C_{M_2}\left(E[\phi](\Si_{\tau_1})+(\tau_1)_+^{-\ga_0}\mathcal{E}_0[\phi]+I_{-2-\ga_0}^{1+\ga_0}[r^{-1}D_L(r\phi)](\bar{\mathcal{D}}_{\tau_1}^{\tau_2})\right)
\end{split}
 \end{equation*}
 for some constant $C_{M_2}$ depending on $M_2$, $\ep$, $\ga_0$ and $R$. For the second term, we further can bound:
 \[
  \tau_+^{-\ep}=(\ep_1^{-\frac{1}{\ep}}\tau_+^{-1-\ep})^{\frac{\ep}{1+\ep}}\cdot (\ep_1)^{\frac{1}{1+\ep}}\leq \frac{\ep}{1+\ep} \ep_1^{-\frac{1}{\ep}}\tau_+^{-1-\ep}+\frac{\ep_1}{1+\ep},\quad \forall \ep_1>0.
  \]
Choose $\ep_1$ sufficiently small so that the second term can be absorbed. Then the first term can be bounded by using Corollary \ref{cor:ILE:sca:in:sim0}. Therefore the previous estimates amounts to
estimate \eqref{eq:pWE:sca:in:1}.
\end{proof}
As a corollary, we show that
\begin{cor}
 \label{cor:pWE:sca:in:1:Wga}
 We have
   \begin{equation}
  \label{eq:pWE:sca:in:1:Wga}
  \begin{split}
   \int_{\tau_1}^{\tau_2}\tau_+^{\ga_0-\ep}E[\phi](\Si_\tau)d\tau\les_{M_2}& \int_{H_{\tau_1^*}}r^{1+\ga_0}|D_L\psi|^2dvd\om+(\tau_1)_+^{1+\ga_0-\ep}E[\phi](\Si_{\tau_1})\\
&+\mathcal{E}_0[\phi]+I_{-2-\ep}^{1+\ga_0}[r^{-1}D_L(r\phi)](\bar{\mathcal{D}}_{\tau_1}^{\tau_2}).
\end{split}
 \end{equation}
\end{cor}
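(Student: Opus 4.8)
\textbf{Proof plan for Corollary \ref{cor:pWE:sca:in:1:Wga}.}

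The plan is to interpolate the already-established $p=1$ energy-flux integral bound from Proposition \ref{prop:pWE:sca:in:1} against the $r$-weighted hierarchy, using the weight-shifting trick of Lemma \ref{lem:simplint}. Concretely, I would start from the $r$-weighted energy estimate \eqref{eq:pWE:sca:in} with $p=1+\ga_0$, combined with Proposition \ref{prop:Est4FJ:sca:in} (taking $\ep_1$ small enough that the $\iint r^{p-1}|\D(r\phi)|^2$ term on the right of \eqref{eq:Est4FJ:sca:in} is absorbed by the positive bulk term $\int r^{p-1}(2-p)|\D\psi|^2$ on the left, which is legitimate since $\ga_0<1$ forces $2-p=1-\ga_0>0$), and with Corollary \ref{cor:ILE:sca:in:sim0} to dispose of the terms $E[\phi](\Si_{\tau_1})$, $I^{1+\ep}_0[\Box_A\phi]$ and $\iint |F_{\Lb\mu}J^\mu[\phi]|+|F_{L\mu}J^\mu[\phi]|$ that also occur on the right of \eqref{eq:pWE:sca:in}. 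The inhomogeneous contribution $\Box_A\phi$ is here identically zero (we are treating the linear equation $\Box_A\phi=0$), so those $I^{\ast}_\ast[\Box_A\phi]$ terms drop; only the genuine interaction terms with the Maxwell field survive, and those are exactly what Propositions \ref{prop:Est4FJ:sca:in} and \ref{prop:Est4FJ:sca:in:0} were built to handle.

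The key step is then a Gronwall argument applied in the $\tau$-variable to control the term $M_2\,\delta_p\int_{\tau_1}^{\tau_2}E[\phi](\Si_\tau)\tau_+^{\delta_p^{-1}-1-\ep}\,d\tau$ appearing on the right of \eqref{eq:Est4FJ:sca:in}. With $p=1+\ga_0$ we have $\delta_p=\frac{2+\ga_0-p}{1+\ga_0}=\frac{1}{1+\ga_0}$, so $\delta_p^{-1}-1-\ep=\ga_0-\ep$, i.e. the weight is precisely $\tau_+^{\ga_0-\ep}$, matching the left-hand side of \eqref{eq:pWE:sca:in:1:Wga}. So what needs proving is an estimate of the schematic shape
\[
\int_{\tau_1}^{\tau_2}\tau_+^{\ga_0-\ep}E[\phi](\Si_\tau)\,d\tau\ \les_{M_2}\ \text{(boundary \& data terms)}+\int_{\tau_1}^{\tau_2}\tau_+^{-1-\ep}\big(\tau_+^{\ga_0}E[\phi](\Si_\tau)\big)\,d\tau+\cdots,
\]
where the last integral, having the extra decaying factor $\tau_+^{-1-\ep}$, can be absorbed into the left side by Gronwall. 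To manufacture this structure from the $p=1$ result of Proposition \ref{prop:pWE:sca:in:1}, I would use Lemma \ref{lem:simplint} with $\b=\ga_0-\ep$ (or rather its integrated form): writing $\int_{\tau_1}^{\tau_2}s^{\ga_0-\ep}E[\phi](\Si_s)\,ds=(\ga_0-\ep)\int_{\tau_1}^{\tau_2}\tau^{\ga_0-\ep-1}\big(\int_\tau^{\tau_2}E[\phi](\Si_s)\,ds\big)d\tau+\tau_1^{\ga_0-\ep}\int_{\tau_1}^{\tau_2}E[\phi](\Si_s)\,ds$, and then feeding the inner flux-integral $\int_\tau^{\tau_2}E[\phi](\Si_s)\,ds$ to Proposition \ref{prop:pWE:sca:in:1} (applied with $\tau$ in place of $\tau_1$). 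The $r$-weighted boundary term $\int_{H_{\tau^*}}r|D_L\psi|^2$ that Proposition \ref{prop:pWE:sca:in:1} produces then gets promoted, under the weight $\tau^{\ga_0-\ep-1}$ and after integrating $d\tau$, to $\int_{H_{\tau_1^*}}r^{1+\ga_0}|D_L\psi|^2$ by a second application of Lemma \ref{lem:simplint} together with the monotonicity of the $r$-weighted flux along $H_u$ — this is exactly the place where the exponent bookkeeping $1+(\ga_0-\ep)=1+\ga_0-\ep$ lands one just shy of the stated $1+\ga_0$, so I would in fact run the interpolation with $p$ slightly below $1+\ga_0$ and then note the endpoint follows since $\ep$ is an arbitrarily small fixed constant, or alternatively absorb the $\ep$-discrepancy into the implicit constant. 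Similarly $(\tau_1)_+^{-\ga_0}\mathcal{E}_0[\phi]$ is promoted to $\mathcal{E}_0[\phi]$ after the $\tau^{\ga_0-\ep-1}$-weighting, and $E[\phi](\Si_{\tau_1})$ becomes $(\tau_1)_+^{1+\ga_0-\ep}E[\phi](\Si_{\tau_1})$.

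The main obstacle I anticipate is the term $I^{1+\ga_0}_{-2-\ga_0}[r^{-1}D_L(r\phi)](\bar{\mathcal{D}}_{\tau_1}^{\tau_2})$, which appears on the right of both Proposition \ref{prop:pWE:sca:in:1} and Corollary \ref{cor:ILE:sca:in:sim0} and which cannot simply be absorbed — it must be \emph{carried along} to the right-hand side of \eqref{eq:pWE:sca:in:1:Wga}. The point is that $\tau_+^{-2-\ga_0}$ integrates the $\tau_+^{\ga_0-\ep}$ weight down to something summable, so under the weight-shifting this spacetime norm reproduces itself (up to the implicit $M_2$-constant and the harmless $\ep$-loss in the subscript, $-2-\ga_0$ versus $-2-\ep$) rather than blowing up; verifying this self-consistency — i.e. that the iteration closes with $I^{1+\ga_0}_{-2-\ep}[r^{-1}D_L(r\phi)]$ on both sides in a way that does not require absorbing an unbounded quantity — is the delicate bookkeeping at the heart of the argument. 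Once that is checked, \eqref{eq:pWE:sca:in:1:Wga} follows by collecting terms and applying Gronwall's inequality (fixing $\tau_1$ and treating $\tau_2$ as the variable, as in the proof of Corollary \ref{cor:ILEdecay:sca:ex}).
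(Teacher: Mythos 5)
Your core strategy is the paper's: apply Lemma \ref{lem:simplint} with $\beta=\ga_0-\ep$ to $\int_{\tau_1}^{\tau_2}\tau_+^{\ga_0-\ep}E[\phi](\Si_\tau)\,d\tau$, feed the inner flux integral to Proposition \ref{prop:pWE:sca:in:1} applied on $[\tau,\tau_2]$, and track how the weight $\tau_+^{\ga_0-1-\ep}$ interacts with each term on its right-hand side. Your identification of what goes right with the $I^{1+\ga_0}_{-2-\ga_0}$ term (it trades up to $I^{1+\ga_0}_{-2-\ep}$ under the extra $\tau_+^{\ga_0-\ep}$ weight, rather than growing) is also correct. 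Your opening paragraph, which proposes to begin from \eqref{eq:pWE:sca:in} at $p=1+\ga_0$ together with Proposition \ref{prop:Est4FJ:sca:in}, is not a viable starting point for \emph{this} corollary — that is precisely the content of Proposition \ref{prop:pWE:sca:in:1ga}, which in turn \emph{invokes} this corollary, so you would be running in a circle; but since you pivot immediately to the \ref{lem:simplint}-on-\ref{prop:pWE:sca:in:1} route, this is only a motivational detour.

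The genuine soft spot is your mechanism for dealing with the boundary term $\int_{H_{\tau^*}}r|D_L\psi|^2$. You propose ``a second application of Lemma \ref{lem:simplint} together with the monotonicity of the $r$-weighted flux along $H_u$,'' but neither ingredient will close this step: $\int_{\tau_1}^{\tau_2}\tau_+^{\ga_0-1-\ep}\,d\tau$ diverges as $\tau_2\to\infty$, so bounding the flux by its value at $\tau_1$ and pulling it out is hopeless, and there is no second $\tau$-integral to which \ref{lem:simplint} can be reapplied. What actually works is a pointwise Young's inequality splitting $\tau_+^{\ga_0-1-\ep}\,r$ into $\ep_1^{-\ga_0}\tau_+^{-1-\ep}r^{1+\ga_0}$ (which becomes a carried spacetime $I$-term) plus $\ep_1\,\tau_+^{\ga_0-\ep}$ (whose contribution is then dominated by $\ep_1\int\tau_+^{\ga_0-\ep}E[\phi]\,d\tau$ using the auxiliary bound $\int_{H_{\tau^*}}|D_L\psi|^2\,dvd\om\les E[\phi](\Si_\tau)$ from Lemma \ref{lem:Est4phipWE}, and finally absorbed by taking $\ep_1$ small). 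A twin Young's inequality $(\tau_1)_+^{\ga_0-\ep}r\leq r^{1+\ga_0}+(\tau_1)_+^{1+\ga_0-\ep}$ handles the boundary term of \ref{lem:simplint} and produces $\int_{H_{\tau_1^*}}r^{1+\ga_0}|D_L\psi|^2$ and $(\tau_1)_+^{1+\ga_0-\ep}E[\phi](\Si_{\tau_1})$ exactly (no $\ep$-loss on the $r$-power, contrary to what you anticipate). Also, the final closure is direct absorption by smallness of $\ep_1$, not a Gronwall step in $\tau_2$.
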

\begin{proof}
By using estimate \eqref{eq:Est4phipWE:in:p} of Lemma \ref{lem:Est4phipWE}, we have the bound
\[
 \int_{H_{\tau^*}}|D_L(r\phi)|^2dvd\om \leq \int_{H_{\tau^*}}|D_L\phi|^2r^2dvd\om +\lim\limits_{r\rightarrow\infty}\int_{\om}r|\phi|^2d\om\les E[\phi](\Si_{\tau}).
\]
For all $\ep_1>0$, we have the following inequality:
\[
 \tau_+^{\ga_0-1-\ep}r=(\ep_1^{-\ga_0}r^{1+\ga_0}\tau_+^{-1-\ep})^{\frac{1}{1+\ga_0}}(\ep_1\tau_+^{\ga_0-\ep})^{\frac{\ga_0}{1+\ga_0}}\leq
 \frac{\ep_1^{-\ga_0}r^{1+\ga_0}\tau_+^{-1-\ep}}{1+\ga_0}+\frac{\ga_0 \ep_1\tau_+^{\ga_0-\ep}}{1+\ga_0}.
\]
In particular the above inequality holds for $r=1$.
Moreover we also have
\[
 (\tau_1)_+^{\ga_0-\ep}r=(r^{1+\ga_0})^{\frac{1}{1+\ga_0}}\left((\tau_1)_+^{1+\ga_0-\frac{1+\ga_0}{\ga_0}\ep}\right)^{\frac{\ga_0}{1+\ga_0}}\leq r^{1+\ga_0}+(\tau_1)_+^{1+\ga_0-\ep}.l
\]
Therefore from estimate \eqref{eq:pWE:sca:in:1}, we can show that
\begin{align*}
 &\int_{\tau_1}^{\tau_2}\tau_+^{\ga_0-1-\ep}\left(\int_{H_{\tau^*}}r|D_L\psi|^2dvd\om+E[\phi](\Si_{\tau})+\tau_+^{-\ga_0}\mathcal{E}_0[\phi]+I_{-2-\ga_0}^{1+\ga_0}[r^{-1}D_L(r\phi)](\bar{\mathcal{D}}_{\tau}^{\tau_2})\right)d\tau\\
 &\les \ep_1^{-\ga_0}\int_{\tau_1}^{\tau_2}\tau_+^{-1-\ep}\int_{H_{\tau^*}}r^{1+\ga_0}|D_L\psi|^2dvd\om d\tau+\ep_1\int_{\tau_1}^{\tau_2}\tau_+^{\ga_0-\ep}E[\phi](\Si_\tau)d\tau\\
 &\quad+\ep_1^{-\ga_0}\int_{\tau_1}^{\tau_2}\tau_+^{-1-\ep}E[\phi](\Si_{\tau})d\tau+\mathcal{E}_0[\phi]+I_{-2-\ep}^{1+\ga_0}[r^{-1}D_L(r\phi)](\bar{\mathcal{D}}_{\tau_1}^{\tau_2}).
\end{align*}
On the right hand side of the above estimate, the first term can be grouped with the last term. The second term will be absorbed for small $\ep_1$. The third term can be bounded by using estimate \eqref{eq:ILE:sca:in:sim0}. Therefore
by using Lemma \ref{lem:simplint} and Proposition \ref{prop:pWE:sca:in:1}, we can show that
 \begin{align*}
   \int_{\tau_1}^{\tau_2}\tau_+^{\ga_0-\ep}E[\phi](\Si_\tau)d\tau\les_{M_2}& \ep_1\int_{\tau_1}^{\tau_2}\tau_+^{\ga_0-\ep}E[\phi](\Si_\tau)d\tau+
  \ep_1^{-\ga_0} \int_{H_{\tau_1^*}}r^{1+\ga_0}|D_L\psi|^2dvd\om+\ep_1^{-\ga_0}\mathcal{E}_0[\phi]\\
  &+\ep_1^{-\ga_0}(\tau_1)_+^{1+\ga_0-\ep}E[\phi](\Si_{\tau_1})+\ep_1^{-\ga_0}I_{-2-\ep}^{1+\ga_0}[r^{-1}D_L(r\phi)](\bar{\mathcal{D}}_{\tau_1}^{\tau_2}).
\end{align*}
Let $\ep_1$ be sufficiently small, depending on $M_2$, $\ep$, $\ga_0$ and $R$. We obtain estimate \eqref{eq:pWE:sca:in:1:Wga}.
\end{proof}
Estimate \eqref{eq:pWE:sca:in:1:Wga} will be used to derive the $r$-weighted energy estimate with $p=1+\ga_0$.
\begin{prop}
  \label{prop:pWE:sca:in:1ga}
 We have
 \begin{equation}
  \label{eq:pWE:sca:in:1ga}
  \begin{split}
 &\int_{H_{\tau_2^*}}r^{1+\ga_0}|D_L\psi|^2dvd\om+\int_{\tau_1}^{\tau_2}\int_{H_{\tau^*}}r^{\ga_0}(|D_L\psi|^2+|\D\psi|^2)dvd\om d\tau\\
&\les_{M_2} \int_{H_{\tau_1^*}}r^{1+\ga_0}|D_L\psi|^2dvd\om+\mathcal{E}_0[\phi]+(\tau_1)_+^{1+\ga_0-\ep}E[\phi](\Si_{\tau_1}).
\end{split}
 \end{equation}
\end{prop}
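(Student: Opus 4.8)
The plan is to run the $r$-weighted energy estimate \eqref{eq:pWE:sca:in} with $p=1+\ga_0$ and control every error term on its right-hand side using the estimates already assembled in this subsection. First I would write down \eqref{eq:pWE:sca:in} with $p=1+\ga_0$: the left-hand side is exactly the quantity we want, together with the (nonnegative, since $\ga_0<1$) bulk term $\int\int r^{\ga_0}((1+\ga_0)|D_L\psi|^2+(1-\ga_0)|\D\psi|^2)$, so in particular the claimed left-hand side of \eqref{eq:pWE:sca:in:1ga} is bounded by it up to a constant. On the right-hand side we get $\int_{H_{\tau_1^*}}r^{1+\ga_0}|D_L\psi|^2dvd\om$ (kept), the terms $I^{\max\{1+\ep,1+\ga_0\}}_{\min\{1+\ep,1+\ga_0\}}[\Box_A\phi]+E[\phi](\Si_{\tau_1})+I^{1+\ep}_0[\Box_A\phi]$ which are all bounded by $\mathcal{E}_0[\phi]+(\tau_1)_+^{1+\ga_0-\ep}E[\phi](\Si_{\tau_1})$ directly from the definition \eqref{eq:def4Nkphi} of $\mathcal{E}_0[\phi]$ (indeed $I^{1+\ga_0}_{1+\ep}[\Box_A\phi]\le\mathcal E_0[\phi]$ trivially, and $I^{1+\ep}_{1+\ga_0}[\Box_A\phi]\le\mathcal E_0[\phi]$), and finally the two interaction integrals $\iint_{\mathcal D}(|F_{\Lb\mu}J^\mu[\phi]|+|F_{L\mu}J^\mu[\phi]|)\,dxdt+\iint_{\bar{\mathcal D}}r^{1+\ga_0}|F_{L\mu}J^\mu[\phi]|\,dxdt$, which are precisely the objects estimated in Propositions \ref{prop:Est4FJ:sca:in} (with $p=1+\ga_0$, so $\delta_p=\delta_{1+\ga_0}=1$ there, killing the $(1-\delta_p)$-term) and \ref{prop:Est4FJ:sca:in:0}, respectively summarized by Corollary \ref{cor:ILE:sca:in:sim0}.

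The substantive step is combining these. From Proposition \ref{prop:Est4FJ:sca:in} with $p=1+\ga_0$ and small $\ep_1$, the first error bulk term $\ep_1\iint r^{\ga_0}|\D\psi|^2$ is absorbed into the left-hand side of \eqref{eq:pWE:sca:in}; what survives is $I^{1+\ga_0}_{-1-\ep}[r^{-1}D_L\psi](\bar{\mathcal D}_{\tau_1}^{\tau_2})$ together with a term $M_2\ep_1^{-1}\int_{\tau_1}^{\tau_2}E[\phi](\Si_\tau)\tau_+^{\delta_p^{-1}-1-\ep}d\tau=M_2\ep_1^{-1}\int_{\tau_1}^{\tau_2}E[\phi](\Si_\tau)\tau_+^{\ga_0-\ep}d\tau$, since $\delta_{1+\ga_0}^{-1}=1$ gives exponent $\ga_0-\ep$ — exactly the weighted energy-flux integral controlled by Corollary \ref{cor:pWE:sca:in:1:Wga}. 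Likewise, the zeroth-order interaction integral is handled by Corollary \ref{cor:ILE:sca:in:sim0}, contributing $E[\phi](\Si_{\tau_1})+(\tau_1)_+^{-1-\ga_0}\mathcal{E}_0[\phi]+I^{1+\ga_0}_{-2-\ga_0}[r^{-1}D_L(r\phi)](\bar{\mathcal D}_{\tau_1}^{\tau_2})$. So after these substitutions the inequality reads, schematically,
\begin{align*}
\text{LHS of \eqref{eq:pWE:sca:in:1ga}}\les_{M_2}&\ \int_{H_{\tau_1^*}}r^{1+\ga_0}|D_L\psi|^2dvd\om+\mathcal{E}_0[\phi]+(\tau_1)_+^{1+\ga_0-\ep}E[\phi](\Si_{\tau_1})\\
&+\int_{\tau_1}^{\tau_2}\tau_+^{\ga_0-\ep}E[\phi](\Si_\tau)d\tau+I^{1+\ga_0}_{-1-\ep}[r^{-1}D_L\psi](\bar{\mathcal D}_{\tau_1}^{\tau_2})+I^{1+\ga_0}_{-2-\ga_0}[r^{-1}D_L(r\phi)](\bar{\mathcal D}_{\tau_1}^{\tau_2}).
\end{align*}

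The main obstacle — and the point where care is needed — is closing this: the right-hand side still contains the weighted spacetime integral $I^{1+\ga_0}_{-1-\ep}[r^{-1}D_L\psi](\bar{\mathcal D}_{\tau_1}^{\tau_2})=\int_{\tau_1}^{\tau_2}\tau_+^{-1-\ep}\int_{H_{\tau^*}}r^{1+\ga_0}|D_L\psi|^2\,dvd\om\,d\tau$, which is a time-integral of (a weight against) the very quantity $\int_{H_{\tau_2^*}}r^{1+\ga_0}|D_L\psi|^2$ appearing on the left. This is exactly the structure for Gronwall's inequality: apply \eqref{eq:pWE:sca:in} on $\mathcal D_{\tau}^{\tau_2}$ for a running initial slice $\tau\in[\tau_1,\tau_2]$, denote $G(\tau):=\int_{H_{\tau^*}}r^{1+\ga_0}|D_L\psi|^2dvd\om$, absorb $\int_{\tau_1}^{\tau_2}\tau_+^{\ga_0-\ep}E[\phi](\Si_\tau)d\tau$ via Corollary \ref{cor:pWE:sca:in:1:Wga} (which itself leaves behind a $G(\tau_1)$, an $\mathcal E_0[\phi]$, a $(\tau_1)_+^{1+\ga_0-\ep}E[\phi](\Si_{\tau_1})$, and again an $I^{1+\ga_0}_{-2-\ep}[r^{-1}D_L(r\phi)]$ term — all acceptable), and note that the term $I^{1+\ga_0}_{-2-\ga_0}[r^{-1}D_L\psi]$ (and $I^{1+\ga_0}_{-2-\ep}$) carries the stronger decay weight $\tau_+^{-2-\ga_0}$ (resp. $\tau_+^{-2-\ep}$) in $\tau$, hence $\int_{\tau_1}^{\tau_2}\tau_+^{-2-\ga_0}G(\tau)d\tau\le (\tau_1)_+^{-1-\ga_0}\sup_{[\tau_1,\tau_2]}G$ is a lower-order perturbation once $G$ is a priori known to be, say, slowly growing — or more cleanly, one feeds these into the Gronwall loop with the integrable kernel $\tau_+^{-1-\ep}$. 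Since $\int_{\tau_1}^{\tau_2}\tau_+^{-1-\ep}d\tau\les 1$ uniformly, Gronwall's inequality then yields $G(\tau_2)+\text{(bulk terms)}\les_{M_2} G(\tau_1)+\mathcal E_0[\phi]+(\tau_1)_+^{1+\ga_0-\ep}E[\phi](\Si_{\tau_1})$, which is precisely \eqref{eq:pWE:sca:in:1ga}. The one delicate bookkeeping item is making sure the several reappearances of $I^{1+\ga_0}_{-2-\ga_0}[r^{-1}D_L(r\phi)]$-type terms (from Corollaries \ref{cor:ILE:sca:in:sim0} and \ref{cor:pWE:sca:in:1:Wga}) are genuinely absorbable — they are, because their $\tau$-weight is $\tau_+^{-2-\ga_0}$, strictly more decay than the $\tau_+^{-1-\ep}$ needed for the Gronwall kernel, so they can be split as $\ep_1\int\tau_+^{-1-\ep}G\,d\tau + \ep_1^{-1}(\text{integrable remainder})$ just as in the proof of Corollary \ref{cor:pWE:sca:in:1:Wga}.
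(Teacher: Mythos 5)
Your proposal follows exactly the paper's route: combine \eqref{eq:pWE:sca:in} at $p=1+\ga_0$ with Propositions \ref{prop:Est4FJ:sca:in}, \ref{prop:Est4FJ:sca:in:0} (via Corollary \ref{cor:ILE:sca:in:sim0}), feed the resulting $\int\tau_+^{\ga_0-\ep}E[\phi](\Si_\tau)d\tau$ into Corollary \ref{cor:pWE:sca:in:1:Wga}, and close with Gronwall on the integrable kernel $\tau_+^{-1-\ep}$. One small slip to correct: $\delta_{1+\ga_0}=\frac{1}{1+\ga_0}$, not $1$, so the $(1-\delta_p)$-term in \eqref{eq:Est4FJ:sca:in} is \emph{not} killed — but you in fact keep $I^{1+\ga_0}_{-1-\ep}[r^{-1}D_L\psi]$ in the second paragraph (and the exponent $\delta_p^{-1}-1-\ep=\ga_0-\ep$ requires $\delta_p^{-1}=1+\ga_0$ anyway), so the accounting you actually carry out is the right one.
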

\begin{proof}
By taking $\ep_1$ in estimate \eqref{eq:Est4FJ:sca:in} to be sufficiently small and combining it with the $r$-weighted energy estimate \eqref{eq:pWE:sca:in} for $p=1+\ga_0$, from corollary \ref{cor:ILE:sca:in:sim0}, we obtain
  \begin{align*}
 &\int_{H_{\tau_2^*}}r^{1+\ga_0}|D_L\psi|^2dvd\om+\int_{\tau_1}^{\tau_2}\int_{H_{\tau^*}}r^{\ga_0}(|D_L\psi|^2+|\D\psi|^2)dvd\om d\tau\\
 &\les \int_{H_{\tau_1^*}}r^{1+\ga_0}|D_L\psi|^2dvd\om+\mathcal{E}_0[\phi]
 +M_2\left(\int_{\tau_1}^{\tau_2}E[\phi](\Si_{\tau})\tau_+^{\ga_0-\ep}d\tau+I^{1+\ga_0}_{-1-\ep}[r^{-1}D_L\psi](\bar {\mathcal{D}}_{\tau_1}^{\tau_2})\right).\\
&\qquad+C_{M_2}(E[\phi](\Si_{\tau_1})+(\tau_1)_+^{-1-\ga_0}\mathcal{E}_0[\phi]+I_{-2-\ga_0}^{1+\ga_0}[r^{-1}D_L\psi](\bar{\mathcal{D}}_{\tau_1}^{\tau_2}))
\end{align*}
for some constant $C_{M_2}$ depending on $M_2$. Estimate \eqref{eq:pWE:sca:in:1ga} then follows from estimate \eqref{eq:pWE:sca:in:1:Wga} together with Gronwall's inequality.
\end{proof}
If we take $\tau_1$ on the right hand side of \eqref{eq:pWE:sca:in:1ga} to be $0$, from the energy estimate \eqref{eq:ILEdecay:sca:ex} and the $r$-weighted energy estimate \eqref{eq:pWEdecay:sca:ex} in the exterior region, we conclude that the right hand side of \eqref{eq:pWE:sca:in:1ga} is bounded with $\tau_1=0$. In particular the $r$-weighted energy estimate in the interior region is bounded as we expected.
\begin{cor}
\label{cor:pWEdecay:sca:in:1ga}
Let $\psi=r\phi$. For all $0\leq \tau_1\leq \tau_2$, we have
 \begin{equation}
  \label{eq:pWEdecay:sca:in:1ga}
  \begin{split}
 \int_{H_{\tau_2^*}}r^{1+\ga_0}|D_L\psi|^2dvd\om+\int_{\tau_1}^{\tau_2}\int_{H_{\tau^*}}r^{\ga_0}(|D_L\psi|^2+|\D\psi|^2)dvd\om d\tau
 \les_{M_2} \mathcal{E}_0[\phi].
\end{split}
 \end{equation}
\end{cor}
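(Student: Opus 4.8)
The plan is to derive \eqref{eq:pWEdecay:sca:in:1ga} simply by specializing estimate \eqref{eq:pWE:sca:in:1ga} of Proposition \ref{prop:pWE:sca:in:1ga} to $\tau_1=0$ and bounding the three terms on its right hand side by $\mathcal{E}_0[\phi]$ using the exterior-region decay estimates already in hand. Since $(\tau_1)_+=1$ for $\tau_1=0$, estimate \eqref{eq:pWE:sca:in:1ga} becomes
\[
\int_{H_{\tau_2^*}}r^{1+\ga_0}|D_L\psi|^2dvd\om+\int_{0}^{\tau_2}\int_{H_{\tau^*}}r^{\ga_0}(|D_L\psi|^2+|\D\psi|^2)dvd\om d\tau\les_{M_2}\int_{H_{0^*}}r^{1+\ga_0}|D_L\psi|^2dvd\om+\mathcal{E}_0[\phi]+E[\phi](\Si_0),
\]
so the task reduces to bounding the boundary flux $\int_{H_{0^*}}r^{1+\ga_0}|D_L\psi|^2dvd\om$ and the energy flux $E[\phi](\Si_0)$.

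For the first of these I would use that $H_{0^*}=H_{-R/2}$ lies on the interface between the interior and the exterior regions: with our conventions it is $H_{0^*}\cap\{|x|\geq R\}$ in the interior foliation, which is exactly the outgoing null hypersurface $H_{0^*}\cap\{t\geq 0\}$ of the exterior region. Applying the exterior $r$-weighted energy decay \eqref{eq:pWEdecay:sca:ex} of Corollary \ref{cor:pWEdecay:sca:ex} with $p=1+\ga_0$ and $\tau_1=0$ (letting $\tau_2\to-\infty$) gives $\int_{H_{0^*}}r^{1+\ga_0}|D_L\psi|^2dvd\om\les_{M_2}\mathcal{E}_0[\phi]$, because the weight $(\tau_1)_+^{p-1-\ga_0}$ equals $1$. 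For the second, by the definition of $\Si_0$ in the interior foliation one has $E[\phi](\Si_0)=\int_{\{t=0,\,|x|\leq R\}}|D\phi|^2dx+E[\phi](H_{0^*}\cap\{|x|\geq R\})$; the bounded piece is controlled directly by the initial energy $E^0_0[\phi]\leq\mathcal{E}_0[\phi]$, and the null piece by the exterior integrated local energy decay \eqref{eq:ILEdecay:sca:ex} of Corollary \ref{cor:ILEdecay:sca:ex} with $\tau_1=0$ (and $\tau_2\to-\infty$), which yields $E[\phi](H_{0^*})\les_{M_2}\mathcal{E}_0[\phi]$. Hence $E[\phi](\Si_0)\les_{M_2}\mathcal{E}_0[\phi]$.

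Putting these together, the right hand side of \eqref{eq:pWE:sca:in:1ga} with $\tau_1=0$ is $\les_{M_2}\mathcal{E}_0[\phi]$, which is exactly \eqref{eq:pWEdecay:sca:in:1ga} in the case $\tau_1=0$. The general case $0\leq\tau_1\leq\tau_2$ then follows immediately: the spacetime integrand is nonnegative and $\tau_1\geq 0$, so $\int_{\tau_1}^{\tau_2}\int_{H_{\tau^*}}r^{\ga_0}(|D_L\psi|^2+|\D\psi|^2)dvd\om d\tau\leq\int_{0}^{\tau_2}\int_{H_{\tau^*}}r^{\ga_0}(|D_L\psi|^2+|\D\psi|^2)dvd\om d\tau$, and the flux term through $H_{\tau_2^*}$ is the same. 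I expect no genuine obstacle here: this corollary is the bookkeeping step closing the chain Proposition \ref{prop:pWE:sca:in:1} $\Rightarrow$ Corollary \ref{cor:pWE:sca:in:1:Wga} $\Rightarrow$ Proposition \ref{prop:pWE:sca:in:1ga}, all of whose right hand sides still carried the scalar field data on $H_{0^*}$ and on $\Si_0$. The only point deserving care is noticing that $H_{0^*}$ sits on the boundary of the exterior region, so that Corollaries \ref{cor:pWEdecay:sca:ex} and \ref{cor:ILEdecay:sca:ex} apply there verbatim, and that the bounded piece $\{t=0,\,|x|\leq R\}$ of $\Si_0$ is absorbed into the initial energy norm.
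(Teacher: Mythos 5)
Your proof is correct and follows the same route as the paper's: specialize Proposition \ref{prop:pWE:sca:in:1ga} to $\tau_1=0$, then bound the resulting $H_{0^*}$ flux and $E[\phi](\Si_0)$ via the exterior-region decay estimates \eqref{eq:pWEdecay:sca:ex} and \eqref{eq:ILEdecay:sca:ex}. Your additional remark that the general case $\tau_1\geq 0$ follows by monotonicity of the nonnegative spacetime integrand is a correct (if implicit in the paper) closing observation.
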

\begin{proof}
 From the $r$-weighted energy decay estimate \eqref{eq:pWEdecay:sca:ex} for $p=1+\ga_0$, $\tau_1=0$ in the exterior region, let $u_2\rightarrow \infty$. We obtain
\[
\int_{H_{0^*}}r^{1+\ga_0}|D_L\psi|^2dvd\om=\int_{H_{-\frac{R}{2}}}r^{1+\ga_0}|D_L\psi|^2\les_{M_2}\mathcal{E}_0[\phi].
\]
From the energy estimate \eqref{eq:ILEdecay:sca:ex} in the exterior region, we conclude that
\[
E[\phi](\Si_{0})=E[\phi](\{t=0, r\leq R\})+E[\phi](H_{-\frac{R}{2}})\les \mathcal{E}_0[\phi].
\]
Then estimate \eqref{eq:pWEdecay:sca:in:1ga} follows from \eqref{eq:pWE:sca:in:1ga} by taking $\tau_1=0$ on the right hand side.
\end{proof}
This uniform bound for the $r$-weighted energy estimate in the interior region is crucial for the energy flux decay. It in particular implies that the terms involving the $r$-weighted energy flux on the right hand side of the energy estimate \eqref{eq:ILE:sca:in:sim0} and the integral of the energy flux estimate \eqref{eq:pWE:sca:in:1} have the right decay to show the energy flux decay.
\begin{prop}
\label{prop:Enerdecay:sca:in}
In the interior region, we have the energy flux decay:
\begin{equation}
\label{eq:Enerdecay:sca:in}
E[\phi](\Si_\tau)\les_{M_2}\mathcal{E}_0[\phi]\tau_+^{-1-\ga_0},\quad \forall \tau\geq 0.
\end{equation}
\end{prop}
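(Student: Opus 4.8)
The plan is to run the Dafermos--Rodnianski pigeonhole argument exactly as in the proof of Proposition~\ref{prop:Endecay:cur} for the Maxwell field, now feeding in the scalar-field estimates assembled in the preceding propositions. All the required ingredients are in place: Corollary~\ref{cor:pWEdecay:sca:in:1ga} gives the \emph{uniform} bound $\int_{H_{\tau_2^*}}r^{1+\ga_0}|D_L\psi|^2dvd\om+\int_{\tau_1}^{\tau_2}\int_{H_{\tau^*}}r^{\ga_0}(|D_L\psi|^2+|\D\psi|^2)dvd\om d\tau\les_{M_2}\mathcal{E}_0[\phi]$; Proposition~\ref{prop:pWE:sca:in:1} controls $\int_{\tau_1}^{\tau_2}E[\phi](\Si_\tau)d\tau$ in terms of $\int_{H_{\tau_1^*}}r|D_L\psi|^2dvd\om$, $E[\phi](\Si_{\tau_1})$, $(\tau_1)_+^{-\ga_0}\mathcal{E}_0[\phi]$ and the tail $I_{-2-\ga_0}^{1+\ga_0}[r^{-1}D_L(r\phi)](\bar{\mathcal{D}}_{\tau_1}^{\tau_2})$; and Corollary~\ref{cor:ILE:sca:in:sim0} furnishes the almost monotonicity $E[\phi](\Si_{\tau_2})\les_{M_2}E[\phi](\Si_{\tau_1})+(\tau_1)_+^{-1-\ga_0}\mathcal{E}_0[\phi]+I_{-2-\ga_0}^{1+\ga_0}[r^{-1}D_L(r\phi)](\bar{\mathcal{D}}_{\tau_1}^{\tau_2})$. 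Finally Corollaries~\ref{cor:ILEdecay:sca:ex} and~\ref{cor:pWEdecay:sca:ex} give $E[\phi](\Si_0)\les_{M_2}\mathcal{E}_0[\phi]$, so the interior foliation starts from bounded data.

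Two preliminary observations. First, the tail decays strictly faster than the target: on $\{r\ge R\}$ one has $r_+^{1+\ga_0}r^{-2}\les r^{\ga_0}$, so foliating $\bar{\mathcal{D}}_{\tau_1}^{\tau_2}$ by $H_{\tau^*}$ (on which $u_+\sim\tau_+$) reduces $I_{-2-\ga_0}^{1+\ga_0}[r^{-1}D_L(r\phi)](\bar{\mathcal{D}}_{\tau_1}^{\tau_2})$ to $\les\int_{\tau_1}^{\tau_2}\tau_+^{-2-\ga_0}\int_{H_{\tau^*}}r^{\ga_0}|D_L\psi|^2dvd\om d\tau\les_{M_2}(\tau_1)_+^{-2-\ga_0}\mathcal{E}_0[\phi]$ by Corollary~\ref{cor:pWEdecay:sca:in:1ga}. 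Second, interpolating $r=(r^{\ga_0})^{\ga_0}(r^{1+\ga_0})^{1-\ga_0}$ and using Corollary~\ref{cor:pWEdecay:sca:in:1ga} again, there is a dyadic sequence $\{\tau_n\}_{n\ge n_0}$ with $\la^{-1}\tau_n\le\tau_{n+1}\le\la\tau_n$ and $\int_{H_{\tau_n^*}}r|D_L\psi|^2dvd\om\les_{M_2}(\tau_n)_+^{-\ga_0}\mathcal{E}_0[\phi]$.

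With these two facts, Proposition~\ref{prop:pWE:sca:in:1} applied with $\tau_1=\tau_n$ gives $\int_{\tau_n}^{\tau_2}E[\phi](\Si_\tau)d\tau\les_{M_2}E[\phi](\Si_{\tau_n})+(\tau_n)_+^{-\ga_0}\mathcal{E}_0[\phi]$ for all $\tau_2\ge\tau_n$. Integrating the almost monotonicity of Corollary~\ref{cor:ILE:sca:in:sim0} over $\tau\in[\tau_n,\tau_2]$ yields $(\tau_2-\tau_n)E[\phi](\Si_{\tau_2})\les_{M_2}\int_{\tau_n}^{\tau_2}E[\phi](\Si_\tau)d\tau+(\tau_n)_+^{-\ga_0}\mathcal{E}_0[\phi]$, hence $(\tau_2-\tau_n)E[\phi](\Si_{\tau_2})\les_{M_2}E[\phi](\Si_{\tau_n})+(\tau_n)_+^{-\ga_0}\mathcal{E}_0[\phi]$. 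Invoking first the uniform bound $E[\phi](\Si_\tau)\les_{M_2}\mathcal{E}_0[\phi]$ (Corollary~\ref{cor:ILE:sca:in:sim0} with $\tau_1=0$ together with $E[\phi](\Si_0)\les_{M_2}\mathcal{E}_0[\phi]$) and choosing $n$ with $\tau_n\le\f12\tau_2$ produces the coarse bound $E[\phi](\Si_\tau)\les_{M_2}\tau_+^{-1}\mathcal{E}_0[\phi]$. Feeding this back, namely $E[\phi](\Si_{\tau_n})\les_{M_2}(\tau_n)_+^{-1}\mathcal{E}_0[\phi]\le(\tau_n)_+^{-\ga_0}\mathcal{E}_0[\phi]$, and taking $\tau_2=\tau_{n+1}$ (so $\tau_{n+1}-\tau_n\gtrsim\tau_n$) gives $E[\phi](\Si_{\tau_{n+1}})\les_{M_2}(\tau_n)_+^{-1-\ga_0}\mathcal{E}_0[\phi]\les_{M_2}(\tau_{n+1})_+^{-1-\ga_0}\mathcal{E}_0[\phi]$ along the dyadic sequence. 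Applying Corollary~\ref{cor:ILE:sca:in:sim0} once more with $(\tau_1,\tau_2)=(\tau_n,\tau)$ for $\tau\in[\tau_n,\tau_{n+1}]$ then fills in the intermediate values, giving \eqref{eq:Enerdecay:sca:in} for all $\tau\ge 0$ (the finitely many small $\tau$ being covered by the uniform bound).

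As for the main obstacle: the genuinely delicate work---bounding the interaction terms $F_{\Lb\nu}J^\nu[\phi]$ and $F_{L\nu}J^\nu[\phi]$ without any smallness of $F=dA$, through the null structure of $J[\phi]$, the trace/Sobolev estimates for $F$ of Section~\ref{sec:pbdMaxwell}, and Gronwall's inequality along the $S_\tau$-foliation---has already been carried out in Propositions~\ref{prop:Est4FJ:sca:in} and~\ref{prop:Est4FJ:sca:in:0} and Corollaries~\ref{cor:ILE:sca:in:sim0}, \ref{cor:pWE:sca:in:1:Wga}, \ref{cor:pWEdecay:sca:in:1ga}. What remains here is only the pigeonhole bookkeeping; the sole points needing a little care are that one must interpolate to extract the dyadic sequence (Corollary~\ref{cor:pWEdecay:sca:in:1ga} supplies an $r^{\ga_0}$-weighted spacetime bound, not an unweighted one) and verify that the tail $I_{-2-\ga_0}^{1+\ga_0}[r^{-1}D_L(r\phi)]$ decays strictly faster than $\tau_+^{-1-\ga_0}$, both of which are immediate from Corollary~\ref{cor:pWEdecay:sca:in:1ga}.
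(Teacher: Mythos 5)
Your proposal is correct and follows essentially the same route as the paper's (very terse) proof: the Dafermos--Rodnianski pigeonhole argument from Proposition~\ref{prop:Endecay:cur}, fed by the integrated local energy estimate \eqref{eq:ILE:sca:in:sim0}, the integral-of-flux estimate \eqref{eq:pWE:sca:in:1}, and the uniform $r$-weighted bound \eqref{eq:pWEdecay:sca:in:1ga}, with $E[\phi](\Si_0)\les_{M_2}\mathcal{E}_0[\phi]$ supplied by the exterior estimates. One small bookkeeping slip worth flagging: the spacetime measure in $I^p_q[\cdot]$ carries an $r^2$ (so $I^p_q[f](\mathcal{D})\sim\iint u_+^q r_+^p|f|^2\,r^2\,dv\,du\,d\om$), and after the cancellation with the $r^{-2}$ from $|r^{-1}D_L\psi|^2$ the correct reduction is
\[
I_{-2-\ga_0}^{1+\ga_0}[r^{-1}D_L\psi](\bar{\mathcal{D}}_{\tau_1}^{\tau_2})\les\int_{\tau_1}^{\tau_2}\tau_+^{-2-\ga_0}\int_{H_{\tau^*}}r^{1+\ga_0}|D_L\psi|^2\,dv\,d\om\,d\tau\les_{M_2}(\tau_1)_+^{-1-\ga_0}\mathcal{E}_0[\phi],
\]
using the per-slice flux bound from \eqref{eq:pWEdecay:sca:in:1ga}, rather than your $(\tau_1)_+^{-2-\ga_0}$. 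So the tail decays \emph{at} the target rate, not strictly faster; this is still exactly what the almost-monotonicity step and the pigeonhole iteration need, so the argument goes through unchanged.
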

\begin{proof}
Estimate \eqref{eq:pWEdecay:sca:in:1ga} implies that
\[
I_{-2-\ga_0}^{1+\ga_0}[r^{-1}D_L\psi](\bar{\mathcal{D}}_{\tau_1}^{\tau_2})\les_{M_2}(\tau_1)_+^{-1-\ga_0}\mathcal{E}_0[\phi],\quad \forall 0\leq \tau_1\leq \tau_2.
\]
Then using the pigeon hole argument similar to the proof of Proposition \ref{prop:Endecay:cur} for the energy flux decay of
the Maxwell field in the interior, the energy decay estimate \eqref{eq:Enerdecay:sca:in} for the scala field follows from the energy estimate \eqref{eq:ILE:sca:in:sim0}, the integral of the energy flux estimate \eqref{eq:pWE:sca:in:1} and the $r$-weighted energy estimate \eqref{eq:pWEdecay:sca:in:1ga}. For a detailed proof for this, we refer to Proposition 2 of \cite{yang2}.
\end{proof}

\subsubsection{Energy decay estimates for the first order derivative of the scalar field}
In this section, we derive the energy flux decay estimates for the derivative of the scalar field. The difficulty is that the covariant wave operator $\Box_A$ does not commute with $D_{Z}$. Commutators are quadratic in the Maxwell field and
the scalar field. In our setting, the Maxwell field is large. In particular those terms can not be absorbed. The idea is to exploit the null structure of the commutators and to use Gronwall's inequality adatped to our foliation $\Si_{\tau}$.

In the following, we always use $\psi$ to denote the weighted scalar field $r\phi$, that is, $\psi=r\phi$. The first order derivative of $\phi$ is shorted as $\phi_1$, similarly, for $\phi_2$.
More precisely, we denote $\phi_1=D_Z\phi$, $\phi_2=D_{Z}^2\phi$. Same notation for the weighted scalar field $\psi$, e.g., $\psi_1=r D_Z\phi$. For any function $f$, under the null coordinates $(u, v, \om)$, we denote
\[
\|f\|_{L_v^2 L_u^{\infty}L_{\om}^2(\mathcal{D})}^2:=\int_{v}\sup\limits_{u}\int_{\om}|f|^2d\om dv,
\]
where $(u, v,\om)$ are the null coordinates on the region $\mathcal{D}$. Similarly we have the notation of $\|f\|_{L_u^2 L_v^\infty L_{\om}^2(\mathcal{D})}$. We can also define $L_u^pL_v^q L_{\om}^r$ norms for general $p$, $q$, $r$.

To apply Corollary \ref{cor:ILEdecay:sca:ex} for the exterior region and Proposition \ref{prop:Enerdecay:sca:in} for the interior region, it suffices to control the commutator terms.
However, we are not able to bound the commutator terms directly by using the zero's order energy estimates. One has to make use of the energy flux of the first order derivative of the solution and then apply Gronwall's inequality.
However for the energy estimate for the first order derivative of the solution, the key is to understand the commutator $[\Box_A, D_Z]$ with $Z=\pa_t$ or the angular momentum. The cases of $\pa_t$ and the angular momentum are
quite different. The main reason is that the angular momentum contains weights in $r$ while $\pa_t$ does not. For the case when $Z=\pa_t$, it is easy to bound $[\Box_A, D_{\pa_t}]\phi$. The only place we need to be careful is the charge part. For the case of $Z=\Om$, the problem is that the commutator $[\Box_A, D_{\Om}]$ will produce a term of the form $Z^\nu F_{\mu\nu}D^{\mu}\phi$ which can not be written as a linear term of $D_Z\phi$. The estimate for the commutator terms heavily rely on the null structure. We first show the following lemma for the commutator terms.
\begin{lem}
\label{lem:Est4commu:1}
When $|x|\geq R$, we have
\begin{equation}
\label{eq:Est4commu:1}
|[\Box_A, D_Z]\phi|\les |\a||D_{\Lb}\psi|+(|\ab|+r^{-1}|\rho|)|D_{L}\psi|+|F||\D\phi|+(|J|+r|\J|+|\si|+r^{-1}|\rho|)|\phi|.
\end{equation}
When $r\leq R$, we have
\begin{equation}
\label{eq:Est4commu:1:in}
|[\Box_A, D_Z]\phi|\les |F||\bar{D} \phi|+|J||\phi| .
\end{equation}
\end{lem}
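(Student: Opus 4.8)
The plan is to start from the commutator formula in Lemma \ref{lem:commutator}, which tells us that for any killing vector field $Z$
\[
[\Box_A, D_Z]\phi = 2i Z^\nu F_{\mu\nu}D^\mu\phi + i\pa^\mu(Z^\nu F_{\mu\nu})\phi.
\]
The proof splits according to whether $Z=\pa_t$ or $Z=\Om_{ij}$, and according to the region. The first, routine, term to handle is $Z^\nu F_{\mu\nu}D^\mu\phi$. For $Z=\pa_t$ we simply have $|\pa_t^\nu F_{\mu\nu}D^\mu\phi| = |F_{0\mu}D^\mu\phi|\les |F||D\phi|$, which after the substitution $D\phi = r^{-1}(D_L\psi, D_{\Lb}\psi, r\D\phi)$ and the null decomposition \eqref{eq:curNull} of $F$ produces exactly the terms on the right of \eqref{eq:Est4commu:1} (and trivially \eqref{eq:Est4commu:1:in} when $r\le R$). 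The point where care is needed is the angular momentum $\Om = (re_1, re_2)$: here $\Om^\nu F_{\mu\nu}D^\mu\phi = r\,e_i^\nu F_{\mu\nu}D^\mu\phi$, and the factor of $r$ must be absorbed by the better decay of the relevant null components. Decomposing $D^\mu\phi$ into the null frame and using $F_{L\Lb}=2\rho$, $F_{Le_i}=\a_i$, $F_{\Lb e_i}=\ab_i$, $F_{e_1e_2}=\si$, one checks that the $r$-weight combines with $\a$ to give $|\a||D_{\Lb}\psi|$, with $\ab$ to give $|\ab||D_L\psi|$, with $\rho$ to give $r^{-1}|\rho||D_L\psi|$ (the extra $r^{-1}$ coming because $re_i\cdot F\cdot D$ pairs $\rho$ against $D_L\phi = r^{-1}D_L\psi$ minus a $\phi$ correction, see below), and with $\si$ to give $|F||\D\phi|$; the $\phi$ corrections coming from $r^2|D_L\phi|^2=|D_L\psi|^2-L(r|\phi|^2)$ type identities contribute to the $(\ldots)|\phi|$ bucket.

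Next I would treat the lower-order term $\pa^\mu(Z^\nu F_{\mu\nu})\phi$. Since $Z$ is killing, $\pa^\mu(Z^\nu F_{\mu\nu}) = Z^\nu\pa^\mu F_{\mu\nu} + \pa^\mu Z^\nu F_{\mu\nu} = Z^\nu J_\nu + \pi^Z_{\mu\nu}F^{\mu\nu}$ where we use $J_\nu=\pa^\mu F_{\mu\nu}$ and that $\pa^\mu Z^\nu$ is antisymmetric for killing $Z$. For $Z=\pa_t$ the deformation tensor vanishes and this is just $J_0\phi$, bounded by $|J||\phi|$. For $Z=\Om_{ij}=x_i\pa_j-x_j\pa_i$ we have $Z^\nu J_\nu = x_i J_j - x_j J_i$, whose magnitude is $\les r|\J| + $ (contributions along $L,\Lb$); the antisymmetric part $\pa^\mu\Om^\nu F_{\mu\nu}$ is a contraction of $F$ that, by the explicit form $\pa_k \Om_{ij}^\mu = \delta_{ik}\delta_j^\mu - \delta_{jk}\delta_i^\mu$, reduces to a single component of $F$ — computing it in the null frame gives precisely the $|\si| + r^{-1}|\rho|$ contribution claimed. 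Collecting all these, multiplied by $|\phi|$, yields the bucket $(|J|+r|\J|+|\si|+r^{-1}|\rho|)|\phi|$ in the outer region and $|J||\phi|$ in the inner region.

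The main obstacle, and the step I would spend the most care on, is bookkeeping the precise placement of $r$-weights when passing from $|D\phi|$ and $F$ to the weighted quantities $D_L\psi$, $D_{\Lb}\psi$, $\D\phi$ and the null components $\a,\ab,\rho,\si$ — in particular, making sure that every appearance of the angular momentum's extra factor of $r$ is matched against a component of $F$ (or a derivative of $\phi$) that decays fast enough, and that the $\phi$-correction terms generated by identities like $r^2|D_L\phi|^2 = |D_L\psi|^2 - L(r|\phi|^2)$ are correctly sorted into the $(\ldots)|\phi|$ term rather than the derivative terms. This is not conceptually hard but it is where a sign or a weight is easy to mislay; I would do it by writing $D^\mu\phi$ and $F_{\mu\nu}$ fully in the frame $\{L,\Lb,e_1,e_2\}$, using $m(L,\Lb)=-2$, $m(e_i,e_j)=\delta_{ij}$ to raise indices, and reading off each term. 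The inner-region estimates \eqref{eq:Est4commu:1:in} are then immediate corollaries since for $r\le R$ all weights are comparable to constants and no cancellation structure is needed, so one simply bounds $|[\Box_A,D_Z]\phi|\les |F||D\phi| + |F||\phi| + |J||\phi|\les |F||\bar D\phi| + |J||\phi|$ using $\bar D\phi = (D\phi, r_+^{-1}\phi)$.
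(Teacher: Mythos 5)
Your overall strategy matches the paper's: expand the commutator via Lemma~\ref{lem:commutator}, rewrite the main term $2iZ^\nu F_{\mu\nu}D^\mu\phi$ in terms of the weighted field $\psi=r\phi$ so that the extra $r$ from $\Om$ is absorbed, null-decompose $F$, and estimate the divergence term $i\pa^\mu(Z^\nu F_{\mu\nu})\phi$ separately. However, there is a concrete gap in how you handle the $\phi$-terms for $Z=\Om_{jk}$, and it is precisely the step the paper spends its effort on.

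You assert that the ``antisymmetric part'' $\pa^\mu\Om^\nu F_{\mu\nu}=2F_{jk}$ ``reduces to a single component of $F$'' whose null decomposition gives the $|\si|+r^{-1}|\rho|$ contribution. This is false in isolation: writing $\pa_j=\om_j\pa_r+(\pa_j-\om_j\pa_r)$ one has
\[
F_{jk}=\om_jF_{rk}-\om_kF_{rj}+F(\pa_j-\om_j\pa_r,\,\pa_k-\om_k\pa_r),
\]
and $F_{rk}=\f12(F_{Lk}-F_{\Lb k})\sim \f12(\a-\ab)$, so $F_{jk}$ contains genuine $\a$, $\ab$ contributions. Bounding $\pa^\mu\Om^\nu F_{\mu\nu}\cdot\phi$ alone produces $(|\a|+|\ab|+|\si|)|\phi|$, which is strictly weaker than what the lemma states (and would wreck the later estimates, where $\ab$ enjoys only an integrated bound, not a pointwise one). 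The missing step is a \emph{cancellation}: replacing $D_L\phi=r^{-1}(D_L\psi-\phi)$, $D_\Lb\phi=r^{-1}(D_\Lb\psi+\phi)$ in the main term produces a $\phi$-correction equal to $-2\Om^\nu F_{\mu\nu}r^{-1}(\pa^\mu r)\phi=-2r^{-1}F(\pa_r,\Om_{jk})\phi$, and only the \emph{combination}
\[
-2\Om^\nu F_{\mu\nu}r^{-1}\pa^\mu r + \pa^\mu\Om^\nu F_{\mu\nu}
=2F(\pa_j-\om_j\pa_r,\,\pa_k-\om_k\pa_r)
\]
is a tangent--tangent contraction bounded by $|\si|$. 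You describe the $\phi$-corrections as going ``into the $(\ldots)|\phi|$ bucket'' without noticing they must be paired with $\pa^\mu\Om^\nu F_{\mu\nu}\phi$ for the $\a,\ab$ parts to drop out; as written your bound would not be \eqref{eq:Est4commu:1}. Two smaller points: (i) for $Z=\Om=re_i$ the contraction $\Om^\nu F_{\mu\nu}$ has no $\rho$ component (since $F_{Le_i}=\a_i$, $F_{\Lb e_i}=\ab_i$, $F_{e_je_i}\sim\si$), so the $r^{-1}|\rho||D_L\psi|$ term comes from $Z=\pa_t$, not from $\Om$ as your parenthetical suggests; (ii) writing $(\pa^\mu Z^\nu)F_{\mu\nu}$ as $\pi^Z_{\mu\nu}F^{\mu\nu}$ is misleading because $\pi^Z$ is the \emph{symmetric} part of $\pa Z$ and vanishes for Killing $Z$ --- what contributes is the antisymmetric part of $\pa Z$.
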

\begin{remark}
In this paper, all the quantity involving $Z$ should be interpreted as the sum of the quantity for all possible vector fields $Z$ unless we specify it.
\end{remark}

\begin{proof}
Let $\psi=r\phi$. First from Lemma \ref{lem:commutator}, we can write
\begin{equation}
\label{eq:com1:nullst}
[\Box_A, D_{Z}]\phi=2i r^{-1}Z^\nu F_{\mu\nu}D^{\mu}\psi+i \pa^\mu F_{\mu\nu}Z^\nu\phi+i\phi\left(-2 Z^\nu F_{\mu\nu}r^{-1}\pa^{\mu}r+\pa^{\mu}Z^\nu F_{\mu\nu}\right).
\end{equation}
We write the commutator terms as above is to exploit the null structure. The first term is the main term. Since we will rely on the $r$-weighted energy estimates, this suggests to write the main term of the commutator in terms of the weighted solution $r\phi$. The second term is easy as $\pa^\mu F_{\mu\nu}$ is nonlinear term of $\phi$ by the Maxwell equation. Let's first estimate the third term. When $Z=\Om$, note that $r^{-1}\Om$ is linear combination of $e_1$, $e_2$. We then can show that
\begin{align*}
|r^{-1}Z^\nu F_{\mu\nu}D^{\mu}(r\phi)|\les |\a||D_{\Lb}(r\phi)|+|\ab||D_{L}(r\phi)|.
\end{align*}
This is the null structure we need: the "bad" component $\ab$ of the curvature does not interact with the "bad" component $D_{\Lb}(r\phi)$ of the scalar field. Similarly, when $Z=\pa_t$, the "bad" term $r^{-1}\ab D_{\Lb}(r\phi)$ does not appear. More precisely, we have
\begin{align*}
|r^{-1}Z^\nu F_{\mu\nu}D^{\mu}(r\phi)|\les r^{-1}(|\a|+|\ab|)|\D(r\phi)|+r^{-1}|\rho||D_r(r\phi)|.
\end{align*}
For the second term on the right hand side of \eqref{eq:com1:nullst}, we note that $\pa^\mu F_{\mu\nu}$ is nonlinear term of $\phi$. We have
\begin{equation*}
|\pa^\mu F_{\mu\nu}Z^\nu\phi|\les (|J|+r|\J|)|\phi|.
\end{equation*}
For the third term on the right hand side of \eqref{eq:com1:nullst}, we show that
\begin{equation*}
|i\phi\left(-2 Z^\nu F_{\mu\nu}r^{-1}\pa^{\mu}r+\pa^{\mu} Z^\nu F_{\mu\nu}\right)|\les (|\si|+r^{-1}|\rho|)|\phi|.
\end{equation*}
The case when $Z=\pa_t$ is trivial. To check the above inequality for the case when $Z=\Om$, it suffices to prove it for the component $\Om_{jk}=x_j \pa_k-x_k\pa_j$. Then we can show that
\begin{align*}
-2\Om^\nu F_{\mu\nu}r^{-1}\pa^{\mu}r+\pa^{\mu}\Om^\nu F_{\mu\nu}&=2F_{jk}-2F(\pa_r, \Om_{ij})\\
&=2F(\om_{j}\pa_r+\pa_j-\om_j\pa_r, \om_k\pa_r+\pa_k-\om_k\pa_r)-2F(\pa_r, \Om_{jk})\\
&=2F(\pa_j-\om_j\pa_r, \pa_k-\om_k\pa_r).
\end{align*}
Here recall that $\om_j=r^{-1}x_j$. Since for all $j=1$, $2$, $3$, $\pa_j-\om_j\pa_r$ is orthogonal to $L$, $\Lb$, we conclude that $\pa_j-\om_j\pa_r$ is a linear combination of $e_1$ and $e_2$. The desired estimate then follows as the norm of the vector fields $\pa_j-\om_j\pa_r$ is less than 1.
\end{proof}

We begin a series of propositions in order to estimate the weighted spacetime norm of the commutators. The estimates in the bounded region $r\leq R$ are easy to obtain as the weights are finite.
We now concentrate on the region $r\geq R$. Let $\bar{\mathcal{D}}_{\tau}=\mathcal{D}_{\tau}\cap\{|x|\geq R\}$. We first consider
$|\a||D_{\Lb}(r\phi)|$.
\begin{prop}
\label{prop:supDLbphi}
For all $\ep_1>0$, we have
\begin{align}
\label{eq:supDLbphi}
\|D_{\Lb}(r\phi)\|_{L_u^2 L_v^\infty L_{\om}^2(\bar{\mathcal{D}}_{\tau})}& \les_{M_2}\mathcal{E}_0[\phi]\ep_1^{-1} \tau_+^{-1-\ga_0}+\ep_1 I^{1+\ep}_0[r^{-1}D_LD_{\Lb}\psi](\bar{\mathcal{D}}_{\tau}).
\end{align}
\end{prop}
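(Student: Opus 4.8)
The plan is to derive a transport equation for $D_{\Lb}(r\phi) = D_{\Lb}\psi$ along the incoming null hypersurfaces $\Hb_v$ and then integrate. Since $\Lb = \pa_u$, integrating along $\Hb_v$ at fixed $v$ in the $u$-variable converts an $L^\infty_v$ bound in $u$ into an estimate on the data at $r = R$ (or the initial hypersurface in the exterior region) plus a spacetime integral of the source. So first I would use the covariant wave equation $\Box_A\phi = 0$ — equivalently the identity $\f12 D_L D_{\Lb}\psi + \f12 D_{\Lb}D_L\psi - \lap_A \psi = 0$ up to lower-order terms coming from the connection coefficients — to solve for $D_L(D_{\Lb}\psi)$ in terms of $D_{\Lb}(D_L\psi)$, $\D^2\psi$, and curvature terms $F\cdot\psi$ arising from the non-commutativity of $D$. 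This rewrites the $v$-derivative of $D_{\Lb}\psi$ so that I can integrate in $v$ starting from the appropriate boundary: $r=R$ in the interior region (controlled by the inner-region estimate, Proposition \ref{prop:Est4F:in:R} type bounds and the energy estimate \eqref{eq:ILE:sca:in:sim0}) or $\{t=0\}$ in the exterior (controlled by $\mathcal{E}_0[\phi]$).

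Next I would apply Cauchy--Schwarz in $v$ along each incoming ray, producing a boundary term and a weighted spacetime integral of $L D_{\Lb}\psi$; the natural weight to insert is $r^{-(1+\ep)}$ against $r^{1+\ep}$, which is where the term $I^{1+\ep}_0[r^{-1}D_L D_{\Lb}\psi](\bar{\mathcal{D}}_\tau)$ on the right-hand side of \eqref{eq:supDLbphi} comes from, and the constant $\ep_1$ is just the Cauchy--Schwarz splitting parameter. The boundary contributions and the curvature-times-scalar source terms must then be bounded by $\mathcal{E}_0[\phi]\tau_+^{-1-\ga_0}$. For the boundary at $r=R$ this uses the already-established interior energy decay $E[\phi](\Si_\tau)\les_{M_2}\mathcal{E}_0[\phi]\tau_+^{-1-\ga_0}$ (Proposition \ref{prop:Enerdecay:sca:in}) together with the inner-region elliptic/Sobolev bounds; for the initial data in the exterior region it is immediate from the definition of $\mathcal{E}_0[\phi]$ in \eqref{eq:def4Nkphi} and \eqref{eq:def4E0kFphi}. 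The lower-order curvature terms $F\cdot\psi$ are handled using the pointwise bounds on $F$ from Proposition \ref{prop:supF} (e.g. $r^{p}|r\a|^2 \les M_2\tau_+^{p-1-\ga_0}$, $|r\ab|^2 \les M_2\tau_+^{-1-\ga_0}$) combined with the pointwise/weighted decay of $\phi$ from Lemma \ref{lem:Est4phipWE} and Corollary \ref{cor:pWEdecay:sca:in:1ga}; here one splits the region into $r$ large versus $r\sim\tau_+$ so that the $M_2$-dependence is absorbed into the implicit constant $\les_{M_2}$.

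The main obstacle I anticipate is handling the charge contribution hidden inside the curvature term $r^{-1}\rho$ multiplying $D_L\psi$ (and $D_{\Lb}\psi$) in the commutator-type source — the charge part $q_0 r^{-2}$ of $\rho$ does not decay in $\tau_+$ in the exterior region, only in $r$, so one cannot simply absorb it with the small $\mathcal{E}_0[\phi]$ unless one uses the smallness of $|q_0|$ (which holds here since the scalar field, hence the charge, is small). Concretely I would isolate the term $\iint |q_0| r^{-3}|D_{\Lb}\psi||\psi|\, dudvd\om$ and estimate it exactly as in the proof of Proposition \ref{prop:Est4bFJ:ex}: bound $r|\psi|^{-1}$-type factors using \eqref{eq:SPHDecay:sca:ex} and then use Cauchy--Schwarz against the incoming energy flux, arranging the powers of $v$ so that the result is $\les_{M_2}\mathcal{E}_0[\phi]\tau_+^{-1-\ga_0}$ (plus, if necessary, a term that gets absorbed into $\ep_1 I^{1+\ep}_0[r^{-1}D_LD_{\Lb}\psi]$). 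A secondary technical point is keeping careful track of the connection coefficients generated when commuting $D$ with the null frame and with $r$, but these only produce terms already of the schematic form appearing in Lemma \ref{lem:Est4commu:1}, so they do not cause genuine difficulty.
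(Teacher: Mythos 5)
Your core step---applying the fundamental theorem of calculus in $v$ along the outgoing rays $H_{\tau^*}$ and then Cauchy--Schwarz with the weight $r^{1+\ep}$---is indeed what the paper does, but your description of the geometry is garbled: you repeatedly say ``incoming null hypersurfaces $\Hb_v$'' and ``Cauchy--Schwarz in $v$ along each incoming ray,'' whereas the surfaces along which you integrate in $v$ are the \emph{outgoing} $H_{\tau^*}$ (fixed $u$, varying $v$); the incoming hypersurface at fixed $v_0$ is only the \emph{starting surface} carrying the boundary term. Also, to actually produce the free small parameter $\ep_1$ you must apply FTC to the \emph{square} $\int_\om|D_{\Lb}\psi|^2d\om$, whose $L$-derivative is $2\Re\int_\om D_LD_{\Lb}\psi\cdot\overline{D_{\Lb}\psi}\,d\om$, and then Cauchy--Schwarz the product $|D_LD_{\Lb}\psi|\cdot|D_{\Lb}\psi|$; if you instead integrate $D_{\Lb}\psi$ linearly in $v$ and apply Minkowski followed by Cauchy--Schwarz, the $I^{1+\ep}_0$ term arrives with a fixed absolute constant rather than $\ep_1$, which is not strong enough for the later applications (Propositions \ref{prop:com1:aDLbphi}, \ref{prop:bd4EDZphi}) where $\ep_1$ must be taken small to absorb the $I^{1+\ep}_0$ term.

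The more serious issue is that most of your proposal concerns machinery that does not belong in this proposition and would, if carried out, make the stated right-hand side unachievable. You propose to ``first use the wave equation'' to rewrite $D_LD_{\Lb}\psi$ in terms of $\D^2\psi$ and curvature; this is precisely what you should \emph{not} do here, because the proposition is deliberately stated with the unresolved quantity $I^{1+\ep}_0[r^{-1}D_LD_{\Lb}\psi]$ on the right so that only $\mathcal{E}_0[\phi]$ (zeroth-order information) appears elsewhere. Substituting the null-form wave equation of Lemma \ref{lem:EQ4sca:null} would introduce $\D^2\psi$, which carries two angular derivatives of $\phi$ and lives at the $\mathcal{E}_1$/$\mathcal{E}_2$ level---that resolution is exactly the content of the \emph{separate} Proposition \ref{prop:Est4DLDLbphi}, not of this one. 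As a consequence, nothing in your third paragraph is relevant here: there are no ``curvature-times-scalar source terms,'' no ``commutator-type source,'' and the charge $q_0 r^{-2}$ inside $\rho$ does not appear. The paper's argument for this statement is a pure transport-plus-Cauchy--Schwarz estimate with no curvature, no commutators and no charge; the boundary term and the $\ep_1^{-1}I^{-1-\ep}_0[\bar D\phi]$ term produced by Cauchy--Schwarz are bounded directly by the already-established decay estimates \eqref{eq:ILE:sca:in:sim0}, \eqref{eq:Enerdecay:sca:in}.
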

\begin{proof}
The idea is to bound $\sup|D_{\Lb}(r\phi)|$ by the $L^2$ norm of $D_{L}D_{\Lb}(r\phi)$. In the exterior region when $\bar{\mathcal{D}}_{\tau}=\mathcal{D}_{\tau}^{-\infty}$, we can integrate from the initial hypersurface $t=0$.
In the interior region, choose the incoming null hypersurface $\Hb_{\frac{\tau_2+R}{2}}^{\tau_1, \tau_2}$ as the starting surface. Denote $\psi=r\phi$. We show estimate \eqref{eq:supDLbphi} for the interior region case, that is, when
$0\leq \tau_1<\tau_2$. On the outgoing null hypersurface $H_{\tau^*}$, for all $0\leq \tau_1\leq \tau\leq \tau_2$, we have
\begin{align*}
\sup\limits_{v\geq\frac{\tau+R}{2}}\int_{\om}|D_{\Lb}(r\phi)|^2(\tau^*, v, \om)d\om & \les \int_{\om}|D_{\Lb}(r\phi)|^2(\tau^*, \frac{\tau_2+R}{2}, \om)d\om+\int_{H_{\tau^*}}|D_L D_{\Lb}(r\phi)|\cdot |D_{\Lb}(r\phi)|dvd\om.
\end{align*}
Integrate the above estimate from $\tau_1$ to $\tau_2$ and apply Cauchy-Schwarz's inequality to the last term. From the integrated local energy estimate
 \eqref{eq:ILE:sca:in:sim0} and the energy decay estimate \eqref{eq:Enerdecay:sca:in}, we then derive
\begin{align*}
\int_{\tau_1}^{\tau_2}\sup\limits_{v\geq \frac{R+\tau}{2}}\int_{\om}|D_{\Lb}(r\phi)|^2 d\om  d\tau\les_{M_2}\mathcal{E}_0[\phi]\ep_1^{-1} (\tau_1)_+^{-1-\ga_0}+\ep_1 I^{1+\ep}_0[r^{-1}D_LD_{\Lb}\psi](\bar{\mathcal{D}}_{\tau_1}^{\tau_2})
\end{align*}
for all $\ep_1>0$. The case in the exterior region follows in a similar way.
\end{proof}
We also need the analogous estimates for $D_{L}(r\phi)$.
\begin{prop}
\label{prop:supDLphi}
For all $\ep_1>0$ and $0\leq p\leq 1+\ga_0$, we have
\begin{align}
\label{eq:supDlphi}
\|r^{\frac{p}{2}}D_{L}(r\phi)\|_{L_v^2 L_u^\infty L_{\om}^2(\bar{\mathcal{D}}_{\tau})}^2 &\les_{M_2}\ep_1^{-1} \mathcal{E}_0[\phi](\tau)_+^{-1-\ga_0}+\ep_1 I^{p_1}_{p_2}[r^{-1}D_{\Lb}D_{L}\psi](\bar{\mathcal{D}}_{\tau}).
\end{align}
Here $p_1=\max\{1+\ep, p\}$, $p_2=\min\{1+\f12\ep, p\}$.
\end{prop}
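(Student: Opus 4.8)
The plan is to mirror the proof of Proposition \ref{prop:supDLbphi}, but now integrating along the \emph{incoming} direction so as to get control of $D_L(r\phi)$ in the $L_v^2 L_u^\infty L_\om^2$ norm. Write $\psi=r\phi$. The starting point is the pointwise-in-$u$ identity along an incoming null hypersurface $\Hb$: for fixed $(v,\om)$,
\[
\int_\om |r^{\frac p2}D_L\psi|^2(\tau^*,v,\om)\,d\om
\les \int_\om |r^{\frac p2}D_L\psi|^2\big|_{u=u_0}\,d\om
+\int_{\Hb} r^{p}\,|D_{\Lb}D_L\psi|\,|D_L\psi|\,du\,d\om
+\int_{\Hb} r^{p-1}|D_L\psi|^2\,du\,d\om,
\]
where the last term comes from differentiating the weight $r^p$ along $\Lb$ (recall $\Lb r=-1$). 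In the interior region one takes $u_0=\tau_2^*$, i.e.\ one integrates from the cylinder $\{r=R\}$ where $r^{\frac p2}\asymp 1$; in the exterior region one integrates from the initial hypersurface $t=0$. Then one integrates this inequality in $v$ over the relevant range and applies Cauchy–Schwarz to the middle term:
\[
\int_{\Hb_v}r^{p}|D_{\Lb}D_L\psi||D_L\psi|\,du\,d\om
\le \ep_1 \int_{\Hb_v} r^{p_1} u_+^{p_2}\,|D_{\Lb}D_L\psi|^2\,du\,d\om
+\ep_1^{-1}\int_{\Hb_v} r^{2p-p_1}u_+^{-p_2}\,|D_L\psi|^2\,du\,d\om,
\]
with $p_1=\max\{1+\ep,p\}$, $p_2=\min\{1+\tfrac12\ep,p\}$ chosen exactly so that, after integrating in $v$, the first term is $\ep_1 I^{p_1}_{p_2}[r^{-1}D_{\Lb}D_L\psi](\bar{\mathcal D}_\tau)$ (using $r^2|D_{\Lb}D_L\phi|^2 = |D_{\Lb}D_L\psi|^2$ up to the usual lower-order corrections) while the second term, together with the boundary term and the weight-derivative term, is controlled by the already-established $r$-weighted energy decay for $\psi$.

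The key estimates to feed in are Corollary \ref{cor:pWEdecay:sca:in:1ga} (and its exterior analogue, the $r$-weighted energy decay \eqref{eq:pWEdecay:sca:ex}), which give
\[
\int_{\tau_1}^{\tau_2}\int_{H_{\tau^*}} r^{\ga_0}(|D_L\psi|^2+|\D\psi|^2)\,dvd\om\,d\tau
\les_{M_2}\mathcal E_0[\phi],
\]
together with the energy flux decay $E[\phi](\Si_\tau)\les_{M_2}\mathcal E_0[\phi]\tau_+^{-1-\ga_0}$ from Proposition \ref{prop:Enerdecay:sca:in} (interior) and Corollary \ref{cor:ILEdecay:sca:ex} (exterior). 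These are used to bound the boundary term at $\{r=R\}$ or $\{t=0\}$ and the term $\int r^{2p-p_1}u_+^{-p_2}|D_L\psi|^2$: one checks, using $2p-p_1\le p-1+\ga_0\cdot(\text{something})\le \ga_0$ when $p\le 1+\ga_0$ and $p_1\ge 1+\ep$ (so that $2p-p_1\le p-1\le \ga_0$) and the weight $u_+^{-p_2}$ with $p_2\ge 1$ absorbing the extra decay, that this integrand is dominated by $r^{\ga_0}|D_L\psi|^2$ times an integrable factor in $\tau$, hence its spacetime integral over $\bar{\mathcal D}_\tau$ is $\les_{M_2}\mathcal E_0[\phi]\,\tau_+^{-1-\ga_0}$. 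The boundary term at $\{r=R\}$ is handled as in Proposition \ref{prop:pWE:sca:in:1}: the trace $\int_\om|D_L\psi|^2(\tau,R,\om)\,d\om\les \int_{r\le R}|D\phi|^2+|\phi|^2\,dx$ which is $\les E[\phi](\Si_\tau)$, integrable against $r^{p-1}\asymp 1$ with the time decay to spare.

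The main obstacle I expect is bookkeeping the exponents: one must verify that the pair $p_1=\max\{1+\ep,p\}$, $p_2=\min\{1+\tfrac12\ep,p\}$ is simultaneously (i) large/small enough that the $\ep_1$-term reproduces exactly the spacetime norm $I^{p_1}_{p_2}[r^{-1}D_{\Lb}D_L\psi](\bar{\mathcal D}_\tau)$ that will later be absorbed (when this proposition is combined with a commuted $r$-weighted energy estimate for $\phi_1=D_Z\phi$), and (ii) such that the leftover term $r^{2p-p_1}u_+^{-p_2}|D_L\psi|^2$ and the weight-derivative term $r^{p-1}|D_L\psi|^2$ are genuinely controlled by the decaying $r$-weighted flux of $\psi$ — the delicate case being $p$ near $1+\ga_0$, where $r^{2p-p_1}=r^{p-1}\to r^{\ga_0}$ saturates the available weight and the $u_+^{-p_2}$ decay with $p_2\ge 1$ is what saves the time integrability. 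A secondary point is that $D_L D_{\Lb}\psi$ and $D_{\Lb}D_L\psi$ differ by $i F_{L\Lb}\psi = 2i\rho\psi$, a commutator term which is itself controlled (pointwise on the sphere, via the estimates of Proposition \ref{prop:supF}) by the already-established curvature decay, so replacing one by the other costs only an admissible error; one should note this once and then proceed. As in the earlier propositions, the exterior region $\bar{\mathcal D}_\tau=\mathcal D_\tau^{-\infty}$ is handled by the identical argument with the starting surface taken to be $\{t=0\}$, using $r\ge\frac13 u_+$ there and the exterior decay estimates \eqref{eq:pWEdecay:sca:ex}, \eqref{eq:ILEdecay:sca:ex}.
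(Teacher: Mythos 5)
Your proposal follows essentially the same route as the paper: integrate $\Lb\bigl(r^p\int_\om|D_L\psi|^2 d\om\bigr)$ along incoming null hypersurfaces, Cauchy–Schwarz the main term to produce $\ep_1 I^{p_1}_{p_2}[r^{-1}D_{\Lb}D_L\psi]$ plus an $\ep_1^{-1}$-weighted $|D_L\psi|^2$ term, and control the latter together with the boundary and weight-derivative terms by the already-proven $r$-weighted energy decay (interpolating against the integrated local energy decay for small $p$). The only cosmetic differences are (a) the paper takes the starting surface to be $H_{\tau_1^*}$ in the interior region rather than the far end in $u$ as you suggest — both choices are fine since both boundary traces are controlled by the decay estimates — and (b) your worry about the commutator $[D_L,D_{\Lb}]\psi=2i\rho\psi$ is unnecessary here: differentiating $D_L\psi$ along $\Lb$ produces $D_{\Lb}D_L\psi$ directly, which is exactly the quantity in $I^{p_1}_{p_2}$, so no swap of derivatives is ever needed.
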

\begin{proof}
Similar to the proof of the previous proposition, we choose the starting surface for $D_{L}(r\phi)$ to be $H_{\tau_1^*}$ in the interior region and the initial hypersurface $t=0$ in the exterior region.
We only prove the proposition for the exterior region case. On $\Hb_{v}^{\tau^*}$, $v\geq -\tau^*$, we can show that
\begin{align*}
r^p\int_{\om}|D_{L}\psi|^2d\om\les \int_{\om}(r^p|D_L\psi|^2)(-v, v, \om)d\om+\int_{\Hb_v^{\tau^*}}r^{p-1}|D_L\psi|^2+r^p|D_L\psi||D_{\Lb}D_L\psi|dud\om.
\end{align*}
The integral of the first term can bounded by the assumption on the data. We control the second term by using the $r$-weighted energy estimate. We bound the last term as follows:
\begin{align*}
r^p|D_L\psi||D_{\Lb}D_L\psi|\les \ep_1 r^{p_1} u_+^{p_2}|D_{\Lb}D_L\psi|^2+\ep_1^{-1}r^{2p-p_1}u_+^{-p_2}|D_L\psi|^2,\quad \forall \ep_1>0.
\end{align*}
When $2p\geq p_1$, we can use the $r$-weighted energy estimate \eqref{eq:pWEdecay:sca:ex} to bound the weighted integral of $|D_L\psi|$. Otherwise one can use interpolation and the integrated local energy decay
estimate \eqref{eq:ILEdecay:sca:ex}. For any case, from the energy decay estimates \eqref{eq:pWEdecay:sca:ex}, \eqref{eq:ILEdecay:sca:ex}, \eqref{eq:Enerdecay:sca:in}, \eqref{eq:pWEdecay:sca:in:1ga} for $\phi$, one can always show that
\begin{align*}
\iint_{\mathcal{D}_{\tau}}r^{2p-p_1}u_+^{-p_2}|D_L\psi|^2 dudvd\om\les_{M_2}\mathcal{E}_0[\phi]\tau_+^{p-1-\ga_0}.
\end{align*}
Another way to understand the above estimate is to use interpolation. It suffices to show the above estimate with $p=0$ and $p=1+\ga_0$. The former case follows by using the integrated local energy estimates for $\phi$ while the later situation relies on the $r$-weighted energy estimate.
Estimate \eqref{eq:supDlphi} for the exterior region case then follows. The interior region case holds in a similar way.
\end{proof}

As we only commute the equation with $\pa_t$ or the angular momentum $\Om$, to estimate the weighted spacetime integral of $D_LD_{\Lb}(r\phi)$ in terms of $D_Z\phi$, we use the equation of $\phi$ under the null frame.
\begin{lem}
\label{lem:EQ4sca:null}
Under the null frame, we can write the covariant wave operator $\Box_A$ as follows:
\begin{equation}
\label{eq:EQ4sca:null}
r\Box_A \phi=r D^\mu D_\mu\phi=-D_L D_{\Lb}(r\phi)+\D^2(r\phi)-i\rho \cdot r\phi=-D_{\Lb} D_{L}(r\phi)+\D^2(r\phi)+i\rho \cdot r\phi
\end{equation}
for any complex scalar field $\phi$. Here $\D^2=\D^{e_1}\D_{e_1}+\D^{e_2}\D_{e_2}$ and $\rho=\f12 (dA)_{\Lb L}$.
\end{lem}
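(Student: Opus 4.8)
The plan is to start from the definition $\Box_A\phi = D^\mu D_\mu\phi = m^{\mu\nu}D_\mu D_\nu\phi$ and expand the wave operator in the null frame $\{L, \Lb, e_1, e_2\}$. Since the inverse flat metric in this frame satisfies $m^{LL}=m^{\Lb\Lb}=0$, $m^{L\Lb}=m^{\Lb L}=-\tfrac12$, and $m^{e_ie_j}=\delta_{ij}$, one has
\[
\Box_A\phi = -\tfrac12 D_L D_{\Lb}\phi - \tfrac12 D_{\Lb}D_L\phi + \sum_i D_{e_i}D_{e_i}\phi - (\text{connection correction terms}),
\]
where the correction terms come from $D_\mu D_\nu\phi = D_{\mu}(D_\nu\phi)$ being the \emph{second covariant derivative}, i.e. one must subtract $D_{\nabla_\mu e_\nu}\phi$ to get the tensorial Hessian before tracing. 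Using the covariant derivatives of the null frame recorded in \eqref{eq:nullderiv}, the only nonzero contributions are $\nabla_{e_i}L = r^{-1}e_i$, $\nabla_{e_i}\Lb = -r^{-1}e_i$, and $\nabla_{e_i}e_i = -r^{-1}\pa_r = -r^{-1}\cdot\tfrac12(L-\Lb)$. Carrying these through, the trace of the Hessian becomes $-\tfrac12(D_LD_{\Lb}+D_{\Lb}D_L)\phi + \sum_i D_{e_i}D_{e_i}\phi + r^{-1}(D_L - D_{\Lb})\phi$ up to the angular Laplacian bookkeeping; here $\sum_i D_{e_i}D_{e_i}\phi + (\text{the }\nabla_{e_i}e_i\text{ term})$ reorganizes into the covariant spherical Laplacian $\D^2\phi$ acting on $\phi$, but since we want it on $r\phi$ we will shift the radial weight at the end.

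The second step is to convert everything to the weighted field $\psi = r\phi$. The key algebraic identities are $D_L(r\phi) = rD_L\phi + \phi$ (since $L(r) = \pa_v r = 1$) and $D_{\Lb}(r\phi) = rD_{\Lb}\phi - \phi$ (since $\Lb(r) = \pa_u r = -1$), from which $D_LD_{\Lb}(r\phi) = rD_LD_{\Lb}\phi + D_{\Lb}\phi\cdot 1 - D_L\phi\cdot 1 + \phi\cdot L(-1)$, i.e. $D_LD_{\Lb}(r\phi) = rD_LD_{\Lb}\phi + D_{\Lb}(r\phi)/r\cdot(\ldots)$ — more precisely one gets $rD_LD_{\Lb}\phi = D_LD_{\Lb}\psi - D_{\Lb}\phi + D_L\phi$ after collecting, and similarly $rD_{\Lb}D_L\phi = D_{\Lb}D_L\psi + D_{\Lb}\phi - D_L\phi$. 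Also $r\,\D^2\phi = \tfrac1r\D^2\psi$ is wrong by a weight; the correct statement on the round sphere of radius $r$ is $r\D^2\phi = \D^2(r\phi)/r\cdot r = \D^2\psi$ since $\D^2$ here denotes $\D^{e_i}\D_{e_i}$ with $e_i$ the \emph{unit} frame and $r$ is constant on each such sphere, so $\D^2$ commutes with multiplication by $r$; thus $r\,\D^2\phi = \D^2\psi$. Plugging these into $r\Box_A\phi$, the terms $\pm r^{-1}(D_L-D_{\Lb})\phi$ generated by the frame connection cancel against the $\pm(D_L\phi - D_{\Lb}\phi)$ pieces from the weight conversion, leaving only the curvature term.

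The final step is to produce the curvature term. The commutator $D_LD_{\Lb} - D_{\Lb}D_L$ acting on a scalar equals $-\sqrt{-1}\,F_{L\Lb}$ by the definition $F_{\mu\nu} = -\sqrt{-1}[D_\mu,D_\nu]$, and $F_{L\Lb} = -F_{\Lb L} = -2\rho$ by \eqref{eq:curNull}. Hence $-D_LD_{\Lb}\psi = -D_{\Lb}D_L\psi - (D_LD_{\Lb}-D_{\Lb}D_L)\psi = -D_{\Lb}D_L\psi + \sqrt{-1}F_{L\Lb}\psi$, which reconciles the two right-hand sides in \eqref{eq:EQ4sca:null} and fixes the sign of the $i\rho\,r\phi$ term: writing $\tfrac12(D_LD_{\Lb}+D_{\Lb}D_L) = D_LD_{\Lb} - \tfrac12(D_LD_{\Lb}-D_{\Lb}D_L) = D_LD_{\Lb} + \tfrac{\sqrt{-1}}2 F_{L\Lb} = D_LD_{\Lb} - \sqrt{-1}\rho$, so $r\Box_A\phi = -D_LD_{\Lb}\psi + \sqrt{-1}\rho\psi + \D^2\psi - 2\sqrt{-1}\rho\psi$... — the bookkeeping of exactly where the factor $\rho$ versus $2\rho$ lands is the one point that needs care, and is the main (minor) obstacle: one must be consistent about whether the residual $r^{-1}(D_L-D_{\Lb})\phi$ terms fully cancel or leave a leftover that combines with the commutator, which determines whether the coefficient is $i\rho$ or $-i\rho$ in each of the two displayed forms. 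I expect all other steps to be routine frame computations. Everything needed — \eqref{eq:nullderiv}, \eqref{eq:curNull}, the definition of $F$ via the commutator of $D$, and the identity $D_Z(r\phi) = rD_Z\phi + Z(r)\phi$ — is already available in the excerpt.
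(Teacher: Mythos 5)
Your overall plan — expand $\Box_A$ in the null frame with the correction from $\nabla_{e_i}e_i=-r^{-1}\pa_r$, convert to $\psi=r\phi$ using $L(r)=1$, $\Lb(r)=-1$, and trade the commutator $[D_L,D_{\Lb}]=\sqrt{-1}\,F_{L\Lb}$ for the $\rho$-term — is the correct and essentially unique route; the paper itself gives no more than ``direct computation.'' But two sign errors in the bookkeeping, which you flag at the end as unresolved, do need fixing; as written your computation would not close.

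First, $rD_{\Lb}D_L\phi = D_{\Lb}D_L\psi + D_{\Lb}\phi - D_L\phi$ is wrong. Computing directly, $D_{\Lb}D_L\psi = D_{\Lb}(\phi+rD_L\phi)=D_{\Lb}\phi+\Lb(r)D_L\phi+rD_{\Lb}D_L\phi=D_{\Lb}\phi - D_L\phi + rD_{\Lb}D_L\phi$, so $rD_{\Lb}D_L\phi=D_{\Lb}D_L\psi + D_L\phi - D_{\Lb}\phi$. Thus \emph{both} $rD_LD_{\Lb}\phi$ and $rD_{\Lb}D_L\phi$ carry the \emph{same} correction $D_L\phi - D_{\Lb}\phi$. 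After multiplying by $-\f12$ and summing, the total weight-conversion correction is $-(D_L\phi-D_{\Lb}\phi)$, which cancels exactly against the frame term $+(D_L-D_{\Lb})\phi$ coming from $\pa_r=\f12(L-\Lb)$. With your signs the two weight-conversion corrections cancel each other instead, stranding the frame term and producing a spurious first-order piece. Second, the symmetrization should be $\f12(D_LD_{\Lb}+D_{\Lb}D_L)=D_LD_{\Lb}-\f12[D_L,D_{\Lb}]=D_LD_{\Lb}-\frac{\sqrt{-1}}2F_{L\Lb}=D_LD_{\Lb}+\sqrt{-1}\,\rho$ (you wrote $+\frac{\sqrt{-1}}2F_{L\Lb}$), using $F_{L\Lb}=-F_{\Lb L}=-2\rho$. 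Putting it together gives
\[
r\Box_A\phi=-\f12(D_LD_{\Lb}+D_{\Lb}D_L)\psi+\D^2\psi=-D_LD_{\Lb}\psi-\sqrt{-1}\,\rho\,\psi+\D^2\psi,
\]
which is the first form of \eqref{eq:EQ4sca:null}; symmetrizing the other way, $\f12(D_LD_{\Lb}+D_{\Lb}D_L)=D_{\Lb}D_L+\f12[D_L,D_{\Lb}]=D_{\Lb}D_L-\sqrt{-1}\,\rho$, gives the second form. So the ``minor obstacle'' you identify is genuine and is exactly the sign error in the $rD_{\Lb}D_L\phi$ identity; once corrected, everything else you wrote goes through.
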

\begin{proof}
The lemma follows by direct computation.
\end{proof}
This lemma leads to the following estimates for $D_L D_{\Lb}(r\phi)$ and $D_{\Lb}D_{L}(r\phi)$.
\begin{prop}
\label{prop:Est4DLDLbphi}
For all $1+\ep\leq p\leq 1+\ga_0$, we have
\begin{align}
\label{eq:Est4DLDLbphi}
&I^{p}_{2+\ga_0-p-2\ep}[r^{-1}(|D_{L}D_{\Lb}\psi|+|D_{L}D_{\Lb}\psi|)](\bar{\mathcal{D}}_{\tau})\les_{M_2}\mathcal{E}_0[\phi]+I^{-1-\ep}_{1+\ga_0-\ep}[D\phi_1](\bar{\mathcal{D}}_{\tau})
+I^{\ga_0}_{0}[\D\psi_1](\bar{\mathcal{D}}_{\tau}).
\end{align}
Here $\phi_1=D_Z\phi$ and $\psi_1=D_Z(r\phi)$.
\end{prop}
\begin{proof}
Let's only consider estimate for $D_LD_{\Lb}(r\phi)$ in the interior region. The proof easily implies the estimates for $D_{\Lb}D_L(r\phi)$. The case in the exterior region is easier as in that region $r\geq\frac{1}{3}u_+$ and
it suffices to show the estimate for $p=1+\ga_0$ which is similar to the proof for the interior region case. Take $\bar{\mathcal{D}}_{\tau}$ to be $\bar{\mathcal{D}}_{\tau_1}^{\tau_2}$ for $0\leq \tau_1=\tau<\tau_2$. From the equation \eqref{eq:EQ4sca:null} for $\phi$ under the null frame, we derive
\[
r^p|D_L D_{\Lb}(r\phi)|^2 \les r^p|\Box_A \phi|^2 r^2+r^p|r\phi \rho|^2+r^p|r^{-1}\D D_{\Om}\psi|^2.
\]
Here we note that $|\D^2\psi|^2\les |r^{-1}\D D_{\Om}\psi|$. The integral of the first term on the right hand side can be bounded by $\mathcal{E}_0[\phi]$. For the second term, we control $\phi$ by using Lemma \ref{lem:Est4phipWE}.
The last term is favorable as it is a form of $\D D_Z\psi$. We will absorb those terms with the help of the small constant $\ep_1$ from Propositions \ref{prop:supDLbphi}, \ref{prop:supDLphi}. According to our notation in this section, let $\psi_1=D_{\Om}\psi$. For all $1+\ep\leq p\leq 1+\ga_0$, we have
\[
\tau_+^{2+\ga_0-p-2\ep}r^{p-2}\les r^{\ga_0}+\tau_+^{1+\ga_0-\ep}r^{-1-\ep}, \quad r\geq R.
\]
Since the energy flux for $\phi$ decays from Proposition \ref{prop:Enerdecay:sca:in}, by using Lemma \ref{lem:Est4phipWE}, we conclude that
\[
\int_{\om}r^p|\phi|^2d\om\les_{M_2}\mathcal{E}_0[\phi](\tau_1)_+^{p-2-\ga_0}.
\]
Therefore for all $1+\ep\leq p\leq 1+\ga_0$ we can show that
\begin{align*}
&\iint_{\bar{\mathcal{D}}_{\tau_1}^{\tau_2}}\tau_+^{2+\ga_0-p-2\ep}r^{p}|D_L D_{\Lb}(r\phi)|^2 dvdud\om\\
&\les I^{p}_{2+\ga_0-p- 2\ep}[\Box_A\phi](\bar{\mathcal{D}}_{\tau_1}^{\tau_2})+I^{-1-\ep}_{1+\ga_0-\ep}[D\phi_1](\bar{\mathcal{D}}_{\tau_1}^{\tau_2})
+I^{\ga_0}_{0}[\D\psi_1](\bar{\mathcal{D}}_{\tau_1}^{\tau_2})\\
&\qquad+\int_{\tau_1}^{\tau_2}\tau_+^{2+\ga_0-p-2\ep}\int_{\frac{\tau+R}{2}}^{\infty} \int_{\om}r^{p}|\phi|^2d\om\cdot \sum\limits_{j\leq 2}\int_{\om}r^2|\mathcal{L}_{\Om}^2 \bar \rho|^2d\om dvdu\\
&\les_{M_2}\mathcal{E}_0[\phi]+I^{-1-\ep}_{1+\ga_0-\ep}[D\phi_1](\bar{\mathcal{D}}_{\tau_1}^{\tau_2})
+I^{\ga_0}_{0}[\D\psi_1](\bar{\mathcal{D}}_{\tau_1}^{\tau_2})
\end{align*}
This finished the proof.
\end{proof}

Next we estimate the weighted spacetime norm of $|\a||D_{\Lb}(r\phi)|$.
\begin{prop}
\label{prop:com1:aDLbphi}
For all $1+\ep\leq p\leq 1+\ga_0$, $\ep_1>0$, we have
\begin{align}
\label{eq:com1:aDLbphi}
\iint_{\bar{\mathcal{D}}_{\tau}}u_+^{2+\ga_0+\ep-p}r^p|\a|^2 |D_{\Lb}(r\phi)|^2 dxdt &\les_{M_2}\mathcal{E}_0[\phi] \ep_1^{-1} \tau_+^{-\ga_0+\ep}+\ep_1 I^{1+\ep}_{1+\ep}[r^{-1}D_LD_{\Lb}\psi](\bar{\mathcal{D}}_{\tau} ).
\end{align}
\end{prop}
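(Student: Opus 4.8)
The plan is to estimate the weighted spacetime integral of $|\a|^2|D_{\Lb}(r\phi)|^2$ by peeling off the angular component $\a$ of the Maxwell field in $L^\infty$ on the spheres, leaving an $L^2$-in-$(u,v)$ norm of $D_{\Lb}(r\phi)$ that is precisely the object controlled by Proposition \ref{prop:supDLbphi}. Concretely, I would first bound, using Sobolev embedding on the unit sphere (so that up to two angular momentum derivatives $\mathcal{L}_\Om^j$ appear, $j\le 2$),
\[
\sup_{\om}|r\a|^2(\tau,v,\om)\les \sum_{j\le 2}\int_\om |r\mathcal{L}_\Om^j\a|^2 d\om,
\]
and then write
\[
\iint_{\bar{\mathcal D}_\tau}u_+^{2+\ga_0+\ep-p}r^p|\a|^2|D_{\Lb}(r\phi)|^2 dxdt
\les \int_u u_+^{2+\ga_0+\ep-p}\Big(\sup_v r^{p-2}\!\int_\om |r\a|^2 d\om\Big)\Big(\sup_v\!\int_\om|D_{\Lb}(r\phi)|^2 d\om\Big) du,
\]
after integrating in $v$ and $\om$ appropriately. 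The key point is that the null structure from Lemma \ref{lem:Est4commu:1} has already isolated the harmless pairing $\a$ with $D_{\Lb}(r\phi)$; here the work is purely to distribute the weights.

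Next I would insert the pointwise bound for $\a$ from Proposition \ref{prop:supF}, namely $r^p|r\mathcal{L}_\Om^j\a|^2(\tau,v,\om)\les M_2\tau_+^{p-1-\ga_0}$ for $0\le p\le 1+\ga_0$, which gives for the $\a$-factor a bound of order $r^{p-2}\cdot$(angular energy) controlled in $u$. More precisely, after using that the $r$-weighted/integrated local energy estimates for $F$ yield $\int_v r^{p-2}\int_\om|r\mathcal{L}_\Om^j\a|^2 d\om\, dv$-type quantities with the right $u_+$-decay, I would be left with
\[
\les_{M_2}\int_u u_+^{2+\ga_0+\ep-p}\cdot u_+^{p-1-\ga_0}\cdot\Big(\sup_v\int_\om|D_{\Lb}(r\phi)|^2 d\om\Big) du
= \int_u u_+^{1+\ep}\Big(\sup_v\int_\om|D_{\Lb}(r\phi)|^2 d\om\Big) du,
\]
which is not yet integrable; the resolution is that the decay in $u$ must instead be extracted jointly — one uses $r\ge\frac12 u_+$ in the exterior region and, in the interior, balances the $r$-weight of $\a$ against $u_+$ so that the surviving power of $u_+$ matches the $L_u^2$ structure of $\|D_{\Lb}(r\phi)\|_{L_u^2L_v^\infty L_\om^2}$. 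In other words, I would choose the split of the weight $u_+^{2+\ga_0+\ep-p}r^p$ so that exactly $\|D_{\Lb}(r\phi)\|^2_{L_u^2 L_v^\infty L_\om^2(\bar{\mathcal D}_\tau)}$ (times an integrable-in-$u$ weight with total $\tau_+^{-\ga_0+\ep}$) appears, and then invoke Proposition \ref{prop:supDLbphi} directly.

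Finally, I would substitute the conclusion of Proposition \ref{prop:supDLbphi},
\[
\|D_{\Lb}(r\phi)\|_{L_u^2L_v^\infty L_\om^2(\bar{\mathcal D}_\tau)}^2\les_{M_2}\mathcal{E}_0[\phi]\ep_1^{-1}\tau_+^{-1-\ga_0}+\ep_1 I^{1+\ep}_0[r^{-1}D_LD_{\Lb}\psi](\bar{\mathcal D}_\tau),
\]
noting that multiplying through by the accumulated $\tau_+$-power (coming from the $u$-integral of the $\a$-weight, of size $\tau_+^{1+\ep}$ relative to $\tau_+^{-1-\ga_0}$, i.e. net $\tau_+^{-\ga_0+\ep}$) reproduces exactly the two terms on the right-hand side of \eqref{eq:com1:aDLbphi}: the main term $\mathcal{E}_0[\phi]\ep_1^{-1}\tau_+^{-\ga_0+\ep}$ and the absorbable term $\ep_1 I^{1+\ep}_{1+\ep}[r^{-1}D_LD_{\Lb}\psi](\bar{\mathcal D}_\tau)$, since $I^{1+\ep}_0$ multiplied by the $u_+^{1+\ep}$ weight becomes $I^{1+\ep}_{1+\ep}$. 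The exterior region case ($\tau<0$) is handled identically, using $r\ge\frac13 u_+$ there and integrating the transport-type estimates from the initial hypersurface $t=0$. The main obstacle I anticipate is bookkeeping the powers: one must verify that the chosen Hölder split in $(u,v,\om)$ is simultaneously compatible with (i) the range $0\le p\le 1+\ga_0$ in which Proposition \ref{prop:supF} gives pointwise control of $\a$, (ii) the $L_u^2 L_v^\infty$ structure of Proposition \ref{prop:supDLbphi}, and (iii) the requirement $1+\ep\le p\le 1+\ga_0$ in the statement, so that the leftover $u_+$-exponent is exactly $1+\ep$ and no decay is wasted; this requires $\ga_0<1$ but no smallness of the Maxwell field, which is the whole point.
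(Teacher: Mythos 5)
Your opening moves are the right ones and match the paper: after Sobolev embedding on the unit sphere (two angular commutators), you use a H\"older split of the form
\[
\iint_{\bar{\mathcal D}_\tau} r^p|\a|^2|D_{\Lb}\psi|^2\,dxdt
\les \bigl\|r^{\frac{p}{2}+1}\a\bigr\|_{L_u^\infty L_v^2 L_\om^\infty(\bar{\mathcal D}_\tau)}^2
\cdot \|D_{\Lb}\psi\|_{L_u^2 L_v^\infty L_\om^2(\bar{\mathcal D}_\tau)}^2,
\]
with the $r$-weighted flux decay of $\a$ on outgoing cones controlling the first factor (note that you want the $L_v^2$ flux bound, not the pointwise bound from Proposition~\ref{prop:supF}: the pointwise bound leaves you with $\int_v |D_{\Lb}\psi|^2\,dvd\om$, which is the bad flux on $H_u$ and is \emph{not} controlled — and you already noticed you run into this dead end). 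Proposition~\ref{prop:supDLbphi} then handles the second factor. So far this is exactly the proof in the paper.

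The genuine gap is in how you then introduce the weight $u_+^{2+\ga_0+\ep-p}$. You try to fold it into the H\"older split, and in the end assert that ``$I^{1+\ep}_0$ multiplied by the $u_+^{1+\ep}$ weight becomes $I^{1+\ep}_{1+\ep}$.'' This is a category error: $\tau_+^{1+\ep}\,I^{1+\ep}_0[r^{-1}D_LD_{\Lb}\psi](\bar{\mathcal D}_\tau)$ is a number times a number, whereas $I^{1+\ep}_{1+\ep}$ carries the $u_+^{1+\ep}$ weight \emph{inside} the spacetime integral. A direct H\"older split cannot place the extra $u_+$-power: the $\a$-factor only tolerates the residual $(1+\ga_0-p)$ exponent, and Proposition~\ref{prop:supDLbphi} gives no $u_+$-weighted version of $\|D_{\Lb}\psi\|_{L_u^2 L_v^\infty L_\om^2}$. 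Your ``balance the $r$-weight against $u_+$'' step is where the argument is actually incomplete.

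What is missing is the elementary but crucial identity of Lemma~\ref{lem:simplint}. The paper first proves the \emph{unweighted} estimate over $\bar{\mathcal D}_\tau$ for every $\tau$,
\[
\iint_{\bar{\mathcal D}_\tau} r^p|\a|^2|D_{\Lb}\psi|^2\,dxdt
\les_{M_2}\mathcal{E}_0[\phi]\,\ep_1^{-1}\tau_+^{p-2-2\ga_0}
+\ep_1\,\tau_+^{p-1-\ga_0}\,I^{1+\ep}_0[r^{-1}D_LD_{\Lb}\psi](\bar{\mathcal D}_\tau),
\]
and only then applies Lemma~\ref{lem:simplint} (equivalently Fubini) with $\b=2+\ga_0+\ep-p$ to \emph{generate} the $u_+^{2+\ga_0+\ep-p}$ weight from the $\tau$-decay of the right-hand side. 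The first term integrates up to $\mathcal{E}_0[\phi]\ep_1^{-1}\tau_+^{-\ga_0+\ep}$ since $2+\ga_0+\ep-p-1+p-2-2\ga_0=-1-\ga_0+\ep<-1$. The second term gives $\int \tau^{\ep}\,I^{1+\ep}_0(\bar{\mathcal D}_\tau)\,d\tau$, which by Fubini is exactly $\frac{1}{1+\ep}\int u_+^{1+\ep}\,r^{1+\ep}|r^{-1}D_LD_{\Lb}\psi|^2\,dxdt=I^{1+\ep}_{1+\ep}$. That is the step your proposal glosses over. With Lemma~\ref{lem:simplint} inserted at the end (and noting the unweighted estimate must indeed hold for every $\tau\in\mathbb{R}$ to invoke it), your argument closes.

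Minor: your intermediate display contains two $\sup_v$ factors with no $\int_v$, which is not a well-formed H\"older split; you flag it as schematic, but the actual $L_v^2$/$L_v^\infty$ split on $\a$ versus $D_{\Lb}\psi$ should be written down, since it is where the whole point of the null structure — pairing the good component $\a$ against the bad derivative $D_{\Lb}\psi$ — lives.
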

\begin{proof}
Make use of Proposition \ref{prop:supDLbphi}. For all $1+\ep\leq p\leq 1+\ga_0$, we can show that
\begin{align*}
\iint_{\bar{\mathcal{D}}_{\tau}} r^p|\a|^2 |D_{\Lb}(r\phi)|^2 dxdt &\les \|D_{\Lb}\psi\|_{L_u^2 L_v^\infty L_{\om}^2(\bar{\mathcal{D}}_{\tau})}^2\cdot
\|r^{\frac{p}{2}+1}\a\|_{L_u^{\infty}L_v^2 L_{\om}^{\infty}(\bar{\mathcal{D}}_{\tau})}^2\\
&\les \|D_{\Lb}\psi\|_{L_u^2 L_v^\infty L_{\om}^2(\bar{\mathcal{D}}_{\tau})}^2\cdot
\sum\limits_{j\leq 2}\|r^{\frac{p}{2}+1}\mathcal{L}_{\Om}^j\a\|_{L_u^{\infty}L_v^2 L_{\om}^{2}(\bar{\mathcal{D}}_{\tau})}^2\\
&\les_{M_2}\mathcal{E}_0[\phi] \ep_1^{-1} \tau_+^{p-2-2\ga_0}+\ep_1 \tau_+^{p-1-\ga_0}I^{1+\ep}_0[r^{-1}D_LD_{\Lb}\psi](\bar{\mathcal{D}}_{\tau} )
\end{align*}
for all $\ep_1>0$. As the above estimate holds for all $\tau\in\mathbb{R}$, from Lemma \ref{lem:simplint}, we conclude that
\begin{align*}
\iint_{\bar{\mathcal{D}}_{\tau}}  u_+^{2+\ga_0+\ep-p}r^p|\a|^2 |D_{\Lb}(r\phi)|^2 dxdt
&\les_{M_2}\mathcal{E}_0[\phi] \ep_1^{-1} \tau_+^{-\ga_0+\ep}+\ep_1 I^{1+\ep}_{1+\ep}[r^{-1}D_LD_{\Lb}\psi](\bar{\mathcal{D}}_{\tau} ).
\end{align*}
This finished the proof for estimate \eqref{eq:com1:aDLbphi}.
\end{proof}
Next we estimate the weighted spacetime integral of $(|\ab|+r^{-1}|\rho|)|D_L(r\phi)|$. One possible way to bound this term, in particular $\ab$, is to make use of the energy flux through the incoming null hypersurface.
It turns out that we loss a little bit of decay in $u$ and we are not able to close the bootstrap argument later. An alternative way is to use $\sup\limits_{v}\int_{\om}|\ab|^2d\om$ which has to exploit the equation for $F$.
For $\tau\in\mathbb{R}$, let
\begin{equation}
\label{eq:defofgu}
g(\tau)=\sum\limits_{k\leq 1}\|\mathcal{L}_{\Om}^k(r\ab)\|_{L_v^\infty L_{\om}^2(H_{\tau^*})}^2+\sum\limits_{k\leq 2}\int_{\Si_{\tau}}\frac{|\mathcal{L}_{Z}^k \bar{F}|^2}{r_+^{1+\ep}}r^{2}d\tilde{v}d\om.
\end{equation}
Here $(\tilde{v}, \om)$ are coordinates of $\Si_{\tau}$. This notation should not be confused with the one in Proposition \ref{prop:Est4FJ:sca:in:0} in the previous section and this function will only be used in this section.
We can not show that $g(\tau)$ decays in $\tau$. However as a corollary we can show that
\begin{cor}
\label{cor:L2gu}
The function $g(\tau)$ is integrable in $\tau$:
\begin{equation}
\label{eq:L2gu}
\int_{\tau_1}^{\tau_2}\tau_+^{1+\ep}g(\tau)d\tau\les M_2\tau_+^{-\ga_0+3\ep},\quad \int_{\tilde{\tau}\leq \tau}\tilde{\tau}_+^{1+\ep}g(\tilde{\tau})d\tilde{\tau}\les M_2\tau_+^{-\ga_0+3\ep}
\end{equation}
for all $0\leq \tau_1\leq \tau_2$, $\tau\leq 0$.
\end{cor}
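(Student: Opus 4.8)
The plan is to prove Corollary \ref{cor:L2gu} by estimating separately the two groups of terms appearing in the definition \eqref{eq:defofgu} of $g(\tau)$, and then integrating in $\tau$ using the energy decay already established for the Maxwell field. For the second group, $\sum_{k\leq 2}\int_{\Si_\tau}r_+^{-1-\ep}|\mathcal{L}_Z^k\bar F|^2 r^2 d\tilde v d\om$, I would observe that this is precisely a piece of the weighted quantity $I^{-1-\ep}_0[\mathcal{L}_Z^k\bar F]$ restricted to the slice $\Si_\tau$. Since we do not have a pointwise-in-$\tau$ decay statement for this flux-type quantity (only the spacetime integral decays), the idea is to pass from the spacetime integral to the time integral via Lemma \ref{lem:simplint}. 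Concretely, from Proposition \ref{prop:Endecay:cur} and Corollary \ref{cor:Endecay:cur:hide} together with the integrated local energy estimate \eqref{eq:Endecay:cur:in} and its higher-order analogue, we have $I^{-1-\ep}_0[\mathcal{L}_Z^k\bar F](\mathcal{D}_{\tau_1}^{\tau_2})\les M_2(\tau_1)_+^{-1-\ga_0}$. Writing $\tau_+^{1+\ep}$ against this with Lemma \ref{lem:simplint},
\[
\int_{\tau_1}^{\tau_2}\tau_+^{1+\ep}\Big(\sum_{k\leq 2}\int_{\Si_\tau}\frac{|\mathcal{L}_Z^k\bar F|^2}{r_+^{1+\ep}}r^2d\tilde v d\om\Big)d\tau
\les \int_{\tau_1}^{\tau_2}\tau_+^{\ep}I^{-1-\ep}_0[\mathcal{L}_Z^2\bar F](\mathcal{D}_{\tau}^{\tau_2})d\tau+(\tau_1)_+^{1+\ep}I^{-1-\ep}_0[\mathcal{L}_Z^2\bar F](\mathcal{D}_{\tau_1}^{\tau_2}),
\]
and both terms on the right are bounded by $M_2(\tau_1)_+^{-\ga_0+\ep}$ (the first after integrating $\tau_+^{\ep-1-\ga_0}$). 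This is the same computation already carried out for $g(\tau)$ in the proof of Corollary \ref{cor:ILE:sca:in:sim0}.

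For the first group, $\sum_{k\leq 1}\|\mathcal{L}_\Om^k(r\ab)\|_{L_v^\infty L_\om^2(H_{\tau^*})}^2$, I would invoke the integral estimate \eqref{eq:supab:I} of Proposition \ref{prop:supF}, which gives $\|r\mathcal{L}_Z^k\ab\|^2_{L_u^2L_v^\infty L_\om^2(\bar{\mathcal{D}}_{\tau_1})}\les M_2(\tau_1)_+^{-1-\ga_0+2\ep}$ for $k\leq 1$. Note that the $L_u^2 L_v^\infty L_\om^2$ norm over $\bar{\mathcal{D}}_{\tau_1}$ is, after changing the $u$-integration to an integration in $\tau$ (recall $u=\tau^*=\tfrac{\tau-R}{2}$ along the foliation, so $du\sim d\tau$), exactly $\int_{\tau_1}^{\tau_2}\|\mathcal{L}_\Om^k(r\ab)\|^2_{L_v^\infty L_\om^2(H_{\tau^*})}d\tau$ up to constants. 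Hence $\int_{\tau_1}^{\tau_2}\|\mathcal{L}_\Om^k(r\ab)\|^2_{L_v^\infty L_\om^2(H_{\tau^*})}d\tau\les M_2(\tau_1)_+^{-1-\ga_0+2\ep}$ directly, but this already loses nothing; to get the weighted version with $\tau_+^{1+\ep}$ I again apply Lemma \ref{lem:simplint}, writing the weighted integral as $\int_{\tau_1}^{\tau_2}\tau_+^{\ep}\big(\int_\tau^{\tau_2}\|\cdot\|^2 d s\big)d\tau+(\tau_1)_+^{1+\ep}\int_{\tau_1}^{\tau_2}\|\cdot\|^2 ds$. Using the bound with base point $\tau$ (valid since \eqref{eq:supab:I} holds for every $\tau_1$), the inner integrals are $\les M_2\tau_+^{-1-\ga_0+2\ep}$ and $\les M_2(\tau_1)_+^{-1-\ga_0+2\ep}$ respectively, so the first term is $\les M_2\int_{\tau_1}^{\tau_2}\tau_+^{\ep-1-\ga_0+2\ep}d\tau\les M_2(\tau_1)_+^{-\ga_0+3\ep}$ and the second is $\les M_2(\tau_1)_+^{-\ga_0+3\ep}$ as well.

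Combining the two groups yields $\int_{\tau_1}^{\tau_2}\tau_+^{1+\ep}g(\tau)d\tau\les M_2(\tau_1)_+^{-\ga_0+3\ep}$, which is the first inequality of \eqref{eq:L2gu} (with $\tau_1$ renamed $\tau$). The second inequality, in the exterior region $\tau\leq 0$, follows identically: there the relevant inputs are the exterior-region estimates \eqref{eq:Endecay:cur:ex}, \eqref{eq:ILE:cur:ex} for $\bar F$ and its derivatives and the exterior case of \eqref{eq:supab:I}, all of which are already established, and the finite-speed-of-propagation remark after Corollary \ref{cor:Endecay:cur:hide} guarantees the relevant quantities are controlled by $M_2$; one integrates $\tilde\tau_+^{1+\ep}$ against these using the same application of Lemma \ref{lem:simplint}, noting $\tilde\tau\leq 0$ so $\tilde\tau_+=2+|\tilde\tau|$ plays the role of $\tau_+$. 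I expect the only genuinely delicate point to be bookkeeping the change of variables between the $L_u^2$ norm on $\bar{\mathcal{D}}_{\tau_1}$ and the $d\tau$-integral along the foliation $\Si_\tau$ (the Jacobian is a harmless constant, but one must be careful that $\|\mathcal{L}_\Om^k(r\ab)\|_{L_v^\infty L_\om^2(H_{\tau^*})}^2$ is the integrand, not its square root), and tracking that the accumulated $\ep$-losses are at most $3\ep$ as claimed; there is no analytic obstacle, since every needed decay estimate has already been proved.
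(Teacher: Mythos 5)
Your proposal follows the paper's proof exactly: the paper's (one-line) proof cites Lemma \ref{lem:simplint}, estimate \eqref{eq:supab:I}, and the integrated local energy estimates \eqref{eq:ILE:cur:ex}, \eqref{eq:ILE:cur:in}, and your argument spells out precisely how these combine. Your accounting of the $\ep$-losses (the $2\ep$ from \eqref{eq:supab:I} being the dominant loss, accumulating to $3\ep$ after the Lemma \ref{lem:simplint} integration by parts), your identification of $u$- vs $\tau$-integration on the foliation, and your observation that the slice quantity in the second group of $g(\tau)$ integrates in $\tau$ to $I^{-1-\ep}_0[\mathcal{L}_Z^k\bar F]$ are all correct and match what the paper leaves implicit.
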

\begin{proof}
By using Lemma \ref{lem:simplint}, the corollary follows from estimate \eqref{eq:supab:I} and the integrated local energy estimates \eqref{eq:ILE:cur:ex}, \eqref{eq:ILE:cur:in} for the curvature.
\end{proof}
We now can estimate the weighted spacetime integral of $|\ab||D_{L}\psi|$.
\begin{prop}
\label{prop:com1:abDLphi}
For all $1+\ep\leq p\leq 1+\ga_0$, $\ep_1>0$, we have
\begin{align}
\notag
&\iint_{\bar{\mathcal{D}}_{\tau}}u_+^{2+\ga_0+\ep-p}|\ab|^2 |D_{L}(r\phi)|^2 r^{p} dxdt \\
\label{eq:com1:abDLphi}
&\les_{M_2}\ep_1 \iint_{\bar{\mathcal{D}}_{\tau}} \tilde{\tau}_+^{2+\ga_0+\ep-p}g(\tilde{\tau}) r^{p}|D_L\psi_1|^2 dvd\om d\tilde{\tau}+\ep_1^{-1}\mathcal{E}_0[\phi]\tau_+^{-\ga_0+3\ep}.
\end{align}
\end{prop}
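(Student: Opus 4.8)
The plan is to bound $\iint_{\bar{\mathcal{D}}_{\tau}}u_+^{2+\ga_0+\ep-p}|\ab|^2|D_L(r\phi)|^2 r^p\,dxdt$ by splitting the integrand into the ``angular'' part $|\ab|^2$ and the derivative part $|D_L\psi|^2$ and using a product estimate in the null coordinates $(u,v,\om)$. The natural Hölder split is to put $|\ab|^2$ in $L_v^1 L_u^\infty L_\om^\infty$ (after absorbing the appropriate $r$-weights) and $|D_L\psi|^2$ in $L_v^\infty L_u^1 L_\om^2$; this is exactly the pairing for which Proposition \ref{prop:supDLphi} provides control of $\|r^{p/2}D_L(r\phi)\|_{L_v^2 L_u^\infty L_\om^2}^2$ — note that since we integrate along the incoming direction we want $D_L\psi$ supremized in $u$, matching the $L_u^\infty$ structure there. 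For $\ab$, first pass from $L_\om^\infty$ to $L_\om^2$ using Sobolev embedding on the sphere, picking up $\sum_{k\le 2}\mathcal{L}_\Om^k\ab$; then recognize that $\sup_v\int_\om |r\mathcal{L}_\Om^k\ab|^2\,d\om$ along $H_{\tau^*}$ is precisely (a piece of) the function $g(\tau)$ defined in \eqref{eq:defofgu}.

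Concretely, I would fix $\tau$ and write, for each fixed outgoing cone $H_{\tilde\tau^*}$ inside $\bar{\mathcal{D}}_\tau$,
\[
\int_{H_{\tilde\tau^*}} r^p|\ab|^2|D_L(r\phi)|^2\,dvd\om
\les \|r\,\ab\|_{L_v^\infty L_\om^2(H_{\tilde\tau^*})}^2\cdot \Big(\text{sup over the cone of }r^{p-2}\Big)\cdot\!\!\int_{H_{\tilde\tau^*}}\!\!\big(|D_L\psi|^2 + \text{lower order}\big)\,dvd\om,
\]
and then feed in Proposition \ref{prop:supDLphi} in the form $\|r^{p/2}D_L(r\phi)\|_{L_v^2 L_u^\infty L_\om^2(\bar{\mathcal{D}}_\tau)}^2\les_{M_2}\ep_1^{-1}\mathcal{E}_0[\phi]\tau_+^{-1-\ga_0}+\ep_1 I^{p_1}_{p_2}[r^{-1}D_{\Lb}D_L\psi](\bar{\mathcal{D}}_\tau)$. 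The $\ep_1 I^{p_1}_{p_2}[r^{-1}D_{\Lb}D_L\psi]$ term is then converted, via Lemma \ref{lem:EQ4sca:null} (the null-frame form of $\Box_A$) and Proposition \ref{prop:Est4DLDLbphi}, into terms controlled by $\mathcal{E}_0[\phi]$ plus weighted spacetime integrals of $D\phi_1$ and $\D\psi_1$; but crucially, in the statement we are allowed to keep exactly a term of the form $\ep_1\iint \tilde\tau_+^{2+\ga_0+\ep-p}g(\tilde\tau)r^p|D_L\psi_1|^2$ — so $D_L\psi_1 = D_L D_Z(r\phi)$ is reintroduced as $D_L D_{\Lb}\psi$ is traded against it through the equation. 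The weighted-in-$\tilde\tau$ bookkeeping is handled by Lemma \ref{lem:simplint}: the pointwise-in-$\tau$ estimate carries a factor $\tau_+^{p-1-\ga_0+\delta}$ (from the energy-flux decay Proposition \ref{prop:Enerdecay:sca:in} fed into Proposition \ref{prop:supDLphi}), and multiplying by the weight $u_+^{2+\ga_0+\ep-p}$ and integrating in $\tilde\tau$ via Lemma \ref{lem:simplint} produces the claimed $\tau_+^{-\ga_0+3\ep}$ (the extra $\ep$-powers coming from the $3\ep$-type losses in Corollary \ref{cor:L2gu} and in the interpolation that bounds $\rho,\si$).

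The main obstacle is the $\ab$ factor: unlike the $\a$-component handled in Proposition \ref{prop:com1:aDLbphi}, the component $\ab$ does not enjoy a good $r$-weighted flux on outgoing cones, so one must use the $L_v^\infty L_\om^2$ bound on $H_{\tau^*}$ rather than an $L^2$ flux, and this is exactly what the auxiliary function $g(\tau)$ in \eqref{eq:defofgu} encodes. The delicate point is that $g(\tau)$ does \emph{not} decay in $\tau$ — it is merely integrable after the weight $\tau_+^{1+\ep}$, per Corollary \ref{cor:L2gu} — so we cannot simply pull it out as a constant; instead it must be kept inside the $\tilde\tau$-integral weighting $|D_L\psi_1|^2$, which is why the conclusion is stated with the term $\ep_1\iint \tilde\tau_+^{2+\ga_0+\ep-p}g(\tilde\tau)r^p|D_L\psi_1|^2$ left on the right-hand side (to be absorbed later via Gronwall against the first-order energy of $\phi_1$). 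Matching the exponent $2+\ga_0+\ep-p$ on both the left-hand weight $u_+^{2+\ga_0+\ep-p}$ and the retained $g(\tilde\tau)$ term, while keeping $p\in[1+\ep,1+\ga_0]$ and tracking that all interpolation/Sobolev losses stay within an overall $O(\ep)$, is the bookkeeping heart of the argument; once the weights are lined up, every individual estimate is one of Propositions \ref{prop:supDLphi}, \ref{prop:Est4DLDLbphi}, Lemma \ref{lem:Est4phipWE}, Lemma \ref{lem:simplint}, and Corollary \ref{cor:L2gu}.
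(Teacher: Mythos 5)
Your general strategy is sound at the top level: factor the integral via a product estimate in $(u,v,\om)$, recognize $\sup_v\int_\om|r\mathcal{L}_\Om^k\ab|^2d\om$ as a piece of $g(\tau)$, exploit Corollary~\ref{cor:L2gu} to integrate $\tau_+^{1+\ep}g(\tau)$, retain the $D_L\psi_1$ term, and distribute $\tau$-weights via Lemma~\ref{lem:simplint}. But there is a concrete error in the core pairing.

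The available control on $\ab$ is estimate \eqref{eq:supab:I}, which is an $L_u^2L_v^\infty L_\om^2$ bound; equivalently, $g(\tilde\tau)$ records a $\sup_v$ on each outgoing cone $H_{\tilde\tau^*}$. For this to H\"older-pair with $D_L\psi$ over the spacetime region, the $D_L\psi$ factor must be placed in $L_u^\infty L_v^2L_\om^2$, which is precisely the $r$-weighted energy flux decay $\sup_{\tilde\tau}\tilde\tau_+^{1+\ga_0-p}\int_{H_{\tilde\tau^*}}r^p|D_L\psi|^2\,dvd\om\les_{M_2}\mathcal{E}_0[\phi]$. You instead reach for Proposition~\ref{prop:supDLphi}, which controls $\|r^{p/2}D_L\psi\|_{L_v^2L_u^\infty L_\om^2}$ — the transposed norm. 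That norm pairs with an $L_v^\infty L_u^2$ control on the curvature factor, which exists for $\rho$ (via the incoming flux; this is exactly why Proposition~\ref{prop:supDLphi} is the right tool in the neighboring Proposition~\ref{prop:com1:rhophi}) but \emph{not} for $\ab$ in the interior region, as the paper emphasizes when discussing Proposition~\ref{prop:Est4FJ:sca:in:0}: the incoming-flux control of $\ab$ decays in $\tau_1$, not $v$, so $L_v^\infty L_u^2$ for $\ab$ is unavailable. Your stated H\"older split (``$|\ab|^2$ in $L_v^1L_u^\infty L_\om^\infty$, $|D_L\psi|^2$ in $L_v^\infty L_u^1L_\om^2$'') does not match either the norm of Proposition~\ref{prop:supDLphi} or the bound we actually have on $\ab$, so the argument does not close.

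The second issue is the origin of the retained $D_L\psi_1$ term. You attribute it to Proposition~\ref{prop:Est4DLDLbphi} and the null equation (converting $D_{\Lb}D_L\psi$). In fact it arises one step earlier and more directly: the Sobolev embedding on the sphere applied to the product $|r\ab||D_L\psi|$ yields, with a small parameter $\ep_1$, the split
\[
\||r\ab||D_L\psi|\|_{L_\om^2}^2\les\sum_{k\le 1}\|r\mathcal{L}_\Om^k\ab\|_{L_\om^2}^2\cdot\bigl(\ep_1^{-1}\|D_L\psi\|_{L_\om^2}^2+\ep_1\|D_\Om D_L\psi\|_{L_\om^2}^2\bigr),
\]
and $D_\Om D_L\psi=D_LD_\Om\psi+[D_\Om,D_L]\psi=D_L\psi_1+\sqrt{-1}F_{\Om L}\psi$; the first summand is the retained $D_L\psi_1$ (kept with the small $\ep_1$), and the commutator gives the extra $r\a\cdot\psi$ contribution, which is handled using the $r$-weighted bounds on $\a$ together with Lemma~\ref{lem:Est4phipWE}. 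Once the pairing is corrected to use the $r$-weighted energy flux decay on $H_{\tilde\tau^*}$ for $D_L\psi$ (and the $\ab$-bound through $g(\tilde\tau)$), the rest of your bookkeeping with Corollary~\ref{cor:L2gu} and Lemma~\ref{lem:simplint} produces the $\tau_+^{-\ga_0+3\ep}$ rate as claimed.
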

\begin{proof}
We first use Sobolev embedding on the unit sphere to bound that
\[
\||r\ab||D_L\psi|\|_{L_{\om}^2}^2\les (\|r\ab\|_{L_{\om}^2}^2+\|r\mathcal{L}_{\Om}\ab\|_{L_{\om}^2}^2)\cdot(\ep_{1}^{-1}\|D_L\psi\|_{L_{\om}^2}^2+\ep_1\|D_{\Om}D_{L}\psi\|_{L_{\om}^2}^2),\quad \ep_1>0
\]
The proof for this estimate for all connection $A$ follows from the case when $A$ is trivial as the norm is gauge invariant. We in particular can choose a gauge so that the function is real.
Then make use of estimate \eqref{eq:supab:I} of Proposition \ref{prop:supF}. We therefore can show that
\begin{align*}
&\|r^{\frac{p}{2}}u_+^{\frac{1}{2}(2+\ga_0+\ep-p)}|r\ab||D_L\psi|\|_{L_u^2L_v^2L_{\om}^2(\bar{\mathcal{D}}_\tau)}^2\\
&\les\|r^{\frac{p}{2}}\sum\limits_{k\leq 1}\|\mathcal{L}_{\Om}^k(r\ab)\|_{L_\om^2}\cdot u_+^{\frac{1}{2}(2+\ga_0+\ep-p)}(\ep_1^{\f12}\|D_{\Om}D_L\psi\|_{L_{\om}^2}+\ep_1^{-\f12}\|D_L\psi\|_{L_{\om}^2})\|_{L_u^2L_v^2}^2\\
&\les \| \tilde{\tau}_+^{2+\ga_0+\ep-p}g(\tilde{\tau})(\ep_1\|r^{\frac{p}{2}}D_LD_{\Om}\psi\|_{L_v^2L_{\om}^2}^2+\ep_1\|r^{\frac{p}{2}}r\a \psi\|_{L_v^2L_{\om}^2}^2+\ep_1^{-1}\|r^{\frac{p}{2}}D_L\psi\|_{L_v^2L_{\om}^2}^2)\|_{L_u^1}\\
&\les \ep_1\int_{\tilde{\tau}} \tilde{\tau}_+^{2+\ga_0+\ep-p}g(\tilde{\tau})\int_{H_{\tilde{\tau}^*}}r^{p}|D_L\psi_1|^2 dvd\om d\tilde{\tau}+\ep_1^{-1}\|\tilde{u}_+^{1+\ep}g(\tilde{u})\|_{L_u^1}\|u_+^{\frac{1+\ga_0-p}{2}}r^{\frac{p}{2}}D_L\psi\|_{L_u^\infty L_v^2 L_\om^{2}}^2 \\
&\qquad+\ep_1\|\tilde{u}_+^{1+\ep-\ga_0}g(\tilde{u})\|_{L_u^1}\|u_+^{\frac{1+\ga_0-p}{2}}r^{\frac{p}{2}}r\a\|_{L_u^\infty L_v^2 L_\om^{\infty}}^2\|u_+^{\frac{\ga_0}{2}}\psi\|_{L_u^\infty L_v^\infty L_{\om}^2}^2\\
&\les_{M_2}\ep_1 \iint_{\bar{\mathcal{D}}_{\tau}} \tilde{\tau}_+^{2+\ga_0+\ep-p}g(\tilde{\tau}) r^{p}|D_L\psi_1|^2 dvd\om du+\ep_1^{-1}\mathcal{E}_0[\phi]\tau_+^{-\ga_0+3\ep}.
\end{align*}
Here we have used the $r$-weighted energy estimates \eqref{eq:pWE:sca:in}, \eqref{eq:pWE:sca:ex} and estimate \eqref{eq:Est4phipWE:in:p} to bound $\phi$.
\end{proof}
For $|r^{-1}\rho||D_{L}\psi|$, we have extra decay in $r$ which allows us to use Proposition \ref{prop:supDLphi}.
\begin{prop}
\label{prop:com1:rhophi}
For all $\ep_1>0$, we have
\begin{align}
\label{eq:com1:rhophi}
\iint_{\bar{\mathcal{D}}_{\tau}}u_+^{1+\ga_0}|r^{-1}\rho|^2 |D_{L}(r\phi)|^2 r^{1+\ga_0} dxdt \les_{M_2}\ep_1 I^{1+\ep}_{\ep}[r^{-1}D_{\Lb}D_L\psi](\mathcal{D}_{\tau_1}^{\tau_2})+\ep_1^{-1}\mathcal{E}_0[\phi].
\end{align}
\end{prop}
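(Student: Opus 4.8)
The plan is to follow essentially the same three-ingredient scheme as in the previous two propositions, but now exploiting the extra power of $r$ coming from the factor $r^{-1}|\rho|$. First I would use Sobolev embedding on the unit sphere to bound the curvature component $\rho$ pointwise on each sphere by its angular derivatives in $L^2_{\om}$, replacing $|\rho|^2$ by $\sum_{k\leq 2}\int_{\om}|\mathcal{L}_{\Om}^k\bar\rho|^2 d\om$ (recall that in the interior region $\rho=\bar\rho$, and on the part $r\geq R$ the charge part contributes only a bounded, rapidly decaying piece that is harmless since it carries two extra powers of $r^{-1}$). Then I would factor the spacetime integral as a product of an $L^\infty_v L^2_{\om}$ norm of $r^{1/2+\ga_0/2}\rho$-type quantity times an $L^2_v L^2_{\om}$ (or $L^\infty_u$) norm of the weighted good derivative $r^{\ga_0/2}D_L\psi$, with the $u$-weights distributed so that Lemma \ref{lem:simplint} applies.

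The key structural point is that, unlike the $|\ab||D_L\psi|$ term handled in Proposition \ref{prop:com1:abDLphi}, here the extra $r^{-1}$ means we can afford to put $D_L(r\phi)$ into the stronger $L^2_v L^\infty_u L^2_{\om}$ norm estimated in Proposition \ref{prop:supDLphi} (estimate \eqref{eq:supDlphi}) rather than into the weaker quantity $g(\tilde\tau)$-weighted integral. Concretely, I would write
\[
\iint_{\bar{\mathcal{D}}_{\tau}}u_+^{1+\ga_0}r^{\ga_0-1}|\rho|^2|D_L(r\phi)|^2 dxdt\les \|r^{\frac{1+\ga_0}{2}}u_+^{\frac{1+\ga_0}{2}}\rho\|_{L_u^\infty L_v^2 L_{\om}^2}^2\cdot\|r^{\frac{-1}{2}}D_L\psi\|_{?}^2
\]
type splitting, but more carefully one should track that $\rho$ enjoys the $r$-weighted energy estimate $I^{2+\ga_0}_0[\mathcal{L}_{\Om}^k\rho](H_{\tau^*})\les M_2$ and the integrated local energy bound; interpolating these, one controls $\int_{H_{\tau^*}}r^{1+\ga_0}|\mathcal{L}_{\Om}^k\rho|^2 r^2 dvd\om$ against $M_2\tau_+^{-\ga_0}$ or so. Pairing this with estimate \eqref{eq:supDlphi} for the right choice of $p$ (here $p=1+\ga_0$ works cleanly) and applying Lemma \ref{lem:simplint} to redistribute the $u_+$ weight yields a bound of the form $\ep_1^{-1}M_2\mathcal{E}_0[\phi]$ for the ``low'' piece plus $\ep_1 I^{1+\ep}_{\ep}[r^{-1}D_{\Lb}D_L\psi](\mathcal{D}_{\tau_1}^{\tau_2})$ for the ``high'' piece coming from $D_{\Om}D_L\psi$ inside \eqref{eq:supDlphi}.

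I expect the main obstacle to be bookkeeping of the precise weights in $r$ and $u_+$: one must verify that the exponents produced by interpolating the integrated local energy decay for $\rho$ against its $r$-weighted energy bound, combined with the exponents $p_1=\max\{1+\ep,p\}$, $p_2=\min\{1+\frac12\ep,p\}$ appearing in Proposition \ref{prop:supDLphi}, actually match the target weights $u_+^{1+\ga_0}r^{1+\ga_0}$ on the left and $I^{1+\ep}_{\ep}[r^{-1}D_{\Lb}D_L\psi]$ on the right, with all residual powers of $\tau_+$ being nonpositive so that the final bound is uniform in $\tau_2$. The saving grace is the genuine extra $r^{-1}$ (equivalently, $\rho$ already comes with a better $r$-decay than $\ab$), which gives enough room that one does not need the delicate $g(\tau)$-weighted Gronwall argument of the previous proposition; a straightforward Cauchy–Schwarz split with the small parameter $\ep_1$ absorbing the $D_{\Om}D_L\psi$ contribution, followed by Lemma \ref{lem:simplint}, should suffice.
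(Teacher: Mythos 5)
Your high-level plan coincides with the paper's: exploit the extra $r^{-1}$ on $\rho$ to replace the $g(\tau)$-weighted Gronwall mechanism of Proposition~\ref{prop:com1:abDLphi} by a direct pairing against the $L_v^2 L_u^\infty L_\om^2$ estimate of Proposition~\ref{prop:supDLphi}, peel off the charge part $q_0 r^{-2}$ separately, and use Lemma~\ref{lem:simplint} to reinstate the $u_+^{1+\ga_0}$ weight. However, two of the concrete choices you make would not close.

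First, the factorization direction for $\rho$ is wrong. You propose $\|r^{\frac{1+\ga_0}{2}}u_+^{\frac{1+\ga_0}{2}}\rho\|_{L_u^\infty L_v^2 L_\om^2}^2$ and correspondingly invoke the outgoing $r$-weighted flux $I^{2+\ga_0}_0[\mathcal{L}_\Om^k\rho](H_{\tau^*})$. But H\"older on $\iint_{u,v}$ forces the two factors to take opposite $L^\infty$/$L^2$ roles in $u$ and in $v$; since $D_L\psi$ comes out of \eqref{eq:supDlphi} in $L_v^2 L_u^\infty L_\om^2$, the $\rho$ factor must sit in $L_v^\infty L_u^2 L_\om^\infty$. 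That norm is controlled not by the outgoing $r$-weighted flux but by the energy flux through the \emph{incoming} null hypersurfaces, $\sup_v E[\mathcal{L}_\Om^k \bar F](\Hb_v)\les M_2\tau_+^{-1-\ga_0}$ (Proposition~\ref{prop:Endecay:cur}), after Sobolev embedding on the sphere. With the $L_u^\infty L_v^2$ choice you wrote, the two factors cannot be combined to recover the spacetime $L^2_{u,v}$ integral.

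Second, $p=1+\ga_0$ in \eqref{eq:supDlphi} does not ``work cleanly.'' After absorbing $r^2$ into the $\rho$ factor to form the $\Hb_v$ flux, the residual weight on $D_L\psi$ is $r^{(1+\ga_0)-2}=r^{\ga_0-1}$, which is \emph{decreasing} (since $\ga_0<1$) and hence bounded by a constant on $r\geq R$; the correct input is therefore $p=0$, giving $p_1=1+\ep$, $p_2=0$, and hence $\ep_1 I^{1+\ep}_0[r^{-1}D_{\Lb}D_L\psi](\bar{\mathcal{D}}_\tau)$ on the right. Taking $p=1+\ga_0$ instead would produce $\ep_1 I^{1+\ga_0}_{1+\ep/2}[r^{-1}D_{\Lb}D_L\psi]$, which is \emph{stronger} than the stated target $I^{1+\ep}_\ep$ and in particular cannot be absorbed by Proposition~\ref{prop:Est4DLDLbphi}. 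With $p=0$, the extra $\tau_+^{-1-\ga_0}$ from the flux bound on $\bar\rho$ combines with Lemma~\ref{lem:simplint} (using $\ln\tau_+\les\tau_+^\ep$) to land exactly on $I^{1+\ep}_\ep[r^{-1}D_{\Lb}D_L\psi]$, as claimed.
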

\begin{proof}
The idea is that we bound $\rho$ by using the energy flux through the incoming null hypersurface and $D_{L}\psi$ by using Proposition \ref{prop:supDLphi}. In the exterior region, we need to specially consider
the effect of the nonzero charge. Other than that, the proof is the same for the interior region case. We thus take $\bar{\mathcal{D}}_{\tau}$ to be $\mathcal{D}_{\tau}$ with $\tau\leq 0$. For all $1+\ep\leq p\leq 1+\ga_0$, we then can show that
\begin{align*}
\iint_{\mathcal{D}_{\tau}}|r^{-1}\rho|^2 |D_{L}\psi|^2 r^{p} dxdt&\les \iint_{\mathcal{D}_{\tau}}|\bar{\rho}|^2 |D_{L}\psi|^2 r^{p} dudvd\om +\iint_{\mathcal{D}_{\tau}}|q_0|^2|D_{L}\psi|^2 r^{p-4} dudvd\om\\
&\les_{M_2} \|D_L\psi\|_{L_v^2 L_u^\infty L_{\om}^2(\mathcal{D}_{\tau})}^2\|r\bar\rho\|_{L_v ^\infty L_u^2 L_{\om}^\infty(\mathcal{D}_{\tau})}^2+\mathcal{E}_0[\phi]\tau_+^{-1-2\ga_0}\\
&\les_{M_2} \ep_1\tau_+^{-1-\ga_0} I^{1+\ep}_0[r^{-1}D_{\Lb}D_L\psi](\mathcal{D}_{\tau})+\ep_1^{-1}\mathcal{E}_0[\phi]\tau_+^{-1-2\ga_0}.
\end{align*}
The above estimate also holds for the interior region case when $\bar{\mathcal{D}}_{\tau}=\mathcal{D}_{\tau_1}^{\tau_2}$ for all $0\leq \tau_1<\tau_2$. As the estimates holds for all $\tau$, from Lemma \ref{lem:simplint}, we then can show that (take the interior region for example)
\begin{align*}
&\iint_{\mathcal{D}_{\tau_1}^{\tau_2}}\tau_+^{1+\ga_0}|r^{-1}\rho|^2 |D_{L}\psi|^2 r^{p} dxdt\\
&\les_{M_2} \ep_1 I^{1+\ep}_0[r^{-1}D_{\Lb}D_L\psi](\mathcal{D}_{\tau_1}^{\tau_2})+\ep_1\int_{\tau_1}^{\tau_2}\tau_+^{-1}I^{1+\ep}_0[r^{-1}D_{\Lb}D_L\psi]
(\mathcal{D}_{\tau}^{\tau_2})d\tau+\ep_1^{-1}\mathcal{E}_0[\phi]\\
&\les_{M_2}\ep_1 I^{1+\ep}_{\ep}[r^{-1}D_{\Lb}D_L\psi](\mathcal{D}_{\tau_1}^{\tau_2})+\ep_1^{-1}\mathcal{E}_0[\phi].
\end{align*}
Here we note that $\ln\tau_+\les \tau_+^{\ep}$.
\end{proof}
Next we estimate $r^{-1}|F||\D(r\phi)|$.
\begin{prop}
\label{prop:com1:FDZphi}
For all $\ep_1>0$, we have
\begin{align}
\label{eq:com1:FDZphi}
\iint_{\bar{\mathcal{D}}_{\tau}}u_+^{1+\ga_0}|r^{-1}F|^2|\D(r\phi)|^2r^{1+\ga_0} dxdt &\les_{M_2}\ep_1^{-1}\mathcal{E}_0[\phi]+\ep_1\int_{\tilde{\tau}}\tilde{\tau}_+^{1+\ga_0}g(\tilde{\tau})E[D_Z\phi](H_{\tilde{\tau}^*})d\tilde{\tau}.
\end{align}
 \end{prop}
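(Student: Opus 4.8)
The plan is to follow the scheme of Propositions \ref{prop:com1:aDLbphi}--\ref{prop:com1:rhophi}: reduce the spacetime integral to an integral along the outgoing null foliation, use Sobolev embedding on the spheres $\mathbb{S}^2_r$ to detach the curvature from the scalar field, then invoke the decay and $r$-weighted energy estimates already established for $F$ and $\phi$, using Lemma \ref{lem:simplint} to redistribute the $u_+$-weights. Since $r$ is constant on each sphere, $\D(r\phi)=r\D\phi$, so $|r^{-1}F||\D(r\phi)|=|F||\D\phi|$, and with $dxdt=2r^2\,dudvd\om$ the quantity to be estimated is, up to a constant, $\iint_{\bar{\mathcal D}_\tau}u_+^{1+\ga_0}r^{3+\ga_0}|F|^2|\D\phi|^2\,dudvd\om$; foliating $\bar{\mathcal D}_\tau$ by the cones $H_{\tilde\tau^*}$, $\tilde\tau\ge\tau$, along which $u_+\sim\tilde\tau_+$, this is $\les\int_{\tilde\tau\ge\tau}\tilde\tau_+^{1+\ga_0}\big(\int_{H_{\tilde\tau^*}}r^{3+\ga_0}|F|^2|\D\phi|^2\,dvd\om\big)\,d\tilde\tau$. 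I focus on the interior region; the exterior case is analogous and easier because there $r\gtrsim u_+$.

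Next I would split $F=q_0\chi_{\{r\ge t+R\}}r^{-2}dt\wedge dr+\bar F$. The charge part is $\les|q_0|^2r^{-2}$, hence contributes $\les|q_0|^2\iint_{\bar{\mathcal D}_\tau}u_+^{1+\ga_0}r^{\ga_0-1}|\D\phi|^2\,dxdt$; since $|q_0|$ is small and $r^{\ga_0-1}|\D\phi|^2$ carries ample $r$-decay, interpolating the integrated local energy estimates \eqref{eq:ILE:sca:in:sim0}, \eqref{eq:ILEdecay:sca:ex} against the $r$-weighted energy estimates \eqref{eq:pWEdecay:sca:in:1ga}, \eqref{eq:pWEdecay:sca:ex}, together with the flux decay \eqref{eq:Enerdecay:sca:in} and Lemma \ref{lem:simplint}, bounds it by $\les_{M_2}|q_0|^2\mathcal{E}_0[\phi]\les_{M_2}\mathcal{E}_0[\phi]$. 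For the components $\a$, $\si$, $\bar\rho$ of $\bar F$, I would use their pointwise decay bounds \eqref{eq:suparhosi:p}, \eqref{eq:supbarrho:p} of Proposition \ref{prop:supF} with $p=0$, namely $(|\a|^2+|\si|^2+|\bar\rho|^2)(\tilde\tau,v,\om)\les M_2\tilde\tau_+^{-1-\ga_0}r^{-2}$, pull the $L^\infty_\om$ norm out of the $v$-integral, and use $0<\ga_0<1$ so that $r^{3+\ga_0}r^{-2}\cdot r^{-2}=r^{\ga_0-1}\le 1$ to bound what is left by $M_2\tilde\tau_+^{-1-\ga_0}E[\phi](H_{\tilde\tau^*})$; after multiplying by $\tilde\tau_+^{1+\ga_0}$ and integrating, Proposition \ref{prop:pWE:sca:in:1} and Corollary \ref{cor:pWEdecay:sca:in:1ga} give $\int_{\tilde\tau\ge\tau}E[\phi](\Si_{\tilde\tau})\,d\tilde\tau\les_{M_2}\mathcal{E}_0[\phi]$, so these three components contribute $\les_{M_2}\mathcal{E}_0[\phi]$.

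The remaining, and genuinely delicate, component is $\ab$: Proposition \ref{prop:supF} provides only the weak pointwise bound $|r\ab|^2\les M_2\tilde\tau_+^{-1-\ga_0}$ in \eqref{eq:supab:p} with no gain in $r$ (the discussion preceding \eqref{eq:defofgu} flags exactly this), so the trick just used fails and one must instead exploit the quantity $g(\tilde\tau)$ of \eqref{eq:defofgu}. Here I would apply the $L^4$ Sobolev inequality on $\mathbb{S}^2_r$ to both $\ab$ and $\D\phi$ and commute $\Om$ past $\D$, so that $\|\mathcal{L}_\Om^{\le1}\D\phi\|_{L^2_\om}\les\|\D\phi\|_{L^2_\om}+\|\D\phi_1\|_{L^2_\om}+\|(\text{pointwise-controlled curvature})\cdot\phi\|_{L^2_\om}$, with $\phi_1=D_Z\phi$ and the last term handled by Cauchy--Schwarz with a parameter $\ep_1$ and absorbed into $\ep_1^{-1}\mathcal{E}_0[\phi]$ via Lemma \ref{lem:Est4phipWE} and the pointwise curvature bounds. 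This yields
\[
\int_{H_{\tilde\tau^*}}r^{3+\ga_0}|\ab|^2|\D\phi|^2\,dvd\om\les\Big(\sup_{v}r^2\|\mathcal{L}_\Om^{\le1}\ab\|_{L^2_\om}^2\Big)\int_{v}r^{1+\ga_0}\big(\|\D\phi\|_{L^2_\om}^2+\|\D\phi_1\|_{L^2_\om}^2\big)\,dv+\text{(lower)},
\]
and the factor in parentheses is precisely the $\ab$-part of $g(\tilde\tau)$. Since $r^{1+\ga_0}\le r^2$, the $v$-integral is $\les E[\phi](H_{\tilde\tau^*})+E[D_Z\phi](H_{\tilde\tau^*})$; multiplying by $\tilde\tau_+^{1+\ga_0}$ and integrating, the $E[\phi]$-piece is $\les_{M_2}\mathcal{E}_0[\phi]\int_{\tilde\tau\ge\tau}g(\tilde\tau)\,d\tilde\tau\les_{M_2}\mathcal{E}_0[\phi]$ by the flux decay \eqref{eq:Enerdecay:sca:in} and the integrability of $g$ from Corollary \ref{cor:L2gu}, while the $E[D_Z\phi]$-piece is exactly the term $\ep_1\int_{\tilde\tau}\tilde\tau_+^{1+\ga_0}g(\tilde\tau)E[D_Z\phi](H_{\tilde\tau^*})\,d\tilde\tau$ claimed in \eqref{eq:com1:FDZphi}. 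The main obstacle, as indicated, is the $\ab$ term: the point is that one does not close it here but leaves the $E[D_Z\phi]$ flux for the later Gronwall argument on the first-order energy, the smallness parameter $\ep_1$ ensuring absorbability once the correct $\tau$-weighted moments of $g$ are known.
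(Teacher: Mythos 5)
Your overall scheme matches the paper's: pass to null coordinates, split off the charge, decouple the Maxwell components from $\D\phi$ by Sobolev on the spheres, and integrate the result against the energy decay of $\phi$ and $D_Z\phi$, closing the zeroth-order piece through the integrability of $g$ from Corollary \ref{cor:L2gu}. The genuine variation is your treatment of $\a$, $\si$, $\bar\rho$: you invoke the $p=0$ pointwise bounds \eqref{eq:suparhosi:p}, \eqref{eq:supbarrho:p} of Proposition \ref{prop:supF} rather than folding those components into the integrated--local--energy part of $g(\tilde\tau)$ as the paper does, which is a valid and arguably cleaner shortcut. In fact the $p=0$ pointwise bound \eqref{eq:supab:p} for $\ab$ gives the same $r^{-2}\tilde\tau_+^{-1-\ga_0}$ decay, so the very computation you run for $\a,\si,\bar\rho$ would also close the $\ab$ piece, yielding the stronger bound $\les_{M_2}\mathcal{E}_0[\phi]$ with no $E[D_Z\phi]$ term at all; the ``pointwise $\ab$ is too weak'' discussion you cite from before \eqref{eq:defofgu} concerns the interaction $\ab\cdot D_L\psi$ of Proposition \ref{prop:com1:abDLphi}, whose $r$-weight distribution is tighter, not the present $F\cdot\D(r\phi)$.

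Two items need repair. First, the charge decay: $|q_0\chi r^{-2}dt\wedge dr|^2=q_0^2r^{-4}$, not $\les |q_0|^2 r^{-2}$. Carried through correctly the charge contribution is $q_0^2\iint u_+^{1+\ga_0}r^{\ga_0-3}|\D\phi|^2\,dxdt$, which in the exterior, where $u_+\les r$, reduces to $q_0^2\iint r^{\ga_0}|\D\psi|^2\,dudvd\om\les_{M_2}q_0^2\mathcal{E}_0[\phi]$; the $r^{\ga_0-1}$ you wrote is $r^2$ too heavy and would exceed the $r^{\ga_0}|\D\psi|^2$ integrability supplied by \eqref{eq:pWEdecay:sca:ex}, \eqref{eq:pWEdecay:sca:in:1ga}. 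Second, and more substantively, your displayed Sobolev inequality for the $\ab$ piece carries equal coefficients on $\|\D\phi\|_{L^2_\om}^2$ and $\|\D\phi_1\|_{L^2_\om}^2$, yet the concluding sentence asserts that the $E[D_Z\phi]$ term arrives weighted by $\ep_1$. That $\ep_1$ is never produced in your argument: it must be inserted either by Young at the Gagliardo--Nirenberg step --- writing $\|\D\phi\|_{L^4_\om}^2\les\ep_1^{-1}\|\D\phi\|_{L^2_\om}^2+\ep_1\|\Om\D\phi\|_{L^2_\om}^2$, exactly as the paper does in the parallel Proposition \ref{prop:com1:abDLphi} --- or by invoking estimate \eqref{eq:Est4phiE:small}, which is what the paper's own proof of this proposition does. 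Without it you obtain only an $O(1)$ coefficient on $\int\tilde\tau_+^{1+\ga_0}g(\tilde\tau)E[D_Z\phi](H_{\tilde\tau^*})\,d\tilde\tau$, which is not the stated estimate and would break the absorption argument of Proposition \ref{prop:bd4EDZphi}.
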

\begin{proof}
The idea is that we use the energy flux through the outgoing null hypersurface to bound $\D(r\phi)=D_{\Om}\phi$ and the integrated local energy to control $F$. We only show the estimate in the exterior region.
Take $\bar{\mathcal{D}_{\tau}}$ to be $\mathcal{D}_{\tau}$ for any $\tau\leq 0$. In the exterior region we have the relation $r\geq \frac{1}{3}u_+$. Therefore from estimate \eqref{eq:Est4phiE:small} and recalling the
definition \eqref{eq:defofgu} of $g(u)$, we can show that
\begin{align*}
&\iint_{\mathcal{D}_{\tau}}u_+^{1+\ga_0}|F|^2|\D(r\phi)|^2r^{1+\ga_0} dudvd\om\\
 &\les \int_{u}u_+^{1+\ga_0}\int_{v}\sum\limits_{k\leq 2}r^{1-\ep}\int_{\om}|\mathcal{L}_{\Om}^k\bar{F}|^2+|q_0r^{-2}|^2 d\om \cdot \int_{\om}r|D_{\Om}\phi|^2d\om dudv\\
 &\les |q_0|^2 \iint_{\mathcal{D}_{\tau}}\frac{|\D\phi|^2}{r^{1+\ep}}dxdt+\int_{\tilde{\tau}}(\tilde{\tau})_+^{1+\ga_0}g(\tilde{\tau})(\ep_1^{-1}\int_{H_{\tilde{\tau}^*}}|\D\phi|^2 r^2dvd\om+\ep_1E[D_{Z}\phi](H_{\tilde{\tau}^*}))d\tilde{\tau}\\
 &\les_{M_2}\mathcal{E}_0[\phi]\tau_+^{-1-\ga_0}+\int_{\tilde{\tau}}(\tilde{\tau})_+^{1+\ga_0}g(\tilde{\tau})\ep_1^{-1}E[\phi](H_{\tilde{\tau}^*})
 d\tilde{\tau}+\ep_1\int_{u}g(u)E[D_Z\phi](H_u)du\\
 &\les_{M_2}\ep_1^{-1}\mathcal{E}_0[\phi]\tau_+^{-1-\ga_0+2\ep}+\ep_1\int_{\tilde{\tau}}\tilde{\tau}_+^{1+\ga_0}g(\tilde{\tau})E[D_Z\phi](H_{\tilde{\tau}^*})d\tilde{\tau}.
\end{align*}
Here we assumed that $\ga_0<1$ and $\ep$ is sufficiently small. For the case $\ga_0=1$, the above estimate also holds but in a different form where we have to rely on the $r$-weighted energy estimate. For the sake of simplicity, we would not discuss in details when $\ga_0\geq 1$.
\end{proof}
Finally we estimate the weighted spacetime norm of $(|J|+|r\J|+|\si|+|r^{-1}\rho|)|\phi|$. We show that
\begin{prop}
\label{prop:com1:Jphi}
For all $1+\ep\leq p\leq 1+\ga_0$, we have
\begin{align}
\label{eq:com1:Jphi}
\iint_{\bar{\mathcal{D}}_{\tau}}(|J|^2+|r\J|^2+|\si|^2+|r^{-1}\rho|^2)|\phi|^2r^{p}u_+^{2+\ga_0+\ep-p} dxdt &\les_{M_2}\mathcal{E}_0[\phi]\tau_+^{-\ga_0+\ep}.
\end{align}
\end{prop}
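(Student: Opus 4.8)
The plan is to estimate each of the four factors $|J|$, $|r\J|$, $|\si|$, $|r^{-1}\rho|$ separately, all multiplied by $|\phi|$ and the weight $r^{p}u_+^{2+\ga_0+\ep-p}$, and to reduce each term to a product of a curvature quantity controlled by $M_2$ and a scalar field quantity controlled by $\mathcal{E}_0[\phi]$. The basic tool is the same as in Propositions \ref{prop:Est4FJ:ex} and \ref{prop:com1:abDLphi}: Sobolev embedding on the unit sphere to pass from the pointwise norm of the curvature to $\sum_{j\leq 2}\int_\om|\mathcal{L}_\Om^j(\cdot)|^2 d\om$, then the pointwise bound \eqref{eq:Est4phipWE:in:p} (or \eqref{eq:SPHDecay:sca:ex} in the exterior region) to bound $\int_\om r^p|\phi|^2 d\om$ by $\mathcal{E}_0[\phi]\tau_+^{p-2-\ga_0}$, using the energy flux decay \eqref{eq:Enerdecay:sca:in} (interior) and \eqref{eq:ILEdecay:sca:ex}, \eqref{eq:pWEdecay:sca:ex} (exterior). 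As usual I would only write out the interior region case $\bar{\mathcal{D}}_\tau=\bar{\mathcal{D}}_{\tau_1}^{\tau_2}$, $\tau_1=\tau$, the exterior case being the same with $r\geq\tfrac13 u_+$.

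First I would handle the terms $|r^{-1}\rho||\phi|$ and $|\si||\phi|$: after Sobolev embedding, these contribute $\int_\om(|\mathcal{L}_\Om^j\bar\rho|^2+|\mathcal{L}_\Om^j\si|^2) r^2 d\om$ on the sphere of radius $r$, and the $r$-weighted energy estimate \eqref{eq:pWE:cur:in} for the Maxwell field gives $\int_{H_{\tau^*}}r^{2+\ga_0}(|\mathcal{L}_Z^j\rho|^2+|\mathcal{L}_Z^j\si|^2)dvd\om\les M_2$, so pairing with $\int_\om r^p|\phi|^2d\om\les_{M_2}\mathcal{E}_0[\phi]\tau_+^{p-2-\ga_0}$ and integrating $\int_{H_{\tau^*}}$, then in $\tau$ against the weight $u_+^{2+\ga_0+\ep-p}$ (using $\tau_+\leq r$ on $H_{\tau^*}$ or more carefully Lemma \ref{lem:simplint}), yields the bound $\mathcal{E}_0[\phi]\tau_+^{-\ga_0+\ep}$; the $r^{-2}$ in $|r^{-1}\rho|^2$ combines with $r^p$ and $\bar\rho$ carries one extra power of $r$ to match $r^{2+\ga_0}$. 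For $|\si|$ one must be a little careful that $\si$ has the same $r$-weighted flux bound but no $J_{\Lb}$ restriction, which is not an issue here.

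Next the nonlinear factors $|J|$ and $|r\J|$: these are controlled directly by the definition of $M_2$ in \eqref{eq:def4NkF}, which contains $I^{1+\ep}_{1+\ga_0}[|\mathcal{L}_Z^l\J|+|\mathcal{L}_Z^l J_L|]$ and also controls $J=(J_L,J_{\Lb},\J)$ through the $J_{\Lb}$ and interior terms; after Sobolev embedding on the sphere one pairs $\sum_{j\leq 2}\int_\om(|\mathcal{L}_\Om^j J|^2+r^2|\mathcal{L}_\Om^j\J|^2)d\om$ with $\int_\om r^{p}|\phi|^2d\om\les_{M_2}\mathcal{E}_0[\phi]\tau_+^{p-2-\ga_0}$. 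Here the scalar field decay factor $\tau_+^{p-2-\ga_0}$ absorbs the extra $u_+$ weight and a factor $\tau_+^{-1-\ga_0}$ (or better) coming from the spacetime integral of $|J|^2$ against its own weights, leaving $\mathcal{E}_0[\phi]\tau_+^{-\ga_0+\ep}$ after integrating in $\tau$; since $J$ is quadratic in $\phi$ one does not in fact need $\mathcal{E}_0[\phi]$-smallness of $J$ here, only its $M_2$-bound, as $J$ is treated as an independent source via $m_k$.

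The main obstacle, as in the preceding propositions, is bookkeeping of the weights: one must check that for every $1+\ep\leq p\leq 1+\ga_0$ the combination $r^{p}u_+^{2+\ga_0+\ep-p}$ times the scalar-field decay $\mathcal{E}_0[\phi]\tau_+^{p-2-\ga_0}$ times the curvature weight (which carries $r^{2}$, $r^{2+\ga_0}$ or the appropriate power from $M_2$) integrates to something $\les_{M_2}\mathcal{E}_0[\phi]\tau_+^{-\ga_0+\ep}$, uniformly in $p$; the interpolation in $p$ of the $r$-weighted Maxwell estimates and of \eqref{eq:Est4phipWE:in:p} must be balanced so that the net power of $r$ matches the available flux and the net power of $\tau_+$ (after Lemma \ref{lem:simplint}) is exactly $-\ga_0+\ep$. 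Apart from this the argument is routine, following verbatim the structure of the proofs of Propositions \ref{prop:Est4FJ:sca:in} and \ref{prop:com1:aDLbphi}.
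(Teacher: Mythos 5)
The overall strategy you propose --- Sobolev embedding on the sphere, the bound $\int_\om r^q|\phi|^2\,d\om \lesssim_{M_2}\mathcal{E}_0[\phi]\tau_+^{q-2-\ga_0}$ from Lemma~\ref{lem:Est4phipWE}, the $M_2$-controlled curvature/current integrals, and Lemma~\ref{lem:simplint} to trade $u_+$-weight for decay in $\tau_+$ --- is exactly the paper's. However your uniform choice of putting $r^p$ on $\phi$ leads to two concrete errors in the bookkeeping.

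First, for the $|\si|^2$ and $|r^{-1}\rho|^2$ terms you invoke ``$\int_{H_{\tau^*}}r^{2+\ga_0}(|\mathcal{L}_Z^j\rho|^2+|\mathcal{L}_Z^j\si|^2)dvd\om\les M_2$'' as a consequence of \eqref{eq:pWE:cur:in}. That identity does \emph{not} control the $r$-weighted flux of $\rho,\si$ through the outgoing hypersurface $H_{\tau^*}$: on $H_{\tau^*}$ only $r^{p+2}|\a|^2$ appears; $\rho,\si$ are controlled in the bulk integral $\iint r^{p+1}(\rho^2+\si^2)$ and through the \emph{incoming} hypersurface $\Hb_v$. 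This matters quantitatively: after pairing with $\int_\om r^p|\phi|^2\,d\om\les\mathcal{E}_0[\phi]\tau_+^{p-2-\ga_0}$, the remaining $\tau$-integral is $\int_u u_+^\ep(\cdots)\,du$; if the inner flux were merely uniformly bounded this diverges. You actually need a \emph{decaying} flux bound, namely $\int_{H_{\tau^*}}r^2(\rho^2+\si^2)dvd\om\leq E[\mathcal{L}_\Om^j F](H_{\tau^*})\lesssim M_2\tau_+^{-1-\ga_0}$, which then yields $\int_u u_+^{\ep-1-\ga_0}du\les\tau_+^{\ep-\ga_0}$. The paper avoids the issue entirely by pairing with $\int_\om r|\phi|^2\,d\om\les\mathcal{E}_0[\phi]\tau_+^{-1-\ga_0}$, which leaves $r^{p+1}$ on $\si,\rho$: precisely the weight the $r$-weighted \emph{bulk} estimate controls with the decay $\tau_+^{p-1-\ga_0}$.

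Second, and this one cannot be patched by simply citing the right flux, your split fails for the $|r\J|^2$ term. Pairing with $\int_\om r^p|\phi|^2\,d\om$ leaves the spacetime weight $r^2u_+^\ep|\mathcal{L}_\Om^k\J|^2\,dxdt$. The definition \eqref{eq:def4NkF} only gives $\iint r^{1+\ga_0}u_+^{1+\ep}|\J|^2\les M_2$ and $\iint r^{1+\ep}u_+^{1+\ga_0}|\J|^2\les M_2$; neither dominates $r^2u_+^\ep$ (the required interpolation exponent exceeds $1$ since $\ga_0<1$), and the ratio $r^{1-\ga_0}u_+^{-1}$ is unbounded in $r$ at fixed $u$. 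The paper instead pairs with $\int_\om r^{1+p-\ga_0}|\phi|^2\,d\om\les\mathcal{E}_0[\phi]\tau_+^{p-1-2\ga_0}$, which leaves $r^{1+\ga_0}u_+^{1-\ga_0+\ep}|\J|^2\,dxdt$; since $u_+^{1-\ga_0+\ep}=u_+^{-\ga_0}\cdot u_+^{1+\ep}$ this is directly dominated by $M_2\tau_+^{-\ga_0}$. In short, the amount of $r$-weight placed on $\phi$ must vary from term to term; a single exponent $p$ cannot simultaneously match the Maxwell flux/bulk weights and the $M_2$-weights for $\J$.
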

\begin{proof}
Let's first consider $(|J|^2+|\si|^2+|r^{-1}\rho|^2)|\phi|^2$. The idea is that we bound $\phi$ by the energy flux. Note that the nonzero charge only affects the estimates
in the exterior region where $r\geq \frac{1}{3}u_+$. From the embedding \eqref{eq:Est4phipWE:in:p} and the energy decay estimates \eqref{eq:ILEdecay:sca:ex}, \eqref{eq:Enerdecay:sca:in}, we can show that
\begin{align*}
&\iint_{\bar{\mathcal{D}}_{\tau}}(|J|^2+|\si|^2+|r^{-1}\rho|^2)|\phi|^2r^{p+2}u_+^{2+\ga_0+\ep-p} dudvd\om\\
&\les \int_{u}u_+^{2+\ga_0+\ep-p}\int_{v}\sum\limits_{k\leq 2}r^{p+1}\int_{\om}|\mathcal{L}_{\Om}^k J|^2+|\mathcal{L}_{\Om}^k\si|^2+
|r^{-1}\mathcal{L}_{\Om}^k\bar \rho|^2+|q_0r^{-3}|^2 d\om \cdot \int_{\om}r|\phi|^2d\om dvdu\\
&\les_{M_2} \mathcal{E}_0[\phi]\int_{u}u_+^{1+\ep-p}\int_{v}\sum\limits_{k\leq 2}r^{p+1}\int_{\om}|\mathcal{L}_{\Om}^k J|^2+|\mathcal{L}_{\Om}^k\si|^2+
|r^{-1}\mathcal{L}_{\Om}^k\bar \rho|^2+|q_0r^{-3}|^2 d\om dvdu\\
&\les_{M_2}\tau_+^{-\ga_0+\ep}.
\end{align*}
Here we used the $r$-weighted energy estimates \eqref{eq:pWE:cur:ex}, \eqref{eq:pWE:cur:in} to bound the curvature components and the definition for $M_2$ to control $J$.
For $|r\J|^2|\phi|^2$, the only difference is that we need to put more $r$ weights on $\phi$. By using the embedding inequality \eqref{eq:Est4phipWE:in:p} and the energy decay estimates \eqref{eq:Enerdecay:sca:in}, \eqref{eq:ILEdecay:sca:ex},
we conclude that
\[
  \int_{\om}r^{1+p-\ga_0}|\phi|^2d\om \les_{M_2}\tau_+^{p-1-2\ga_0}.
\]
Therefore we have
\begin{align*}
\iint_{\bar{\mathcal{D}}_{\tau}}u_+^{2+\ga_0+\ep-p}|r\J|^2|\phi|^2r^{p+2}dudvd\om&\les \int_{u}u_+^{2+\ga_0+\ep-p}\int_{v}\sum\limits_{k\leq 2}r^{3+\ga_0}\int_{\om}
|\mathcal{L}_{\Om}^k \J|^2d\om\cdot \int_{\om}r^{1+p-\ga_0}|\phi|^2d\om dudv\\
&\les_{M_2} \mathcal{E}_{0}[\phi]\iint_{\bar{\mathcal{D}}_{\tau}}u_+^{1-\ga_0+\ep}r^{3+\ga_0}\int_{\om}
|\mathcal{L}_{\Om}^k \J|^2d\om dvdu\\
&\les_{M_2}\mathcal{E}_{0}[\phi]\tau_+^{-\ga_0}.
\end{align*}
Estimate \eqref{eq:com1:Jphi} then follows.
\end{proof}
Now we are remained to consider the spacetime norm on the bounded region $r\leq R$.
\begin{prop}
 \label{prop:com1:R}
 On the bounded region $r\leq R$, for all $0\leq \tau_1<\tau_2$ we have
 \begin{equation}
  \label{eq:com1:R}
  \int_{\tau_1}^{\tau_2}\tau_+^{1+\ga_0}\int_{r\leq R}|[\Box_A, D_Z]\phi|^2dxdt\les_{M_2}\mathcal{E}_0[\phi](\tau_1)_+^{-1-\ga_0}.
 \end{equation}
\end{prop}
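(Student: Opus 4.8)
The plan is to reduce the estimate on the bounded region $\{r\leq R\}$ to the pointwise and integrated estimates for the Maxwell field that were already established in Proposition \ref{prop:Est4F:in:R}, together with the energy decay for the scalar field in Proposition \ref{prop:Enerdecay:sca:in}. First I would invoke the pointwise form of the commutator, estimate \eqref{eq:Est4commu:1:in} of Lemma \ref{lem:Est4commu:1}, which on $\{r\leq R\}$ gives $|[\Box_A, D_Z]\phi|\les |F||\bar D\phi|+|J||\phi|$. Hence it suffices to bound the spacetime integrals $\int_{\tau_1}^{\tau_2}\tau_+^{1+\ga_0}\int_{r\leq R}|F|^2|\bar D\phi|^2\,dxdt$ and $\int_{\tau_1}^{\tau_2}\tau_+^{1+\ga_0}\int_{r\leq R}|J|^2|\phi|^2\,dxdt$.

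For the first integral I would pull out the pointwise bound on $F$: on the slice $t=\tau$ with $r\leq R$, using $\sup_{|x|\leq R}|F|^2(\tau, x)\les M_2\tau_+^{-1-\ga_0}$ from \eqref{eq:Est4F:in:R:p}, one gets $\int_{r\leq R}|F|^2|\bar D\phi|^2\,dx\les M_2\tau_+^{-1-\ga_0}\int_{r\leq R}|\bar D\phi|^2\,dx\les M_2\tau_+^{-1-\ga_0}E[\phi](\Si_\tau)$, where Hardy's inequality \eqref{eq:Hardy} controls the $r_+^{-1}\phi$ part of $\bar D\phi$ by the energy flux. Multiplying by $\tau_+^{1+\ga_0}$ and integrating in $\tau$ then gives $\les M_2\int_{\tau_1}^{\tau_2}E[\phi](\Si_\tau)\,d\tau$, which is $\les_{M_2}\mathcal{E}_0[\phi](\tau_1)_+^{-\ga_0}$ by \eqref{eq:Enerdecay:sca:in}; this is slightly worse than claimed, so instead I would use the integrated version: integrating the pointwise bound \eqref{eq:Est4F:in:R} against the weighted energy and using that $\int_{\tau_1}^{\tau_2}\tau_+^{1+\ga_0}\sup_{|x|\leq R}|F|^2\cdot E[\phi](\Si_\tau)\,d\tau$ can be split by a dyadic decomposition so that on each dyadic block $[\tau_n,\tau_{n+1}]$ the factor $\tau_+^{1+\ga_0}E[\phi](\Si_\tau)\les_{M_2}\mathcal{E}_0[\phi]$ is uniformly bounded, reducing matters to $\int_{\tau_1}^{\tau_2}\sup_{|x|\leq R}|F|^2\,d\tau\les M_2(\tau_1)_+^{-1-\ga_0}$, which is exactly \eqref{eq:Est4F:in:R}. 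For the second integral, I would similarly bound $\int_{r\leq R}|J|^2|\phi|^2\,dx$ using Sobolev embedding on $\{r\leq 2R\}$ to control $\sup_{r\leq R}|J|^2\les \sum_{k\leq 1}\int_{r\leq 2R}|\nabla^k J|^2\,dx$ (since $J=\Im(\phi\cdot\overline{D\phi})$ involves $\phi$, and $J$, $\mathcal{L}_Z J$, $\nabla J$ all appear in $M_2$ via the term $I_{1+\ga_0}^0[\nabla\mathcal{L}_Z^{l-1}J](\{r\leq 2R\})$), so that $\int_{r\leq R}|J|^2|\phi|^2\,dx\les (\sup_{r\leq R}|J|^2)E[\phi](\Si_\tau)$; the time integral is then handled the same way, with the $\tau_+^{1+\ga_0}$ weight absorbed into the energy decay of $\phi$ and the remaining $J$-integral controlled by the relevant pieces of $M_2$.

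The main obstacle, as in the preceding propositions of this subsection, is bookkeeping the decay rates so that the final bound is genuinely $(\tau_1)_+^{-1-\ga_0}$ rather than a weaker power: the naive split $\tau_+^{1+\ga_0}|F|^2\cdot E[\phi]$ loses a factor of $\tau_+$, so one must use the \emph{integrated} decay estimates \eqref{eq:Est4F:in:R} for $|F|^2$ and for $\int_{r\leq 2R}|\nabla J|^2$ (which already carry the full $(\tau_1)_+^{-1-\ga_0}$ decay coming from the definition of $M_2$) rather than their pointwise counterparts, and pair them with the \emph{uniform} bound $\tau_+^{1+\ga_0}E[\phi](\Si_\tau)\les_{M_2}\mathcal{E}_0[\phi]$ from \eqref{eq:Enerdecay:sca:in}. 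Once this pairing is set up correctly the proof is a short computation. I would also note at the end that the region $\{r\leq R\}$ is contained in $\bar{\mathcal{D}}_{\tau_1^+}^{\tau_2}$ with $\tau_1^+=\max\{\tau_1-R,0\}$, and $(\tau_1^+)_+^{-1-\ga_0}\les (\tau_1)_+^{-1-\ga_0}$ up to a constant depending on $R$, so shifting the time origin by $R$ (to account for finite speed of propagation from $\Si_{\tau_1^+}$) costs nothing.

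\begin{proof}
On the bounded region $\{r\leq R\}$, estimate \eqref{eq:Est4commu:1:in} of Lemma \ref{lem:Est4commu:1} gives
\[
|[\Box_A, D_Z]\phi|^2\les |F|^2|\bar D\phi|^2+|J|^2|\phi|^2.
\]
For the first term, using the pointwise bound \eqref{eq:Est4F:in:R:p} together with Hardy's inequality \eqref{eq:Hardy}, on the slice $t=\tau$ we have
\[
\int_{r\leq R}|F|^2|\bar D\phi|^2 dx\les \Big(\sup_{|x|\leq R}|F|^2(\tau, x)\Big)\int_{r\leq R}|\bar D\phi|^2 dx\les \Big(\sup_{|x|\leq R}|F|^2(\tau, x)\Big) E[\phi](\Si_{\tau}).
\]
By the energy flux decay \eqref{eq:Enerdecay:sca:in}, $\tau_+^{1+\ga_0}E[\phi](\Si_\tau)\les_{M_2}\mathcal{E}_0[\phi]$ for all $\tau\geq 0$; hence, using the integrated estimate \eqref{eq:Est4F:in:R},
\[
\int_{\tau_1}^{\tau_2}\tau_+^{1+\ga_0}\int_{r\leq R}|F|^2|\bar D\phi|^2 dxdt\les_{M_2}\mathcal{E}_0[\phi]\int_{\tau_1}^{\tau_2}\sup_{|x|\leq R}|F|^2(\tau, x)d\tau\les_{M_2}\mathcal{E}_0[\phi](\tau_1)_+^{-1-\ga_0}.
\]
For the second term, Sobolev embedding on $\{r\leq 2R\}$ yields
\[
\int_{r\leq R}|J|^2|\phi|^2 dx\les \Big(\sup_{r\leq R}|J|^2(\tau, x)\Big)\int_{r\leq R}|\phi|^2 dx\les \Big(\sum\limits_{k\leq 1}\int_{r\leq 2R}|\nabla^k J|^2 dx\Big)E[\phi](\Si_\tau),
\]
where we used Hardy's inequality \eqref{eq:Hardy} to bound $\int_{r\leq R}|\phi|^2 dx\les E[\phi](\Si_\tau)$. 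Again using $\tau_+^{1+\ga_0}E[\phi](\Si_\tau)\les_{M_2}\mathcal{E}_0[\phi]$ and the definition of $M_2$, which in particular controls $\int_{\tau_1}^{\tau_2}\sum_{k\leq 1}\int_{r\leq 2R}|\nabla\mathcal{L}_Z^k J|^2 dxdt\les M_2(\tau_1)_+^{-1-\ga_0}$ via the term $I_{1+\ga_0}^0[\nabla\mathcal{L}_Z^{l-1}J](\{r\leq 2R\})$ together with $\int_{r\leq 2R}|J|^2 dx\les E[\phi](\Si_\tau)\cdot E[\phi](\Si_\tau)$ (which is even smaller), we obtain
\[
\int_{\tau_1}^{\tau_2}\tau_+^{1+\ga_0}\int_{r\leq R}|J|^2|\phi|^2 dxdt\les_{M_2}\mathcal{E}_0[\phi](\tau_1)_+^{-1-\ga_0}.
\]
Finally, since $\{r\leq R,\ \tau_1\leq t\leq \tau_2\}\subset\bar{\mathcal{D}}_{\tau_1^+}^{\tau_2}$ with $\tau_1^+=\max\{\tau_1-R, 0\}$ and $(\tau_1^+)_+^{-1-\ga_0}\les_R (\tau_1)_+^{-1-\ga_0}$, combining the two bounds proves \eqref{eq:com1:R}.
\end{proof}
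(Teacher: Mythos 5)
Your overall strategy is the same as the paper's: use \eqref{eq:Est4commu:1:in} to reduce to the two pieces $|F||\bar D\phi|$ and $|J||\phi|$, pair the \emph{integrated} decay $\int_{\tau_1}^{\tau_2}\sup_{|x|\leq R}|F|^2\,d\tau\les M_2(\tau_1)_+^{-1-\ga_0}$ from \eqref{eq:Est4F:in:R} with the \emph{uniform} weighted energy bound $\tau_+^{1+\ga_0}E[\phi](\Si_\tau)\les_{M_2}\mathcal{E}_0[\phi]$ from \eqref{eq:Enerdecay:sca:in}, and handle the $J$-term with a local Sobolev embedding together with the $\nabla\mathcal{L}_Z^{l-1}J$-pieces of $M_2$. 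That bookkeeping is exactly right and is what the paper does.

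There is, however, a concrete error in the Sobolev step for the $J$-term. You write $\sup_{r\leq R}|J|^2\les \sum_{k\leq 1}\int_{r\leq 2R}|\nabla^k J|^2\,dx$, i.e.\ $H^1(B_{2R})\hookrightarrow L^\infty(B_R)$. This is false on a three-dimensional ball: $H^1$ only embeds into $L^6$, and $L^\infty$ would require roughly $H^{s}$ with $s>3/2$, i.e.\ two spatial derivatives of $J$, which $M_2$ does not provide (it only records $\nabla J$ and $\nabla\mathcal{L}_Z J$). Consequently the chain of inequalities as written does not close. The repair is short and is precisely what the paper uses: replace the pointwise bound on $J$ by the bilinear estimate
\[
\int_{r\leq R}|J|^2|\phi|^2\,dx=\|J\phi\|_{L^2(B_R)}^2\les\|J\|_{H^1_x(B_R)}^2\,\|\phi\|_{H^1_x(B_R)}^2,
\]
which is valid in three dimensions (e.g.\ via H\"older $L^3\times L^6\to L^2$ and the embeddings $H^{1/2}\hookrightarrow L^3$, $H^1\hookrightarrow L^6$). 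Then $\|\phi\|_{H^1_x(B_R)}^2\les E[\phi](\Si_\tau)$ by Hardy's inequality \eqref{eq:Hardy}, and $\int_{\tau_1}^{\tau_2}\|J\|_{H^1_x(B_R)}^2\,d\tau$ is controlled by the $I_{1+\ga_0}^0[\nabla\mathcal{L}_Z^{l-1}J](\{r\leq 2R\})$ piece of $M_2$. With this substitution your argument matches the paper's proof; the rest, including the treatment of the $|F||\bar D\phi|$-term, is correct. The final remark about shifting to $\tau_1^+=\max\{\tau_1-R,0\}$ is harmless but unnecessary here, since for $\tau_1\geq 0$ the slice $\Si_{\tau_1}$ already coincides with $\{t=\tau_1\}$ on $\{r\leq R\}$.
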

\begin{proof}
First we conclude from the energy estimate \eqref{eq:Enerdecay:sca:in} that the energy flux of the scalar field decays:
\[
 E[\phi](\Si_{\tau})\les_{M_2}\tau_+^{-1-\ga_0},\quad \forall \tau\geq 0.
\]
From the commutator estimates \eqref{eq:Est4commu:1:in}, we have
\[
 |[\Box_A, D_Z]\phi|\les |F||\bar D\phi|+|J||\phi|.
\]
For the first term, we make use of estimate \eqref{eq:Est4F:in:R}:
\begin{align*}
  \int_{\tau_1}^{\tau_2}\tau_+^{1+\ga_0}\int_{r\leq R}|F|^2|\bar D\phi|dxdt &\les \int_{\tau_1}^{\tau_2}\sup\limits_{|x|\leq R}|F|^2(\tau, x) E[\phi](\Si_{\tau})\tau_+^{1+\ga_0}d\tau\\
  &\les_{M_2}\mathcal{E}_0[\phi]\int_{\tau_1}^{\tau_2}\sup\limits_{|x|\leq R}|F|^2(\tau, x)d\tau\\
  &\les_{M_2}\mathcal{E}_0[\phi](\tau_1)_+^{-1-\ga_0}.
 \end{align*}
For $|J||\phi|$, we use Sobolev embedding on the ball $B_R$ with radius $R$ at fixed time $\tau$:
 \begin{align*}
  \int_{\tau_1}^{\tau_2}\tau_+^{1+\ga_0}\int_{r\leq R}|J|^2|\phi|^2dxdt &\les \int_{\tau_1}^{\tau_2}\tau_+^{1+\ga_0}\|J\|_{H^1_x(B_R)}^2\cdot \|\phi\|_{H^1_x(B_R)}^2d\tau\\
  &\les_{M_2}\int_{\tau_1}^{\tau_2}\mathcal{E}_0[\phi]\int_{r\leq R}|\nabla J|^2+|J|^2 dx d\tau\\
  &\les_{M_2}\mathcal{E}_0[\phi](\tau_1)_+^{-1-\ga_0}.
 \end{align*}
 Thus estimate \eqref{eq:com1:R} holds.
\end{proof}
Now from Lemma \ref{lem:Est4commu:1}, combine estimates \eqref{eq:com1:aDLbphi}, \eqref{eq:com1:abDLphi}-\eqref{eq:com1:R}. We can bound the first order commutator.
\begin{cor}
\label{cor:Est4:com1}
For all positive constant $\ep_1<1$, we have
\begin{equation}
\label{eq:Est4:com1}
\begin{split}
&I^{1+\ga_0}_{1+\ep}[[\Box_A, D_Z]\phi](\{t\geq 0\})+ I^{1+\ga_0}_{1+\ep}[[\Box_A, D_Z]\phi](\{t\geq 0\}\\
&\les_{M_2}\ep_1 I^{-1-\ep}_{1+\ga_0-\ep}[D\phi_1](\{t\geq 0\})+\ep_1 I^{\ga_0}_{0}[\D\psi_1](\{t\geq 0\}\cap\{r\geq R\})+\mathcal{E}_0[\phi] \ep_1^{-1}\\
&+\ep_1\int_{\mathbb{R}}\tau_+^{1+\ga_0}g(\tau)E[D_Z\phi](\Si_{\tau})d\tau+\ep_1 \int_{\mathbb{R}}\int_{H_{\tau^*}} \tau_+^{2+\ga_0+\ep-p}g(\tau) r^{p}|D_L\psi_1|^2 dvd\om d\tau
\end{split}
\end{equation}

\end{cor}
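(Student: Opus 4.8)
The plan is to assemble Corollary \ref{cor:Est4:com1} by summing the pointwise commutator bound of Lemma \ref{lem:Est4commu:1} against the weights $r^{1+\ga_0}u_+^{1+\ep}$ (and the twin weight $r^{1+\ep}u_+^{1+\ga_0}$), splitting the spacetime $\{t\ge 0\}$ into the region $\{r\ge R\}$ and the bounded region $\{r\le R\}$, and then quoting the term-by-term estimates already proved. In $\{r\le R\}$ estimate \eqref{eq:com1:R} of Proposition \ref{prop:com1:R} directly gives a bound by $\mathcal{E}_0[\phi](\tau_1)_+^{-1-\ga_0}$, which is absorbed into the $\mathcal{E}_0[\phi]\ep_1^{-1}$ term; no $\ep_1$ is needed there. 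For $\{r\ge R\}$, square the right-hand side of \eqref{eq:Est4commu:1} and distribute the weight $r^{1+\ga_0}u_+^{1+\ep}$ over the six groups of terms: $|\a|^2|D_{\Lb}\psi|^2$, $|\ab|^2|D_L\psi|^2$, $r^{-2}|\rho|^2|D_L\psi|^2$, $r^{-2}|F|^2|\D\psi|^2$ (noting $|\D\phi|=r^{-1}|\D\psi|$), and $(|J|^2+r^2|\J|^2+|\si|^2+r^{-2}|\rho|^2)|\phi|^2$. Matching $p=1+\ga_0$ in Propositions \ref{prop:com1:aDLbphi}, \ref{prop:com1:abDLphi}, \ref{prop:com1:rhophi}, \ref{prop:com1:FDZphi}, \ref{prop:com1:Jphi} (there $2+\ga_0+\ep-p = 1+\ep$, so the weights line up exactly), I obtain for each piece a bound of the form $\ep_1^{-1}\mathcal{E}_0[\phi]$ plus an $\ep_1$-small term that is either $\iint \tilde\tau_+^{1+\ep}g(\tilde\tau)r^{1+\ga_0}|D_L\psi_1|^2$, $\int\tilde\tau_+^{1+\ga_0}g(\tilde\tau)E[D_Z\phi](H_{\tilde\tau^*})\,d\tilde\tau$, or a weighted integral $I^{1+\ep}_{\ep}[r^{-1}D_LD_{\Lb}\psi]$ / $I^{1+\ep}_{1+\ep}[r^{-1}D_LD_{\Lb}\psi]$ of the mixed second-order derivative.

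The one group that does not immediately fit the statement is the mixed-derivative term: Propositions \ref{prop:supDLbphi}, \ref{prop:com1:aDLbphi}, \ref{prop:com1:rhophi} produce $\ep_1 I^{p_*}_{q_*}[r^{-1}D_LD_{\Lb}\psi](\bar{\mathcal D}_\tau)$ on the right, whereas \eqref{eq:Est4:com1} is phrased in terms of $I^{-1-\ep}_{1+\ga_0-\ep}[D\phi_1]$ and $I^{\ga_0}_0[\D\psi_1]$. This is exactly where Proposition \ref{prop:Est4DLDLbphi} enters: its estimate \eqref{eq:Est4DLDLbphi} bounds $I^{p}_{2+\ga_0-p-2\ep}[r^{-1}D_LD_{\Lb}\psi]$ for $1+\ep\le p\le1+\ga_0$ by $\mathcal{E}_0[\phi]+I^{-1-\ep}_{1+\ga_0-\ep}[D\phi_1]+I^{\ga_0}_0[\D\psi_1]$. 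So after choosing $p$ appropriately in each application I substitute \eqref{eq:Est4DLDLbphi} to convert all the second-order mixed-derivative contributions into the two allowed norms $I^{-1-\ep}_{1+\ga_0-\ep}[D\phi_1]$, $I^{\ga_0}_0[\D\psi_1]$, each carrying the small factor $\ep_1$. The residual $\mathcal{E}_0[\phi]$ coming from \eqref{eq:Est4DLDLbphi} is harmless since it is multiplied by $\ep_1<1$ and hence dominated by the $\mathcal{E}_0[\phi]\ep_1^{-1}$ term.

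The main bookkeeping obstacle, and the step to be most careful about, is matching the weight exponents. In Propositions \ref{prop:com1:aDLbphi} and \ref{prop:com1:abDLphi} the $u$-weight is $u_+^{2+\ga_0+\ep-p}$, which equals $u_+^{1+\ep}$ only at $p=1+\ga_0$; since in $\{r\ge R\}\cap\{t\ge 0\}$ one has $r\ge R$ bounded below but $r$ also unbounded, I must use $r^{1+\ga_0}u_+^{1+\ep}\lesssim$ the weight appearing in those propositions (true because $p=1+\ga_0$ makes them coincide, so there is actually nothing to lose) — i.e. I simply take $p=1+\ga_0$ throughout and no interpolation between different $p$ is needed for this corollary. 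Likewise the twin norm $I^{1+\ep}_{1+\ga_0}$ follows by the symmetric choice, exploiting $r\gtrsim u_+$ fails globally but $r_+^{1+\ga_0}u_+^{1+\ep}\le r_+^{1+\ep}u_+^{1+\ga_0}$ whenever $r_+\le u_+$ and the reverse otherwise, so one of the two weightings always dominates pointwise and both integrals are controlled by the same right-hand side; this is the standard device used throughout Section \ref{sec:EnEst4sca}. Finally I collect the $g$-weighted integrals: the $|\ab|^2|D_L\psi|^2$ term contributes the $\int\tau_+^{2+\ga_0+\ep-p}g(\tau)r^p|D_L\psi_1|^2$ piece with $p=1+\ga_0$, and the $r^{-2}|F|^2|\D\psi|^2$ term contributes $\int\tau_+^{1+\ga_0}g(\tau)E[D_Z\phi](\Si_\tau)$ after identifying $E[D_Z\phi](H_{\tau^*})\les E[D_Z\phi](\Si_\tau)$. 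Summing all pieces and relabeling constants depending on $M_2$ yields \eqref{eq:Est4:com1}.
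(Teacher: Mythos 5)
Your proposal is correct and reproduces, with fuller bookkeeping, exactly the argument the paper gives: split via Lemma \ref{lem:Est4commu:1} into the region $\{r\leq R\}$ (handled by \eqref{eq:com1:R}) and the region $\{r\geq R\}$ where the six groups are estimated term by term via Propositions \ref{prop:com1:aDLbphi}--\ref{prop:com1:Jphi}, and then use Proposition \ref{prop:Est4DLDLbphi} with $p=1+\ep$ to convert the leftover $I^{1+\ep}_{1+\ep}[r^{-1}D_LD_{\Lb}\psi]$ into the allowed $I^{-1-\ep}_{1+\ga_0-\ep}[D\phi_1]$ and $I^{\ga_0}_0[\D\psi_1]$ norms. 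You also correctly read the (evidently typo-ridden) left-hand side of \eqref{eq:Est4:com1} as the pair $I^{1+\ga_0}_{1+\ep}+I^{1+\ep}_{1+\ga_0}$ and handle the twin norm by the symmetric choice $p=1+\ep$ in the weighted propositions rather than any pointwise-domination trick, which is the intended mechanism.
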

\begin{proof}
From Lemma \ref{lem:Est4commu:1}, estimate \eqref{eq:Est4:com1} is a consequence of estimates \eqref{eq:com1:aDLbphi}, \eqref{eq:com1:abDLphi}, \eqref{eq:com1:FDZphi}, \eqref{eq:com1:rhophi}, \eqref{eq:com1:Jphi}, \eqref{eq:com1:R}. The term $I^{1+\ep}_{1+\ep}[r^{-1}D_{\Lb}D_L\psi](D)$ can further be controlled by using Proposition \ref{prop:Est4DLDLbphi} with $p=1+\ep$.
\end{proof}
Now we are able to derive the energy decay estimates for the first order derivative of the scalar field. Based on the result for the decay estimates for $\phi$ in the previous subsection, it suffices to bound $\mathcal{E}_0[D_Z\phi]$.
\begin{prop}
\label{prop:bd4EDZphi}
We have the following bound:
\begin{equation}
\label{eq:bd4EDZphi}
\mathcal{E}_0[D_Z\phi]\les_{M_2}\mathcal{E}_1[\phi].
\end{equation}
\end{prop}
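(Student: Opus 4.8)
The goal of Proposition \ref{prop:bd4EDZphi} is to bound the norm $\mathcal{E}_0[D_Z\phi]$, which by definition \eqref{eq:def4Nkphi} consists of the initial weighted energy $E_0^0[D_Z\phi]$ together with the weighted spacetime integrals $I^{1+\ga_0}_{1+\ep}[\Box_A D_Z\phi](\{t\geq 0\})$ and $I^{1+\ep}_{1+\ga_0}[\Box_A D_Z\phi](\{t\geq 0\})$. The plan is to treat the two contributions separately. For the spacetime integrals, since $\phi$ solves the linear covariant wave equation $\Box_A\phi=0$, Lemma \ref{lem:commutator} gives $\Box_A D_Z\phi=[\Box_A, D_Z]\phi$, so these are exactly the commutator norms already estimated in Corollary \ref{cor:Est4:com1}. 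Taking $\ep_1$ small in \eqref{eq:Est4:com1}, the terms $\ep_1 I^{-1-\ep}_{1+\ga_0-\ep}[D\phi_1]$ and $\ep_1 I^{\ga_0}_0[\D\psi_1]$ will later be absorbed (in the energy estimate for $\phi_1$), but here the point is just to see that the right-hand side is controlled by $\mathcal{E}_0[\phi]$ together with quantities built from the first-order energy flux $E[D_Z\phi](\Si_\tau)$ and the weighted flux $\int_{H_{\tau^*}}r^p|D_L\psi_1|^2$, weighted against the integrable function $g(\tau)$ of Corollary \ref{cor:L2gu}. These latter first-order quantities are themselves controlled by $\mathcal{E}_0[D_Z\phi]$ via the energy and $r$-weighted energy estimates of Section \ref{sec:energyID} (Propositions \ref{prop:ILEs}, \ref{prop:pWE:sca}), so one obtains an estimate of schematic form $\mathcal{E}_0[D_Z\phi]\les_{M_2}\mathcal{E}_1[\phi]+\ep_1\,(\text{first-order quantities})+\ep_1\int g(\tau)\,\mathcal{E}_0[D_Z\phi]$; choosing $\ep_1$ small and using $\int_{\mathbb{R}}\tau_+^{1+\ep}g\,d\tau<\infty$, a Gronwall-type argument (or simple absorption, since the integral is finite) closes the estimate.

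First I would handle the initial-data term $E_0^0[D_Z\phi]=\sum_{j\leq 3}\int_{\mathbb{R}^3}r_+^{1+\ga_0}|D_j D_Z\phi|^2(0,x)\,dx$. When $Z=\pa_t$, one has $D_j D_{\pa_t}\phi = D_{\pa_t}D_j\phi + i F_{j0}\phi$, and $D_{\pa_t}\phi(0,x)=\phi_1$ is data while $D_{\pa_t}D_j\phi(0,x)$ must be re-expressed: differentiating the equation $\Box_A\phi=0$ spatially, or rather using the equation itself to trade the second time derivative for spatial derivatives, one writes $D_t^2\phi = D^iD_i\phi - iF_{0i}D^i\phi \cdots$ wait — more directly, $\Box_A\phi=0$ gives $D_tD_t\phi$ in terms of $\sum_i D_iD_i\phi$ plus curvature terms, so $D_jD_{\pa_t}\phi(0,x)$ reduces to expressions involving $\nabla^{\leq 2}\phi_0$, $\nabla^{\leq 1}\phi_1$, and $F$, $\nabla F$ acting on $\phi_0$, $\phi_1$. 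When $Z=\Om_{kl}$, $D_jD_\Om\phi$ produces $\Om D_j\phi$, a zeroth-order term $D_k\phi$ or $D_l\phi$ from the commutator $[\pa_j,\Om_{kl}]$, plus curvature terms $F(\pa_j,\Om_{kl})\phi$. In all cases the result is bounded by $\sum_{l\leq 2,j\leq 3}\int r_+^{1+\ga_0}(|D_Z^l D_j\phi|^2+|\nabla^l F|^2|\phi|^2)(0,x)\,dx$ plus the extra $|\phi_0|^2$ type terms, and the curvature-times-scalar pieces are controlled using $\sup|F|$ or Sobolev bounds times the weighted scalar norm — here one invokes the equivalence (Lemma \ref{lem:IDbd}, referenced in the introduction) of the gauge-invariant data norm with $\mathcal{E}$ up to a constant depending on $\mathcal{M}$, so that $E_0^0[D_Z\phi]\les_{M_2}\mathcal{E}_1[\phi]$.

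The main obstacle I anticipate is bookkeeping rather than conceptual: one must carefully verify that every term appearing on the right of \eqref{eq:Est4:com1} and in the initial-data reduction is genuinely controlled by $\mathcal{E}_1[\phi]$ (for the data part) and $\mathcal{E}_0[D_Z\phi]$ times a small or integrable factor (for the dynamical part), so that the absorption/Gronwall step is legitimate. In particular the coupling terms $\ep_1\int_{\mathbb{R}}\tau_+^{1+\ga_0}g(\tau)E[D_Z\phi](\Si_\tau)\,d\tau$ and $\ep_1\int\int_{H_{\tau^*}}\tau_+^{2+\ga_0+\ep-p}g(\tau)r^p|D_L\psi_1|^2$ must be shown to be absorbable: since $\int\tau_+^{1+\ep}g\,d\tau\les M_2\tau_+^{-\ga_0+3\ep}$ is finite and $E[D_Z\phi](\Si_\tau)$, $\int r^p|D_L\psi_1|^2$ are, by Propositions \ref{prop:ILEs} and \ref{prop:pWE:sca}, bounded by (a constant times) $\mathcal{E}_0[D_Z\phi]$ plus lower order, choosing $\ep_1$ small absorbs them into the left side. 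Once these verifications are in place, combining the data bound with the absorbed spacetime bounds yields \eqref{eq:bd4EDZphi}. I would also remark, as the text does, that the same computation handles both the $I^{1+\ga_0}_{1+\ep}$ and $I^{1+\ep}_{1+\ga_0}$ weightings since the commutator estimates in Lemma \ref{lem:Est4commu:1} and the subsequent propositions were proved with enough flexibility in the weights.
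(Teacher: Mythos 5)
Your proposal follows essentially the same route as the paper: reduce $\mathcal{E}_0[D_Z\phi]$ to the commutator spacetime norms plus initial data via $\Box_A D_Z\phi=[\Box_A,D_Z]\phi$, invoke Corollary \ref{cor:Est4:com1}, observe that the first-order quantities $E[D_Z\phi](\Si_\tau)$, $I^{-1-\ep}_{1+\ga_0-\ep}[DD_Z\phi]$ and $\int_{H_{\tau^*}}r^p|D_L\psi_1|^2$ appearing on its right-hand side are in turn bounded by $\mathcal{E}_0[D_Z\phi]$, use the integrability of $g(\tau)$ from Corollary \ref{cor:L2gu}, and close by choosing $\ep_1$ small. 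Two small citation inaccuracies: the bounds on the first-order quantities come from the already-established \emph{decay} estimates (Corollary \ref{cor:ILEdecay:sca:ex}, Proposition \ref{prop:Enerdecay:sca:in}, Corollaries \ref{cor:pWEdecay:sca:ex}, \ref{cor:pWEdecay:sca:in:1ga}), not the raw energy identities of Propositions \ref{prop:ILEs}, \ref{prop:pWE:sca} alone; and Lemma \ref{lem:IDbd} is not what controls $E_0^0[D_Z\phi]$ here — that lemma relates the gauge-invariant norm to $\mathcal{M}$, $\mathcal{E}$ and is only used in the bootstrap section, whereas the initial-data term $E_0^0[D_Z\phi]\les_{M_2}\mathcal{E}_1[\phi]$ is obtained directly by commuting derivatives and absorbing the resulting curvature factors into the $M_2$-dependent constant.
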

\begin{proof}
First by definition,
\begin{align*}
\mathcal{E}_0[D_Z\phi]&\les \mathcal{E}_1[\phi]+I^{1+\ga_0}_{1+\ep}[[\Box_A, D_Z]\phi](\{t\geq 0\})+ I^{1+\ep}_{1+\ga_0}[[\Box_A, D_Z]\phi](\{t\geq 0\}.
\end{align*}
Then from previous estimate \eqref{eq:Est4:com1}, the above inequality leads to
\begin{align*}
\mathcal{E}_0[D_Z\phi]&\les_{M_2} \ep_1 I^{-1-\ep}_{1+\ga_0-\ep}[DD_Z\phi](\{t\geq 0\})+\ep_1 I^{\ga_0}_{0}[\D\psi_1](\{t\geq 0\}\cap\{r\geq R\})+\mathcal{E}_1[\phi] \ep_1^{-1}\\
&+\ep_1\int_{\mathbb{R}}\tau_+^{1+\ga_0}g(\tau)E[D_Z\phi](\Si_{\tau})d\tau+\ep_1 \int_{\mathbb{R}}\int_{H_{\tau^*}} \tau_+^{2+\ga_0+\ep-p}g(\tau) r^{p}|D_L\psi_1|^2 dvd\om d\tau
\end{align*}
for all $0<\ep_1<1$. By our notation, the implicit constant is independent of $\ep_1$ and $\phi_1=D_Z\phi$.

Now from the integrated local energy estimates \eqref{eq:ILEdecay:sca:ex}, \eqref{eq:ILE:sca:in:sim0} combined with Lemma \ref{lem:simplint}, we can show that
\begin{align*}
I^{-1-\ep}_{1+\ga_0-\ep}[DD_Z\phi](\{t\geq 0\})\les_{M_2} \mathcal{E}_0[D_Z\phi].
\end{align*}
By the energy decay estimate \eqref{eq:ILE:Sca:ex}, \eqref{eq:Enerdecay:sca:in}, we have the energy decay for $D_Z\phi$:
\[
E[D_Z\phi](\Si_{\tau})\les_{M_2}\mathcal{E}_0[D_Z\phi]\tau_+^{-1-\ga_0},\quad \forall \tau\in \mathbb{R}.
\]
Moreover, the $r$-weighted energy estimates \eqref{eq:pWEdecay:sca:ex}, \eqref{eq:pWEdecay:sca:in:1ga} imply that
\begin{align*}
\tau_+^{1+\ga_0-p}\int_{H_{\tau^*}}r^p|D_{L}(rD_Z\phi)|^2dvd\om+\int_{\mathbb{R}}\int_{H_{\tau^*}}r^{\ga_0}|\D(rD_{Z}\phi)|^2 dvd\om d\tau\les_{M_2}\mathcal{E}_0[D_Z\phi].
\end{align*}
Recall the definition for $g(\tau)$ in line \eqref{eq:defofgu}. By Lemma \ref{cor:L2gu}, we then can demonstrate that
\begin{align*}
&\int_{\mathbb{R}}\tau_+^{1+\ga_0}g(\tau)E[D_Z\phi](\Si_{\tau})d\tau\les_{M_2} \mathcal{E}_0[D_Z\phi] \int_{\mathbb{R}}g(\tau)d\tau\les_{M_2}\mathcal{E}_0[D_Z\phi],\\
&\int_{\mathbb{R}}\int_{H_{\tau^*}} \tau_+^{2+\ga_0+\ep-p}g(\tau) r^{p}|D_L\psi_1|^2 dvd\om d\tau\les_{M_2}\mathcal{E}_0[D_Z\phi] \int_{\mathbb{R}}\tau_+^{1+\ep}g(\tau)d\tau\les_{M_2} \mathcal{E}_0[D_Z\phi].
\end{align*}
We therefore derive that
\[
\mathcal{E}_0[D_Z\phi]\les_{M_2} \ep_1 \mathcal{E}_0[D_Z\phi]+\ep_1^{-1}\mathcal{E}_1[\phi],\quad \forall 0<\ep_1<1.
\]
Take $\ep_1$ to be sufficiently small, depending only on $M_2$, $\ga_0$, $R$ and $\ep$. We then obtain estimate \eqref{eq:bd4EDZphi}.
\end{proof}
The argument then implies all the desired energy decay estimates for the first order derivative of the scalar field in terms of $\mathcal{E}_1[\phi]$. Moreover estimate \eqref{eq:Est4:com1} can be improved to be the following:
\begin{cor}
\label{cor:Est4:com1:b}
For all positive constant $\ep_1<1$, we have
\begin{equation}
\label{eq:Est4:com1:b}
\begin{split}
I^{1+\ga_0}_{1+\ep}[[\Box_A, D_Z]\phi](\{t\geq 0\})+ I^{1+\ga_0}_{1+\ep}[[\Box_A, D_Z]\phi](\{t\geq 0\}\les_{M_2}\ep_1 \mathcal{E}_1[\phi]+\mathcal{E}_0[\phi] \ep_1^{-1}.
\end{split}
\end{equation}
\end{cor}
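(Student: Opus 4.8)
The plan is to revisit the estimate \eqref{eq:Est4:com1} from Corollary \ref{cor:Est4:com1} and feed the already-established decay of the first order derivative of the scalar field back into its right hand side. The point is that in \eqref{eq:Est4:com1} the ``large'' terms involving $D\phi_1$, $\D\psi_1$, $E[D_Z\phi]$ and the $r$-weighted flux of $D_L\psi_1$ were left on the right hand side because at that stage we had no quantitative control over $\phi_1=D_Z\phi$; now, by Proposition \ref{prop:bd4EDZphi} together with the energy flux decay \eqref{eq:Enerdecay:sca:in} for $\phi_1$ in place of $\phi$, the $r$-weighted energy decay \eqref{eq:pWEdecay:sca:ex}, \eqref{eq:pWEdecay:sca:in:1ga} for $rD_Z\phi$, and the integrated local energy estimates \eqref{eq:ILEdecay:sca:ex}, \eqref{eq:ILE:sca:in:sim0}, each of those terms is bounded by $\mathcal{E}_0[D_Z\phi]\les_{M_2}\mathcal{E}_1[\phi]$.

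Concretely, first I would recall from the proof of Proposition \ref{prop:bd4EDZphi} that
\[
I^{-1-\ep}_{1+\ga_0-\ep}[DD_Z\phi](\{t\geq 0\})+\int_{\mathbb{R}}\tau_+^{1+\ga_0}g(\tau)E[D_Z\phi](\Si_\tau)d\tau\les_{M_2}\mathcal{E}_0[D_Z\phi],
\]
and similarly that $I^{\ga_0}_0[\D\psi_1](\{t\geq0\}\cap\{r\geq R\})$ and $\int_{\mathbb{R}}\int_{H_{\tau^*}}\tau_+^{2+\ga_0+\ep-p}g(\tau)r^p|D_L\psi_1|^2dvd\om d\tau$ are both $\les_{M_2}\mathcal{E}_0[D_Z\phi]$, the latter using Corollary \ref{cor:L2gu} to integrate $\tau_+^{1+\ep}g(\tau)$. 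Substituting these four bounds into \eqref{eq:Est4:com1} turns the $\ep_1$-weighted terms into $\ep_1 C_{M_2}\mathcal{E}_0[D_Z\phi]$, and then Proposition \ref{prop:bd4EDZphi} gives $\mathcal{E}_0[D_Z\phi]\les_{M_2}\mathcal{E}_1[\phi]$, so the right hand side of \eqref{eq:Est4:com1} becomes $\ep_1 C_{M_2}\mathcal{E}_1[\phi]+C_{M_2}\ep_1^{-1}\mathcal{E}_0[\phi]$. Since $\ep_1\in(0,1)$ is arbitrary, renaming the constant absorbed into $\les_{M_2}$ yields exactly \eqref{eq:Est4:com1:b}. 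Note that the $\mathcal{E}_0[\phi]\ep_1^{-1}$ term in \eqref{eq:Est4:com1} is untouched and simply carried through.

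The two displays in \eqref{eq:Est4:com1:b} are written with the same right hand side but refer to the two spacetime norms $I^{1+\ga_0}_{1+\ep}$ and $I^{1+\ep}_{1+\ga_0}$ appearing in the definition \eqref{eq:def4Nkphi} of $\mathcal{E}_k[\phi]$; both are controlled by the same argument, since all the component estimates \eqref{eq:com1:aDLbphi}, \eqref{eq:com1:abDLphi}, \eqref{eq:com1:rhophi}, \eqref{eq:com1:FDZphi}, \eqref{eq:com1:Jphi}, \eqref{eq:com1:R} of Lemma \ref{lem:Est4commu:1} were proved with the weight $r^p u_+^{2+\ga_0+\ep-p}$ for the full range $1+\ep\leq p\leq 1+\ga_0$, and taking $p=1+\ep$ versus $p=1+\ga_0$ recovers the two norms respectively. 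The main obstacle is purely bookkeeping: one must check that every term produced by the commutator estimate has actually been given a $\tau_+$ or $r$ decay rate compatible with \eqref{eq:pWEdecay:sca:in:1ga} and \eqref{eq:Enerdecay:sca:in} \emph{for the commuted field} $D_Z\phi$, which is legitimate precisely because Proposition \ref{prop:bd4EDZphi} and the energy method of Section \ref{sec:EnEst4sca} apply verbatim to $D_Z\phi$ once $\mathcal{E}_0[D_Z\phi]$ is known to be finite; there is no new analytic difficulty, only the self-improvement of an already-closed estimate.
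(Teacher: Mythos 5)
Your proposal is correct and follows the same route as the paper intends: the paper states this corollary without a written proof (only the one-line remark that the argument above "implies all the desired energy decay estimates for the first order derivative... Moreover estimate \eqref{eq:Est4:com1} can be improved to be the following"), and your write-up simply makes that self-improvement explicit by substituting the four bounds established inside the proof of Proposition \ref{prop:bd4EDZphi} — each controlled by $\mathcal{E}_0[D_Z\phi]\les_{M_2}\mathcal{E}_1[\phi]$ — back into the right-hand side of Corollary \ref{cor:Est4:com1}. Your closing observation that the two norms $I^{1+\ga_0}_{1+\ep}$ and $I^{1+\ep}_{1+\ga_0}$ (the second being a typo in the paper's display, which repeats the first) correspond to the endpoints $p=1+\ga_0$ and $p=1+\ep$ of the range in which Propositions \ref{prop:com1:aDLbphi}--\ref{prop:com1:Jphi} were proved is also correct and is the reason the statement carries over to both.
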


\subsubsection{Energy decay estimates for the second order derivatives of the scalar field}
\label{sec:2ndenergy}
In this subsection, we establish the energy decay estimates for the second order derivative of the scalar field. Note that the definition of $M_2$ records the size and regularity of the connection field $A$ which is
independent of the scalar field. In particular Proposition \ref{prop:bd4EDZphi} and Corollary \ref{cor:Est4:com1:b} apply to $\phi_1=D_Z\phi$:
\begin{align*}
&\mathcal{E}_0[D_Z\phi_1]\les_{M_2}\mathcal{E}_1[\phi_1],\\
I^{1+\ga_0}_{1+\ep}[[\Box_A, D_Z]\phi_1](\{t\geq 0\})+& I^{1+\ga_0}_{1+\ep}[[\Box_A, D_Z]\phi_1](\{t\geq 0\}\les_{M_2}\ep_1 \mathcal{E}_1[\phi_1]+\mathcal{E}_1[\phi] \ep_1^{-1}
\end{align*}
for all $0<\ep_1<1$. Here by Proposition \ref{prop:bd4EDZphi}, $\mathcal{E}_0[\phi_1]\les_{M_2} \mathcal{E}_1[\phi]$. To derive the energy decay estimates for the second order derivative of the solution, it suffices to bound $\mathcal{E}_1[\phi_1]$.
As $\phi_1=D_Z\phi$, by definition
\begin{align*}
 \mathcal{E}_1[\phi_1]&=\mathcal{E}_0[\phi_1]+E^1_0[\phi_1]+I^{1+\ga_0}_{1+\ep}[D_Z\Box_A\phi_1](\{t\geq 0\})+I^{1+\ep}_{1+\ga_0}[D_Z\Box_A \phi_1](\{t\geq0\})\\
 &\les \mathcal{E}_2[\phi]+I^{1+\ga_0}_{1+\ep}[D_Z[\Box_A,D_Z]\phi](\{t\geq 0\})+I^{1+\ep}_{1+\ga_0}[D_Z[\Box_A,D_Z] \phi](\{t\geq0\})\\
 &\les \mathcal{E}_2[\phi]+I^{1+\ga_0}_{1+\ep}[[D_Z, [\Box_A,D_Z]]\phi](\{t\geq 0\})+I^{1+\ep}_{1+\ga_0}[[D_Z,[\Box_A,D_Z]] \phi](\{t\geq0\})\\
 &\qquad+I^{1+\ga_0}_{1+\ep}[[\Box_A,D_Z]D_Z\phi](\{t\geq 0\})+I^{1+\ep}_{1+\ga_0}[[\Box_A,D_Z] D_Z\phi](\{t\geq0\})\\
 &\les_{M_2} \mathcal{E}_2[\phi]+I^{1+\ga_0}_{1+\ep}[[D_Z, [\Box_A,D_Z]]\phi](\{t\geq 0\})+I^{1+\ep}_{1+\ga_0}[[D_Z,[\Box_A,D_Z]] \phi](\{t\geq0\})\\
 &\qquad+\ep_1 \mathcal{E}_1[\phi_1]+\mathcal{E}_1[\phi] \ep_1^{-1}
\end{align*}
for $0<\ep_1<1$.
Let $\ep_1$ to be sufficiently small. We then conclude that
\begin{align*}
\mathcal{E}_1[\phi_1]&\les_{M_2} \mathcal{E}_2[\phi]+I^{1+\ga_0}_{1+\ep}[[D_Z, [\Box_A,D_Z]]\phi](\{t\geq 0\})+I^{1+\ep}_{1+\ga_0}[[D_Z,[\Box_A,D_Z]] \phi](\{t\geq0\}).
\end{align*}
Therefore to bound $\mathcal{E}_1[\phi_1]$, it is reduced to control the second order commutator $[D_Z, [\Box_A,D_Z]]\phi$.

First we have the following analogue of Lemma \ref{lem:Est4commu:1}:
\begin{lem}
\label{lem:Est4commu:2}
For all $X, \, Y\in Z$, when $r\geq R$, we have
\begin{equation}
\label{eq:Est4commu:2}
\begin{split}
|[D_X,[\Box_A, D_Y]]\phi|\les & |[\Box_{\mathcal{L}_Z A}, D_Z]\phi|+(|F|^2+|r\a||r\ab|+|r\si|^2+|r\rho|(|\ab|+|\a|))|\phi|.
\end{split}
\end{equation}
When $r\leq R$, we have
\begin{equation}
\label{eq:Est4commu:2:in}
|[D_X, [\Box_A, D_Y]]\phi|\les |[\Box_{\mathcal{L}_Z A}, D_Z]\phi|+|[\Box_{A}, D_Z]\phi|+|F|^2|\phi|.
\end{equation}
Here we note that $\mathcal{L}_Z F=\mathcal{L}_Z d A=d\mathcal{L}_Z A$.
\end{lem}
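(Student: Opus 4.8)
The plan is to prove Lemma~\ref{lem:Est4commu:2} by direct computation, expanding the double commutator $[D_X,[\Box_A,D_Y]]\phi$ using the first-order commutator formula of Lemma~\ref{lem:commutator} and then sorting the resulting terms into the three types on the right-hand side of \eqref{eq:Est4commu:2}: a ``genuine'' commutator term $[\Box_{\mathcal{L}_Z A},D_Z]\phi$ where the connection has been Lie-differentiated, and two types of genuinely quadratic (in $F$) interaction terms multiplying $\phi$ itself. First I would recall from Lemma~\ref{lem:commutator} that $[\Box_A,D_Y]\phi = 2i\,Y^\nu F_{\mu\nu}D^\mu\phi + i\,\pa^\mu(Y^\nu F_{\mu\nu})\phi$ for a Killing field $Y$. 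Applying $D_X$ to this and using the Leibniz rule for the covariant derivative acting on the curvature $F$ (noting $D_X(F_{\mu\nu}) = \pa_X(F_{\mu\nu})$ since $F$ is a scalar-valued 2-form and $X$ commutes with $\pa$ in a suitable sense, modulo frame terms), I would get schematically
\[
[D_X,[\Box_A,D_Y]]\phi = 2i\,Y^\nu (\mathcal{L}_X F)_{\mu\nu}D^\mu\phi + i\,\pa^\mu\big(Y^\nu (\mathcal{L}_X F)_{\mu\nu}\big)\phi + \big(\text{terms quadratic in }F\big)\phi + \big(\text{commutator of }[\Box_A,D_X]\text{ with }D_Y\big).
\]
The first two terms reassemble, after using the commutator lemma in reverse, into $[\Box_{\mathcal{L}_Z A},D_Z]\phi$, since $\mathcal{L}_Z F = d\mathcal{L}_Z A$; the subtlety is that $D_X F = \pa_X F$ only up to the fact that $D_X(D^\mu\phi)$ generates an extra $F_{X}{}^\mu\phi$, and this extra piece is precisely where the quadratic $F^2\phi$ terms come from.

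The second block of work is to identify which components of $F\cdot F$ actually appear and to show they are controlled by $|F|^2 + |r\a||r\ab| + |r\si|^2 + |r\rho|(|\ab|+|\a|)$ when $r\geq R$. The quadratic terms arise from two sources: (i) $D_X$ hitting the $D^\mu\phi$ inside $[\Box_A,D_Y]\phi$, which produces $Y^\nu F_{\mu\nu}\cdot X_\rho F^{\rho\mu}\phi$-type expressions, and (ii) the mismatch between $[D_X,[\Box_A,D_Y]]$ and $[\Box_A,[D_X,D_Y]]$, where $[D_X,D_Y]$ contains both the (vector-field) commutator $[X,Y]$ — which is again in the span of $Z$ — and the curvature term $iF(X,Y)$, so that commuting $\Box_A$ past this $iF(X,Y)$ scalar multiplier produces $F\cdot \nabla F$ and $F\cdot F$ contributions. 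I would then use the null decomposition \eqref{eq:curNull}: for the angular momentum $X = \Omega$ one has $r^{-1}\Omega \in \mathrm{span}\{e_1,e_2\}$, so $\Omega^\nu F_{\mu\nu}$ involves only $r\a$ and $r\ab$ (and $r\rho$, $r\si$ through the contractions with $\pa_r$), and the worst cross-terms after multiplying two such factors are exactly $|r\a||r\ab|$, $|r\si|^2$, and $|r\rho|(|\a|+|\ab|)$; the remaining products are bounded crudely by $|F|^2$. For $X = \pa_t$ the weights are trivial and everything is dominated by $|F|^2|\phi|$.

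The third, easy, block is the interior region $r\leq R$: here all $r$-weights are comparable to constants, the angular momentum no longer has the favorable structure, and I would simply note that the double commutator is a sum of $[\Box_{\mathcal{L}_Z A},D_Z]\phi$, a lower-order term $[\Box_A,D_Z]\phi$ (coming from the vector-field commutator $[X,Y]$, which on $r\leq R$ is a bounded combination of elements of $Z$, plus the fact that $\Omega$ itself is $O(r)$ so no weighted gain is claimed), and a quadratic $|F|^2|\phi|$ term, giving \eqref{eq:Est4commu:2:in}. The main obstacle I anticipate is bookkeeping: tracking the frame/connection-coefficient terms carefully enough to see that the ``good'' first-order pieces genuinely recombine into $[\Box_{\mathcal{L}_Z A},D_Z]\phi$ rather than leaving a residue, and making sure that the null structure is not destroyed — i.e., that no term of the forbidden shape $|\ab|^2|D_{\Lb}\psi|$ or $|r\ab|^2|\phi|$ with the bad weight sneaks in. Once the algebraic identity is pinned down, the estimate \eqref{eq:Est4commu:2} is just reading off the null components with their correct $r$-weights.
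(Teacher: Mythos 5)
Your outline follows the paper's route: expand $[\Box_A,D_Y]\phi$ via Lemma \ref{lem:commutator}, commute with $D_X$, and sort into (i) the first-order commutator with the Lie-differentiated connection $\mathcal{L}_Z A$, (ii) a first-order commutator coming from the vector-field bracket $[X,Y]\in Z$, and (iii) a genuinely quadratic tail of the form $F^{\mu}{}_{X}F_{Y\mu}\phi$, then read off (iii) in the null frame. The plan is sound, but the null-frame case analysis is where the lemma lives and your description of it needs repair. You claim $\Om^\nu F_{\mu\nu}$ ``involves $r\rho$ through the contraction with $\pa_r$''; it does not --- since $r^{-1}\Om$ is tangent to the spheres, $\Om^\nu F_{\mu\nu}=rF_{\mu e_i}$ has only $r\a$, $r\ab$, $r\si$ components, and the term $|r\rho|(|\ab|+|\a|)$ in \eqref{eq:Est4commu:2} actually comes from the \emph{mixed} case $X\in\Om$, $Y=\pa_t$ (or vice versa), not from $\Om$--$\Om$. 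More importantly, you \emph{anticipate} that the forbidden product $r^2|\ab|^2$ does not appear but do not explain why; this is the whole content of the null structure and has to be demonstrated, not conjectured. The mechanism is that the summed index $\mu$ in $F^{\mu}{}_{X}F_{Y\mu}$ is raised by the Minkowski metric, whose only off-diagonal null component pairs $L$ with $\Lb$: hence an $\ab_i=F_{\Lb e_i}$ factor is always contracted against an $L$-slot, producing $\a$ (or, in the mixed case, $\rho$ or $\si$), and never another $\ab$. Once you write out the three cases explicitly --- $\Om$--$\Om$ gives $|r\a||r\ab|+|r\si|^2$; $\pa_t$--$\pa_t$ gives $|F|^2$; mixed gives $r(|\rho|+|\si|)(|\a|+|\ab|)$, whose $\si$ piece is absorbed into $|r\si|^2+|F|^2$ --- your sketch becomes the paper's proof.
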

\begin{proof}
First from Lemma \ref{lem:commutator}, we can write
\begin{equation*}
[\Box_A, D_{X}]\phi=2i X^\nu F_{\mu\nu}D^{\mu}\phi+i \pa^\mu( F_{\mu\nu}X^\nu)\phi.
\end{equation*}
We need to compute the double commutator $[D_Y, [\Box_A, D_X]]\phi$ for $X$, $Y\in Z$. We can compute that
\begin{align*}
[D_Y, [\Box_A, D_{X}]]\phi&=\mathcal{L}_Y\left(2i X^\nu F_{\mu\nu}D^{\mu}+i \pa^\mu( F_{\mu\nu}X^\nu)\right)\phi\\
&=2i (\mathcal{L}_Y F)(D\phi, X)+2i F([D_Y, D\phi], X)+2iF(D\phi, [Y, X])+iY(\pa^\mu( F_{\mu\nu}X^\nu))\phi.
\end{align*}
Here
\begin{align*}
[D_Y, D\phi]=D^{\mu}\phi [Y, \pa_{\mu}]+[D_Y, D^{\mu}]\phi \pa_{\mu}=i F_{Y\pa_{\mu}}\phi\pa^{\mu}-(\pa^{\mu}Y^{\nu}+\pa^{\nu}Y^{\mu})D_{\mu}\phi\pa_{\nu}
\end{align*}
As $X$, $Y\in Z$, $Z=\{\pa_t, \Om_{ij}\}$, we conclude that $X$, $Y$ are killing:
\[
\pa^\mu X^\nu+\pa^\nu X^\mu=0,\quad \pa^\mu Y^\nu+\pa^\nu Y^\mu=0.
\]
This implies that the following term can be simplified:
\begin{align*}
Y(\pa^\mu( F_{\mu\nu}X^\nu))&=[Y, \pa^\mu]F(\pa_{\mu}, X)+\pa^{\mu} (\mathcal{L}_Y F)(\pa_{\mu}, X)+\pa^{\mu} F(\mathcal{L}_Y \pa_{\mu}, X)+\pa^{\mu} F( \pa_{\mu}, \mathcal{L}_Y X)\\
&=\pa^{\mu} (\mathcal{L}_Y F)(\pa_{\mu}, X)+\pa^{\mu} F( \pa_{\mu}, [Y,X])
\end{align*}
Therefore we can write the double commutator as follows:
\begin{align*}
[D_Y, [\Box_A, D_{X}]]\phi&=2i (\mathcal{L}_Y F)(D\phi, X)+i\pa^{\mu} (\mathcal{L}_Y F)(\pa_{\mu}, X)\\
&+2iF(D\phi, [Y, X])+i\pa^{\mu} F( \pa_{\mu}, [Y,X])\phi-2 F^{\mu}_{\ X}F_{Y\mu}\phi.
\end{align*}
Note that $[X, Y]\in Z$ for $X$, $Y\in Z=\{\pa_t, \Om_{ij}\}$. We thus can write $2iF(D\phi, [Y, X])+i\pa^{\mu} F( \pa_{\mu}, [Y,X])\phi$ as $[\Box_A, D_{[Y, X]}]\phi$ and can be bounded by
using Lemma \ref{lem:Est4commu:1}. The term $2i (\mathcal{L}_Y F)(D\phi, X)+i\pa^{\mu} (\mathcal{L}_Y F)(\pa_{\mu}, X)$ has the same form with $[\Box_A, D_{X}]\phi$ if we replace $F$ with $\mathcal{L}_Y F$. In particular the bound follows from Lemma \ref{lem:Est4commu:1}. Therefore to show this lemma, it remains to control $F^{\mu}_{\ X}F_{Y\mu}\phi$ for $X$, $Y\in Z$. This term has crucial null structure we need to exploit when $r\geq R$. The main difficulty is that the angular momentum $\Om$ contains weights in $r$. If both $X, \, Y\in \Om$, then
\[
|F^{\mu}_{\ X}F_{Y\mu}|\les |r\a||r\ab|+|r\si|^2.
\]
If $X=Y=\pa_t$, then
\[
|F^{\mu}_{\ X}F_{Y\mu}|\les |F|^2.
\]
If one and only one of $X,\, Y$ is $\pa_t$, then the null structure is as follows:
\begin{align*}
|F^{\mu}_{\ X}F_{Y\mu}|&\les r|F^{\mu}_{\ L}F_{e_i\mu}|+r|F^{\mu}_{\ \Lb}F_{e_i\mu}|\les r(|\rho|+|\si|)(|\ab|+|\a|).
\end{align*}
We see that the "bad" term $r|\ab|^2$ does not appear on the right hand side. Hence
 \[
|F^{\mu}_{\ X}F_{Y\mu}|\les |F|^2+|r\a||r\ab|+|r\si|^2+|r\rho|^2,\quad \forall X,\, Y\in Z.
\]
Therefore estimate \eqref{eq:Est4commu:2} holds. On the bounded region $\{r\leq R\}$, null structure is not necessary and estimate \eqref{eq:Est4commu:2:in} follows trivially.
\end{proof}
The above lemma shows that the double commutator $[D_Z, [\Box_A, D_Z]]\phi$ consists of quadratic part $[\Box_{\mathcal{L}_Z^k A}, D_Z]\phi$ which can be bounded similar to $[\Box_A, D_Z]\phi$ as
we can put one more derivative $D_Z$ on the scalar field $\phi$ when we do Sobolev embedding. It thus suffices to control those cubic terms in \eqref{eq:Est4commu:2}.
\begin{prop}
\label{prop:Est4:com2:quad}
We have
\begin{equation}
\label{eq:Est4:com2:quad}
\begin{split}
&\sum\limits_{k\leq 1}I_{1+\ga_0}^{1+\ep}[[\Box_{\mathcal{L}_Z^k A}, D_Z]\phi](\{t\geq 0\})+ I^{1+\ga_0}_{1+\ep}[[\Box_{\mathcal{L}_Z^k A}, D_Z]\phi](\{t\geq 0\}\\
&\les_{M_2}\ep_1 I^{-1-\ep}_{1+\ga_0-\ep}[D\phi_2](\{t\geq 0\})+\ep_1 I^{\ga_0}_{0}[\D\psi_2](\{t\geq 0\}\cap\{r\geq R\})+\mathcal{E}_1[\phi] \ep_1^{-1}\\
&+\ep_1\int_{\mathbb{R}}\tau_+^{1+\ga_0}g(\tau)E[\phi_2](\Si_{\tau})d\tau+\ep_1 \int_{\mathbb{R}}\int_{H_{\tau^*}} \tau_+^{2+\ga_0+\ep-p}g(\tau) r^{p}|D_L\psi_2|^2 dvd\om d\tau
\end{split}
\end{equation}
for all positive constant $\ep_1$. Here $\phi_2=D_Z^2\phi$, $\psi_2=rD_Z^2\phi$. The function $g(\tau)$ is defined in line \eqref{eq:defofgu}.
\end{prop}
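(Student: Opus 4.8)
The plan is to reduce Proposition \ref{prop:Est4:com2:quad} to the already-proven first-order estimate \eqref{eq:Est4:com1}, applied with $\phi$ replaced by $\phi_1=D_Z\phi$ and with the connection $A$ replaced by $\mathcal{L}_Z^k A$ for $k=0,1$. The point is that $[\Box_{\mathcal{L}_Z^k A},D_Z]\phi$ has exactly the same algebraic shape as $[\Box_A,D_Z]\psi$ with $\psi$ a scalar field and the curvature $F$ replaced by $\mathcal{L}_Z^k F=d\mathcal{L}_Z^k A$ (using Lemma \ref{lem:commutator} and $\mathcal{L}_Z F=d\mathcal{L}_Z A$). Therefore I would first invoke Lemma \ref{lem:Est4commu:1} with $F$ replaced by $\mathcal{L}_Z^k F$ to get, for $r\geq R$,
\[
|[\Box_{\mathcal{L}_Z^k A},D_Z]\phi|\les |\mathcal{L}_Z^k\a||D_{\Lb}\psi|+(|\mathcal{L}_Z^k\ab|+r^{-1}|\mathcal{L}_Z^k\rho|)|D_{L}\psi|+|\mathcal{L}_Z^k F||\D\phi|+(|\mathcal{L}_Z^k J|+r|\mathcal{L}_Z^k\J|+|\mathcal{L}_Z^k\si|+r^{-1}|\mathcal{L}_Z^k\rho|)|\phi|,
\]
and the analogous bound \eqref{eq:Est4commu:1:in} for $r\leq R$. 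Since $\mathcal{L}_Z^k F$ ($k\leq 1$) satisfies exactly the same energy, integrated local energy, $r$-weighted and pointwise decay estimates as $F$ itself, with $M_2$ in the role of the governing constant (Corollary \ref{cor:Endecay:cur:hide}, Propositions \ref{prop:Est4F:in:R}, \ref{prop:supF}, and the auxiliary corollary \ref{cor:L2gu}), every one of the component propositions \ref{prop:com1:aDLbphi}, \ref{prop:com1:abDLphi}, \ref{prop:com1:rhophi}, \ref{prop:com1:FDZphi}, \ref{prop:com1:Jphi}, \ref{prop:com1:R} goes through verbatim with $F\rightsquigarrow\mathcal{L}_Z^k F$.

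Next, in those component estimates the scalar field $\phi$ (resp. $\psi=r\phi$) gets replaced by $\phi$ and $\psi$ again — the derivative falls on the curvature, not on $\phi$ — \emph{except} that when I iterate to bound $\mathcal{E}_1[\phi_1]$ I will actually be applying the result with $\phi\rightsquigarrow\phi_1$. Concretely, I would run the argument of Corollary \ref{cor:Est4:com1} with the substitutions $\phi\to\phi_1$, $\psi\to\psi_1$, $F\to\mathcal{L}_Z^{k}F$, using for the scalar-field inputs the already-established decay estimates for $\phi_1=D_Z\phi$: the energy flux decay $E[D_Z\phi](\Si_\tau)\les_{M_2}\mathcal{E}_1[\phi]\tau_+^{-1-\ga_0}$, the integrated local energy decay, and the $r$-weighted decay \eqref{eq:pWEdecay:sca:ex}, \eqref{eq:pWEdecay:sca:in:1ga} for $D_Z\phi$ — all of which follow from Proposition \ref{prop:bd4EDZphi} and the propositions of the previous subsection. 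Collecting the contributions from $r\geq R$ via Propositions \ref{prop:com1:aDLbphi}--\ref{prop:com1:Jphi} (each applied with $\mathcal{L}_Z^kF$ and with $\phi_1$), from $r\leq R$ via Proposition \ref{prop:com1:R}, and absorbing $I^{1+\ep}_{1+\ep}[r^{-1}D_{\Lb}D_L\psi_1]$ through Proposition \ref{prop:Est4DLDLbphi} with $p=1+\ep$, yields exactly the right-hand side of \eqref{eq:Est4:com2:quad}, with $\mathcal{E}_1[\phi]$ in place of $\mathcal{E}_0[\phi]$ and $\phi_2=D_Z^2\phi$, $\psi_2=rD_Z^2\phi$ in the error terms, as claimed.

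The one genuinely new ingredient, and the step I expect to be the main obstacle, is controlling the $k=1$ curvature terms $\mathcal{L}_Z F$: I must make sure that the decay estimates and the trace/Sobolev-on-the-sphere bounds used in Proposition \ref{prop:supF} are available \emph{with one extra angular derivative}, i.e. that $\sum_{j\le 2}\mathcal{L}_\Om^j\mathcal{L}_Z F$ still obeys the integrated local energy and $r$-weighted bounds governed by $M_2$. Since $M_k$ for $k\le 2$ controls up to two Lie derivatives of the curvature, this is exactly why the hypothesis ``$M_2$ finite'' suffices and no more; the bookkeeping must confirm that every application of Sobolev embedding on $\mathbb{S}^2$ costs at most two $\Om$-derivatives total (one from the embedding, one already present from $\mathcal{L}_Z$), so four derivatives are never needed. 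A secondary subtlety is that $\mathcal{L}_Z A$ itself is not a gauge potential of the full Maxwell field, but this is harmless because all norms entering the estimates are expressed through $\mathcal{L}_Z^k F = d\mathcal{L}_Z^k A$, for which Lemma \ref{lem:commutator} guarantees the divergence structure $\pa^\mu(\mathcal{L}_Z^k F)_{\mu\nu}=(\mathcal{L}_Z^k\delta F)_\nu = (\mathcal{L}_Z^k J)_\nu$, and $M_2$ by definition bounds $\mathcal{L}_Z^l J$ for $l\le 2$. Modulo this verification, the proof is a routine transcription of the first-order case, and I would present it as such: state the commutator bounds from Lemma \ref{lem:Est4commu:1} with $F\rightsquigarrow\mathcal{L}_Z^kF$, then cite Propositions \ref{prop:com1:aDLbphi}--\ref{prop:com1:R} and Proposition \ref{prop:Est4DLDLbphi} applied to $(\phi_1,\psi_1,\mathcal{L}_Z^kF)$, and conclude.
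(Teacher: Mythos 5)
There is a genuine gap: your central claim, that the component Propositions \ref{prop:com1:aDLbphi}--\ref{prop:com1:R} ``go through verbatim with $F\rightsquigarrow\mathcal{L}_Z^k F$'' and that $\mathcal{L}_Z^k F$ for $k\le 1$ satisfies ``exactly the same pointwise decay estimates as $F$ itself, with $M_2$ governing,'' is false, and it is exactly the point the paper's proof is careful about. Take Proposition \ref{prop:com1:aDLbphi}: its proof controls $\iint r^p|\a|^2|D_{\Lb}\psi|^2$ by placing the whole $L^\infty_\om$ Sobolev cost on the curvature factor,
\[
\|r^{\frac{p}{2}+1}\a\|_{L^\infty_\om}^2 \les \sum_{j\le 2}\|r^{\frac{p}{2}+1}\mathcal{L}_{\Om}^j\a\|_{L^2_\om}^2.
\]
Substituting $\a\rightsquigarrow\mathcal{L}_Z\a$ requires $\mathcal{L}_{\Om}^2\mathcal{L}_Z\a$, a third Lie derivative of the Maxwell field, which is controlled by $M_3$, not $M_2$. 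The same obstruction appears in the pointwise bounds of Proposition \ref{prop:supF}, which use the trace lemma together with the full two-derivative spherical embedding. Your closing bookkeeping remark compounds the error: you assert that ``$\sum_{j\le 2}\mathcal{L}_\Om^j\mathcal{L}_Z F$ still obeys the $\dots$ bounds governed by $M_2$'' — that is a third-order quantity — and you count the spherical embedding as costing one derivative when $L^\infty(\mathbb{S}^2)\hookrightarrow H^2(\mathbb{S}^2)$ costs two.

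The resolution in the paper is precisely the point you missed: when $F$ is replaced by $\mathcal{L}_Z F$, the spherical Sobolev/product estimate must be rebalanced so that the extra angular derivatives land on the \emph{scalar} factor, not the curvature factor; schematically, instead of $\|\mathcal{L}_Z F\|_{L^\infty_\om}\|D\phi\|_{L^2_\om}$ one uses $\|\mathcal{L}_Z F\cdot D\phi\|_{L^2_\om}\les \sum_{k\le 1}\|\mathcal{L}_Z^k(\mathcal{L}_Z F)\|_{L^2_\om}\|DD_Z^k\phi\|_{L^2_\om}$, placing at most one extra angular derivative on the curvature (so at most $\mathcal{L}_Z^2 F$, controlled by $M_2$) and the remaining derivatives on the scalar field. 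This reallocation is exactly why $\phi_2=D_Z^2\phi$, rather than $\phi_1$, appears in the error terms on the right-hand side of \eqref{eq:Est4:com2:quad}, and why the estimate holds with only $M_2$ finite. A secondary confusion: applying Corollary \ref{cor:Est4:com1} ``with the substitution $\phi\to\phi_1$'' would produce a bound for $[\Box_{\mathcal{L}_Z^k A},D_Z]\phi_1$, whereas the statement concerns $[\Box_{\mathcal{L}_Z^k A},D_Z]\phi$; the scalar field in the commutator stays $\phi$, and the shift to $\phi_2$ in the error only enters through the derivative rebalancing just described.
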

\begin{proof}
From Corollary \ref{cor:Est4:com1} and the decay estimates for the first order derivative of the scalar field, it suffices to consider estimate \eqref{eq:Est4:com2:quad} with $k=1$. The difference between estimate \eqref{eq:Est4:com1} and estimate \eqref{eq:Est4:com2:quad} is that $F$ is replaced with $\mathcal{L}_Z F$ in \eqref{eq:Est4:com2:quad}. However we are allowed to put one more derivative on the scalar field ($\phi_1=D_Z\phi$ is replaced with $D_Z^2\phi$). Note that for the proof of estimate \eqref{eq:Est4:com1} the higher order derivative comes in when we use Sobolev embedding on the sphere to bound $\|F\cdot D\phi\|_{L_{\om}^2}$:
\[
\|F\cdot D\phi\|_{L_{\om}^2}\les \sum\limits_{k\leq 2}\|\mathcal{L}_Z^k F\|_{L_\om^2}\cdot \|D\phi\|_{L_{\om}^2} \textnormal{ or }
\sum\limits_{k\leq 1}\|\mathcal{L}_Z^k F\|_{L_\om^2}\cdot \|DD_{Z}^k\phi\|_{L_{\om}^2}.
\]
For estimate \eqref{eq:Est4:com2:quad}, the corresponding term $\mathcal{L}_Z F\cdot D\phi$ can be bounded as follows:
\[
\|\mathcal{L}_Z F\cdot D\phi\|_{L_{\om}^2}\les \sum\limits_{k\leq 1}\|\mathcal{L}_Z^k \mathcal{L}_Z F\|_{L_\om^2}\cdot \|D D_Z^k \phi\|_{L_{\om}^2} \textnormal{ or }
\|\mathcal{L}_Z F\|_{L_\om^2}\cdot \sum\limits_{k\leq 2}\|DD_{Z}^k\phi\|_{L_{\om}^2}.
\]
This is how we can transfer one derivative on $F$ to the scalar field $\phi$. In particular estimate \eqref{eq:Est4:com2:quad} holds.
\end{proof}
From Lemma \ref{lem:Est4commu:2}, to bound the double commutator, it suffices to control the cubic terms in \eqref{eq:Est4commu:2}, \eqref{eq:Est4commu:2:in}. We rely on the pointwise bound for the Maxwell field summarized in Propositions \ref{prop:Est4F:in:R}, \ref{prop:supF}.
\begin{prop}
\label{prop:Est4:com2:cubic}
For all $1+\ep\leq p\leq 1+\ga_0$, we have
\begin{equation}
\label{eq:Est4:com2:cubic}
\begin{split}
&I^p_{2+\ga_0+\ep-p}[(|F|^2+|r\a||r\ab|+|r\si|^2+|r\rho|(|\ab|+|\a|))|\phi|](\{t\geq 0, \, r\geq R\})\\
&+I^p_{2+\ga_0+\ep-p}[|F|^2|\phi|](\{t\geq 0, \, r\leq R\})\les_{M_2}\mathcal{E}_1[\phi].
\end{split}
\end{equation}
\end{prop}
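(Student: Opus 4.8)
The plan is to estimate each of the cubic terms appearing in \eqref{eq:Est4commu:2}, \eqref{eq:Est4commu:2:in} by splitting the curvature factors from the scalar factor using Sobolev embedding on the sphere, exactly in the spirit of the proofs of Propositions \ref{prop:com1:aDLbphi}--\ref{prop:com1:Jphi}. The key structural observation is that, unlike in the first-order commutator estimates, here the scalar field always enters undifferentiated, so there is no ``bad'' derivative $D_{\Lb}\phi$ or $D_L\psi$ to worry about; the entire burden falls on the quadratic curvature factors, which by Propositions \ref{prop:Est4F:in:R} and \ref{prop:supF} obey pointwise bounds. Thus I do not need any small constant $\ep_1$ or any absorption: the right-hand side will be a clean multiple of $\mathcal{E}_1[\phi]$ (through the zeroth-order decay of $\phi$) times a curvature quantity that is bounded by $M_2$ and integrable in $\tau$.

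First I would handle the region $\{r\geq R\}$. For the typical worst term $|r\a||r\ab||\phi|$ I would bound, using Sobolev embedding on $\mathbb{S}^2$ applied to $\mathcal{L}_{\Om}^{\leq 1}$ of the curvature factors, $\int_{\om}|r\a|^2|r\ab|^2d\om\les \sup_{\om}|r\ab|^2\cdot\int_{\om}|r\a|^2d\om$ and so on, then insert the pointwise estimates \eqref{eq:supab:p} for $\ab$, \eqref{eq:suparhosi:p} for $\a$, $\si$, and \eqref{eq:supbarrho:p} for $\bar\rho$ (splitting $\rho=\bar\rho+q_0r^{-2}\chi$ in the exterior region, the charge piece giving $r^{-2}$ decay which is harmless). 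Since each such product decays like $\tau_+^{-1-\ga_0}$ pointwise with an extra $r^{p}$-weight available up to $p=1+\ga_0$, and since $\int_{\om}r^{p}|\phi|^2d\om\les_{M_2}\mathcal{E}_0[\phi]\tau_+^{p-2-\ga_0}$ (respectively $\mathcal{E}_1[\phi]$ after including the first-order bound via Proposition \ref{prop:bd4EDZphi}) from the embedding \eqref{eq:Est4phipWE:in:p} and the energy flux decay \eqref{eq:Enerdecay:sca:in}, \eqref{eq:ILEdecay:sca:ex}, I would obtain a $\tau$-integrand decaying faster than $\tau_+^{-1}$ and hence integrable, yielding the $\mathcal{E}_1[\phi]$ bound after applying Lemma \ref{lem:simplint} to convert the $u_+$-weight into the decaying prefactor. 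The term $|F|^2|\phi|$ is dominated similarly (in fact it is no worse than $|r^{-1}F|^2\cdot r^2$ times $|\phi|^2$, and $|F|^2$ obeys all the pointwise bounds of Proposition \ref{prop:supF}).

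For the bounded region $\{r\leq R\}$ the term is just $|F|^2|\phi|$, and I would use the $L^2_tL^\infty_x$ bound \eqref{eq:Est4F:in:R} for $F$, i.e. $\int_{\tau_1}^{\tau_2}\sup_{|x|\leq R}|F|^4(\tau,x)d\tau$ is not directly what we have, so instead I would pair $\sup_{|x|\leq R}|F|^2$ from \eqref{eq:Est4F:in:R:p} with the integrated version: write $\int\tau_+^{1+\ga_0}\int_{r\leq R}|F|^4|\phi|^2\les \int\tau_+^{1+\ga_0}\sup_{|x|\leq R}|F|^2(\tau,x)\cdot\big(\int_{r\leq R}|F|^2\,dx\big)\cdot\sup_{r\leq R}|\phi|^2$, then use Sobolev embedding $\sup_{r\leq R}|\phi|^2\les\|\phi\|_{H^2_x(B_R)}^2$ controlled by $E[\phi_{\leq 2}](\Si_\tau)\les_{M_2}\mathcal{E}_1[\phi]\tau_+^{-1-\ga_0}$ together with the pointwise and integrated bounds on $F$ from Proposition \ref{prop:Est4F:in:R}; the $\tau$-integral then converges. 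The main obstacle I anticipate is organizing the exterior estimate so that the weights balance correctly across the full range $1+\ep\leq p\leq 1+\ga_0$ — in particular verifying that the product of the pointwise curvature decay rate, the available $r$-weight, and the $u_+$-weight from $I^p_{2+\ga_0+\ep-p}$ combine (via Lemma \ref{lem:simplint}) into an integrable $\tau$-weight without loss, which is where the precise exponents in \eqref{eq:supab:p}--\eqref{eq:supbarrho:p} and \eqref{eq:Est4phipWE:in:p} must be tracked carefully; everything else is a routine repetition of the techniques already used for the first-order commutator.
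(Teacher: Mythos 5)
There is a genuine gap in your treatment of the cubic terms $|r\a||r\ab||\phi|$ and $|r\si|^2|\phi|$: you propose to bound $\ab$ and $\si$ via the pointwise estimates \eqref{eq:supab:p}, \eqref{eq:suparhosi:p}, but these are too weak for the $r$-weights appearing in $I^p_{2+\ga_0+\ep-p}$ when $p\geq 1+\ep$. Concretely, consider $|r\a||r\ab||\phi|$. Bookkeeping the weights: the integrand is $u_+^{2+\ga_0+\ep-p}\,r^{p+2}\,|r\a|^2|r\ab|^2|\phi|^2$. The pointwise bound for $\ab$ supplies \emph{no} decay in $r$ at all (only $\tau_+^{-1-\ga_0}$), the pointwise bound for $\a$ supplies at most $r^{-(1+\ga_0)}$, and the embedding \eqref{eq:Est4phipWE:in:p} for $\int_\om r^{p'}|\phi|^2\,d\om$ supplies at most $r^{-2}$. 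This leaves a residual $r$-power of $p+2-(1+\ga_0)-2 = p-1-\ga_0 \geq \ep-\ga_0$, which at $p=1+\ga_0$ is exactly $0$; the $dv$-integral therefore diverges. A similar count fails for $|r\si|^2|\phi|$. The paper itself flags this obstruction explicitly in its proof (``The reason that we can not use the pointwise bound \eqref{eq:supab:p} is the weak decay rate'' and ``we are not allowed to use the pointwise bound \eqref{eq:suparhosi:p} to control $\si$ due to the strong $r$ weights here'').

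The paper's actual proof is structured differently precisely to avoid this. For $\a$, it does not use the pointwise bound but the $r$-weighted energy flux $\int_v r^{1+\ga_0}\int_\om|r\mathcal{L}_Z^k\a|^2\,d\om\,dv \les M_2$ on $H_{\tau^*}$, which ``spends'' an extra order of $r$ (the $dv$-measure) relative to the sup bound. For $\ab$, it uses $\sup_v\int_\om|r\ab|^2\,d\om$ integrated in $L^2_u$, i.e.\ the integrated decay estimate \eqref{eq:supab:I} encoded in the function $g(\tau)$ from \eqref{eq:defofgu}, which by Corollary \ref{cor:L2gu} makes $\int\tau_+^{1+\ep}g(\tau)\,d\tau$ finite — a full extra power of $\tau$ of decay that the pointwise bound \eqref{eq:supab:p} simply does not provide (the remark after Proposition \ref{prop:supF} makes this point). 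Likewise for $|r\si|^2|\phi|$ the paper combines the incoming-null-surface $r$-weighted flux with \eqref{eq:suprhosi:I}. The terms $|F|^2|\phi|$, $|r\rho|(|\ab|+|\a|)|\phi|$, and the compact region $r\leq R$ are indeed handled as you describe via pointwise bounds and \eqref{eq:Est4F:in:R:p}, so that portion of your proposal is sound. But the observation that ``everything else is a routine repetition'' is wrong: the step that requires the integrated rather than pointwise curvature decay is the nontrivial input, and your proposal does not account for it.
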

\begin{proof}
On the finite region $r\leq R$, the weights $r^p$ have an upper bound. The Maxwell field $F$ can be bounded by using the pointwise estimate \eqref{eq:Est4F:in:R:p}. We then can estimate the scalar field by using the integrated local energy estimates. In deed for all $0\leq \tau_1<\tau_2$, we can show that
\begin{align*}
\int_{\tau_1}^{\tau_2}\int_{r\leq R}\tau_+^{1+\ga_0}|F|^4|\phi|^2dxd\tau\les\int_{\tau_1}^{\tau_2}\tau_+^{1+\ga_0}\sup|F|^4|\phi|^2dx d\tau\les_{M_2}(\tau_1)_+^{-2-2\ga_0} \mathcal{E}_1[\phi].
\end{align*}
For the cubic terms on the region $r\geq R$, let's first consider $|r\a||r\ab||\phi|$. We use the $r$-weighted energy estimates \eqref{eq:pWE:cur:ex}, \eqref{eq:pWE:cur:in} for the Maxwell field to control $\a$ and the
integrated decay estimate \eqref{eq:supab:I} of Proposition \ref{prop:supF} to bound $\ab$. The reason that we can not use the pointwise bound \eqref{eq:supab:p} is the weak decay rate. The scalar field $\phi$ can be bounded by using Lemma \ref{lem:Est4phipWE}. In deed, for $1+\ep\leq p\leq 1+\ga_0$, we can show that
\begin{align*}
&I^{p}_{2+\ga_0+\ep-p}[|r\a||r\ab|| \phi|](\{t\geq 0\}\cap\{r\geq R\})\\
&\les\iint u_+^{2+\ga_0+\ep-p}r^{p+2}|r\a|^2|r\ab|^2|\phi|^2dudvd\om\\
& \les\sum\limits_{k\leq 1} \int_{u}\int_{v}u_+^{2+\ga_0+\ep-p}r^{1+\ga_0}\int_{\om}|r\mathcal{L}_Z^k \a|^2d\om \cdot \int_{\om}|r\mathcal{L}_Z^k \ab|^2d\om \cdot
\int_{\om}r^{p+1-\ga_0}|\mathcal{L}_Z^k \phi|^2d\om dvdu\\
&\les_{M_2}\mathcal{E}_1[\phi]\sum\limits_{k\leq 1} \int_{u}\int_{v}u_+^{1+\ep-\ga_0}r^{1+\ga_0}\int_{\om}|r\mathcal{L}_Z^k \a|^2d\om \cdot \int_{\om}|r\mathcal{L}_Z^k \ab|^2d\om dvdu\\
&\les_{M_2}\mathcal{E}_1[\phi]\sum\limits_{k\leq 1} \int_{u}u_+^{1+\ep-\ga_0}\int_{v}r^{1+\ga_0}\int_{\om}|r\mathcal{L}_Z^k \a|^2d\om dv \cdot\sup\limits_{v}  \int_{\om}|r\mathcal{L}_Z^k \ab|^2d\om \, du\\
&\les_{M_2}\mathcal{E}_1[\phi]\int_{\mathbb{R}}\tau_+^{1+\ep-\ga_0}g(\tau)d\tau\les_{M_2}\mathcal{E}_1[\phi].
\end{align*}
Here recall the definition of $g(\tau)$ in line \eqref{eq:defofgu} and the last step follows from Corollary \ref{cor:L2gu}.

For $|F|^2|\phi|$, we use the pointwise estimates \eqref{eq:supab:p}, \eqref{eq:suparhosi:p} of Proposition \ref{prop:supF} to bound the Maxwell field $F$. The scalar field $\phi$ can be bounded by using Lemma \ref{lem:Est4phipWE} as
above. In the exterior region where the the Maxwell field contains the charge part $q_0 r^{-2}dt\wedge dr$, we have the relation $r_+\geq \frac{1}{2}u_+$. We can show that
\begin{align*}
&I^{p}_{2+\ga_0+\ep-p}[|F|^2 \cdot \phi](\{t\geq 0\}\cap\{r\geq R\})\\
&\les\iint u_+^{2+\ga_0+\ep-p}r^{p+2}|\bar F|^4|\phi|^2dudvd\om+|q_0|^2\iint_{t+R\leq r} u_+^{2+\ga_0+\ep-p}r^{p+2-8}|\phi|^2dudvd\om\\
& \les_{M_2}\int_{u}\int_{v}u_+^{2+\ga_0+\ep-p-2-2\ga_0}r^{p+2-4-1}\int_{\om}r|\phi|^2d\om dvdu+\mathcal{E}_0[\phi]\\
&\les_{M_2}\mathcal{E}_1[\phi] \int_{u}\int_{v}u_+^{\ep-p-1-2\ga_0}r^{p-3}dvdu\les_{M_2}\mathcal{E}_1[\phi].
\end{align*}
For $|r\si|^2|\phi|$, same reason as the case $|r\a||r\ab||\phi|$, we are not allowed to use the pointwise bound \eqref{eq:suparhosi:p} to control $\si$ due to the strong $r$ weights here. Instead we use the $r$-weighted energy estimate for $\si$ on the incoming null hypersurface together with the integrated decay estimate \eqref{eq:suprhosi:I}.
Then we can show that
\begin{align*}
\iint_{\bar{\mathcal{D}}_{\tau_1}}r^p|r\si|^4|\phi|^2dxdt
& \les\sum\limits_{k\leq 1} \int_{u}\int_{v}r^{1+\ga_0}\int_{\om}|r\mathcal{L}_Z^k \si|^2d\om \cdot \int_{\om}|r\mathcal{L}_Z^k \si|^2d\om \cdot
\int_{\om}r^{p+1-\ga_0}|\mathcal{L}_Z^k \phi|^2d\om dvdu\\
&\les_{M_2}\mathcal{E}_1[\phi](\tau_1)_+^{-1+p-2\ga_0}\sum\limits_{k\leq 1} \int_{v}\int_{u}r^{1+\ga_0}\int_{\om}|r\mathcal{L}_Z^k \si|^2d\om du\cdot \sup\limits_{u}\int_{\om}|r\mathcal{L}_Z^k \si|^2d\om dv\\
&\les_{M_2}\mathcal{E}_1[\phi](\tau_1)_+^{-1+p-2\ga_0}\sum\limits_{k\leq 1} \|r\mathcal{L}_Z^k \si\|_{L_v^2L_u^\infty L_{\om}^2(\bar{\mathcal{D}}_{\tau_1})}^2\\
&\les_{M_2}\mathcal{E}_1[\phi](\tau_1)_+^{-2+p+\ep-3\ga_0}.
\end{align*}
This holds for all $\tau_1\in \mathbb{R}$. Since
\[
2+3\ga_0-\ep-p>2+\ga_0+\ep-p,\quad 0\leq p\leq 1+\ga_0,
\]
from Lemma \ref{lem:simplint}, we obtain
\[
I^{p}_{2+\ga_0+\ep-p}[|r\si|^2 \phi](\{t\geq 0\}\cap\{r\geq R\})\les_{M_2}\mathcal{E}_1[\phi].
\]
Finally for $|r\rho|(|\ab|+|\a|)|\phi|$, we need to take into consideration of the charge effect in the exterior region. Except this charge, the proof for the interior region case is the same. Let's merely estimate this cubic term in the exterior region. In particular take $\bar{\mathcal{D}}_{\tau_1}$ to be $\mathcal{D}_{\tau_1}$ for some $\tau_1<0$. By using the $r$-weighted energy estimate for $\bar \rho$ and the pointwise bound \eqref{eq:supab:p}, \eqref{eq:suparhosi:p} for $F$, for $0\leq p\leq 1+\ga_0$ we then can show that
\begin{align*}
&\iint_{\mathcal{D}_{\tau_1}} r^{p+2}|r\rho|^2(|\a|^2+|\ab|^2)|\phi|^2dudvd\om\\
&\les \iint_{\mathcal{D}_{\tau_1}} |q_0|r^{p}(|\a|^2+|\ab|^2)|\phi|^2dudvd\om+\iint_{\mathcal{D}_{\tau_1}} r^{p+2}|r\bar\rho|^2(|\a|^2+|\ab|^2)|\phi|^2dudvd\om\\
& \les_{M_2}\mathcal{E}_1[\phi](\tau_1)_+^{-1-2\ga_0}+\sum\limits_{k\leq 1} \int_{u}\int_{v}r^{p-1}\int_{\om}|r\mathcal{L}_Z^k \bar{\rho}|^2 d\om \cdot \sup (|r\ab|^2+|r\a|^2) \cdot
\int_{\om}r|\mathcal{L}_Z^k \phi|^2d\om dvdu\\
&\les_{M_2}\mathcal{E}_1[\phi](\tau_1)_+^{-1-2\ga_0}+\mathcal{E}_1[\phi](\tau_1)_+^{-2-2\ga_0}\sum\limits_{k\leq 1} \iint_{\mathcal{D}_{\tau_1}}r^{p-1}|r\mathcal{L}_Z^k \bar{\rho}|^2 du dv d\om \\
&\les_{M_2}\mathcal{E}_1[\phi](\tau_1)_+^{-1-2\ga_0}.
\end{align*}
Here the last term is bounded by using the $r$-weighted energy estimates for $\bar \rho$. As $\tau_1$ is arbitrary, from Lemma \ref{lem:simplint}, we derive that
\[
I^{p}_{2+\ga_0+\ep-p}[r\rho\cdot (|\ab|+|\a| )\cdot \phi](\{t\geq 0\}\cap\{r\geq R\})\les_{M_2}\mathcal{E}_1[\phi], \quad 1+\ep\leq p\leq 1+\ga_0.
\]
To summarize, we have shown \eqref{eq:Est4:com2:cubic}.
\end{proof}
The above two propositions \ref{prop:Est4:com2:quad}, \ref{prop:Est4:com2:cubic} together with Lemma \ref{lem:Est4commu:2} lead to the desired estimates for the double commutator $[D_X, [\Box_A, D_Y]]$ for $X, \, Y\in Z$.
Then by the argument at the beginning of this section, we have control of $\mathcal{E}_0[D_X D_Y \phi]$. By using the same argument of Proposition \ref{prop:bd4EDZphi}, we then can bound $\mathcal{E}_0[D_X D_Y \phi]$ by $\mathcal{E}_2[\phi]$ which then implies the decay of the second order derivative of the scalar field.
\begin{prop}
\label{prop:bd4DZZphi}
For any $X, \, Y\in Z$, we have the following bound:
\begin{equation}
\label{eq:bd4DZZphi}
\mathcal{E}_0[D_XD_Y\phi]\les_{M_2}\mathcal{E}_2[\phi].
\end{equation}
\end{prop}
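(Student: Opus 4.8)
The plan is to mirror the structure already used for the first–order derivatives (Proposition \ref{prop:bd4EDZphi}), treating $\phi_1 = D_Z\phi$ as the "base" field and bootstrapping one more level. As observed in the preamble to this subsection, since $M_2$ records only the size and regularity of the connection field $A$ (independent of the scalar field), Proposition \ref{prop:bd4EDZphi} and Corollary \ref{cor:Est4:com1:b} apply verbatim with $\phi$ replaced by $\phi_1$, giving $\mathcal{E}_0[D_Z\phi_1]\les_{M_2}\mathcal{E}_1[\phi_1]$ and the companion commutator bound. The algebraic reduction displayed just before Lemma \ref{lem:Est4commu:2} then shows that it suffices to prove
\[
\mathcal{E}_1[\phi_1]\les_{M_2}\mathcal{E}_2[\phi],
\]
and, in turn, that this reduces to controlling the double commutator $[D_X,[\Box_A,D_Y]]\phi$ for $X,Y\in Z$ in the norms $I^{1+\ga_0}_{1+\ep}$ and $I^{1+\ep}_{1+\ga_0}$ over $\{t\ge 0\}$.

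First I would invoke Lemma \ref{lem:Est4commu:2}, which splits $[D_X,[\Box_A,D_Y]]\phi$ into a "quadratic" piece $[\Box_{\mathcal{L}_Z A},D_Z]\phi$ (structurally identical to $[\Box_A,D_Z]\phi$ but with $F$ replaced by $\mathcal{L}_Z F$) plus genuinely cubic terms $(|F|^2+|r\a||r\ab|+|r\si|^2+|r\rho|(|\ab|+|\a|))|\phi|$ in the region $r\ge R$, and $\big(|[\Box_{\mathcal{L}_Z A},D_Z]\phi|+|[\Box_A,D_Z]\phi|+|F|^2|\phi|\big)$ in $r\le R$. The quadratic piece is handled by Proposition \ref{prop:Est4:com2:quad}: its proof transfers one derivative off $\mathcal{L}_Z F$ onto the scalar field via Sobolev embedding on the sphere, so the estimate has exactly the shape of Corollary \ref{cor:Est4:com1} but with $\phi_1$, $\psi_1$ replaced by $\phi_2=D_Z^2\phi$, $\psi_2=rD_Z^2\phi$, and with the error hierarchy closing against $\mathcal{E}_1[\phi]$ rather than $\mathcal{E}_0[\phi]$. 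The cubic piece is handled by Proposition \ref{prop:Est4:com2:cubic}, which bounds all four cubic combinations by $\mathcal{E}_1[\phi]$ using the pointwise and integrated bounds for the Maxwell field from Propositions \ref{prop:Est4F:in:R} and \ref{prop:supF} together with Lemma \ref{lem:Est4phipWE} for $\phi$, and Corollary \ref{cor:L2gu} for the integrability of $g(\tau)$. Combining Propositions \ref{prop:Est4:com2:quad} and \ref{prop:Est4:com2:cubic} with Lemma \ref{lem:Est4commu:2} yields
\[
I^{1+\ga_0}_{1+\ep}\big[[D_X,[\Box_A,D_Y]]\phi\big](\{t\ge0\})+I^{1+\ep}_{1+\ga_0}\big[[D_X,[\Box_A,D_Y]]\phi\big](\{t\ge0\})\les_{M_2}\ep_1\,\mathcal{E}_1[\phi_1]+\mathcal{E}_1[\phi]\,\ep_1^{-1}
\]
for all $0<\ep_1<1$, where the $\ep_1\mathcal{E}_1[\phi_1]$ term originates from the absorbable pieces $\ep_1 I^{-1-\ep}_{1+\ga_0-\ep}[D\phi_2]+\ep_1 I^{\ga_0}_0[\D\psi_2]+\ep_1\int \tau_+^{1+\ga_0}g(\tau)E[\phi_2](\Si_\tau)\,d\tau+\ep_1\int\int_{H_{\tau^*}}\tau_+^{2+\ga_0+\ep-p}g(\tau)r^p|D_L\psi_2|^2$, each of which is $\les_{M_2}\mathcal{E}_0[\phi_2]$ by the energy decay, integrated local energy decay, and $r$-weighted energy decay estimates of the previous subsections applied to $\phi_2$, together with $\mathcal{E}_0[\phi_2]\les_{M_2}\mathcal{E}_1[\phi_1]$.

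Plugging this into the reduction $\mathcal{E}_1[\phi_1]\les_{M_2}\mathcal{E}_2[\phi]+I^{1+\ga_0}_{1+\ep}[[D_Z,[\Box_A,D_Z]]\phi]+I^{1+\ep}_{1+\ga_0}[[D_Z,[\Box_A,D_Z]]\phi]$ gives
\[
\mathcal{E}_1[\phi_1]\les_{M_2}\mathcal{E}_2[\phi]+\ep_1\,\mathcal{E}_1[\phi_1]+\ep_1^{-1}\mathcal{E}_1[\phi];
\]
choosing $\ep_1$ small (depending only on $M_2,\ga_0,R,\ep$) absorbs the middle term, and then $\mathcal{E}_1[\phi]\les_{M_2}\mathcal{E}_1[\phi]$ is already known (it is, up to constants, dominated by $\mathcal{E}_2[\phi]$), so $\mathcal{E}_1[\phi_1]\les_{M_2}\mathcal{E}_2[\phi]$. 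Finally, applying $\mathcal{E}_0[D_Z\phi_1]\les_{M_2}\mathcal{E}_1[\phi_1]$ and noting $D_X D_Y\phi$ is one of the components of $D_Z^2\phi$ yields $\mathcal{E}_0[D_X D_Y\phi]\les_{M_2}\mathcal{E}_2[\phi]$, which is the claim. The main obstacle is not any single estimate but the bookkeeping: one must verify that every error term generated by the double commutator — in particular the cubic term $|r\si|^2|\phi|$ and the charge-interaction term $|r\rho|(|\ab|+|\a|)|\phi|$ in the exterior region, where the weak pointwise decay of $\ab$ and $\si$ forces one to use integrated ($L^2_uL^\infty_v$ or $L^2_vL^\infty_u$) norms rather than $L^\infty$ bounds — actually closes at the level $\mathcal{E}_1[\phi]$ with the same decay budget in $\tau_+$, and that the absorbable terms genuinely feed back into $\mathcal{E}_1[\phi_1]$ (equivalently $\mathcal{E}_0[\phi_2]$) and not into something uncontrolled. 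This is exactly what Propositions \ref{prop:Est4:com2:quad} and \ref{prop:Est4:com2:cubic} are designed to deliver, so the proof of the present proposition is a short assembly of those two results via Lemma \ref{lem:Est4commu:2} and the argument of Proposition \ref{prop:bd4EDZphi}.
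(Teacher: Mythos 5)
Your proposal reproduces the paper's proof: the same algebraic reduction displayed before Lemma~\ref{lem:Est4commu:2}, the same splitting of the double commutator via Lemma~\ref{lem:Est4commu:2}, the same appeal to Propositions~\ref{prop:Est4:com2:quad} and~\ref{prop:Est4:com2:cubic}, and the same absorption argument inherited from Proposition~\ref{prop:bd4EDZphi}. The only cosmetic differences are a slightly more explicit bookkeeping and an obvious typo near the end where ``$\mathcal{E}_1[\phi]\les_{M_2}\mathcal{E}_1[\phi]$'' should read ``$\mathcal{E}_1[\phi]\les_{M_2}\mathcal{E}_2[\phi]$.''
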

\begin{proof}
From the argument at the beginning of this section (before Lemma \ref{lem:Est4commu:2}), we derive that
\begin{align*}
\mathcal{E}_0[D_X D_Y\phi]&\les_{M_2} \mathcal{E}_2[\phi]+I^{1+\ga_0}_{1+\ep}[[D_X, [\Box_A,D_Y]]\phi](\{t\geq 0\})+I^{1+\ep}_{1+\ga_0}[[D_X,[\Box_A,D_Y]] \phi](\{t\geq0\}).
\end{align*}
Then by Lemma \ref{lem:Est4commu:2} and Proposition \ref{eq:Est4:com2:quad}, for all $0<\ep_1<1$, $X$, $Y\in Z$, we conclude that
\begin{align*}
&\mathcal{E}_0[D_X D_Y\phi]\les_{M_2} \ep_1 I^{-1-\ep}_{1+\ga_0-\ep}[D\phi_2](\{t\geq 0\})+\ep_1 I^{\ga_0}_{0}[\D\psi_2](\{t\geq 0\}\cap\{r\geq R\})\\
&\quad+\mathcal{E}_1[\phi] \ep_1^{-1}+\ep_1\int_{\mathbb{R}}\tau_+^{1+\ga_0}g(\tau)E[\phi_2](\Si_{\tau})d\tau+\ep_1 \int_{\mathbb{R}}\int_{H_{\tau^*}} \tau_+^{2+\ga_0+\ep-p}g(\tau) r^{p}|D_L\psi_2|^2 dvd\om d\tau,
\end{align*}
where $\phi_2=D_XD_Y\phi$, $\psi_2=r\phi_2$. The proposition follows by the same argument of Proposition \ref{prop:bd4EDZphi}.
\end{proof}
\subsection{Pointwise bound for the scalar field}
Once we have the bound \eqref{eq:bd4DZZphi}, from Proposition \ref{prop:Enerdecay:sca:in} and Corollary \ref{cor:ILEdecay:sca:ex}, we obtain the energy flux decay estimates as well as the $r$-weighted energy estimates for
the second order derivatives of the scalar field. In another word, simply assuming $M_2$ is finite (see definition of $M_2$ in line \eqref{eq:def4NkF}), we then can derive the energy decay estimates for
the second order derivatives of the scalar field. For the MKG equations, $J=\delta F$ is quadratic in $\phi$. To construct global solutions, we need to bound these nonlinear terms.
In this section, we show the pointwise bound for the scalar field with the assumption that $M_2$ is finite.

We start with an analogue of Proposition \ref{prop:Est4F:in:R} regarding the pointwise bound of the scalar field in the finite region $r\leq R$. Similar to the pointwise bound of the Maxwell field, we use elliptic estimates. However as the connection field $A$ is general, we are not able to apply the elliptic estimates for the flat case directly. We therefore establish an elliptic lemma for the operator $\Delta_A=\sum\limits_{i=1}^3 D_iD_i$ first. Let $B_{R_1}$ be the ball with radius $R_1$ in $\mathbb{R}^3$. Define
\[
\|\phi\|_{H^k(B_{R_1})}=\sum\limits_{1\leq j_{l}\leq 3}\|D_{j_1}D_{j_2}\ldots D_{j_{k}}\phi\|_{L^2(B_{R_1})}+\|\phi\|_{H^{k-1}(B_{R_1})},\quad k\geq 1.
\]
Then we have
\begin{lem}
\label{lem:elliptic:A}
We have the following elliptic estimates:
\begin{equation}
\label{eq:elliptic:A}
\|\phi\|_{H^2(B_{R_1})}\les_{M_2, R_1, R_2}\|\Delta_A\phi\|_{L^2(B_{R_2})}+(1+\|F\|_{L^{\infty}(B_{R_2})}+\|J\|_{H^1(B_{R_2})}) \|\phi\|_{H^1(B_{R_2})}
\end{equation}
for all $R_1<R_2$. Here the constant $M_2$ is defined in line \eqref{eq:def4NkF} and $J=\delta(dA)$ or $J_{j}=\pa^{i}(dA)_{ij}$.
\end{lem}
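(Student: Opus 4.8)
The plan is to reduce the covariant elliptic estimate \eqref{eq:elliptic:A} to the standard flat elliptic estimate by treating the connection terms as lower-order perturbations. First I would fix a cutoff function $\varphi$ supported in $B_{R_2}$ with $\varphi\equiv 1$ on $B_{R_1}$, and a slightly larger ball $B_{R_3}$ with $R_1<R_3<R_2$ where $\varphi$ is supported, so that all integrations by parts are justified. Writing out $\Delta_A\phi=\Delta\phi+2i A^i\pa_i\phi+(i\pa^i A_i-A^iA_i)\phi$, the key point is that the connection field $A$ can, after a gauge transformation, be chosen to lie in a fixed Sobolev space depending only on the initial data of the Maxwell field $(E^{df},H)$: this is precisely the content of Lemma \ref{lem:IDbd} referenced in the introduction, and the regularity of $A$ is controlled by $M_2$. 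Hence $\|A\|_{H^2(B_{R_2})}+\|A\|_{L^\infty(B_{R_2})}\les_{M_2}1$, and by Sobolev embedding in $\mathbb{R}^3$ the products $A^i\pa_i\phi$ and $(\pa A+A^2)\phi$ are controlled in $L^2(B_{R_2})$ by $\|A\|_{L^\infty}\|\pa\phi\|_{L^2}$ and $(\|\pa A\|_{L^3}+\|A\|_{L^6}^2)\|\phi\|_{L^6}$ respectively, all of which are $\les_{M_2}\|\phi\|_{H^1(B_{R_2})}$.

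The main steps are then as follows. Step one: apply the standard flat interior elliptic estimate to $\phi$ on $B_{R_1}\subset B_{R_3}$, giving $\|\phi\|_{H^2_{\text{flat}}(B_{R_1})}\les_{R_1,R_3}\|\Delta\phi\|_{L^2(B_{R_3})}+\|\phi\|_{H^1(B_{R_3})}$, where $H^2_{\text{flat}}$ uses ordinary derivatives. Step two: substitute $\Delta\phi=\Delta_A\phi-2iA^i\pa_i\phi-(i\pa^iA_i-A^iA_i)\phi$ and bound the perturbative terms in $L^2(B_{R_3})$ by $\|\Delta_A\phi\|_{L^2(B_{R_3})}+C_{M_2}\|\phi\|_{H^1(B_{R_3})}$ as above. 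Step three: convert between the flat $H^2$ norm and the covariant $H^2$ norm defined in the statement; since $D_jD_k\phi=\pa_j\pa_k\phi+(\text{terms linear in }A,\pa A\text{ times }\phi\text{ or }\pa\phi)$, the two norms differ by quantities bounded by $C_{M_2}\|\phi\|_{H^1(B_{R_3})}$, again using the fixed Sobolev bounds on $A$. Step four: absorb the intermediate-scale $\|\phi\|_{H^1(B_{R_3})}$ — here one either iterates the estimate on the nested balls $B_{R_1}\subset B_{R_3}\subset B_{R_2}$, or simply observes that $\|\phi\|_{H^1(B_{R_3})}\leq\|\phi\|_{H^1(B_{R_2})}$, which already suffices for the stated inequality since $R_3<R_2$. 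Collecting these gives \eqref{eq:elliptic:A}, where I retain the explicit factors $\|F\|_{L^\infty(B_{R_2})}$ and $\|J\|_{H^1(B_{R_2})}$ on the right-hand side because in the application these will be the curvature and nonlinearity of the full solution, whose size is controlled by the decay estimates of the previous sections rather than absorbed into the $M_2$-dependent constant.

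The main obstacle I anticipate is making rigorous the claim that $A$ can be chosen in a fixed Sobolev space with norm controlled by $M_2$: the gauge in which the scalar-field norm $\mathcal{E}$ is defined is not the Coulomb gauge, and one must either invoke the equivalence of norms promised in Lemma \ref{lem:IDbd} or work in a gauge (e.g. the Coulomb gauge on each time slice, or the temporal gauge) adapted to elliptic estimates on $B_{R_2}$, checking that the transition functions are smooth enough not to spoil the $H^2$ regularity. A secondary technical point is that the constants must be genuinely independent of the scalar field $\phi$ and depend only on $M_2,R_1,R_2$ (and the structural constants $\ga_0,\ep,R$), so one must be careful that every perturbative term involving $A$ is placed inside a constant of the form $C_{M_2,R_1,R_2}$ while every term involving $F$ or $J$ — which are quadratic in $\phi$ and hence not a priori small in the relevant gauge-invariant sense — is kept as an explicit factor. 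Once the gauge choice is pinned down, the remaining computations are routine applications of Hölder's inequality, Sobolev embedding $H^1(\mathbb{R}^3)\hookrightarrow L^6$, $H^2(\mathbb{R}^3)\hookrightarrow L^\infty$, and the classical interior Schauder/$L^2$ elliptic estimate for $\Delta$.
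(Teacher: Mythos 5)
Your proposal takes a genuinely different route from the paper, and it has a gap that I do not think can be closed without essentially redoing the argument in the paper's way. The paper's proof never gauge-fixes and never extracts Sobolev bounds on the potential $A$. It works intrinsically: for compactly supported $\phi$ it integrates $|D_iD_j\phi|^2$ by parts twice, so that the commutator $[D_iD_i,D_j]\phi=\sqrt{-1}\left(2F_{ij}D_i\phi+\pa_iF_{ij}\phi\right)$ is what appears, and this is exactly where the explicit factors $\|F\|_{L^\infty(B_{R_2})}$ and $\|J\|_{H^1(B_{R_2})}$ in \eqref{eq:elliptic:A} come from (recall $J_j=\pa^iF_{ij}$). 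The extension to general $\phi$ via a cutoff $\chi$, with $\Delta_A(\chi\phi)=\chi\Delta_A\phi+2\pa_i\chi\cdot D_i\phi+\Delta\chi\cdot\phi$, is the only place a cutoff is used, and every term there is manifestly gauge-covariant. Nothing about the gauge potential $A$ itself is ever needed.

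The gap in your version is the claim $\|A\|_{H^2(B_{R_2})}+\|A\|_{L^\infty(B_{R_2})}\les_{M_2}1$. This is not what Lemma \ref{lem:IDbd} says: that lemma controls the weighted Sobolev norms of the \emph{initial data} for $\bar F$ and $\phi$ on the slice $t=0$ (in Coulomb gauge on $\mathbb{R}^3$), not a uniform-in-time local Sobolev bound on a gauge potential. The quantity $M_2$ is a norm of the gauge-invariant curvature $F$, its Lie derivatives, and the source $J$; it says nothing about $A$ in any particular gauge on a ball $B_{R_2}$ at general time $\tau$. To supply the missing bound you would have to perform a local gauge fixing (e.g.\ Coulomb on $B_{R_2}$) at each time slice and prove elliptic estimates for the potential there with constants uniform in $\tau$, which is an additional and nontrivial step the paper avoids. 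Moreover, in the flat-expansion route the error terms naturally come out as $\|A\partial\phi\|_{L^2}$, $\|(\partial A+A^2)\phi\|_{L^2}$, i.e.\ in terms of $A$ and $\partial A$, not the antisymmetrized $F=dA$ and its divergence $J$; the specific gauge-invariant factors $\|F\|_{L^\infty}$ and $\|J\|_{H^1}$ in the statement would not appear on their own and would have to be reconstructed after the fact. So your Step 4 and the final "retain the explicit factors" step are not outcomes of your calculation but a hope; the covariant integration-by-parts argument delivers them for free.
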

\begin{proof}
The proof is similar to the case when the connection field $A$ is trivial. For the case when the scalar field $\phi$ is compactly supported in some ball $B_{R_1}$, using integration by parts, we can show that
\begin{align*}
\int_{B_{R_1}}D_i D_j\phi\cdot \overline{D_iD_j\phi}dx&=-\int_{B_{R_1}} D_iD_iD_j\phi \cdot \overline{D_j\phi}dx\\
&=-\int_{B_{R_1}} D_jD_iD_i\phi \cdot \overline{D_j\phi}dx-\int_{B_{R_1}} [D_iD_i, D_j]\phi \cdot \overline{D_j\phi}dx\\
&=\int_{B_{R_1}}|\Delta_A\phi|^2dx-\int_{B_{R_1}} \sqrt{-1}(2F_{ij}D_i\phi+\pa_{i}F_{ij}\phi)\cdot \overline{D_j\phi}dx.
\end{align*}
Estimate \eqref{eq:elliptic:A} then follows.

For general complex function $\phi$, we can choose a real cut-off function $\chi$ which is supported on the ball $B_{R_2}$ and equal to $1$ on the smaller ball $B_{R_1}$. By direct computation, we can show that:
\begin{align*}
\|\Delta_A(\chi\phi)\|_{L^2(B_{R_2})}&=\|\chi \Delta_A\phi+2\pa_i\chi \cdot D_i\phi+\Delta \chi \cdot \phi\|_{L^2(B_{R_2})}\\
&\les \|\Delta_A \phi\|_{L^2(B_{R_2})}+\|\phi\|_{H^1(B_{R_2})}.
\end{align*}
The lemma then follows from the above argument for the compactly supported case.
\end{proof}
We assume $\Box_A\phi$ verifies the following extra bound
\[
\int_{\tau_1}^{\tau_2}\int_{r\leq 2R}|D\Box_A\phi|^2+|D_Z D\Box_A\phi|^2dxd\tau\leq C \mathcal{E}_2[\phi](\tau_1)_+^{-1-
\ga_0},\quad 0\leq \tau_1<\tau_2
\]
for some constant $C$ depending only on $R$.
The above elliptic estimate adapted to the connection field $A$ implies the following pointwise bound for the scalar field $\phi$ on the compact region $r\leq R$.
\begin{prop}
\label{prop:Est4phi:in:R}
For all $0\leq \tau$, $0\leq \tau_1<\tau_2$, we have
 \begin{align}
  \label{eq:Est4phi:in:R}
  \int_{\tau_1}^{\tau_2}\sup_{|x|\leq R}(|D\phi|^2+|\phi|^2)(\tau, x)d\tau&\les  \int_{\tau_1}^{\tau_2}\int_{r\leq R}|D^2 D\phi|^2+|\phi|^2 dxdt \les_{ M_2}\mathcal{E}_2[\phi](\tau_1)_+^{-1-\ga_0},\\
 \label{eq:Est4phi:in:R:p}
  |D\phi|^2(\tau, x)+|\phi|^2(\tau, x)&\les_{ M_2}\mathcal{E}_2[\phi] \tau_+^{-1-\ga_0},\quad \forall |x|\leq R.
  \end{align}
\end{prop}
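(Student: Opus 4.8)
The strategy is exactly parallel to the proof of Proposition \ref{prop:Est4F:in:R} for the Maxwell field, but with the flat Laplacian replaced by the covariant Laplacian $\Delta_A$ and the elliptic estimate of Lemma \ref{lem:elliptic:A} used in place of ordinary elliptic regularity. The starting point is the observation that on a fixed time slice $t=\tau$ the covariant wave equation reads $\Delta_A\phi = D_t D_t\phi - \Box_A\phi$, so $\Delta_A\phi$ is controlled in $L^2_x(B_{2R})$ by $\|D_t^2\phi\|_{L^2_x(B_{2R})}$ (a second $\pa_t$-derivative, i.e. a second $D_Z$-derivative) plus $\|\Box_A\phi\|_{L^2_x(B_{2R})}$. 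First I would apply Lemma \ref{lem:elliptic:A} (with $R_1 = \tfrac{3R}{2}$, $R_2 = 2R$) to $\phi$ and to $D_Z\phi$ to bound $\|\phi\|_{H^2(B_{3R/2})}$ and $\|D_Z\phi\|_{H^2(B_{3R/2})}$; here one uses that $\|F\|_{L^\infty(B_{2R})}$ is controlled by $M_2$ via Proposition \ref{prop:Est4F:in:R} (estimate \eqref{eq:Est4F:in:R:p}) and that $\|J\|_{H^1(B_{2R})}$ is controlled by the term $I^0_{1+\ga_0}[\nabla\mathcal{L}_Z^{l-1}J](\{r\le 2R\})$ in the definition of $M_2$. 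Then I would commute the equation once more with a spatial covariant derivative $D_j$ — noting $[\Delta_A, D_j]\phi$ is a first-order term in $F$ and $\phi$ that is harmless on the compact region — and apply Lemma \ref{lem:elliptic:A} once again to obtain
\[
\|D\phi\|_{H^2(B_R)}^2 \les_{M_2} \|D D_t^2\phi\|_{L^2_x(B_{2R})}^2 + \|D\Box_A\phi\|_{L^2_x(B_{2R})}^2 + \|D\phi\|_{H^1(B_{2R})}^2 + \|\phi\|_{H^1(B_{2R})}^2 ,
\]
where lower-order terms involving $F$, $J$ and their derivatives have been absorbed into the implicit constant using the pointwise bounds of Propositions \ref{prop:Est4F:in:R}, \ref{prop:supF} and the $M_2$-controlled quantities.

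Next I would integrate this inequality in $t$ from $\tau_1$ to $\tau_2$. Each right-hand term is then recognized as a weighted spacetime integral over $\mathcal{D}_{\tau_1^+}^{\tau_2}\cap\{r\le 2R\}$ (with $\tau_1^+ = \max\{\tau_1 - R,0\}$ as in Proposition \ref{prop:Est4F:in:R}, to account for the null foliation): $\int\int |DD_t^2\phi|^2$ is bounded by $I^{-1-\ep}_0[D D_Z^2\phi](\mathcal{D}_{\tau_1^+}^{\tau_2})$, which by the integrated local energy estimates for the second-order derivatives of $\phi$ (which follow from Proposition \ref{prop:bd4DZZphi}, Corollary \ref{cor:ILEdecay:sca:ex} and Proposition \ref{prop:Enerdecay:sca:in}) is $\les_{M_2}\mathcal{E}_2[\phi](\tau_1)_+^{-1-\ga_0}$; the term $\int\int|D\Box_A\phi|^2 + |D_Z D\Box_A\phi|^2$ is bounded by the extra hypothesis just stated on $\Box_A\phi$; and the $\|D\phi\|_{H^1}^2 + \|\phi\|_{H^1}^2$ terms are again controlled by the integrated local energy decay for $\phi$ and $D_Z\phi$. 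This yields \eqref{eq:Est4phi:in:R} after a Sobolev embedding $H^2(B_R)\hookrightarrow L^\infty(B_R)$ (in three space dimensions $H^2\hookrightarrow L^\infty$, with room to spare).

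For the pointwise bound \eqref{eq:Est4phi:in:R:p} I would not integrate in time but instead work at a fixed slice, running the same elliptic chain on a region of the form $\mathcal{D}_1$ bounded by $\Si_{\tau^+}$ and $\{t=\tau\}$ as in the last part of the proof of Proposition \ref{prop:Est4F:in:R}: Lemma \ref{lem:elliptic:A} gives $\|D\phi\|_{H^2(B_R)}^2 \les_{M_2} \|\Delta_A D\phi\|_{L^2(B_{2R})}^2 + (\ldots)\|D\phi\|_{H^1(B_{2R})}^2$, and the right side is bounded by $E[D_Z^2\phi](\Si_{\tau^+}) + I^{1+\ep}_0[D_Z^2 \Box_A\phi](\mathcal{D}_1) + E[D_Z\phi](\Si_{\tau^+}) + E[\phi](\Si_{\tau^+})$ together with the $H^1$-norm of $J$ on $B_{2R}$, all of which decay like $\tau_+^{-1-\ga_0}$ times $\mathcal{E}_2[\phi]$ (for the energy fluxes, by Proposition \ref{prop:Enerdecay:sca:in} applied to $\phi$, $D_Z\phi$, $D_Z^2\phi$; for the $J$-terms, by the definition of $M_2$ and the energy decay of $\phi$). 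Again Sobolev embedding finishes the job.

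\textbf{Main obstacle.} The routine part is the bookkeeping of weights and the repeated invocation of Lemma \ref{lem:elliptic:A}; the genuinely delicate point is handling the commutator terms $[\Delta_A, D_j]\phi$ and $[\Delta_A, D_Z]\phi$ that appear when one differentiates the equation to raise regularity. These are quadratic expressions $F\cdot D\phi$ (and $\nabla F\cdot\phi$) which on the unbounded spacetime would require the null structure exploited in Lemmas \ref{lem:Est4commu:1}, \ref{lem:Est4commu:2}; but here, because we are confined to the compact region $r\le 2R$ where all weights are bounded and where $F$, $\nabla F$ enjoy the pointwise/$L^2$ bounds of Propositions \ref{prop:Est4F:in:R}, \ref{prop:supF} and the $M_2$-controlled $H^1$-bound on $J$, no null structure is needed and every such term is absorbed into the constant — so the apparent difficulty dissolves once one is careful to restrict attention to $r\le 2R$ throughout and to use the $t$-shifted domains $\mathcal{D}_{\tau_1^+}^{\tau_2}$ so that the interior energy decay estimates apply on the correct initial slice.
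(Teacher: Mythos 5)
Your proposal follows essentially the same route as the paper's proof: solve the elliptic equation $\Delta_A \phi_k = D_t D_t \phi_k + \Box_A\phi_k$ on fixed time slices, apply Lemma \ref{lem:elliptic:A} (with $F$ and $J$ controlled on $B_{2R}$ via Propositions \ref{prop:Est4F:in:R}, \ref{prop:supF} and the $M_2$-bound on $\nabla J$), commute once more with $D_j$ to raise regularity, absorb the commutator $[\Delta_A, D_j]\phi = \sqrt{-1}(2F_{ij}D_i\phi + \pa_i F_{ij}\phi)$ into the constant on the compact region, and close with Sobolev embedding and the interior energy-flux/ILED decay for $D_Z^k\phi$, $k\le 2$. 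The paper establishes the pointwise bound before the integrated one and you reverse the order, and you gloss slightly over how the spacetime term $I^0_{1+\ga_0}[\nabla\mathcal{L}_Z^{l-1}J](\{r\le 2R\})$ gives the fixed-time $H^1(B_{2R})$ bound on $J$ (the paper uses the $\pa_t J$ control and a one-dimensional trace on a unit time interval), but these are cosmetic differences, not gaps.
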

\begin{proof}
At the fixed time $\tau\geq 0$, consider the elliptic equation for the scalar field $\phi_k=D_Z^k\phi$:
\[
\Delta_A \phi_k=D_tD_t\phi_k+D_k\Box_A\phi+[\Box_A, D_Z^k]\phi.
\]
Proposition \ref{prop:Est4F:in:R} and \ref{prop:supF} indicate that the Maxwell field $F$ is bounded. The definition of $M_2$ shows that
\[
\|J\|_{H^1(B_{2R})}^2\les \int_{\tau}^{\tau+1}|\nabla J|^2+|\pa_t \nabla J|^2+|J|^2 +|\pa_t J|^2dx dt\les M_2.
\]
Here $B_{R_1}$ denotes the ball with radius $R_1$ at time $\tau$. Then by the previous Lemma \ref{lem:elliptic:A}, we conclude that
\begin{align*}
\|\phi_k\|_{H^2(B_{\frac{3}{2}R})}^2\les_{M_2} \|D_tD_t\phi_k\|_{L^2(B_{2R})}^2+\|D_Z^k\Box_A\phi\|_{L^2(B_{2R})}^2+\|[\Box_A, D_Z^k]\phi\|_{L^2(B_{2R})}^2+\|\phi_k\|_{H^1(B_{2R})}^2.
\end{align*}
This gives the $H^2$ estimates for $D_t\phi$, $\phi$. To obtain estimates for $D_j\phi$, commute the equation with $D_j$:
\[
\Delta_A D_j\phi=D_jD_tD_t\phi+D_j\Box_A\phi+[\Delta_A, D_j]\phi=D_jD_tD_t\phi+D_j\Box_A\phi+\sqrt{-1}(2F_{ij}D_i\phi+\pa_{i}F_{ij}\phi).
\]
Then by using Lemma \ref{lem:elliptic:A} again, we obtain
\begin{align*}
\|D_j\phi\|_{H^2(B_{R})}^2&\les_{M_2} \|D_j\phi\|_{H^1(B_{1.5R})}^2+\|\Delta_A D_j\phi\|_{L^2(B_{1.5R})}^2\\
&\les_{M_2} \|\phi\|_{H^2(B_{1.5R})}^2+\|D_j D_t^2\phi\|_{L^2(B_{1.5R})}^2+\| D_j\Box_A\phi\|_{L^2(B_{1.5R})}^2.
\end{align*}
Here we have used the fact $|F|^2\les M_2$, $\|J\|_{H^1(B_{2R})}^2\les M_2$. Then for the pointwise bound \eqref{eq:Est4phi:in:R:p}, we need to show the energy flux decay through $B_{2R}$ at time $\tau$. This can be fulfilled by considering the energy estimate obtained by using the vector field $\pa_t$ as multiplier on the region bounded by $t=\tau$ and $\Si_{\tau-R}$ (recall that $\Si_{\tau}=H_{\tau^*}$ for negative $\tau<0$). Corollary \ref{cor:ILE:sca:in:sim0} together with Proposition \ref{prop:Enerdecay:sca:in}, \ref{prop:bd4DZZphi} imply that
\begin{align*}
E[D_Z^k \phi](B_{2R})\les E[D_Z^k\phi](\Si_{\tau-R})+(\tau-R)_+^{-1-\ga_0}\mathcal{E}_0[D_Z^k\phi]\les_{M_2}\mathcal{E}_k[\phi]\tau_+^{-1-\ga_0},\quad k\leq 2.
\end{align*}
For the flux of the inhomogeneous term $D\Box_A\phi$ and the commutator term $[D_Z, l\Box_A]\phi$, we can make use of the integrated local energy estimates. More precisely combine the above $H^2$ estimates for $\phi_k=D_Z^k\phi$, $k=0$, $1$ and $D_j\phi$. We can show that
\begin{align*}
&\|D_j\phi\|_{H^2(B_{R})}^2+\sum\limits_{k\leq 1}\|\phi_k\|_{H^2(B_{R})}^2\\
&\les_{M_2} \sum\limits_{l\leq 2}E[D_Z^l\phi](B_{2R})+\| D\Box_A\phi\|_{L^2(B_{2R})}^2+\| [\Box_A, D_Z]\phi\|_{L^2(B_{2R})}^2\\
&\les_{M_2} \mathcal{E}_2[\phi]\tau_+^{-1-\ga_0}+\int_{\tau}^{\tau+1}\int_{r\leq 2R}|D\Box_A\phi|^2+|D_tD\Box_A\phi|^2dxd\tau+I^0_0[D_Z [\Box_A, D_Z]\phi](D_{\tau-R}^{\tau})\\
&\les_{M_2}\mathcal{E}_2[\phi]\tau_+^{-1-\ga_0}.
\end{align*}
Here we have used the following bound:
\[
I^{1+\ep}_{1+\ga_0}[D_Z^k[\Box_A, D_Z]\phi](\{t\geq 0\})\les_{M_2} \mathcal{E}_{k+1}[\phi],\quad k=0, 1.
\]
which is a consequence of the proof in the previous section (see the argument in the beginning of Section \ref{sec:2ndenergy}). Then Sobolev embedding implies the pointwise bound \eqref{eq:Est4phi:in:R:p} for $\phi$.

For the integrated decay estimates \eqref{eq:Est4phi:in:R}, we integrate the $H^2$ norm of $D_j\phi$ from time $\tau_1$ to $\tau_2$:
\begin{align*}
\int_{\tau_1}^{\tau_2}\|D\phi\|_{H^2(B_{R})}^2 d\tau&\les_{M_2}\int_{\tau_1}^{\tau_2}\sum\limits_{l\leq 2}\|D_Z^l\phi\|_{L^2(B_{2R})}^2+\| D\Box_A\phi\|_{L^2(B_{2R})}^2+\| [\Box_A, D_Z]\phi\|_{L^2(B_{2R})}^2 d\tau\\
&\les_{M_2}\sum\limits_{l\leq 2}I^{-1-\ep}_0[D_Z^l\phi](D_{\tau_1-R}^{\tau_2})+\int_{\tau_1}^{\tau_2}\int_{r\leq 2R}|D\Box_A\phi|^2dxd\tau+I^{0}_0[[D_Z, \Box_A]\phi](D_{\tau_1-R}^{\tau_2})\\
&\les_{M_2}\mathcal{E}_2[\phi](\tau_1)_+^{-1-\ga_0}.
\end{align*}
Here we have used the integrated local energy estimates for the second order derivative of the scalar field. Then Sobolev embedding implies the integrated decay estimate \eqref{eq:Est4phi:in:R}.
\end{proof}
\begin{remark}
\label{remark:Sobolev}
For the Sobolev embedding adapted to the connection $A$, it suffices to establish the $L^p$ embedding in terms of the $H^1$ norm. As the norm is gauge invariant we can choose a particular gauge so that the function is real. For real function $f$ we have the trivial bound $\|D_A f\|_{L^2}\geq \|\pa f\|_{L^2}$. This explains the Sobolev embedding we have used in this paper adapted to the general connection field $A$.
\end{remark}
Next we consider the pointwise bound for the scalar field outside the cylinder $r\leq R$. The decay estimate for $\phi$ easily follows from Lemma \ref{lem:Est4phipWE} as we have energy decay estimates for second order derivatives of $\phi$. However this does not apply to the derivative of $\phi$ due to the limited regularity (only two derivatives). Like the Maxwell field in Proposition \ref{prop:supF}, we rely on Lemma \ref{lem:trace}.
\begin{prop}
\label{prop:supphi}
We have the following pointwise bound:
\begin{align}
\label{eq:supDLbphi:I}
\|D_{\Lb}(rD_Z^k\phi)\|_{L_u^2 L_v^\infty L_{\om}^2(\bar{\mathcal{D}}_{\tau})}& \les_{M_2}\mathcal{E}_{k+1}[\phi] (\tau_1)_+^{-1-\ga_0+3\ep},\quad k=0, 1,\\
\label{eq:supDLphi:I}
\|r^{\frac{p}{2}}D_{L}(rD_Z^k\phi)\|_{L_v^2 L_u^\infty L_{\om}^2(\bar{\mathcal{D}}_{\tau})}^2 &\les_{M_2} \mathcal{E}_{k+1}[\phi](\tau_1)_+^{p+4\ep-1-\ga_0},\quad 0\leq p\leq 1+\ga_0-4\ep, \quad k=0, 1,\\
\label{eq:supDLpsi:p}
r^p(|D_L(r\phi)|^2+|\D(r\phi)|^2)(\tau, v, \om)&\les_{M_2}\mathcal{E}_2[\phi]\tau_+^{p-1-\ga_0},\quad 0\leq p\leq 1+\ga_0,\\
\label{eq:supDLbpsi:p}
|D_{\Lb}(r\phi)|^2(\tau, v, \om)&\les_{M_2}\mathcal{E}_2[\phi]\tau_+^{-1-\ga_0},\\
\label{eq:supphi:p}
r^p|\phi|^2(\tau, v,\om)&\les_{M_2}\mathcal{E}_2[\phi]\tau_+^{p-2-\ga_0},\quad 1\leq p\leq 2.
\end{align}
\end{prop}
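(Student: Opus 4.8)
The plan is to prove each of the six pointwise bounds in the proposition by the same two-step mechanism already used for the Maxwell field in Proposition \ref{prop:supF}: an energy flux through a suitable null hypersurface (outgoing $H_{\tau^*}$ or incoming $\Hb_{\tau}$), followed by the trace theorem Lemma \ref{lem:trace} and Sobolev embedding on $\mathbb{S}^2$. The underlying inputs are all available by this point: the energy decay estimate $E[D_Z^l\phi](\Si_\tau)\les_{M_2}\mathcal{E}_l[\phi]\tau_+^{-1-\ga_0}$ for $l\le 2$ (Proposition \ref{prop:Enerdecay:sca:in}, Corollary \ref{cor:ILEdecay:sca:ex}, Proposition \ref{prop:bd4DZZphi}), the $r$-weighted energy estimates \eqref{eq:pWEdecay:sca:ex}, \eqref{eq:pWEdecay:sca:in:1ga} and their $D_Z$-commuted versions, the integrated local energy estimates \eqref{eq:ILEdecay:sca:ex}, \eqref{eq:ILE:sca:in:sim0}, and Lemma \ref{lem:EQ4sca:null} which expresses $D_LD_{\Lb}\psi$ and $D_{\Lb}D_L\psi$ in terms of $\Box_A\phi$, $\D^2\psi$ and $\rho\psi$.

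First I would dispose of \eqref{eq:supDLpsi:p}, \eqref{eq:supDLbpsi:p} and \eqref{eq:supphi:p}, which concern $\phi$ itself (no extra $D_Z$). For \eqref{eq:supphi:p} and \eqref{eq:supDLpsi:p} one applies Lemma \ref{lem:Est4phipWE}, estimate \eqref{eq:Est4phipWE:in:p}, together with the decay of $E[\phi](\Si_\tau)$ and the boundedness of $I^{1+\ga_0}_0[r^{-1}D_L\psi](H_{\tau^*})$ from Corollary \ref{cor:pWEdecay:sca:in:1ga}; interpolating in $p$ gives the full range. For \eqref{eq:supDLpsi:p} one also needs the energy flux of the tangential derivative $L(r\mathcal{L}_Z^k\a)$-analogue, i.e. $D_L(r D_Z\phi)$, which is controlled on $H_{\tau^*}$ by the $r$-weighted energy estimate for $D_Z\phi$ (available since $\mathcal{E}_0[D_Z\phi]\les_{M_2}\mathcal{E}_1[\phi]$), and then invokes Lemma \ref{lem:trace} and spherical Sobolev embedding exactly as in the proof of \eqref{eq:suparhosi:p}. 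For \eqref{eq:supDLbpsi:p} one uses the incoming null hypersurface $\Hb_\tau$ (truncated at $r=R$ in the interior, extended to $t=0$ in the exterior): the energy flux $E[D_Z^k\phi](\Hb_\tau)$ decays like $\tau_+^{-1-\ga_0}$, and the needed tangential derivative $\Lb(r D_Z^k\phi)$ is again controlled by $E[D_Z^{k+1}\phi](\Hb_\tau)$ plus $\D$-terms, giving the $L^4_\om$ bound, hence the pointwise bound after Sobolev on $\mathbb{S}^2$.

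Next I would establish the integral estimates \eqref{eq:supDLbphi:I} and \eqref{eq:supDLphi:I}. These are the refined $L^2_uL^\infty_v$ (resp. $L^2_vL^\infty_u$) bounds for $D_{\Lb}\psi$, $D_L\psi$ after commuting with $Z$, and are the analogues of \eqref{eq:supab:I}. I would integrate along the outgoing (for $D_L$) or incoming (for $D_{\Lb}$) null direction starting from the initial surface $t=0$ in the exterior and from the null boundary of $\bar{\mathcal{D}}_{\tau}$ in the interior, exactly mirroring Propositions \ref{prop:supDLbphi} and \ref{prop:supDLphi} but now for $\phi_1=D_Z\phi$; the role of $I^{1+\ep}_0[r^{-1}D_LD_{\Lb}\psi]$ and $I^{p_1}_{p_2}[r^{-1}D_{\Lb}D_L\psi]$ is played by the analogous norms of $D_LD_{\Lb}(rD_Z\phi)$, which by Lemma \ref{lem:EQ4sca:null} reduce to $\Box_A D_Z\phi=[\Box_A,D_Z]\phi$ (bounded via Corollary \ref{cor:Est4:com1:b}), to $\D^2(rD_Z\phi)$ (bounded by the $r$-weighted energy estimate for $D_Z^2\phi$), and to $\rho\cdot rD_Z\phi$ (bounded by Lemma \ref{lem:Est4phipWE}). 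Tracking the powers of $\tau_+$ through Lemma \ref{lem:simplint} produces the stated loss of $3\ep$ or $4\ep$.

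The main obstacle will be bookkeeping the interaction of the Maxwell field with the commutator $[\Box_A,D_Z]$ when closing \eqref{eq:supDLphi:I} for the largest range of $p$: the terms $|\a||D_{\Lb}\psi_1|$ and $(|\ab|+r^{-1}|\rho|)|D_L\psi_1|$ in Lemma \ref{lem:Est4commu:1} must be absorbed using the same Gronwall-on-$\Si_\tau$ argument as in Propositions \ref{prop:com1:aDLbphi}--\ref{prop:com1:rhophi}, but now with $D_L\psi_1$ (not just $\psi_1$) on the right-hand side, so one needs the pointwise/trace bounds for $D_L\psi_1$ and the integrated-energy bound for $\ab$ (through the auxiliary function $g(\tau)$ of \eqref{eq:defofgu}, whose integrability is Corollary \ref{cor:L2gu}) to be compatible. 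I expect the weak $u$-decay of $\ab$ to force the $\ep$-losses and to require using the integral version \eqref{eq:supab:I} rather than the pointwise bound \eqref{eq:supab:p}, exactly as flagged in the remark after Proposition \ref{prop:supF}. Once these commutator terms are handled, \eqref{eq:supDLbpsi:p}, \eqref{eq:supDLpsi:p}, \eqref{eq:supphi:p} follow from the pointwise trace argument and the integral estimates feed back to improve nothing further but suffice to close the bootstrap on $J$.
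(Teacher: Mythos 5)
Your overall strategy — energy flux through the appropriate null hypersurface, the trace theorem Lemma~\ref{lem:trace}, and Sobolev embedding on $\mathbb{S}^2$, with Propositions~\ref{prop:supDLbphi}, \ref{prop:supDLphi}, \ref{prop:Est4DLDLbphi} and Lemma~\ref{lem:EQ4sca:null} handling the $L^2_uL^\infty_v$/$L^2_vL^\infty_u$ bounds — matches the paper's route. However, for the pointwise bounds \eqref{eq:supDLpsi:p} and \eqref{eq:supDLbpsi:p} there is a concrete gap. You identify the required tangential data on $H_{\tau^*}$ as ``$D_L(rD_Z\phi)$, controlled by the $r$-weighted energy estimate for $D_Z\phi$,'' but this only delivers $D_LD_\Om\psi$ and $D_LD_{\pa_t}\psi$. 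It does not deliver $D_\Om D_L\psi$ or $D_LD_L\psi$, which are what Lemma~\ref{lem:trace} actually requires when the function is $D_L\psi$. The difference $D_\Om D_L\psi - D_LD_\Om\psi = iF_{\Om L}\psi$ is a gauge curvature term (this has no analogue in the Maxwell case, where the Levi-Civita connection commutes with $\mathcal{L}_Z$ for Killing $Z$), and for the radial tangential derivative one must write $D_LD_L\psi = 2D_{\pa_t}D_L\psi - D_\Lb D_L\psi$ and invoke the null equation \eqref{eq:EQ4sca:null} for the second piece. When you then iterate to obtain the $H^1(\mathbb{S}^2)$ control needed to upgrade $L^4_\om$ to $L^\infty_\om$, you meet $D_L[D_L,D_Z]\psi$, which produces terms like $|L(r\a)|\,|\psi|$ and $|L\rho|\,|\psi|$; these $L$-derivatives of curvature components are not controlled by the $r$-weighted energy flux and must be turned into angular derivatives, $\ab$, and $J$ via the Maxwell transport equations of Lemma~\ref{lem:nullMKG}. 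Your pointer ``exactly as in the proof of \eqref{eq:suparhosi:p}'' covers the Maxwell-side transport-equation step but not the gauge commutators $[D_Z,D_L]\psi$, $[D_Z^2,D_L]\psi$, $D_L[D_L,D_Z]\psi$, which is precisely where the scalar estimate departs from the Maxwell one. The same remark applies to \eqref{eq:supDLbpsi:p} on $\Hb_\tau$ (commutators $[D_Z,D_\Lb]\psi$ and $D_\Lb[D_\Lb,D_Z]\psi$, then Lemma~\ref{lem:nullMKG} for $\Lb\rho$ and $L(r\ab)$). Without spelling out this commutator chain, the trace theorem cannot be applied and the proof as stated does not close; the rest of the machinery you cite is the correct one.
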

\begin{remark}
If we have one more derivative (assume $M_3$), then we have better estimate for $\D(r\phi)$ as we can write it as $D_{Z}\phi$.
\end{remark}
\begin{proof}
Estimate \eqref{eq:supDLbphi:I} follows from \eqref{eq:supDLbphi}, \eqref{eq:Est4DLDLbphi} together with the $r$-weighted energy and integrated local energy estimates for the scalar field $D_Z^k\phi$, $k\leq 2$. Estimate \eqref{eq:supDLphi:I} is a consequence of \eqref{eq:supDlphi}, \eqref{eq:Est4DLDLbphi}.

For the pointwise bound for the scalar field, let $\phi_k=D_Z^k\phi$, $\psi_k=r\phi_k$, $k\leq 2$. First the $r$-weighted energy estimates \eqref{eq:pWEdecay:sca:ex}, \eqref{eq:pWEdecay:sca:in:1ga} imply that
\[
\int_{H_{\tau^*}}r^p|D_L\psi_k|^2dvd\om\les_{M_2}\mathcal{E}_k[\phi]\tau_+^{p-1-\ga_0},\quad k\leq 2, \quad 0\leq p\leq 1+\ga_0.
\]
From the $r$-weighted energy estimate for $F$ and Lemma \ref{lem:Est4phipWE}, we can bound the commutator:
\begin{align*}
\int_{H_{\tau^*}}r^p|[D_Z^2, D_L]\psi|^2dvd\om&\les \int_{H_{\tau^*}}r^p(|F_{ZL}D_Z\psi|^2+|\mathcal{L}_Z F_{ZL}\psi|^2)dvd\om\\
&\les \sum\limits_{l\leq 1}\int_{H_{\tau^*}}r^p(|\mathcal{L}_Z^l\rho \psi_{1-l}|^2+|r\mathcal{L}_Z^l\a||\psi_{l-1}|)dvd\om\\
&\les_{M_2}\mathcal{E}_2[\phi]|q_0|\tau_+^{p-3-\ga_0}+\mathcal{E}_2[\phi]\tau_+^{p-1-2\ga_0}\\
&\les_{M_2}\mathcal{E}_2[\phi]\tau_+^{p-1-\ga_0}.
\end{align*}
Here the charge part only appears when $\tau<0$. The previous two estimates lead to:
\begin{align*}
\int_{H_{\tau^*}}r^p|D_Z^kD_L D_Z^l\psi|^2dvd\om\les_{M_2}\mathcal{E}_2[\phi]\tau_+^{p-1-\ga_0},\quad k+l\leq 2, \quad 0\leq p\leq 1+\ga_0.
\end{align*}
To apply Lemma \ref{lem:trace}, we need the energy flux for $D_LD_L\psi$. From the null equation \eqref{eq:EQ4sca:null} for the scalar field, on the outgoing null hypersurface $H_{\tau^*}$, for $k=0,\, 1$, we can show that
\begin{align*}
\int_{H_{\tau^*}}r^p|D_{\Lb}D_{L}\psi_k|^2dvd\om&\les\int_{H_{\tau^*}}r^p(|\rho\cdot r\phi_k|^2+|r^{-1}\D D_{\Om}\psi_k|^2+|r\Box_A\phi_k|^2)dvd\om\\
&\les_{M_2}\mathcal{E}_{k+1}[\phi]\tau_+^{p-1-\ga_0}.
\end{align*}
Here the first term $\rho\cdot r\psi_k$ has been bounded in the above commutator estimate for $[D_Z^2, D_L]\psi $. The second term $|r^{-1}\D D_{\om}\psi_k|^2$ can be bounded by the energy flux of $\mathcal{L}_Z^2 F$ through $H_{\tau^*}$ as $p\leq 2$. The bound for $\Box_A\phi_k$ follows from the argument in the previous section \ref{sec:2ndenergy} where we have shown that $\mathcal{E}_1[\phi_k]\les_{M_2}\mathcal{E}_{2}[\phi_{k-1}]$, $k=0$, $1$. Now commute $D_L$ with $\psi_k=D_Z^k\psi$. First we can show that
\[
|D_L[D_L, D_Z]\psi|\les |L F_{LZ}||\psi|+|F_{ZL}||D_L\psi|\les (|\Lb(r\a)|+|r\mathcal{L}_Z\a|+|L\rho|)|\psi|+(|\rho|+|r\a|)|D_L\psi|.
\]
On the right hand side of the above inequality, the second term is easy to bound as we can control the Maxwell field $\rho$, $r\a$ by the $L^\infty$ norm shown in Proposition \ref{prop:supF} and the scalar field $\psi$ by the $r$-weighted energy estimates. For the first term, we have to use the null structure equations of Lemma \ref{lem:nullMKG} to control $L(r\a)$, $L\rho$. We can show that
\begin{align*}
\int_{H_{\tau^*}}r^p|D_L[D_L, D_Z]\psi|^2dvd\om &\les_{M_2}\int_{H_{\tau^*}}r^p |D_Z^2 D_L\psi|^2+r^p(|\Lb(r\a)|^2+|r\mathcal{L}_Z\a|^2+|L\rho|^2)\mathcal{E}_2[\phi]dvd\om\\
&\les_{M_2} \mathcal{E}_{2}[\phi](\tau_+^{p-1-\ga_0}+\int_{H_{\tau^*}}r^p(|\mathcal{L}_{\Om}(\rho, \si, \a)|^2+|rJ|^2+|\rho|^2)dvd\om)\\
&\les_{M_2} \mathcal{E}_{2}[\phi]\tau_+^{p-1-\ga_0}.
\end{align*}
Here we can bound $\rho$, $\a$, $\si$ by the energy flux as $p\leq 2$. For the inhomogeneous term $J$ we can use one more derivative $\mathcal{L}_{\pa_t}$. In particular we have shown that
\begin{align*}
&\sum\limits_{k\leq 1}\int_{H_{\tau^*}}r^p(|D_{L}D_Z^k D_{L}\psi|^2+|D_{\Om}D_Z^k D_L\psi|^2+|D_Z^kD_L\psi|^2)dvd\om\\
&\les \sum\limits_{l\leq 2}\int_{H_{\tau^*}}r^p(|D_Z^l D_L\psi|^2+|D_{L}[D_Z, D_{L}]\psi|^2+|D_{\Lb}D_LD_Z \psi|^2+|D_{\pa_t}D_LD_Z \psi|^2)dvd\om\\
&\les_{M_2}\mathcal{E}_{k+1}[\phi]\tau_+^{p-1-\ga_0}.
\end{align*}
Then by using Lemma \ref{lem:trace} and Sobolev embedding we derive the pointwise estimate for $D_{L}\psi$ (see Remark \ref{remark:Sobolev} for the Sobolev embedding adapted to the connection $A$). This proves the first part of \eqref{eq:supDLpsi:p}.

For $D_{\Lb}\psi$ and $\D(r\phi)$, we make use of the energy flux through the incoming null hypersurface $\Hb_{\tau}$ which is defined as $\Hb_{v}^{\tau^*, -v}$ when $\tau<0$ or $\Hb_{v}^{\tau, 2v-R}$ when $\tau\geq 0$. From the energy estimates \eqref{eq:pWEdecay:sca:ex}, \eqref{eq:ILEdecay:sca:ex}, \eqref{eq:Enerdecay:sca:in}, \eqref{eq:pWEdecay:sca:in:1ga}, we obtain the energy flux decay
\begin{align*}
\int_{\Hb_{\tau}}|D_{\Lb}D_Z^k\psi|^2+|\D D_Z^k\psi|^2 +\tau_+^{-p} r^p|D_{\Om} D_Z^k\phi|^2+r^2|D_{\Lb}D_Z^k\phi|^2dud\om\les_{M_2}\mathcal{E}_2[\phi]\tau_+^{-1-\ga_0}
\end{align*}
for $k\leq 2,\quad 0\leq p\leq 1+\ga_0$. For $\D(r\phi)=D_{\Om}\phi$, the above estimates together with Lemma \ref{lem:trace} indicate that
\[
r^{p}|D_{\Om}\phi|^2\les_{M_2}\mathcal{E}_2[\phi]\tau_+^{p-1-\ga_0},\quad 0\leq p\leq 1+\ga_0.
\]
Thus the second part of \eqref{eq:supDLpsi:p} holds.

For $D_{\Lb}\psi$, we need to pass the $D_{\Lb}$ derivative to $\psi$. We can compute the commutator:
\begin{align*}
|[D_Z^2, D_{\Lb}]\psi|\les (|r\mathcal{L}_Z\ab|+|\mathcal{L}_Z\rho|)|\psi|+(|r\ab|+|\rho|)|D_Z\psi|.
\end{align*}
We can bound $\psi$ by using Lemma \ref{lem:Est4phipWE} and $\rho$, $\ab$ by using the energy flux through $\Hb_{\tau}$. Then the previous energy estimates imply that
\begin{align}
\label{eq:DZDLbDZpsi}
\int_{\Hb_{\tau}}|D_Z^k D_{\Lb}D_Z^l\psi|^2+|r^{-1}D_Z^{k+1}\psi|^2 dud\om \les_{M_2}\mathcal{E}_2[\phi]\tau_+^{-1-\ga_0},\quad k+l\leq 2.
\end{align}
To apply Lemma \ref{lem:trace}, we also need the energy flux of $D_{\Lb}D_{\Lb}\psi$. We use the null equation \eqref{eq:EQ4sca:null} to show that
\begin{align*}
\int_{\Hb_{\tau}}|D_L D_{\Lb}\psi_k|^2dud\om\les_{M_2}\mathcal{E}_2[\phi]\tau_+^{-1-\ga_0},\quad k\leq 1.
\end{align*}
The proof of this estimate is similar to that through the outgoing null hypersurface we have done above. To pass the $D_{\Lb}$ derivative to $\psi$, we commute $D_{\Lb}$ with $\psi_1=D_Z\psi$:
\[
|D_{\Lb}[D_{\Lb}, D_Z]\psi|\les |D_{\Lb}\psi|(|r\ab|+|\rho|)+|\psi|(|\Lb\rho|+|L(r\ab)|+|\pa_t(r\ab)|).
\]
Again we can bound $D_{\Lb}\psi$ by using the energy flux and $r\ab$, $\rho$ by the $L^\infty $ norm. For the second term, $\psi$ can be bounded by using Lemma \ref{lem:Est4phipWE} and the curvature components $\Lb\rho$, $L(r\ab)$ are controlled by using the null structure equations \eqref{eq:eq4ab}, \eqref{eq:eq4rhoCu}. More precisely, we can show that
\begin{align*}
\sum\limits_{k\leq 1}\int_{\Hb_{\tau}}|D_{\Lb}D_Z^k D_{\Lb}\psi|^2dud\om&l\les\int_{\Hb_{\tau}}|D_{\Lb} [D_Z, D_{\Lb}]\psi|^2+|D_{L} D_{\Lb}D_Z^k\psi|^2+|D_{\pa_t}D_{\Lb}D_Z^k\psi|^2dud\om\\
&\les_{M_2}\mathcal{E}_2[\phi]\tau_+^{-1-\ga_0}.
\end{align*}
This estimate and \eqref{eq:DZDLbDZpsi} combined with Lemma \ref{lem:trace} imply the pointwise bound \eqref{eq:supDLbpsi:p} for $D_{\Lb}\psi$.

The pointwise bound \eqref{eq:supphi:p} for $\phi$ follows from Lemma \ref{lem:Est4phipWE}:
\[
\int_{\om}r^p|D_Z^k\phi|^2(\tau, v, \om)d\om\les_{M_2}\mathcal{E}_k[\phi]\tau_+^{p-2-\ga_0}, \quad k\leq 2, \quad 1\leq p\leq 2
\]
together with Sobolev embedding on the sphere.
\end{proof}

\section{Bootstrap argument}
\label{sec:btstp}
We use bootstrap argument to prove the ourl theorem. In the exterior region, we decompose the full Maxwell field $F$ into the chargeless part and the charge part:
\begin{equation*}
F=\bar F+q_0\chi_{\{r\geq t+R\}}  r^{-2}dt\wedge dr.
\end{equation*}
We make the following bootstrap assumptions on the nonlinearity $J_{\mu}=\pa^\nu F_{\nu\mu}=\Im(\phi\cdot\overline{D_\mu\phi})$:
\begin{equation}
\label{eq:btstrap:assum}
m_2\leq 2\mathcal{E}.
\end{equation}
Here recall the definition of $m_2$ in \eqref{eq:def4NkF}. Since the nonlinearity $J$ is quadratic in $\phi$, $m_2$ has size $\mathcal{E}^2$. By assuming small $\mathcal{E}$, we then can improve the above bootstrap assumption and hence conclude our main theorem. The smallness of $\mathcal{E}$ depends on $\mathcal{M}$. Without loss of generality, we assume $\mathcal{E}\leq 1$, $\mathcal{M}>1$.

In the definition \eqref{eq:def4NkF} for $M_2$, the main contribution is $E_0^2[\bar F]$ with $\bar F$ the chargeless part of the Maxwell field. As the scalar field $\phi$ solves the linear equation $\Box_A\phi=0$, we derive from the definition \eqref{eq:def4Nkphi} for $\mathcal{E}_2[\phi]$ that $\mathcal{E}_2[\phi]=E_0^2[\phi]$. The definition for $E_0^k[\bar F]$, $E_0^k[\phi]$ has been given in \eqref{eq:def4E0kFphi}. To proceed, we need to bound $E_0^2[\bar F]$, $\mathcal{E}_2[\phi]$ in terms of $\mathcal{M}$ and $\mathcal{E}$ which is shown in the following lemma.
\begin{lem}
 \label{lem:IDbd}
 We can bound $E_0^2[\bar F]$, $E_0^2[\phi]$ as follows:
 \begin{equation*}
  E_0^2[\bar F]\les \mathcal{M},\quad E_0^2[\phi]\les_{\mathcal{M}} \mathcal{E}.
 \end{equation*}
\end{lem}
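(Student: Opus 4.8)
The plan is to establish the two bounds separately, translating the geometric (gauge-invariant) norms $E_0^2[\bar F]$ and $E_0^2[\phi]$ into the coordinate norms $\mathcal{M}$ and $\mathcal{E}$ that appear in the statement of the main theorem. The starting observation is that on the initial slice $t=0$ the Lie derivatives $\mathcal{L}_Z^l$ with $Z\in\{\pa_t,\Om_{ij}\}$ can be rewritten in terms of spatial derivatives: the time derivative $\pa_t$ is eliminated using the Maxwell evolution equations $\pa_t H=-\cur E$, $\pa_t E=\cur H-\bar J$ (and their differentiated versions), while the angular momenta $\Om_{ij}$ are bounded combinations of $r\nabla$. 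Thus $|\mathcal{L}_Z^l \bar F|(0,x)$ is controlled, up to lower order terms with bounded coefficients, by $\sum_{j\leq l}(1+r)^j|\nabla^j(E^{df},H)|+|\nabla^j \bar J|$ plus curvature-times-$J$ type terms; since $\bar F=F$ on $t=0$ in the interior and differs only by the explicit charge tail $q_0 r^{-2}dt\wedge dr$ (which is smooth and decaying away from $r=0$, hence contributes a bounded amount to the weighted norm with weight $r_+^{1+\ga_0}$ once $r\geq R$, and is handled by the $E^{cf}$ part near the origin), the weight $(1+r)^{1+\ga_0}$ matches. The first bound $E_0^2[\bar F]\les \mathcal{M}$ then follows directly from the definition of $\mathcal{M}$, once we absorb the nonlinear $\nabla^j J$ contributions: these are quadratic in $\phi$, hence controlled by $\mathcal{E}$ (via the bootstrap assumption $m_2\leq 2\mathcal{E}$ or directly by the definition of $\mathcal{E}$), which is $\les 1\les\mathcal{M}$.

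For the second bound $E_0^2[\phi]\les_{\mathcal{M}}\mathcal{E}$, the point is that $E_0^2[\phi]=\sum_{l\leq 2,j\leq 3}\int_{\mathbb{R}^3}r_+^{1+\ga_0}|D_Z^l D_j\phi|^2(0,x)\,dx$ involves the covariant derivatives $D_Z=\pa_Z+\sqrt{-1}A_Z$, so expanding the covariant derivatives produces the raw derivatives $\nabla^{l+1}\phi_0$, $\nabla^l\phi_1$, etc.\ (which are exactly what $\mathcal{E}$ controls), plus error terms where derivatives land on the connection coefficients $A_\mu$. Here we must first fix a gauge: choose, say, the Lorenz or the Coulomb gauge on the initial slice so that $A$ is determined by the initial Maxwell data $(E^{df},H)$ through an elliptic system, and establish that $A$ together with two derivatives is bounded in a suitable weighted space by $\mathcal{M}$ (this is the assertion, alluded to in the introduction, that the gauge-invariant norm is equivalent to $\mathcal{E}$ up to a constant depending on $\mathcal{M}$). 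Then each error term is of the schematic form (derivatives of $A$) $\times$ (fewer derivatives of $\phi$), and one estimates it by Hölder, putting the $A$-factor in $L^\infty$ or $L^3$ via Sobolev embedding and the elliptic bound by $\mathcal{M}$, and the $\phi$-factor in the appropriate weighted $L^2$ or $L^6$ controlled by $\mathcal{E}$. The time derivatives $D_t\phi$ are again eliminated using $D_t\phi(0,x)=\phi_1$ and, for the second order, the equation $\Box_A\phi=0$ which expresses $D_t D_t\phi$ in terms of spatial covariant derivatives of $\phi$ and a $\rho\cdot\phi$ term (cf.\ Lemma \ref{lem:EQ4sca:null}), the latter being controlled by $\mathcal{M}\cdot\mathcal{E}$.

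The main obstacle I expect is the gauge-fixing and the accompanying elliptic estimate for $A$: one needs that, in the chosen gauge, $A$ and its derivatives decay and are bounded by $\mathcal{M}$ in the right weighted spaces, uniformly, so that all the connection-error terms really are lower order and absorb into $C(\mathcal{M})\mathcal{E}$ rather than spoiling the smallness. The subtlety is the nonzero charge $q_0$: the full connection $A$ associated to $F$ has a long-range Coulomb tail ($A_i\sim q_0 x_i r^{-2}$ type behavior), so $A$ itself does not sit in the weighted Sobolev space with weight $r_+^{1+\ga_0}$; one must either work with a renormalized connection or exploit that $q_0$ is small (proportional to $\mathcal{E}$) so its contribution to the error terms is harmless. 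Once the elliptic/gauge input is in place, the remainder is a routine, if somewhat lengthy, bookkeeping of Leibniz expansions, Sobolev embeddings, and Hölder's inequality with the weights tracked carefully; no genuinely new estimate beyond those already developed in Sections \ref{sec:notation}--\ref{sec:decay:lin} is needed.
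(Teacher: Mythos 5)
Your proposal follows essentially the same route as the paper: fix Coulomb gauge $\div\bar A=0$ on the initial slice, obtain weighted elliptic estimates for $\bar A$ from $\nabla\times\bar A=H$ and for $A_0$ from $\Delta A_0=-J_0(0)$ (after subtracting the Coulomb tail $\chi q_0 r^{-1}$ to produce a mean-zero, decaying source, exactly your "renormalized connection"), then expand the covariant and Lie derivatives via Leibniz and absorb the connection-error terms by H\"older and Sobolev embedding, eliminating time derivatives on $t=0$ via the Maxwell evolution equations and $\Box_A\phi=0$. The paper simply fills in the elliptic input you treat as a black box (citing McOwen/Christodoulou for the weighted-Sobolev elliptic theory and a Gagliardo--Nirenberg interpolation for $\nabla\bar A_0$), and tracks the chargeless curl-free electric field $\bar E^{cf}=\nabla\bar A_0$ explicitly when assembling the bound on $\|\bar F\|$.
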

\begin{proof}
To define the norm $E^k_0[\phi]$, we need to know the connection field $A$ at least on the initial hypersurface $t=0$. As the norm $E_0^k[\phi]$ is gauge invariant, we may choose a particular gauge. Let $\bar A=(A_1, A_2, A_3)(0, x)$, $A_0=A_0(0, x)$. We want to choose a particular connection field $( A_0, \bar A)$ on the initial hypersurface to define the gauge invariant norm $E_0^k[\phi]$ so that we are able to prove this Lemma.

It is convenient to choose Coulomb gauge to make use of the divergence free part $E^{df}$ and the curl free part $E^{cf}$ of $E$. More precisely on the initial hypersurface $t=0$, we choose $( A_0, \bar A)$ so that $\div(\bar A)=0$.
Then the compatibility condition \eqref{eq:comp:cond} is equivalent to
\[
\Delta  A_0=-\Im(\phi_0\cdot \overline{\phi_1})=-J_0(0),\quad \nabla \times \bar A=H.
\]
Define the weighed Sobolev space
\[
W^{p}_{s, \delta}:=\{f|\sum\limits_{|\b|\leq s}\|(1+|x|)^{\delta+|\b|}|\pa^\b f\|_{L^p}<\infty\}.
\]
For the special case $p=2$, let $H_{s, \delta}=W^2_{s, \delta}$. Denote $\tilde{Z}=\{\Om, \pa_j\}$, $\delta=\frac{1+\ga_0}{2}$. By the definition of $\mathcal{M}$:
\[
\|\mathcal{L}_{\tilde{Z}}^k H\|_{H_{0, \delta}}\les \mathcal{M}^{\f12},\quad k\leq 2.
\]
Then from Theorem 0 of \cite{McOwen:behaviorLaplacian} or Theorem 5.1 of \cite{christodoulou:ellipticHs}, we conclude that
\begin{equation*}
\|\mathcal{L}_{\tilde{Z}}^k\bar A\|_{H_{1, \delta-1}}\les \mathcal{M}^{\f12}, \quad k\leq 2.
\end{equation*}
This is the desired estimate for the gauge field $\bar A$. With this connection field $\bar A$, we then can define the covariant derivative $\tilde{D}=\nabla+\sqrt{-1}\bar A$ in the spatial direction. Therefore
\begin{align*}
\|D\phi(0, \cdot)\|_{H_{0, \delta}}=\|\tilde{D}\phi_0\|_{H_{0, \delta}}+\|\phi_1\|_{H_{0, \delta}}\les \mathcal{E}^{\f12}+\|\bar A\|_{W^3_{0, \delta}}\| \phi_0\|_{W^6_{0, \delta}}\les\mathcal{E}^{\f12}(1+\mathcal{M}^{\f12})\les\mathcal{E}^{\f12}\mathcal{M}^{\f12}.
\end{align*}
By the same argument and commute the equations with $D_{\tilde Z}$, we obtain same estimates for $D_{\tilde{Z}}\phi$:
\begin{equation*}
\|DD_{\tilde{Z}}^k\phi(0, \cdot)\|_{H_{0, \delta}}\les \mathcal{E}^{\f12}\mathcal{M}^{\f12},\quad k\leq 2.
\end{equation*}
To define the covariant derivative $D_0$, we need estimates for $A_0$. The difficulty is the nonzero charge. Take a cut off function $\chi(x)=\chi(|x|))$ such that $\chi=1$ when $|x|\geq R$ and vanishes for $|x|\leq \frac{R}{2}$. Denote the chargeless part of $A_0$ and $J_0$ as follows:
\[
\bar A_0=A_0+\chi q_0r^{-1}, \quad \bar J_0(0):=J_0-\Delta (\chi q_0r^{-1}).
\]
By the definition of the charge $q_0$, we then have
\[
\Delta \bar A_0=-\bar J_0(0),\quad \int_{\mathbb{R}^3}\bar J_0(0)dx=0.
\]
Recall that $J_0(0)=\Im(\phi_0\cdot \overline{\phi_1})$. By using Sobolev embedding, we can bound
\begin{align*}
\|\bar J_0(0)\|_{W^{\frac{3}{2}}_{0, 2\delta}}\les |q_0|+\|\phi_1\|_{W^{2}_{0, \delta}}\|\phi_0\|_{W^6_{0, \delta}}\les |q_0|+\|\phi_1\|_{W^{2}_{0, \delta}}\|\phi_0\|_{W^2_{1, \delta}}\les \mathcal{E}.
\end{align*}
Then from Theorem 0 of \cite{McOwen:behaviorLaplacian}, we conclude that
\begin{equation*}
\|\bar A_0\|_{W^{\frac{3}{2}}_{2, 2\delta-2}}\les \mathcal{E}.
\end{equation*}
Here the condition that $\bar A_0$ is chargeless guarantees $\bar A_0$ to belong to the above weighted Sobolev space. Then by using Gagliardo-Nirenberg interpolation inequality, we derive that
\begin{align*}
\|\nabla\bar A_0\|_{H_{0, 2\delta-\frac{1}{2}}}\les \|\nabla \bar A_0\|_{W^{\frac{3}{2}}_{0, 2\delta-1}}^\f12\cdot\|\nabla\nabla \bar A_0\|_{W^{\frac{3}{2}}_{0, 2\delta}}^\f12\les \mathcal{E} .
\end{align*}
By definition $E=\pa_t \bar A-\nabla A_0$. By our gauge choice, $\pa_t \bar A$ is divergence free, $\nabla A_0$ is curl free. In particular we derive that $E^{df}=\pa_t \bar A$, $E^{cf}=-\nabla A_0$. Take the chargeless part. We obtain that $\bar E^{cf}=\nabla \bar A_0$ when $|x|\geq R$. Therefore we can bound the weighted Sobolev norm of the chargeless part of the Maxwell field $\bar F$ on the initial hypersurface as follows:
\begin{align*}
\|\bar F\|_{H_{0, \delta}}&\leq \|F\chi_{\{|x|\leq R\}}\|_{H_{0, \delta}}+\|(\bar E, H)\chi_{\{|x|\geq R\}}\|_{H_{0, \delta}}\\
&\les \|F\chi_{\{|x|\leq R\}}\|_{H_{0, \delta}}+\|(E^{df}, H)\chi_l{\{|x|\geq R\}}\|_{H_{0, \delta}}+\|\bar E^{cf}\chi_{\{|x|\geq R\}}\|_{H_{0, \delta}}\\
&\les \mathcal{M}^{\f12}+\|\nabla\bar A_0\|_{H_{0, \delta}}\les \mathcal{M}^{\f12}.
\end{align*}
Similarly we have the same estimates for $\mathcal{L}_{\tilde Z}^k \bar F$, $k\leq 2$, that is
\begin{equation*}
\|\mathcal{L}_{\tilde Z}^k\bar F\|_{H_{0, \delta}}\les \mathcal{M}^{\f12},\quad k\leq 2.
\end{equation*}
To derive estimates for $D_{Z}^k\phi$, $\mathcal{L}_{Z}^k \bar F$ on the initial hypersurface, we use the equations:
\[
\pa_t E-\nabla\times H=\Im(\phi\cdot \tilde{D}\phi),\quad \pa_t H+\nabla\times E=0,\quad D_t\phi_1=\tilde{D}\tilde{D}\phi
\]
to replace the time derivatives with the spatial derivatives. The inhomogeneous term $\Im(\phi\cdot \tilde{E}\phi)$ or the commutators $[D_t, \tilde{D}]$ could be controlled by using Sobolev embedding together with H\"older's inequality. The lemma then follows.
\end{proof}
The above lemma then leads to the following:
\begin{cor}
\label{cor:Data:0}
Let $(\phi, F)$ be the solution of \eqref{EQMKG}. Under the bootstrap assumption \eqref{eq:btstrap:assum}, we have
\begin{equation*}
M_2\les \mathcal{M},\quad \mathcal{E}_2[\phi]\les_{\mathcal{M}} \mathcal{E}.
\end{equation*}
\end{cor}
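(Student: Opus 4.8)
The plan is to derive Corollary~\ref{cor:Data:0} directly from Lemma~\ref{lem:IDbd} together with the bootstrap assumption \eqref{eq:btstrap:assum} and the structure of the definitions \eqref{eq:def4NkF}, \eqref{eq:def4Nkphi}. Recall that $M_2 = m_2 + \sum_{l\leq 2}E_0^l[\bar F] + 1 + |q_0|$ and, since the scalar field satisfies the linear equation $\Box_A\phi = 0$, we have $\mathcal{E}_2[\phi] = E_0^2[\phi]$. Thus the two estimates to be proved reduce to controlling $m_2$, $E_0^2[\bar F]$, $|q_0|$ and $E_0^2[\phi]$ in terms of $\mathcal{M}$ and $\mathcal{E}$.

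First I would handle the Maxwell side. The bootstrap assumption \eqref{eq:btstrap:assum} gives $m_2 \leq 2\mathcal{E} \leq 2$ (recall we normalized $\mathcal{E}\leq 1$). Lemma~\ref{lem:IDbd} gives $\sum_{l\leq 2}E_0^l[\bar F] \leq E_0^2[\bar F] \lesssim \mathcal{M}$. For the charge, the definition \eqref{defcharge} together with the smallness of the scalar data yields $|q_0| = \frac{1}{4\pi}|\int_{\mathbb{R}^3}\Im(\phi_0\cdot\overline{\phi_1})dx| \lesssim \|\phi_0\|_{L^2}\|\phi_1\|_{L^2} \lesssim \mathcal{E} \leq 1$ (using the weighted control on $\phi_0$, $\phi_1$ built into $\mathcal{E}$; more simply $|q_0|\lesssim\mathcal{E}_2[\phi]$ which is already controlled). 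Summing these, and using $\mathcal{M}>1$ so that $1+|q_0|+m_2\lesssim 1 \lesssim \mathcal{M}$, we obtain $M_2 \lesssim \mathcal{M}$, with an absolute implicit constant. For the scalar side, Lemma~\ref{lem:IDbd} directly gives $E_0^2[\phi]\lesssim_{\mathcal{M}}\mathcal{E}$, hence $\mathcal{E}_2[\phi] = E_0^2[\phi]\lesssim_{\mathcal{M}}\mathcal{E}$.

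The only genuinely non-routine point is making sure the definition \eqref{eq:def4Nkphi} of $\mathcal{E}_2[\phi]$ collapses to $E_0^2[\phi]$; this is exactly the observation recorded in the text that $\Box_A\phi = 0$ for solutions of \eqref{EQMKG}, so the inhomogeneous terms $I^{1+\ga_0}_{1+\ep}[D_Z^l\Box_A\phi]$ and $I^{1+\ep}_{1+\ga_0}[D_Z^l\Box_A\phi]$ vanish identically. Similarly one should note that in \eqref{eq:def4NkF} the norm $m_2$ is precisely the quantity appearing in the bootstrap assumption \eqref{eq:btstrap:assum}, so no further estimation is needed there. I do not expect any real obstacle: the corollary is essentially a bookkeeping consequence of Lemma~\ref{lem:IDbd} once one unwinds the definitions, the main subtlety being to keep track of which constants depend on $\mathcal{M}$ (those coming from the elliptic estimates for $A_0$ and $\bar A$ inside Lemma~\ref{lem:IDbd}) and which are absolute.

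Concretely the write-up will be: invoke \eqref{eq:btstrap:assum} to bound $m_2$; invoke Lemma~\ref{lem:IDbd} to bound $E_0^2[\bar F]$ and $E_0^2[\phi]$; bound $|q_0|$ by $\mathcal{E}_2[\phi]$ (or directly by the data norm); assemble $M_2 = m_2 + \sum_{l\leq 2}E_0^l[\bar F] + 1 + |q_0|\lesssim \mathcal{E} + \mathcal{M} + 1 + \mathcal{E}\lesssim\mathcal{M}$ using $\mathcal{M}>1$; and note $\mathcal{E}_2[\phi] = E_0^2[\phi]\lesssim_{\mathcal{M}}\mathcal{E}$ since $\Box_A\phi = 0$. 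This completes the proof.
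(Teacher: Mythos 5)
Your proof is correct and takes essentially the same route as the paper, which simply unwinds the definitions of $M_2$ and $\mathcal{E}_2[\phi]$, invokes Lemma~\ref{lem:IDbd} for $E_0^2[\bar F]$ and $E_0^2[\phi]$, uses the bootstrap assumption for $m_2$, and observes that $\mathcal{E}_2[\phi]=E_0^2[\phi]$ because $\Box_A\phi=0$. Your version fills in a few routine details (the Cauchy--Schwarz bound $|q_0|\les\mathcal{E}$ and the normalization $\mathcal{E}\leq 1$, $\mathcal{M}>1$) that the paper leaves implicit, but there is no material difference in approach.
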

\begin{proof}
The corollary follows from the definition of $M_2$, $\mathcal{E}_2[\phi]$ in line \eqref{eq:def4NkF}, \eqref{eq:def4Nkphi} together lemma \ref{lem:IDbd}.
\end{proof}
From now on, we allow the implicit constant $\les$ also depends on $\mathcal{M}$, that is, $B\les K$ means that $B\leq C K$ for some constant $C$ depending on $\ga_0$, $R$, $\ep$ and $\mathcal{M}$. The rest of this section is devoted to improve the bootstrap assumption.

To improve the bootstrap assumption, we need to estimate $m_2$ defined in line \eqref{eq:def4NkF}. On the finite region $r\leq R$, the null structure of $J$ is not necessary as the weights of $r$ is bounded above. When $r\geq R$, the null structure of $J$ plays a crucial role. Note that $J_{L}$, $\J=(J_{e_1}, J_{e_2})$ are easy to control as they already contain "good" terms $\D\phi$ or $D_L(r\phi)$. The difficulty is to exploit the null structure of the component $J_{\Lb}$ which is not a standard null form defined in \cite{klNullc}, \cite{klNull}. The null structure of the system is that $J_{\Lb}$ does not interact with the "bad" component $\ab$ of the Maxwell field.

For nonnegative integers $k$, denote $\phi_k=D_Z^k\phi$, $\psi_k=r\phi_k$, $F_k=\mathcal{L}_Z^k$ in this section. First we expand the second order derivative of $J=\Im(\phi\cdot \overline{D\phi})$.
\begin{lem}
\label{lem:J:Z2}
Let $X$ be $L$, $\Lb$, $e_1$, $e_2$. Then we have
\begin{equation*}
\begin{split}
|\mathcal{L}_Z^2 J|+|\nabla\mathcal{L}_Z J|&\les |D\phi_1||D\phi|+|\phi_1||D^2\phi|+|\phi||D^2\phi_{1}|+|\pa F||\phi|^2+|F||D\phi||\phi|,\quad |x|\leq R;\\
r^2|\mathcal{L}_Z^2 J_X|&\les \sum\limits_{k\leq 2} |\psi_k||D_X\psi_{2-k}|+\sum\limits_{l_1+l_2+l_3\leq 1}|\mathcal{L}_Z^{l_1} F_{ZX}||\psi_{l_2}||\psi_{l_3}|,\quad |x|>R.
\end{split}
\end{equation*}
\end{lem}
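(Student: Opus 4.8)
The plan is to compute $\mathcal{L}_Z^2 J$ and $\nabla \mathcal{L}_Z J$ directly from the definition $J_\mu = \Im(\phi \cdot \overline{D_\mu \phi})$ and the Maxwell equation $\pa^\nu F_{\nu\mu} = J_\mu$, keeping careful track of which factors carry the covariant derivative $D$ and which carry the ordinary derivative $\pa$ or the Lie derivative $\mathcal{L}_Z$. First I would recall that for any vector field $Z \in Z = \{\pa_t, \Om_{ij}\}$, the one-form $J$ transforms as $(\mathcal{L}_Z J)_\mu = Z(J_\mu) - J(\mathcal{L}_Z \pa_\mu)$, and that $\mathcal{L}_Z$ distributes over the bilinear expression $\Im(\phi \cdot \overline{D_\mu\phi})$ up to commutator terms $[\mathcal{L}_Z, D_\mu]$; by Lemma \ref{lem:commutator} (and its first-order analogue for one-forms) these commutators are of the schematic form $F_{Z\mu}\phi$. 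Thus expanding $\mathcal{L}_Z J = \Im(D_Z\phi \cdot \overline{D\phi}) + \Im(\phi \cdot \overline{D D_Z\phi}) + \Im(\phi \cdot \overline{F_{Z\cdot}\phi})$, and applying $\mathcal{L}_Z$ (or $\nabla$) once more, one obtains a sum of terms, each of which is a product of: a covariant derivative of $\phi$ of order $\le 2$ in the $D_Z$-hierarchy, another such factor, and possibly one factor of $F$ or $\pa F$. Collecting terms and using $|D_Z^k \phi| \les |D^k_Z\phi|$ and the schematic bound $|\mathcal{L}_Z^{l} F| \les |\pa^{\le l}F|$ on the compact region $r \le R$ gives the first inequality. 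The only points of care are that $\nabla \mathcal{L}_Z J$ is handled the same way since $\nabla$ and $\mathcal{L}_Z$ play interchangeable roles here up to lower-order terms, and that all the coefficients are bounded on $\{r\le R\}$.

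For the second inequality, on the region $\{r > R\}$, the plan is to first rewrite everything in terms of the weighted scalar field $\psi = r\phi$ and the weighted curvature component $F_{ZX}$, using that $r^2 J_X[\phi] = J_X[r\phi] = \Im(\psi \cdot \overline{D_X \psi})$ for the null directions $X \in \{L, \Lb, e_1, e_2\}$ (this is the identity used throughout Section \ref{sec:decay:lin}, e.g.\ in the proof of Proposition \ref{prop:pWE:sca}). Applying $\mathcal{L}_Z^2$ to $r^2 J_X = \Im(\psi \cdot \overline{D_X\psi})$, and noting that $\mathcal{L}_Z$ commutes with $r$-weights (since $Z \in \{\pa_t, \Om\}$ is tangent to the spheres or is $\pa_t$), I would distribute the two derivatives by Leibniz, picking up the main terms $\sum_{k\le 2}\psi_k \cdot D_X\psi_{2-k}$, together with commutator terms from $[\mathcal{L}_Z, D_X]\psi \sim F_{ZX}\psi$ and its further $\mathcal{L}_Z$-derivatives, which produce $\sum_{l_1+l_2+l_3\le 1}(\mathcal{L}_Z^{l_1}F_{ZX})\psi_{l_2}\psi_{l_3}$. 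Here the key structural point is that $X$ is kept as a null frame vector throughout — in particular the curvature factor always appears contracted as $F_{ZX}$, never in the form of a bare $\ab$ component — which is precisely the null structure emphasized in the paragraph preceding the lemma and exploited later to close the bootstrap.

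The main obstacle I anticipate is bookkeeping rather than genuine difficulty: one must be careful about the non-commutativity of $D_X$ (the null-frame covariant derivative) with the angular-momentum operators $\Om$, since $[\Om, e_i]$ and $[\Om, L]$, $[\Om, \Lb]$ do not vanish — $\Om$ rotates the frame $\{e_1, e_2\}$. This means $\mathcal{L}_Z(D_X\psi)$ is not simply $D_X(\mathcal{L}_Z\psi)$ plus a curvature term; there is also a frame-rotation term $D_{[Z,X]}\psi$. However, $[\Om, X]$ for $X \in \{L,\Lb,e_1,e_2\}$ is again a bounded ($r$-independent in the relevant normalization) linear combination of frame vectors, so these terms are absorbed into the schematic sum $\sum_k \psi_k D_X\psi_{2-k}$ after relabeling. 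I would therefore state the lemma with the understanding (as in the paper's convention, cf.\ the remark after Lemma \ref{lem:Est4commu:1}) that quantities involving $Z$ denote sums over all admissible vector fields and all ways of distributing them, and that $D_X\psi_{2-k}$ on the right is shorthand including such frame-rotated variants. With that convention the proof is a direct if somewhat tedious application of the Leibniz rule and Lemma \ref{lem:commutator}.
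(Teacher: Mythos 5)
Your proposal is correct and takes essentially the same route as the paper: both arguments expand $\mathcal{L}_Z J_X$ and $\mathcal{L}_Y\mathcal{L}_Z J_X$ via the one-form Lie-derivative formula and the Leibniz rule, identify the commutator $[D_Z,D_X]-D_{[Z,X]}=\sqrt{-1}F_{ZX}$ from Lemma \ref{lem:commutator}, and use the identity $\Im(\phi\cdot\overline{D_X\phi})=r^{-2}\Im(\psi\cdot\overline{D_X\psi})$ together with the observation that $[Y,Z]$ and $[Z,X]$ remain in the admissible vector-field/frame collection. The only cosmetic difference is order of operations — you pass to $\psi=r\phi$ before distributing the derivatives, while the paper expands first in $\phi$ and converts at the end — but the bookkeeping and the resulting schematic terms coincide.
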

\begin{proof}
By the definition of the Lie derivative $\mathcal{L}_Z$, we can compute
\begin{align*}
\mathcal{L}_Z J_X= Z(J_X)-J_{\mathcal{L}_Z X}&=\Im(D_Z\phi \cdot \overline{D_X\phi}+\phi\cdot\overline{D_Z D_X\phi}-\phi\cdot \overline{D_{[Z, X]\phi}})\\
&=\Im(\phi_1 \cdot \overline{D_X\phi}+\phi\cdot\overline{D_X\phi_1}+\phi\cdot \overline{([D_Z, D_X]-D_{[Z, X]})\phi})\\
&=\Im(\phi_l\cdot\overline{D_X\phi_{1-l}})-F_{ZX}|\phi|^2.
\end{align*}
Here we note that $[D_Z, D_X]-D_{[Z, X]}=\sqrt{-1}F_{ZX}$ for any vector fields $Z$, $X$ and we omitted the summation sign for $l=0,\, 1$. Take one more derivative $\nabla$. The estimate on the region $\{r\leq R\}$ then follows.

Similarly the second order derivative expands as follows:
\begin{align*}
\mathcal{L}_Y\mathcal{L}_Z J_X&= Y \mathcal{L}_Z J_X- \mathcal{L}_Z J_{[Y, X]}\\
&= Y\Im(\phi_l\cdot\overline{D_X}\phi_{1-l})-Y (F_{ZX}|\phi|^2)-\Im(\phi_l\cdot\overline{D_{[Y,X]}}\phi_{1-l})+F_{Z[Y, X]}|\phi|^2\\
&= \Im(\phi_k\cdot\overline{D_X}\phi_{2-k})-(\mathcal{L}_Y F_{ZX}+F_{[Y, Z]X})|\phi|^2+\Im(\sqrt{-1}\phi_l\cdot\overline{F_{YX}\phi_{1-l}})-F_{ZX}Y|\phi|^2
\end{align*}
for any $Y\in Z$. Here we omitted the summation sigh for $k=0, 1, 2$ and $l=0, 1$. Note that
\[
\Im(\phi\cdot \overline{D_X\phi})=r^{-2}\Im(r\phi\cdot \overline{D_X (r\phi)}),\quad [Y, Z]=0 \textnormal { or }\in Z.
\]
The estimate on the region $r\geq R$ then follows. We thus finished the proof of the lemma.
\end{proof}

Next we use the above bound for $J$ to improve the bootstrap assumption.
\begin{prop}
\label{prop:btstrap:imp}
We have
\begin{equation}
\label{eq:btstrap:imp}
m_2\leq C \mathcal{E}^2
\end{equation}
for some constant $C$ depending on $\mathcal{M}$, $\ep$, $R$ and $\ga_0$.
\end{prop}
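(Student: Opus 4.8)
The plan is to unpack the definition of $m_2$ term by term and bound each contribution by $C\mathcal{E}^2$, using Lemma \ref{lem:J:Z2} to expand $\mathcal{L}_Z^2 J$ and its components, and the decay estimates for the scalar field established in Section \ref{sec:decay:lin} (Proposition \ref{prop:Enerdecay:sca:in}, Corollary \ref{cor:ILEdecay:sca:ex}, the $r$-weighted energy estimates, and the pointwise bounds in Propositions \ref{prop:Est4phi:in:R} and \ref{prop:supphi}). By Corollary \ref{cor:Data:0}, under the bootstrap assumption \eqref{eq:btstrap:assum} we have $M_2\les \mathcal{M}$ and $\mathcal{E}_2[\phi]\les \mathcal{E}$, so all the scalar-field decay estimates from Section \ref{sec:decay:lin} hold with implicit constants depending on $\mathcal{M}$; since $J$ is quadratic in $\phi$, each dyadic-in-$\tau$ piece will carry a factor $\mathcal{E}^2$ (one $\mathcal{E}$ from each scalar-field factor after Cauchy–Schwarz), and summing the resulting geometric series in $\tau$ (using $\ga_0>0$ to ensure convergence, and Lemma \ref{lem:simplint} where needed) gives the bound $C\mathcal{E}^2$.

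First I would handle the finite region $r\leq R$: the terms $I_{1+\ga_0}^0[\nabla\mathcal{L}_Z^{l-1}J](\{r\leq 2R\})$ and the $r\leq R$ parts of the other integrals are controlled by the first line of Lemma \ref{lem:J:Z2}, bounding $|D\phi_1||D\phi|$ and similar quadratic terms by the $L^2_tL^\infty_x$ and $L^\infty_tL^2_x$ estimates of Proposition \ref{prop:Est4phi:in:R} (the pointwise decay $\les\mathcal{E}\tau_+^{-1-\ga_0}$ of $|D\phi|,|\phi|$ and the integrated version), together with the pointwise bound on $F$ from Proposition \ref{prop:Est4F:in:R} for the $|F||D\phi||\phi|$ and $|\pa F||\phi|^2$ terms; each such spacetime integral decays like $\tau_+^{-1-\ga_0}$ or faster with a $\mathcal{E}^2$ factor, and the weights $\tau_+^{1+\ga_0}$ built into $m_2$ are absorbed. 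Next, for the region $r\geq R$, I would use the second line of Lemma \ref{lem:J:Z2}, treating $J_L$, $\J$, and $J_{\Lb}$ separately: for $J_L$ and $\J$ the expansion contains a factor $D_L\psi_{2-k}$ or $\D\psi_{2-k}$, which is the "good" derivative controlled by the $r$-weighted energy estimates \eqref{eq:pWEdecay:sca:ex}, \eqref{eq:pWEdecay:sca:in:1ga}; combined with the pointwise bound $\int_\om r^p|\phi_k|^2\les\mathcal{E}\tau_+^{p-2-\ga_0}$ of Proposition \ref{prop:supphi} (or Lemma \ref{lem:Est4phipWE}) on the other factor, the weighted integrals $I^{1+\ga_0}_{1+\ep}[\mathcal{L}_Z^l\J](\{r\geq R\})$, $I^{2+\ga_0}_0[\mathcal{L}_Z^l J_L](\{r\geq R\})$, and $I^{1+\ep}_{1+\ga_0}[\cdots](\{t\geq 0\})$ are all bounded by $\mathcal{E}^2$ after Cauchy–Schwarz and summation. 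The curvature factors $\mathcal{L}_Z^{l_1}F_{ZX}$ in the cubic piece of $J$ are handled via the pointwise and integrated estimates for $F$ in Propositions \ref{prop:supF} and \ref{prop:Est4F:in:R}, noting the null structure (the "good" components $\a$, $\rho$, $\si$ rather than $\ab$ appear for $X=L$).

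The main obstacle, as in the earlier energy-decay arguments, will be the $J_{\Lb}$ terms: the component $J_{\Lb}=\Im(r^{-2}\psi\cdot\overline{D_{\Lb}\psi})$ involves the "bad" derivative $D_{\Lb}\psi$, for which we only have the $L^2_uL^\infty_vL^2_\om$-type estimate \eqref{eq:supDLbphi:I} of Proposition \ref{prop:supphi}, not a pointwise bound with good $u$-decay. I would bound $I^{1-\ep}_{1+\ga_0+2\ep}[\mathcal{L}_Z^l J_{\Lb}](\{t\geq 0\})$ by pairing this $D_{\Lb}\psi$ estimate (which gives $\les\mathcal{E}\tau_+^{-1-\ga_0+3\ep}$ in the appropriate mixed norm) against the pointwise decay of $\psi=r\phi$ from \eqref{eq:supphi:p}, exactly mirroring the structure of the commutator estimates in Propositions \ref{prop:com1:aDLbphi}, \ref{prop:com1:abDLphi}; the null structure guarantees that the weights work out, and the small loss $2\ep$ in the $u$-weight of $m_2$ is precisely what allows the mixed-norm estimate (with its $3\ep$ loss) to close. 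The last term of $m_2$, namely $|q_0|\sup_{\tau\leq 0}\tau_+^{1+\ga_0}\iint_{\mathcal{D}_\tau^{-\infty}}|J_{\Lb}|r^{-2}dxdt$, I would estimate in the exterior region using $|J_{\Lb}|\les r^{-2}|\psi||D_{\Lb}\psi|$, the pointwise decay \eqref{eq:SPHDecay:sca:ex} of $\phi$, and Cauchy–Schwarz against the energy flux through incoming null cones as in the proof of Proposition \ref{prop:Est4bFJ:ex}; since $|q_0|\les\mathcal{E}^{1/2}$ (the charge is controlled by the scalar-field data) this contributes $\les\mathcal{E}\cdot\mathcal{E}=\mathcal{E}^2$. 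Collecting all pieces and choosing $C$ to depend on $\mathcal{M}$, $\ep$, $R$, $\ga_0$ yields \eqref{eq:btstrap:imp}.
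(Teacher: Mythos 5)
Your overall plan mirrors the paper's: unpack $m_2$ term by term via Lemma \ref{lem:J:Z2}, treat the inner region $r\leq R$ with the elliptic/pointwise bounds, and handle $J_L$, $\J$, $J_{\Lb}$ on $\{r\geq R\}$ separately by exploiting the null structure. Your treatment of $J_L$, $\J$, the charge term, and the inner region is essentially what the paper does. However, there is a genuine gap in your argument for $I^{1-\ep}_{1+\ga_0+2\ep}[\mathcal{L}_Z^2 J_{\Lb}]$ on $\{r\geq R\}$.

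The expansion from Lemma \ref{lem:J:Z2} produces the bilinear piece $\sum_{k\leq 2}|\psi_k||D_{\Lb}\psi_{2-k}|$, so after Sobolev embedding on the sphere the worst term carries $D_{\Lb}\psi_2$, i.e.\ the \emph{second-order} bad derivative. You propose to control $D_{\Lb}\psi$ using the $L_u^2L_v^\infty L_\om^2$ estimate \eqref{eq:supDLbphi:I} of Proposition \ref{prop:supphi}, but that estimate is only stated (and provable, given that we only control $\mathcal{E}_2[\phi]$) for $k\leq 1$; its proof uses the null equation \eqref{eq:EQ4sca:null} to trade $D_LD_{\Lb}\psi_k$ for $\D^2\psi_k$, which costs one extra $Z$-derivative. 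So the term $|\psi_0||D_{\Lb}\psi_2|$ cannot be closed along the route you describe. The paper avoids this by going a different way: it pairs the sphere-averaged decay $\int_\om|\psi_2|^2 d\om\les\mathcal{E}\tau_+^{-\ga_0}$ from Lemma \ref{lem:Est4phipWE} with the \emph{integrated local energy} estimate for $\bar D\phi_2$ (which is available at order two from \eqref{eq:ILEdecay:sca:ex} and \eqref{eq:ILE:sca:in:sim0}), and then uses Lemma \ref{lem:simplint} to absorb the remaining $\tau_+^{1+2\ep}$ weight. This is the mechanism that actually closes the top-order $J_{\Lb}$ estimate; the mixed-norm bound you cite is a different tool that saturates one order of differentiability too early.

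Two smaller points. First, $|q_0|\les\mathcal{E}$, not $\mathcal{E}^{1/2}$ — the charge is quadratic in $(\phi_0,\phi_1)$ and $\mathcal{E}$ is already an $L^2$-type (quadratic) norm; your subsequent arithmetic happens to reach $\mathcal{E}^2$ anyway, but the stated power of $q_0$ is wrong. Second, for the term $I^0_{1+\ga_0}[\nabla\mathcal{L}_Z^{l-1}J](\{r\leq 2R\})$ you need a bound on the full spatial gradient of $J$ on the annulus $R\leq r\leq 2R$; since $Z=\{\pa_t,\Om\}$ does not span $\nabla$ at finite $r$, the paper recovers the missing radial derivative by substituting the equations (Lemma \ref{lem:EQ4sca:null} and Lemma \ref{lem:nullMKG}) before applying the integrated decay estimates, a step not visible in your sketch.
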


\begin{proof}
Since $M_2\les \mathcal{M}$, all the estimates in the previous section hold. In particular we have the energy flux and the $r$-weighted energy decay estimates  for the scalar field and the chargeless part of the Maxwell field up to second order derivatives. Moreover the pointwise estimates in Proposition \ref{prop:Est4F:in:R}, \ref{prop:supF}, \ref{prop:supphi}, \ref{prop:Est4phi:in:R} hold.

Let's first consider the estimate of $|J_{\Lb}|r^{-2}$ on the exterior region. We have the simple bound that $|J_{\Lb}|\leq |D_{\Lb}\phi||\phi|$. We can control $D_{\Lb}\phi$ by using the energy flux through the incoming null hypersurface $\Hb_{v}$ and $\phi$ by the $L^\infty$ norm. In particular for any $\tau<0$ we can show that
\begin{align*}
\iint_{\mathcal{D}_{\tau}^{-\infty}}|J_{\Lb}|r^{-2}dxdt&\les\int_{-\tau^*}^{\infty}\left(\int_{\Hb_{v}}|D_{\Lb}\phi|^2r^2 dud\om\right)^{\f12}\cdot \left(\int_{\Hb_{v}}|\phi|^2r^{-2}dud\om\right)^{\f12}dv \\
&\les \mathcal{E}\int_{-\tau^*}^{\infty}\tau_+^{-\frac{1+\ga_0}{2}}\left(\int_{\Hb_{v}}r^{-4}\tau_+^{-\ga_0}dud\om\right)^{\f12}dv\\
&\les\mathcal{E}\int_{-\tau^*}^{\infty}\tau_+^{-\frac{1+2\ga_0}{2}}r^{-\frac{3}{2}}dv\les \mathcal{E}\tau_+^{-1-\ga_0}.
\end{align*}
We remark here that we can not use the integrated local energy to bound the above term due to the exact total decay rate of $|J_{\Lb}|r^{-2}$. As the charge $|q_0|\les \mathcal{E}$, we therefore obtain
 \[
 |q_0|\sup\limits_{\tau\leq 0}\tau_+^{1+\ga_0}\iint_{\mathcal{D}_{\tau}^{-\infty}}|J_{\Lb}|r^{-2}dxdt\les \mathcal{E}^2,\quad \forall \tau\leq 0.
 \]
 Next we consider the estimates on the compact region $r\leq 2R$. As $|\phi_1|=|D_Z\phi|\les |D\phi|$ when $|x|\leq R$, we can bound $\phi_1$, $\phi$, $D\phi$ and $F$ by the $L^\infty$ norm obtained in \eqref{eq:Est4F:in:R:p}, \eqref{eq:Est4phi:in:R:p}. Then $D^2\phi_1$, $\pa F$ can be controlled by using the integral decay estimates \eqref{eq:Est4F:in:R}, \eqref{eq:Est4phi:in:R} on $r\leq R$. To derive estimates for $D^2\phi_k$ or $\pa F$ on the region $\{R\leq r\leq 2R\}$, we use the equation \eqref{EQMKG}. From Lemma \ref{lem:EQ4sca:null} and Lemma \ref{lem:nullMKG}, we can show that
\begin{align*}
|D^2\phi_1|+\mathcal{E}|\pa F|&\les |D\phi_2|+|D_{L}D_{\Lb}\psi_1|+|F||\phi_1|+\mathcal{E}(|\mathcal{L}_Z F|+|L(r^2\rho, r^2\si, r\ab)|+|L(r\ab)|)\\
&\les |D\phi_2|+|\Box_A\phi_1|+|F||\phi_1|+\mathcal{E}(|\mathcal{L}_Z F|+|J|).
\end{align*}
Here we omitted the easier lower order terms. On the region $\{R\leq R\leq 2R\}$, $Z$ only miss one derivative which could be recovered from the equation. From Lemma \ref{lem:J:Z2}, we can show that
\begin{align*}
&I^0_{1+\ga_0+2\ep}[|\mathcal{L}_Z^2 J|+|\nabla \mathcal{L}_Z J|](\{r\leq 2R\})\\
&\les \mathcal{E}\int_{0}^{\infty}\tau_+^{2\ep}\int_{r\leq 2R}|D^2\phi_1|^2+\mathcal{E}|\pa F|^2+|D\phi|^2 dxd\tau\\
&\les \mathcal{E}^2+\mathcal{E}\int_{0}^{\infty}\tau_+^{2\ep}\int_{R\leq r\leq 2R}|D\phi_2|^2+|\Box_A\phi_1|^2+|F|^2|\phi_1|^2+\mathcal{E}(|\mathcal{L}_Z F|^2+|J|^2)dxd\tau\\
&\les \mathcal{E}^2+\mathcal{E}I^{0}_{2\ep}[\Box_A\phi_1](\{r\geq R\})+\mathcal{E}^2I^{0}_{2\ep}[J](\{r\geq R\})\les \mathcal{E}^2.
\end{align*}
Here the implicit constant also depends on $\mathcal{M}$ and we only consider the highest order terms. The second last step follows as the integral from time $\tau_1$ to $\tau_2$ decays in $\tau_1$. Hence the spacetime integral is bounded by using Lemma \ref{lem:simplint}. The bound for $\Box_A\phi_1$ follows from Proposition \ref{prop:bd4EDZphi} and the spacetime norm for $J$ is controlled by the bootstrap assumption.

Next, we consider the case when $|x|\geq R$ where the null structure of $J$ plays a crucial role.
For $|\mathcal{L}_Z^2 J_{\Lb}|$, Lemma \ref{lem:J:Z2} implies that
\[
r^2|\mathcal{L}_Z^2 J_{\Lb}|\les |\psi_k||D_{\Lb}\psi_{2-k}|+(|r\mathcal{L}_Z^{l_1}\ab|+|\mathcal{L}_Z^{l_1}\rho|)|\psi_{l_2}||\psi_{1-l_1-l_2}|.
\]
Here the indices $k$, $2-k$, $1-l_1-l_2$, $l_1$, $l_2$ are nonnegative integers and we only consider the highest order term as the lower order terms are easier and could be bounded in a similar way. On the right hand side of the above inequality, after using Sobolev embedding on the sphere, we can bound $|\psi|$ by using Lemma \ref{lem:Est4phipWE} and $D_{\Lb}\psi$, $|\ab|$, $\rho$ by using the integrated local energy estimates. In deed we can show that
\begin{align*}
&I^{1-\ep}_{1+\ga_0+2\ep}[\mathcal{L}_Z^2 J_{\Lb}](\{r\geq R\})=\int_{\tau}\int_{H_{\tau^*}}r_+^{-\ep-1}\tau_+^{1+\ga_0+2\ep}|r^2\mathcal{L}_Z^2J_{\Lb}| ^2dud\om d\tau\\ &\les\int_{\tau}\int_{v}r_+^{-1-\ep}\tau_+^{1+\ga_0+2\ep} \int_{\om}|\psi_2|^2d\om\cdot \int_{\om}|D_{\Lb}\psi_{2}|^2d\om +\int_{\om}|r\mathcal{L}_Z^{2}\ab|^2+|\mathcal{L}_Z^{2}\bar\rho|^2d\om(\int_{\om}|\psi_{2}|^2d\om)^2 dv d\tau\\
&\qquad+\int_{\tau\leq 0}\int_{v}r^{-1-\ep}|q_0|^2r^{-4} \tau_+^{1+\ga_0+2\ep}(\int_{\om}|\psi_{2}|^2d\om)^2 dvd\tau\\
&\les \mathcal{E}\int_{\tau}\tau_+^{1+2\ep}\int_{H_{\tau^*}}\frac{|\bar D\phi_2|^2}{r_+^{1+\ep}}dxd\tau+\mathcal{E}^2\int_{\tau}\tau_+^{1+2\ep}\int_{H_{\tau^*}}\frac{|\mathcal{L}_Z^2 \bar F|^2}{r_+^{1+\ep}}dxd\tau
+\mathcal{E}^2|q_0|^2\int_{\tau\leq 0}\int_{v}r_+^{-4+\ep+\ga_0}dvd\tau\\
&\les \mathcal{E} I^{-1-\ep}_{1+2\ep}[\bar D \phi_2](\{t\geq 0\})+\mathcal{E}^2 I^{-1-\ep}_{1+2\ep}[\mathcal{L}_Z^2\bar F](\{t\geq 0\})+|q_0|^2\mathcal{E}^2\les \mathcal{E}^2.
\end{align*}
Here after using Sobolev embedding on the sphere, we dropped the lower order terms like $\psi_1$, $\psi$. In the above estimate, we have used the decay estimates $\int_{\om}|\psi_k|^2d\om\les \mathcal{E}\tau_+^{-\ga_0}$ by Lemma \ref{lem:Est4phipWE}. The last step follows from the integrated local energy decay (see e.g. estimate \eqref{eq:Enerdecay:sca:in}) and Lemma \ref{lem:simplint}. We also note that in the exterior region $r_+\geq \frac{1}{2}\tau_+$.

For $J_{L}$, Lemma \ref{lem:J:Z2} indicates that
 \[
r^2|\mathcal{L}_Z^2 J_{L}|\les  |\psi_k||D_L\psi_{2-k}|+(|r\mathcal{L}_Z^{l_1}\a|+|\mathcal{L}_Z^{l_1}\rho|)|\psi_l{l_2}||\psi_{1-l_1-l_2}|.
\]
Similarly, after using Sobolev embedding, we control $\psi_k$ by using Lemma \ref{lem:Est4phipWE}. Then for $D_{L}\psi_k$, $|\mathcal{L}_Z^k\a|$ we can apply the $r$-weighted energy estimates. For $\rho$, we split it into the charge part $q_0r^{-2}$ and the chargeless part which can be bounded by using the energy flux decay estimates. More precisely for $\ep\leq p\leq 1+\ga_0$ we can show that
\begin{align*}
&I^{1+p}_{1+\ga_0+\ep-p}[\mathcal{L}_Z^2 J_{L}](\{r\geq R\})=\int_{\tau}\int_{H_{\tau^*}}r_+^{p-1}\tau_+^{1+\ga_0+\ep-p}|r^2\mathcal{L}_Z^2J_{L}| ^2dvd\om d\tau\\ &\les\mathcal{E}\int_{\tau}\int_{H_{\tau^*}}r_+^{p}\tau_+^{\ep-p} |D_{L}\psi_{2}|^2d\om  dvd\tau +\mathcal{E}^2\int_{\tau}\int_{H_{\tau^*}}r_+^{p}\tau_+^{\ep-p-\ga_0}(|r\mathcal{L}_Z^{2}\a|^2+|\mathcal{L}_Z^{2}\bar\rho|^2)d\om dv d\tau\\
&\qquad+\mathcal{E}^2\int_{\tau\leq 0}\int_{v}r^{p-1}|q_0|^2r^{-4} \tau_+^{1-\ga_0+\ep-p} dvd\tau\\
&\les \mathcal{E}^2\int_{\tau}\tau_+^{\ep-p-1-\ga_0+p}+\tau_+^{\ep-p-\ga_0-1-\ga_0+p}+\tau_+^{\ep-p-1-\ga_0}d\tau+\mathcal{E}^2|q_0|^2\les \mathcal{E}^2.
\end{align*}
Here for $\psi_k$, we have used the estimate $r^{-1}\int_{\om}|\psi_k|^2d\om \les \mathcal{E}\tau_+^{-1-\ga_0}$.

Next for $\J$, Lemma \ref{lem:J:Z2} shows that
\[
r^2|\mathcal{L}_Z^2 \J|\les |\psi_k||\D\psi_{2-k}|+(|r\mathcal{L}_Z^{l_1} \si|+|\mathcal{L}_Z^{l_1} \a|+|\mathcal{L}_Z^{l_1} \ab|)|\psi_{l_2}||\psi_{1-l_1-l_2}|.
\]
Like the previous estimates for $J_{\Lb}$, $J_{L}$, for all $\ep\leq p\leq \ga_0$ we can show that
\begin{align*}
&I^{1+p}_{1+\ga_0+\ep-p}[\mathcal{L}_Z^2 \J_{L}](\{r\geq R\})=\int_{\tau}\int_{H_{\tau^*}}r_+^{p-1}\tau_+^{1+\ga_0+\ep-p}|r^2\mathcal{L}_Z^2 \J|^2dvd\om d\tau\\ &\les\mathcal{E}\int_{\tau}\int_{H_{\tau^*}}r_+^{p}\tau_+^{\ep-p} |\D\psi_{2}|^2d\om  dvd\tau +\mathcal{E}^2\int_{\tau}\int_{H_{\tau^*}}r_+^{p}\tau_+^{\ep-p-\ga_0}(|r\mathcal{L}_Z^{l_1} \si|^2+|\mathcal{L}_Z^{l_1} (\ab,\a)|^2)d\om dv d\tau\\
&\les\mathcal{E}\int_{\tau}\int_{H_{\tau^*}}r_+^{\ga_0} (|\D\psi_{2}|^2+\mathcal{E}|r\mathcal{L}_Z^{l_1} (\si,\a)|^2)d\om  dvd\tau +\mathcal{E}^2\int_{\tau}\int_{H_{\tau^*}}r_+^{1-\ep}|\mathcal{L}_Z^{l_1} \ab|^2d\om dv d\tau\\
&\les \mathcal{E}^2.
\end{align*}
Here $l_1\leq 1$. The last term is bounded by using the integrated local energy estimates. This relies on the assumption that $\ga_0\leq 1-\ep<1$. For $\ga_0\geq 1$, we then can use the improved integrated local energy estimate for the angular derivatives of $\phi$ or $\si$ or we can move the $r$ weights to $\phi_k$.

Combining the above estimates, we have \eqref{eq:btstrap:imp}.
\end{proof}

By choosing $\mathcal{E}$ sufficiently small depending only on $\mathcal{M}$, $\ep$, $R$ and $\ga_0$, we then can improve the bootstrap assumption \eqref{eq:btstrap:assum}. To prove theorem \ref{thm:dMKG:small}, we can choose $R=2$. Then for sufficiently small $\mathcal{E}$, we can bound $m_2$ and $M_2$. The pointwise estimates in the main Theorem \ref{thm:dMKG:small} follow from Propositions \ref{prop:Est4F:in:R}, \ref{prop:supF}, \ref{prop:supphi}, \ref{prop:Est4phi:in:R}.

\bibliography{shiwu}{}
\bibliographystyle{plain}

\bigskip

DPMMS, Centre for Mathematical Sciences, University of Cambridge,
Wilberforce Road, Cambridge, UK CB3 0WA

\textsl{Email address}: S.Yang@dpmms.cam.ac.uk
\end{document}